
\documentclass[11pt]{amsart}
\usepackage{amsmath}
\usepackage{amsfonts}
\usepackage{amsthm}
\usepackage{amssymb}
\usepackage{mathrsfs}
\usepackage{latexsym}
\usepackage{verbatim}
\usepackage{diagrams}
\usepackage{a4wide}
\usepackage{enumerate}
\usepackage{hyperref}
\usepackage{bookmark}
\usepackage{amscd}
\usepackage[all,cmtip,line]{xy}
\usepackage{url}

\def\EE{\E}
\def\E{\mathscr{E}}
\def\GG{\mathscr{G}}

\hfuzz=24.002pt

\def\flatt{b}
\setcounter{tocdepth}{2}
\newtheorem{theoremA}{Theorem}

\renewcommand{\thetheoremName}

\newtheorem{conjectureA}[theoremA]{Conjecture}

\def\fppf{\mathrm{fppf}}
\def\LL{\mathscr{L}}

\def\Rmint{R}
\def\Rmin{R^{\mathrm{min}}}

\def\gr{\mathrm{gr}}
\def\coker{\mathrm{coker}}
\def\nind{\mathrm{n\text{-}ind}}
\def\WD{\mathrm{WD}}
\def\Cat{\mathscr{C}}
\def\wchi{\widetilde{\chi}}

\def\mhat{\widehat{m}}

\def\Aut{\mathrm{Aut}}
\def\res{\mathrm{res}}
\def\length{\mathrm{length}}

\def\Gf{\mathscr{G}}

\def\Sym{\mathrm{Sym}}

\def\Sp{\mathrm{Sp}}

\def\PSL{\mathrm{PSL}}
\def\r{r}

\def\Rbox{R^{\square}}

\def\pssi{\phi}

\def\Ee{\mathscr{E}}
\def\rbar{\overline{r}}
\def\CO{\mathcal{O}}

\def\Art{\mathrm{Art}}
\def\ab{\mathrm{ab}}
\def\projdim{\mathrm{proj.dim}}
\def\Ass{\mathrm{Ass}}
\def\codim{\mathrm{codim}}
\def\Cc{C}
\def\Dc{D}
\def\DP{\mathcal{D}}
\def\Kc{K}
\def\Kcp{L}
\def\n{\mathfrak{n}}
\def\Ff{\mathscr{F}}
\def\big{big}

\def\sbar{\overline{s}}
\def\psibar{\overline{\psi}}
\def\CS{\mathcal{S}}
\def\Iw{\mathrm{Iw}}
\def\tY{\widetilde{Y}}
\def\Art{\mathrm{Art}}
\def\ab{\mathrm{ab}}

\def\SO{\mathrm{SO}}
\def\SU{\mathrm{SU}}
\def\Res{\mathrm{Res}}
\def\Trace{\mathrm{Trace}}
\def\Ad{\mathrm{ad}}
\def\nmf{\mathbf{N}_{F/\Q}}

\def\H{H_{\psi}}
\def\Hom{\mathrm{Hom}}
\def\SHom{\mathcal{H}\kern -.5pt om}

\def\L{\mathcal{L}}
\def\T{\mathbf{T}}
\def\Z{\mathbf{Z}}
\def\G{\mathbf{G}}
\def\A{\mathbf{A}}
\def\OL{\mathcal{O}}
\def\L{\mathscr{L}}
\def\Q{\mathbf{Q}}
\def\F{\mathbf{F}}
\def\R{\mathbf{R}}
\def\C{\mathbf{C}}

\def\CC{\mathcal{C}}
\def\CE{\mathcal{E}}
\def\CL{\mathcal{L}}
\def\Cl{\mathrm{Cl}}
\def\RCl{\mathrm{RCl}}

\def\CF{\mathcal{F}}
\def\CV{\mathcal{V}}
\def\a{\mathfrak{a}}
\def\m{\mathfrak{m}}
\def\p{\mathfrak{p}}
\def\b{\mathfrak{b}}
\def\d{\mathfrak{d}}

\def\WT{\widetilde{\T}}
\def\Qbar{\overline{\Q}}

\def\rhobar{\overline{\rho}}

\def\Tor{\mathrm{Tor}}
\def\Ind{\mathrm{Ind}}
\def\PGL{\mathrm{PGL}}
\def\Ext{\mathrm{Ext}}
\def\Hom{\mathrm{Hom}}
\def\End{\mathrm{End}}
\def\Ann{\mathrm{Ann}}
\def\GSp{\mathrm{GSp}}
\def\GL{\mathrm{GL}}
\def\rank{\mathrm{rank}}
\def\Gal{\mathrm{Gal}}
\def\ad{\mathrm{ad}}
\def\SL{\mathrm{SL}}
\def\Frob{\mathrm{Frob}}
\def\Spec{\mathrm{Spec}}

\newarrow{Equals}{=}{=}{=}{=}{=}
\newarrow{pots}{.}{.}{.}{.}{>}
\def\ra{\rightarrow}
\def\lra{\longrightarrow}
\def\onto{\twoheadrightarrow}
\def\iso{\stackrel{\sim}{\rightarrow}}
\def\onto{\twoheadrightarrow}
\def\into{\hookrightarrow}
\def\liso{\stackrel{\sim}{\longrightarrow}}

\def\ss{\mathrm{ss}}
\def\ff{\mathrm{f}}
\def\cusps{\infty}

\def\Twist{\Phi}
\def\twist{\phi}
\def\eps{\epsilon}
\def\NF{N_{F/\Q}}
\def\wrho{\widetilde{\rho}}
\def\walpha{\widetilde{\alpha}}
\def\wbeta{\widetilde{\beta}}
\def\wtm{\widetilde{\m}}
\def\hatm{\widetilde{\m}}
\def\Tan{\T^{\mathrm{an}}}
\def\TE{\T_{\emptyset}}

\def\mE{\mathfrak{m}_{\emptyset}}

\def\tr{\mathrm{Trace}}
\def\wt{\widetilde}

\def\univ{\mathrm{univ}}
\def\II{\mathscr{I}}
\def\JJ{\mathscr{J}}
\def\JJg{\JJ^{\mathrm{glob}}}
\def\Ra{\widetilde{R}}
\def\Rv{R^{\mathrm{univ}}}
\def\Rva{\Ra^{\mathrm{univ}}}
\def\Rd{R^{\dagger}}
\def\Rda{\Ra^{\dagger}}
\def\Ru{R^{\mathrm{unr}}}
\def\Rua{\Ra^{\mathrm{unr}}}
\def\Rloc{R_{\mathrm{loc}}}

\DeclareMathOperator\depth{depth}
\DeclareMathOperator\Frac{Frac}
\DeclareMathOperator\Tr{tr}
\DeclareMathOperator\im{Im}
\DeclareMathOperator\diag{diag}

\DeclareMathOperator\red{red}
\DeclareMathOperator\ord{ord}
\DeclareMathOperator\sub{sub}

\newtheorem{theorem}{Theorem}[section]

\newtheorem{df}[theorem]{Definition}
\newtheorem{lemma}[theorem]{Lemma}
\newtheorem{prop}[theorem]{Proposition}
\newtheorem{corr}[theorem]{Corollary}

\newtheorem{remark}[theorem]{Remark}
\newtheorem{assumption}[theorem]{Assumption}

\begin{document}

\title{Modularity Lifting 
beyond the Taylor--Wiles Method}
\author{Frank Calegari \and David Geraghty} 
\thanks{The first author was supported in part by NSF Career Grant
  DMS-0846285, NSF Grant DMS-1404620, NSF Grant DMS-1648702, NSF Grant DMS-1701703,
      and a Sloan Foundation Fellowship; the second author was
  supported in part by NSF Grant DMS-1440703;
both authors were supported in part by NSF Grant DMS-0841491.}
\subjclass[2010]{11F33, 11F80.}

\begin{abstract}
We prove new modularity lifting theorems for $p$-adic Galois
  representations in situations where the methods of Wiles and
  Taylor--Wiles do not apply. Previous generalizations of these
  methods have been restricted to situations where the automorphic
  forms in question contribute to a single degree of cohomology. In
  practice, this imposes several restrictions -- one must be in a
  Shimura variety setting and the automorphic forms must be of regular
  weight at infinity. In this paper, we essentially show how to remove
  these restrictions. 

  Our most general result is a modularity lifting theorem which, on the
  automorphic side applies to automorphic forms on the group $\GL(n)$
  over a general number field; it is contingent on a conjecture which, in particular,
  predicts the existence of Galois representations associated to
  torsion classes in the cohomology of the associated locally
  symmetric space. We show that if this conjecture holds, then our
  main theorem implies the following: if $E$ is an elliptic curve over
  an arbitrary number field, then $E$ is potentially automorphic and
  satisfies the Sato--Tate conjecture. 

  In addition, we also prove some unconditional results. For example,
  in the setting of $\GL(2)$ over $\Q$, we identify certain minimal
  global deformation rings with the Hecke algebras acting on spaces of
  $p$-adic Katz modular forms of weight 1. Such algebras may well
  contain $p$-torsion. Moreover, we also completely solved the problem
  (for $p$ odd) of determining the multiplicity of an irreducible
  modular representation $\rhobar$ in the Jacobian $J_1(N)$, where $N$ is the
  minimal level such that $\rhobar$ arises in weight two.  
\end{abstract}

\maketitle

\newpage

{\tiny
 \tableofcontents
}

\section{Introduction}

In this paper, we prove a new kind of modularity lifting theorem for $p$-adic
Galois representations. Previous generalizations
of the work of Wiles~\cite{W} and Taylor--Wiles~\cite{TW} have (essentially) been
restricted to circumstances where the automorphic forms in question arise from
the middle degree cohomology of Shimura varieties. In particular, such
approaches
ultimately rely on a  ``numerical coincidence''
(see the introduction to~\cite{CHT}) which does not hold in general, and does not
hold in particular for $\GL(2)/F$ if $F$ is not totally real. A second requirement
of these generalizations
 is that the Galois representations in question are \emph{regular} at $\infty$,
that is, have distinct Hodge--Tate weights for all $v|p$. 
Our approach, in contrast, does not \emph{a priori} require either such assumption.

When considering questions of modularity in more general contexts, there are two issues
that need to be overcome. The first is that there do not seem to be ``enough'' automorphic
forms to account for all the Galois representations. In~\cite{CM,CV,CE}, the suggestion
is made that one should instead consider  \emph{integral} cohomology,
 and that the torsion occurring in these cohomology groups
may account for the missing automorphic forms. 
In order to make this approach work,
one needs to show that there is ``enough'' torsion. This is the problem that we solve
in some cases.
A second problem is the lack of Galois representations attached to 
these integral cohomology classes.
In particular, our methods require Galois representations associated 
to  torsion classes which do not necessarily lift
to characteristic zero, where one might hope to apply
the recent results of~\cite{HLTT}.
We do not resolve the problems of constructing Galois representations in this paper, and instead,
our results are contingent on  a conjecture which predicts that there exists a map
from a suitable deformation ring $R^{\min}$ to a Hecke algebra $\T$.
In a recent preprint, Scholze~\cite{Scholze} has constructed Galois representations associated
to certain torsion classes. If one can show that these Galois representations satisfy certain local-global
compatibility conditions (including showing that the Galois representations associated to
cohomology classes on which $U_v$ for $v|p$ is invertible are reducible after restriction to the decomposition
group at $v$), then our modularity lifting
theorems for imaginary quadratic fields would be unconditional.
There \emph{are} contexts, however, in which  
the
existence of Galois representations is known; in these cases we can produce unconditional
results.
In principle, our method currently applies in two contexts: 
\begin{enumerate}
\item[\bf{Betti}\rm] To Galois representations conjecturally arising from 
tempered $\pi$ of cohomological type associated to $G$, where
$G$ is reductive with a maximal compact $K$, maximal $\Q$-split torus $A$, and
$l_0 = \rank(G) - \rank(K) - \rank(A)$ is arbitrary,
\item[\bf{Coherent}] To Galois representations conjecturally arising from
tempered $\pi$ associated to $G$, where
$(G,X)$  is a Shimura variety over a totally real field $F$, and
such that $\pi_v$  is a non-degenerate limit of discrete series at~$\ell_0$ infinite places
and a discrete series at all other infinite places.
\end{enumerate}
 In practice, however, what we really need is  that
(after localizing at a suitable maximal ideal $\m$ of the Hecke algebra $\T$)  the cohomology
is concentrated in $l_0 + 1$ consecutive degrees.  (This is certainly true of the tempered representations
which occur in Betti cohomology. According to~\cite{BW}, the range of cohomological
degrees to which they occur has length $l_0 + 1$. In the coherent
case, the value of $l_0$ will depend on the infinity components
$\pi_v$ allowed. That tempered representations occur in a range of
length $l_0$, then follows from~\cite[Theorems 3.4 \&
3.5]{harris-ann-arb} together with knowledge of L-packets at infinite primes.)
The specialization of our approach to the case~$\ell_0 = 0$ exactly recovers the
usual  Taylor--Wiles method.

\medskip

The following results are a sample of what can be shown by these methods
in case {\bf Betti}\rm, assuming (Conjecture~\ref{conj:AA} of ~\S~\ref{conjectures}) the existence of Galois
representations in appropriate degrees satisfying the expected properties.

\begin{theorem}  \label{theorem:ST} Assume Conjecture~\ref{conj:AA}. Let $F$ be any number
field, and let $E$ be an elliptic curve over $F$.
Then the following hold:
\begin{enumerate}
\item $E$ is potentially modular.
\item The Sato--Tate conjecture is true for $E$.
\end{enumerate}
\end{theorem}

 The proof of Theorem~\ref{theorem:ST} relies on the following ingredients. The first ingredient consists of  the usual
 techniques in modularity lifting (the Taylor--Wiles--Kisin method) as augmented by  Taylor's
 Ihara's Lemma avoidance trick~\cite{Taylor}. The second ingredient is to observe that
 these arguments \emph{continue to hold} in a more general situation, 
 \emph{provided} that one can show that there is ``enough'' cohomology. Ultimately,
 this amounts to giving a lower
 bound on the depth of certain patched Hecke modules.
Finally, one can obtain such a lower bound by a commutative algebra argument, 
\emph{assuming} that the relevant cohomology occurs only in a certain range of length $l_0$.
Conjecture~\ref{conj:AA} amounts to assuming both the existence of Galois
representations together with the vanishing of cohomology (localized at an appropriate $\m$) outside a given range.
We deduce Theorem~\ref{theorem:ST} from a more general modularity lifting theorem,
see Theorem~\ref{theorem:modularity}.

\medskip

The following result is a sample of what can be shown by these methods
in case {\bf Betti}\rm \  assuming only Conjecture~\ref{conj:A} concerning the existence of Galois
representations for arithmetic lattices in $\GL_2(\OL_F)$ for an imaginary
quadratic field $F$. Unlike Conjecture~\ref{conj:AA}, it appears that Conjecture~\ref{conj:A} may well be
quite tractable in light of the work of~\cite{Scholze}.
 Let $\OL$ denote the ring of integers in a finite extension of $\Q_p$,
let $\varpi$ be a uniformizer of $\OL$, and let $\OL/\varpi = k$ be the residue field.
Say that a representation $\rho: G_F \rightarrow \GL_2(\OL)$ is semi-stable if
$\rho | I_v$ is unipotent for all finite $v$ not dividing $p$, and semi-stable in the sense of Fontaine~\cite{Fontaine}
if~$v|p$.  Furthermore, for $v|p$, we say that $\rho |D_v$ is finite flat if 
 for all $n \ge 1$
 each finite quotient $\rho | D_v \mod \varpi^n$ is the generic fiber of a finite flat $\OL$-group
scheme, and ordinary if  $\rho | D_v$ is conjugate to a representation
of the form
\[ \begin{pmatrix}
\epsilon\chi_1 & * \\
0 & \chi_2 
\end{pmatrix} \]
where $\chi_1$ and $\chi_2$ are unramified and $\epsilon$ is the
cyclotomic character.

\begin{theorem} \label{theorem:modularityimaginary}
Assume Conjecture~\ref{conj:A}. Let $F/\Q$ be an imaginary quadratic field.
Let $p \ge 3$ be unramified in $F$. Let
$$\rho: G_{F} \rightarrow \GL_2(\OL)$$
be a continuous semi-stable Galois representation with cyclotomic determinant unramified outside finitely many primes.
Let $\rhobar: G_{F} \rightarrow \GL_2(k)$ denote the mod-$\varpi$ reduction of $\rho$.
Suppose that
\begin{enumerate}
\item If $v|p$, the representation $\rho |D_v$ is either finite flat
  or ordinary.
  \item The restriction of $\rhobar$
to 
$\displaystyle{G_{F \kern-0.05em{\left(\zeta_p \right)}}}$
 is absolutely irreducible.
\item $\rhobar$ is modular of level $N(\rhobar)$, where $N(\rhobar)$ is the product of the usual
prime-to-$p$ Artin conductor and the primes $v|p$ where $\rhobar$ is not finite-flat. 
\item $\rho$ is minimally ramified.
\end{enumerate}
Then $\rho$ is modular, that is, there exists a regular algebraic cusp form $\pi$ for $\GL(2)/F$
such that $L(\rho,s) = L(\pi,s)$.
\end{theorem}

It is important to note that the condition $(3)$  is only a statement about the
existence of a mod-$p$ cohomology class of level $N(\rhobar)$, not the existence of a
characteristic zero lift. This condition is the natural  generalization of Serre's
conjecture.

\medskip

It turns out that --- even assuming Conjecture~\ref{conj:A} --- this is not enough to
prove that all minimal semi-stable elliptic curves over $F$ are modular. Even though
the Artin conjecture for finite two-dimensional solvable representations  of $G_F$ is known,
there are no
obvious congruences between eigenforms arising from Artin $L$-functions and cohomology classes
over $F$. (Over $\Q$, this arose from the happy accident that classical weight one forms could be interpreted
via coherent cohomology.) 
One class of mod-$p$ Galois representations known to
satisfy $(3)$ are the restrictions of odd Galois representations $\rhobar:G_{\Q} \rightarrow \GL_2(k)$
to $G_F$. 
One might imagine that the minimality condition is a result
of the lack of Ihara's lemma; however, Ihara's lemma
and level raising are known for $\GL(2)/F$ (see~\cite{CV}). The issue
arises because there is no analogue of Wiles' numerical
criterion for Gorenstein rings of dimension zero.

We deduce Theorem~\ref{theorem:modularityimaginary} from
the following more general result.
\begin{theorem} \label{theorem:RequalsT} Assume
conjecture~\ref{conj:A}. Let $F/\Q$ be an imaginary quadratic field.
Let $p \ge 3$ be unramified in $F$. Let
$$\rhobar: G_{F} \rightarrow \GL_2(k)$$
be a continuous  representation with cyclotomic determinant, and suppose that:
\begin{enumerate}
\item If $v|p$, the representation $\rhobar |D_v$ is either finite flat or ordinary.
\item $\rhobar$ is modular of level $N = N(\rhobar)$.
\item  $\rhobar| G_{F(\zeta_p)}$ is absolutely irreducible.
\item \label{cond:nonvexing}  If $\rhobar$ is  ramified at  $x$ where ${\NF}(x) \equiv -1 \mod p$,
then either $\rhobar | D_x$ is reducible or $\rhobar | I_x$ is absolutely irreducible.
\end{enumerate}
Let $R^{\min}$ denote the minimal finite flat (respectively, ordinary) deformation ring of $\rhobar$ with cyclotomic determinant.
Let $\T_{\m}$ be the algebra of Hecke operators acting on $H_1(Y_0(N),\OL)$ localized at the maximal
ideal corresponding to $\rhobar$. Then there is an isomorphism:
$$R^{\min} \stackrel{\sim}{\longrightarrow}\T_{\m},$$
and  there exists an integer
$\mu \ge 1$ such that $H_1(Y_0(N),\OL)_{\m}$ is free
of rank $\mu$ as a $\T_{\m}$-module.
If $H_1(Y_0(N),\OL)_{\m} \otimes \Q \ne 0$, then $\mu = 1$.
If $\dim(\T_{\m}) = 0$, then $\T_{\m}$ is a complete intersection.
\end{theorem}

Note that condition~\ref{cond:nonvexing} --- the non-existence of ``vexing primes''  $x$ such that
$\NF(x) \equiv -1 \mod p$ ---
is already a condition that arises in the original paper of Wiles~\cite{W}.
It could presumably be
removed by making the appropriate modifications as in either~\cite{DiamondVexingTwo, DiamondVexing}
or~\cite{CDT} and making the corresponding modifications to Conjecture~\ref{conj:A}.

\medskip

Our results  are obtained by applying a modification of
Taylor--Wiles to the Betti cohomology of arithmetic manifolds.
In such a context, it seems difficult to construct Galois representations whenever $l_0 \ne 0$.
Following~\cite{Pilloni} and~\cite{HarrisCoherent}, however, we may  also apply our methods to the
\emph{coherent} cohomology of Shimura varieties, where Galois representations are more readily available.
 In contexts where the underlying automorphic forms
$\pi$ are discrete series at infinity, one expects (and in many cases can prove, see~\cite{LanSuh}) that
the integral coherent cohomology localized  at  a suitably generic maximal ideal $\m$ of $\T$ vanishes
outside the middle degree. If $\pi_{\infty}$ is a \emph{limit} of discrete series, however, 
(so that we are in case {\bf Coherent}\rm) then the cohomology
of the associated coherent sheaf can sometimes be shown to be non-zero 
in exactly in the expected number of degrees, in which case our methods apply. 
In particular, \emph{a priori}, the {\bf Coherent} case appears more tractable,
since there are available methods for constructing Galois representations to coherent cohomology classes
in low degree~\cite{DeligneSerre,wushi}. However, our methods \emph{still} lead to open conjectures concerning the existence of Galois representations, since the usual methods for constructing
representations on torsion classes (using congruences) only work with Hecke actions on $H^0(X,\Ee)$ rather than $H^i(X,\Ee)$ for $i > 0$,
and we require Galois representations coming from the latter groups.

In this paper, we confine our discussion of the general
{\bf Coherent\rm} case to addressing the problem of constructing suitable complexes (see Section~\ref{section:nakajima}).
We expect, however, that our methods may be successfully applied to prove 
unconditional modularity lifting theorems in a number of interesting cases in small rank.
The most well known example of such a situation is the case of classical modular forms of weight $1$.
Such modular forms contribute to the  cohomology of
$H^0(X_1(N),\omega)$ \emph{and} $H^1(X_1(N),\omega)$ in characteristic zero, where $X_1(N)$ is a modular curve, and
$\omega$ is the usual pushforward $\pi_{*} \omega_{\mathcal{E}/X_1(N)}$ of the relative dualizing sheaf
along the universal generalized elliptic curve. Working over $\Z_p$
for some prime $p\nmid N$, the group $H^0(X_1(N),\omega)$ is torsion free, but $H^1(X_1(N),\omega)$ is not torsion free in general,
as predicted by Serre and confirmed by Mestre (Appendix~A of~\cite{Bas}).
In order to deal with vexing primes, we introduce
a 
vector bundle 
$\CL_\sigma$ which plays the role
of the locally constant sheaf
$\CF_{M}$ of section~\S~6 of~\cite{CDT}  --- see \S~\ref{section:vexing} for details.
We also introduce a curve $X_U$ which sits in the sequence
$X_1(M) \rightarrow X_U \rightarrow X_0(M)$ for some $M$ dividing the Serre conductor
of $\rhobar$ and such that the first map has
$p$-power degree.
Note that if $\rhobar$ has no vexing primes, then  $\CL_{\sigma}$  is
trivial of rank 1,
and if $\rhobar$ is not ramified at any primes congruent to $1 \mod p$, then $X_U = X_1(N)$.
In this context, we prove the following result:
\begin{theorem} \label{weightone} Suppose that $p \ge 3$. Let
$\rhobar: G_{\Q} \rightarrow \GL_2(k)$
be an odd continuous  irreducible Galois representation of
Serre level $N$.
Assume that $\rhobar$ is unramified at $p$.
Let $R^{\min}$ denote the universal minimal unramified-at-$p$ deformation ring of $\rhobar$.
Then there exists a quotient
$X_U$ of $X_1(N)$ and a vector bundle $\CL_{\sigma}$ on $X_U$ such
that  if $\T$ denotes the Hecke algebra of $H^1(X_U,\omega \otimes \CL_\sigma)$, there is an isomorphism
$$R^{\min} \stackrel{\sim}{\longrightarrow}\T_{\m}$$
where $\m$ is the maximal ideal of $\T$ corresponding to $\rhobar$. Moreover,
$H^1(X_U,\omega \otimes  \CL_\sigma)_{\m}$ is free as a $\T_{\m}$-module.
\end{theorem}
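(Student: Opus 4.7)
My plan is to adapt the Taylor--Wiles patching method to the coherent cohomology of $X_U$, exploiting the fact that this cohomology lives in only two consecutive degrees, $H^0$ and $H^1$, because $X_U$ is a curve. This is precisely the defect-one ($l_0 = 1$) situation of case \textbf{Coherent}: weight-one eigenforms contribute both to $H^0$ and (via Serre duality) to $H^1$ of $\omega$ in characteristic zero, and accordingly the $\OL$-module $H^1(X_U,\omega\otimes \CL^*_\sigma)_\m$ can carry torsion that has no lift to characteristic zero. The central idea is to patch the entire two-term complex $R\Gamma(X_U,\omega\otimes\CL^*_\sigma)_\m$, rather than just its cohomology, and to extract freeness from a commutative-algebra argument on the patched complex.

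\textbf{Local conditions and the map $R^{\min}\to\T_\m$.} The cover $X_U$ and the vector bundle $\CL_\sigma$ should be constructed so that $H^\bullet(X_U,\omega\otimes\CL^*_\sigma)_\m$ sees exactly the eigensystems whose associated $\GL_2(k)$-representations are deformations of $\rhobar^\vee$ with the prescribed local types at vexing primes; this is the coherent counterpart of the locally constant sheaf $\CF_M$ used by Diamond. Correspondingly, one defines $R^{\min}$ to parametrise deformations that are unramified at $p$, minimal at primes dividing $N$, and compatible with $\CL_\sigma$ at vexing primes. To produce the map $R^{\min}\to\T_\m$ one must attach Galois representations even to torsion eigenclasses of $H^1$: this is accessible because multiplication by the Hasse invariant $E_{p-1}$ identifies weight-one mod-$p$ eigensystems with eigensystems in weight $p$, where Galois representations are known by Deligne.

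\textbf{Taylor--Wiles patching.} For each $n\ge 1$, choose a set $Q_n$ of $g$ primes $q\equiv 1\pmod{p^n}$ at which $\rhobar(\Frob_q)$ has distinct eigenvalues in $k$ and such that the dual Selmer group vanishes; because we are in a defect-one situation, $R_{Q_n}$ is a quotient of $\OL[[y_1,\dots,y_g]]$ by at most $g-1$ relations. Adding $\Gamma_0(q)$-level structure at each $q\in Q_n$, and localising at a maximal ideal pinning down one of the two $U_q$-eigenvalue lifts of each $\rhobar(\Frob_q)$-eigenvalue, produces a perfect two-term complex $C^\bullet_{Q_n}$ of $\OL[\Delta_{Q_n}]$-modules (where $\Delta_{Q_n}$ is the $p$-part of $\prod_{q\in Q_n}(\Z/q\Z)^\times$), on which $R_{Q_n}$ acts compatibly. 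A standard Taylor--Wiles patching over a subsequence then yields a complex $C^\bullet_\infty$ of free $S_\infty:=\OL[[y_1,\dots,y_g]]$-modules concentrated in degrees $[0,1]$, equipped with a compatible action of a patched ring $R_\infty$ of Krull dimension $\dim S_\infty - l_0 = g$.

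\textbf{Commutative algebra and conclusion.} This step is where the main obstacle lies and where the argument genuinely departs from classical Taylor--Wiles. Because $C^\bullet_\infty$ has amplitude one in free $S_\infty$-modules, Auslander--Buchsbaum gives $\depth_{S_\infty} H^1(C^\bullet_\infty) \ge g$; since $H^1$ is supported on $\Spec R_\infty$, which has dimension $g$, it is maximal Cohen--Macaulay over $R_\infty$. Using that the two free terms of $C^\bullet_\infty$ have equal rank (the generic Euler characteristic vanishes by Serre duality for weight one), a short analysis of the complex combining the Cohen--Macaulay property with the expected rank forces $H^0(C^\bullet_\infty)=0$ and $H^1(C^\bullet_\infty)$ to be free over $R_\infty$; simultaneously $R_\infty$ must coincide with the patched Hecke algebra, realised as a hypersurface quotient $S_\infty/(f)$. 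Setting the $y_i$ to zero, equivalently quotienting by the augmentation ideal of $\OL[\Delta_\infty]$, then recovers $R^{\min}\stackrel{\sim}{\to}\T_\m$ together with the freeness of $H^1(X_U,\omega\otimes\CL^*_\sigma)_\m$ as a $\T_\m$-module.
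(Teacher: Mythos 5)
Your patching and commutative-algebra architecture is essentially the paper's own, just phrased in the language of complexes: the paper patches the modules $H_0(X_{\Delta}(Q_N),\omega\otimes\CL_\sigma)_\m$ subject to the ``balanced'' condition of Definition~\ref{defn:balanced}, which is precisely the statement that they admit presentations $S_N^d\to S_N^d\to H_N\to 0$, i.e.\ your equal-rank two-term perfect complex; the equality of ranks after localizing at $\m$ is proved by the same Serre-duality-plus-twisting computation you invoke (Proposition~\ref{prop:balanced-homology-w1}), and the endgame is the same support/injectivity, Auslander--Buchsbaum, and freeness-over-$R_\infty$ argument as in Proposition~\ref{prop:patching} (note the logical order there: one first shows the patched cokernel has support of dimension strictly less than $\dim S_\infty$, which forces the patched presentation to be injective, and only then deduces depth and freeness).

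The genuine gap is in your second paragraph, the construction of $R^{\min}\to\T_\m$ (and of $R_{Q_N}\to\T_{Q_N,\m}$ at the auxiliary levels, which you need for the patched $R_\infty$-action). Multiplying by a lift of the Hasse invariant and citing Deligne/Hida attaches to each $\varpi^m$-torsion weight-one eigensystem a representation valued in a quotient of the weight-$n$ ordinary Hecke algebra; such a representation is \emph{ordinary} at $p$, but a map out of the unramified-at-$p$ deformation ring $R^{\min}$ requires showing that the representation over the weight-one quotient $\T_\m/I_m$ (typically a torsion ring) is actually \emph{unramified} at $p$, with $T_p$ equal to the trace of $\Frob_p$. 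This local--global compatibility for torsion weight-one classes is the crux of the paper (Theorem~\ref{theorem:galoisweight1}) and is not automatic: when $\rhobar(\Frob_p)$ has distinct eigenvalues one compares the two ordinary stabilizations $\alpha,\beta$; when the eigenvalues coincide but $\rhobar(\Frob_p)$ is non-scalar one needs Wiese's ``doubling'' argument (Lemma~\ref{lemma:doubling}); and when $\rhobar(\Frob_p)$ is scalar the paper must prove a new local statement — the equality of the unramified ideal and the doubling ideal, $\II=\JJ$, via Snowden's presentation of the ordinary framed deformation ring $\Rda$ — to conclude unramifiedness. Without this input your patched ring does not act on the patched complex through the deformation rings you prescribed, and neither the surjection $R^{\min}\onto\T_\m$ nor the final isomorphism can be extracted; your proposal as written simply assumes the hardest step of the theorem. (Smaller glosses: you also need the $U_q$-stabilization isomorphism between level $\Gamma_0(q)$ and level prime to $q$, Lemma~\ref{lemma:matt-w1}, to identify the coinvariants of the patched object with $H$, and the Verdier-duality bookkeeping that converts the $H_0$ statement into the stated one about $H^1(X_U,\omega\otimes\CL^*_\sigma)_\m$ and $\rhobar^{\vee}$.)
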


Note that even the fact  that there \emph{exists} a surjective map
from $R^{\min}$ to $\T_{\m}$ is non-trivial, and requires us to prove
a local--global compatibility result for Galois representations associated
to Katz modular forms of weight one over any $\Z_p$-algebra (see Theorem~\ref{theorem:galoisweight1}).
  We immediately deduce  from Theorem~\ref{weightone} the following:

\begin{corr}
Suppose that $p \ge 3$. Suppose also that
$\rho: G_{\Q} \rightarrow \GL_2(\OL)$ is a continuous
representation satisfying the following conditions. 
\begin{enumerate}
\item For all primes $v$, either  $\rho(I_v)  \iso \rhobar(I_v)$ or
$\dim(\rho^{I_v}) = \dim(\rhobar^{I_v}) = 1$.
\item $\rhobar$ is odd and irreducible.
\item $\rho$ is unramified at $p$. 
\end{enumerate}
Then $\rho$ is modular of weight one.
\end{corr}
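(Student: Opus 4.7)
The plan is to exhibit $\rho$ as a characteristic-zero point of the minimal deformation ring $R^{\min}$ of Theorem~\ref{weightone} and then read off a weight-one eigenform from the induced map $\T_{\m} \to \OL$.

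First, the theorem of Khare--Wintenberger ensures that the odd irreducible representation $\rhobar$ is modular of its Serre level $N$, so the hypotheses of Theorem~\ref{weightone} are met. This yields an isomorphism $R^{\min} \iso \T_{\m}$ together with the freeness (hence non-vanishing) of $H^1(X_U, \omega \otimes \CL^*_\sigma)_{\m}$ over $\T_{\m}$.

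Next, one checks that $\rho$ itself defines an $\OL$-point of $R^{\min}$. Condition (3) provides unramifiedness at $p$, matching the local condition at $p$ built into $R^{\min}$. At primes $\ell \neq p$, the minimality condition---together with the role played by the auxiliary bundle $\CL_\sigma$ in absorbing the vexing-prime behavior, see \S\ref{section:vexing}---is precisely that either $\rho(I_\ell) \iso \rhobar(I_\ell)$ or $\dim \rho^{I_\ell} = \dim \rhobar^{I_\ell} = 1$, which is condition (1). Hence $\rho$ factors through a continuous homomorphism $R^{\min} \to \OL$, and composition with $\T_{\m} \iso R^{\min}$ produces an $\OL$-algebra homomorphism $\lambda: \T_{\m} \to \OL$.

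Finally, the freeness of $H^1(X_U, \omega \otimes \CL^*_\sigma)_{\m}$ over $\T_{\m}$ produces a nonzero Hecke eigenclass $f$ in $H^1(X_U, \omega \otimes \CL^*_\sigma) \otimes_{\T_{\m}, \lambda} \OL$. Because $\lambda$ takes values in characteristic zero, the standard comparison between integral Katz weight-one forms and classical ones identifies $f$ with a classical weight-one cuspidal eigenform of the appropriate level and nebentypus. The local--global compatibility result (Theorem~\ref{theorem:galoisweight1}), applied to $f$, gives that the attached Galois representation $\rho_f$ has the same trace as $\rho$ on Frobenii outside the finite set of ramified primes; irreducibility of $\rhobar$ together with Chebotarev then forces $\rho \iso \rho_f$, and $\rho$ is modular of weight one.

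The main subtle point is the matching of conditions (1) and (3) against the local deformation conditions used to define $R^{\min}$, especially the vexing-prime case handled by $\CL_\sigma$. The remaining issue---passing from a characteristic-zero Hecke eigenclass in coherent $H^1$ to a genuine classical weight-one form---is mild precisely because $\lambda$ factors through $\OL$; all of the truly difficult work is absorbed into Theorem~\ref{weightone} itself.
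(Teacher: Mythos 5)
Your argument is essentially the paper's own: the corollary is presented there as an immediate consequence of Theorem~\ref{weightone}, obtained exactly as you do by viewing $\rho$ as an $\OL$-point of $R^{\min}\iso\T_{\m}$, extracting a characteristic-zero Hecke eigensystem (hence a classical weight-one eigenform, via Serre/Verdier duality), and identifying $\rho$ with the attached Galois representation by Theorem~\ref{theorem:galoisweight1} and Chebotarev. The only points you gloss --- matching the fixed-determinant condition of Definition~\ref{defn:minimal} against hypotheses (1)--(3), and the duality twist whereby $\m$ corresponds to $\rhobar^{\vee}$ --- are glossed in the same way by the paper itself.
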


It is instructive to compare this theorem and
the corollary to the main theorem of Buzzard--Taylor~\cite{BuzzT} (see also~\cite{BuzzWild}).
Note that the hypothesis in that paper
that $\rhobar$ is modular is no longer necessary, following the proof of Serre's conjecture~\cite{KhareW}.
In both cases, if $\rho$ is a deformation of $\rhobar$ to
a field of characteristic zero, we deduce that $\rho$
is modular of weight one, and hence has finite image.
 The method of~\cite{BuzzT}  applies in a non-minimal situation,
 but it requires the hypothesis that
 $\rhobar(\Frob_p)$ has distinct eigenvalues.
 Moreover, it has the disadvantage that it only gives an identification of 
reduced points on the generic fibre (equivalently, that $R^{\min}[1/p]^{\mathrm{red}} =  \T_{\m}[1/p]$,
although from this by class field theory --- see Lemma~\ref{lemma:forbrian} and the subsequent remarks after the proof --- one may deduce that $R^{\min}[1/p] = \T_{\m}[1/p]$),
 and says nothing about the torsion structure of
$H^1(X_1(N),\omega)$.  
 Contrastingly, we may deduce the following result:

\begin{corr} Suppose that $p\ge 3$. Let $\rhobar:G_{\Q} \rightarrow
  \GL_2(k)$ be odd, continuous, irreducible,
and unramified at $p$. 
Let $(A,\m)$ denote a complete local Noetherian $\OL$-algebra with residue field $k$ and $\rho:G_{\Q} \rightarrow \GL_2(A)$ a minimal deformation of $\rhobar$.
Then $\rho$ has finite image.
\end{corr}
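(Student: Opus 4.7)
My plan is to use Theorem~\ref{weightone} to reduce the corollary to the finite-image property of weight-one Galois representations (Deligne--Serre). A minimal deformation $\rho : G_\Q \to \GL_2(A)$ is classified by a local $\OL$-algebra map $R^{\min} \to A$, so $\rho$ is a specialization of the universal deformation $\rho^{\univ} : G_\Q \to \GL_2(R^{\min})$, and it therefore suffices to prove that $\rho^{\univ}$ has finite image. By Theorem~\ref{weightone}, $R^{\min} \iso \T_{\m}$, and since $\T_{\m}$ acts faithfully on the finitely generated $\OL$-module $H^1(X_U, \omega \otimes \CL^*_\sigma)_{\m}$, the ring $\T_{\m}$ is itself finitely generated over $\OL$. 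If $p$ is nilpotent in $\T_{\m}$ the ring has finitely many elements and there is nothing to prove, so assume $\T_{\m}[1/p] \ne 0$.

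I first control the image of $\rho^{\univ}$ in $\GL_2(\T_{\m}[1/p])$. Writing $L = \OL[1/p]$, the algebra $\T_{\m}[1/p]$ is a finite-dimensional $L$-algebra acting faithfully on $H^1(X_U, \omega \otimes \CL^*_\sigma)_{\m}[1/p]$. Via Serre duality, this characteristic-zero cohomology is identified (up to twist by $\CL_\sigma$) with a space of classical weight-one cusp forms, on which Hecke operators away from the level are simultaneously diagonalizable. Hence $\T_{\m}[1/p]$ is semisimple and decomposes as a product $\prod_i L_i$ of finite extensions of $L$. By the $R = T$ isomorphism together with the local--global compatibility result Theorem~\ref{theorem:galoisweight1}, each projection $\rho_i : G_\Q \to \GL_2(L_i)$ of $\rho^{\univ}$ is the Galois representation attached to a classical weight-one cuspidal eigenform, and Deligne--Serre then gives that $\rho_i(G_\Q)$ is finite. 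Consequently the image of $\rho^{\univ}$ in $\prod_i \GL_2(L_i)$ is finite.

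Finally, the kernel of $\GL_2(\T_{\m}) \to \GL_2(\T_{\m}[1/p])$ equals $1 + M_2(I)$, where $I \subset \T_{\m}$ is the $\OL$-torsion ideal. As a finitely generated $\OL$-submodule of $\T_{\m}$ killed by a fixed power of $\varpi$, the set $I$ is literally finite, so $1 + M_2(I)$ is a finite group. Combining this with the previous paragraph, $\rho^{\univ}(G_\Q)$ is finite, and hence so is $\rho(G_\Q)$. The principal obstacle I anticipate is establishing the semisimplicity of $\T_{\m}[1/p]$ together with the identification of its residue fields as classical weight-one eigenforms amenable to Deligne--Serre; this is precisely the role of Theorem~\ref{theorem:galoisweight1} in the argument, while the torsion step is an elementary bookkeeping exercise once one observes that $I$ has finite cardinality.
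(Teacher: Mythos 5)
Your proposal is correct and is essentially the deduction the paper intends: the corollary is stated there without an explicit proof as an immediate consequence of Theorem~\ref{weightone}, and your route --- factoring any minimal deformation through the universal one over $R^{\min} \iso \T_{\m}$, which is finite over $\OL$, then using the Petersson/Serre-duality identification of $\T_{\m}[1/p]$ with a semisimple algebra of classical weight-one eigensystems so that Deligne--Serre (together with Chebotarev and residual irreducibility to match the representations) gives finite image in characteristic zero, and finally observing that the $\OL$-torsion ideal $I$ is literally finite so the kernel of $\GL_2(\T_{\m})\to\GL_2(\T_{\m}[1/p])$ contributes only a finite group --- is precisely that deduction, with no gaps.
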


This gives the first results towards Boston's strengthening of the Fontaine--Mazur conjecture for representations
unramified at $p$ (See~\cite{BostonII}, Conjecture~2).

\medskip

It is natural to ask whether our results can be modified using Kisin's method to yield modularity
lifting results in non-minimal level. Although the formalism of this method can be adapted to our context,
there is a genuine difficultly in proving that the support of
$\Spec(\T_{\infty}[1/p])$ hits each of the components of $\Spec(R_{\infty}[1/p])$  whenever the latter has
more than one component. In certain situations, we may apply Taylor's trick~\cite{Taylor}, but this can not
be made to work in general.  However, suppose one replaces the ``minimal'' condition away from $p$ with 
the following condition:
\begin{itemize}
\item If $\rho$ is special at $x \nmid p$, and $\rhobar$ is unramified at $x$, then $x \equiv 1 \mod p$.
\end{itemize}
In this context our methods should yield that the deformation ring $R$ acts nearly faithfully on
$H^1(X_1(M),\omega)_{\m}$ for an appropriate $M$. This is sufficient for applications to the
conjectures of Fontaine--Mazur and Boston.

\medskip

In the process of proving our main result, we 
also completely solved the problem (for $p$ odd)   of determining the multiplicity of  an
irreducible modular representation $\rhobar$
in  the Jacobian $J_1(N^*)[\m]$, where $N^*$ is the minimal level such that
$\rhobar$ arises in weight two.  In particular, we prove that when $\rhobar$
is unramified at $p$ and $\rhobar(\Frob_p)$ is a scalar, then the multiplicity of $\rhobar$
is two (see Theorem~\ref{theorem:multiplicitytwo}). (In all other cases, the multiplicity was
already 
known to be one --- and in the exceptional cases we consider, the multiplicity was 
also known to be $\ge 2$.)

\medskip

Finally, we outline here the structure of the paper, which has two parts. In
Part~\ref{part:1}, we treat the case where $l_0=1$ in two specific
instances -- namely, the case of classical modular forms of weight 1 (Section~\ref{sec:weight-one}) and the case of automorphic forms on $\GL(2)$
over a quadratic imaginary field that contribute to the Betti
cohomology (Section~\ref{sec:imag-quadr-fields}). The
ideas from commutative algebra and the abstract Taylor--Wiles patching
method necessary to treat these two situations are developed in
Section~\ref{sec:com-alg-I}. The `multiplicity two' result mentioned
above is proved in Section~\ref{sec:complements}.

In Part~\ref{part:2}, we treat the case of general $l_0$. In contrast
to Part~\ref{part:1}, we only treat the Betti case in detail (more
specifically, we consider the Betti cohomology of the locally
symmetric spaces associated to $\GL(n)$ over a general number field).
Section~\ref{section:commutetwo} contains the results from commutative
algebra and the abstract Taylor--Wiles style patching result that
underlie our approach to the case of general $l_0$. These techniques
are more `derived' in nature than the techniques that treat $l_0=0$ or
$1$, and in particular rely on the existence of complexes which compute
cohomology and satisfy various desirable properties. The existence of
such complexes is proved in Section~\ref{sec:complexes} (in both
contexts --- Betti cohomology and coherent cohomology). In
Section~\ref{sec:galois-deformations}, we consider the Galois
deformation side of our arguments. In
Section~\ref{sec:hom-arithm-quot}, we consider cohomology and Hecke
algebras. This section contains Conjecture~\ref{conj:AA} on the
existence of Galois representations as well our main modularity
lifting theorem. Finally, Section~\ref{sec:proof-of-ST} contains the
proof of Theorem~\ref{theorem:ST} above.

\section*{Notation}

In this paper, we fix a prime $p\geq 3$ and let $\OL$ denote the ring of integers in a finite
extension $K$ of $\Q_p$. We let $\varpi$ denote a uniformizer in $\OL$
and let $k = \OL/\varpi$ be the residue field. We denote by $\CC_\OL$
the category of complete Noetherian local $\OL$-algebras with residue
field $k$. The homomorphisms in $\CC_\OL$ are the continuous $\OL$-algebra homomorphisms. If $G$ is a group and $\chi : G \to k^\times$ is a
character, we denote by $\langle \chi\rangle : G \to \OL^\times$
the Teichm\"uller lift of $\chi$.

If $F$ is a field, we let $G_F$ denote the Galois group
$\mathrm{Gal}(\overline F/F)$ for some choice of algebraic closure $\overline
F/F$. We let $\epsilon : G_F \to \Z_p^\times$ denote the $p$-adic
cyclotomic character. If $F$ is a number field and $v$ is a prime of $F$, we  let
$\OL_v$ denote the ring of integers in the completion of $F$ at $v$
and we let $\pi_v$ denote a uniformizer in $\OL_{v}$. We denote
$G_{F_v}$ by $G_v$ and let $I_v \subset G_v$ be the inertia group. We
also let $\Frob_v \in G_v/I_v$ denote the \emph{arithmetic}
Frobenius. We let $\Art : F_v^\times \to W_{F_v}^{\ab}$ denote the
local Artin map, normalized to send uniformizers to geometric
Frobenius lifts. We will also sometimes denote the decomposition group
at $v$ by $D_v$. If $R$ is a topological ring and $\alpha\in R^\times$, we let
$\lambda(\alpha) : G_v \to R^\times$ denoted the continuous unramified character
which sends $\Frob_v$ to $\alpha$, when such a character exists. We let $\A_F$ and
$\A_F^\infty$ denote the adeles and finite adeles of $F$
respectively. If $F=\Q$, we simply write $\A$ and $\A^\infty$.

If $P$ is a bounded complex of $S$-modules for some ring $S$, then we
let $H^*(P)=\oplus_i H^i(P)$. Any map $H^*(P)\to H^*(P)$ will be
assumed to be degree preserving. If $R$ is a ring, by a
\emph{perfect complex of $R$-modules} we mean a bounded complex of finitely generated projective $R$-modules. 

If $R$ is a local ring, we will
sometimes denote the maximal ideal of $R$ by $\m_R$.

\part{\texorpdfstring{$l_0$}{l0} equals \texorpdfstring{$1$}{1}.}
\label{part:1}

\section{Some Commutative Algebra I}
\label{sec:com-alg-I}

This section contains one of  the main new technical innovations of this paper.
The issue, as mentioned in the introduction, is to show that there are \emph{enough}
modular Galois representations. 
This involves showing that certain modules $H_N$ (consisting of
modular forms) for the group rings $S_N:=\OL[(\Z/p^N\Z)^q]$
compile, in a  Taylor--Wiles patching process, to form a module of
codimension one over the completed group ring $S_\infty:=
\OL[[(\Z_p)^q]]$. The problem then becomes to  find a suitable
notion of ``codimension one'' for modules over a local ring that
\begin{enumerate}
\item is well behaved for non-reduced quotients of power series rings
over $\OL$ (like $S_N$),
\item can be established for the spaces $H_N$ in question,
\item compiles well in a Taylor--Wiles system.
\end{enumerate}

It turns out that the correct notion is that of being ``balanced'', a notion defined below.
When  $l_0 > 1$, we shall ultimately be required to patch more information than simply
the modules $H_N$; rather, we shall patch entire complexes (see~\S~\ref{section:commutetwo}).

\subsection{Balanced Modules}

Let $S$ be a Noetherian local ring with residue field $k$ and let $M$ be a
finitely generated $S$-module.

\begin{df}
  \label{defn:defect}
We define the \emph{defect} $d_S(M)$ of $M$ to be 
\[ d_S(M)= \dim_k \Tor^0_S(M,k)-\dim_k \Tor^1_S(M,k)=\dim_k M/\m_S M -
\dim_k \Tor^1_S(M,k).\]
\end{df}

Let
\[ \dots \ra P_i \ra \dots \ra P_1 \ra P_0 \ra M \ra 0\]
be a (possibly infinite) resolution of $M$ by finite free
$S$-modules. Assume that the image of $P_i$ in $P_{i-1}$ is
contained in $\m_S P_{i-1}$ for each $i\geq 1$. (Such resolutions
always exist and are often
called `minimal'.) Let $r_i$ denote the rank of $P_i$. Tensoring
the resolution over $S$ with $k$ we see that $P_i/\m_S P_i \cong
\Tor^i_S(M,k)$ and hence that $r_i = \dim_k \Tor^i_S(M,k)$.

\begin{df}
  \label{defn:balanced}
We say that $M$ is \emph{balanced} if $d_S(M)\geq 0$.
\end{df}

If $M$ is balanced, then we see that it admits a presentation 
\[ S^d \ra S^d \ra M \ra 0 \]
with $d = \dim_k M/\m_S M$.

\subsection{Patching}
\label{sec:patchingimaginary}

We establish in this section an abstract  Taylor--Wiles style patching
result which may be viewed as an analogue of Theorem 2.1
of~\cite{DiamondMult}. This result will be one of the key ingredients in the proofs of our main theorems.

\begin{prop}
\label{prop:patchingimaginary}
Suppose that
\begin{enumerate}[\ \ \ \ (1)]
\item $R$ is an object of $\CC_\OL$ and $H$ is a finite $R$-module
  which is also finite over $\OL$;
\item $q \geq 1$ is an integer, 
                                and for each integer $N\geq 1$,  
$S_N:=\OL[\Delta_N]$ with $\Delta_N:=(\Z/p^N\Z)^q$;
\item  $R_\infty:= \OL[[x_1,\dots,x_{q-1}]]$;
\item \label{cond4} for each $N\geq 1$, $\phi_N: R_\infty \onto R$ is a surjection
  in $\CC_\OL$ and $H_N$ is an $R_\infty\otimes_\OL
  S_N$-module.
\item \label{cond5}
For each $N\geq 1$ the following conditions are satisfied
\begin{enumerate}[\ \ \ \ (a)]
 \item\label{cond-image} the image of $S_N$ in $\End_\OL(H_N)$ is contained in the image
   of $R_\infty$ and moreover, the image of the augmentation ideal of
   $S_N$ in $\End_{\OL}(H_N)$ is contained in the image of $\ker(\phi_N)$;
 \item\label{cond-coninvts} there is an isomorphism $\psi_N: (H_N)_{\Delta_N} \iso H$ of
   $R_\infty$-modules (where $R_\infty$ acts on $H$ via $\phi_N$);
 \item\label{cond-balanced} $H_N$ is finite and balanced over $S_N$ (see Definition \ref{defn:balanced}).
\end{enumerate}
\end{enumerate}
Then $H$ is a free $R$-module.
\end{prop}

\begin{proof}
  Let $S_\infty=\OL[[(\Z_p)^q]]$ and let $\a$ denote
  the augmentation ideal of $S_\infty$ (that is, the kernel of the
  homomorphism $S_\infty \onto \OL$ which sends each element of
  $(\Z_p)^q$ to 1). For each $N\geq 1$, let
  $\a_N$ denote the kernel of the natural
  surjection $S_\infty \onto S_N$ and let $\b_N$ denote the open ideal
  of $S_\infty$ generated by $\varpi^N$ and $\a_N$.  Let $d = \dim_k
  (H/\varpi H)$. We may assume that $d>0$ since otherwise $H=\{0\}$
  and the result is trivially true.  Choose a
  sequence of open ideals $(\d_N)_{N\ge 1}$ of $R$ such that
  \begin{itemize}
  \item $\d_N\supset \d_{N+1}$ for all $N\ge 1$;
  \item $\cap_{N\ge 1}\d_N = (0)$;
  \item $\varpi^NR \subset \d_N \subset \varpi^NR+\Ann_R(H)$ for all $N$.
  \end{itemize}
(For example, one can take $\d_N$ to be the ideal generated by
$\varpi^N$ and $\Ann_R(H)^N$. These are open ideals since
$R/\Ann_R(H)\subset \End_{\OL}(H)$
is finite as an $\OL$-module.)

Define a \emph{patching datum of level $N$} to be a 4-tuple $(\phi,X,\psi,P)$
where
\begin{itemize}
\item $\phi : R_\infty \onto R/\d_N$ is a surjection in $\CC_\OL$;
\item $X$ is an $R_\infty\widehat\otimes_\OL S_\infty$-module such that the action of $S_\infty$ on $X$ factors through
    $S_\infty/\b_N$ and $X$ is finite over $S_\infty$;
\item $\psi : X/\a X \iso H/\varpi^NH$ is an isomorphism of $R_\infty$
  modules (where $R_\infty$ acts on $H/\varpi^N H$ via $\phi$);
\item $P$ is a presentation
\[ (S_\infty/\b_N)^d \to (S_\infty/\b_N)^d \to X \to 0. \]
\end{itemize}
We say that two such 4-tuples $(\phi,X,\psi,P)$ and
$(\phi',X',\psi',P')$ are isomorphic if 
\begin{itemize}
\item $\phi = \phi'$;
\item there is an isomorphism $X \iso X'$ of $R_\infty\widehat\otimes_\OL
  S_\infty$ modules compatible with $\psi$ and $\psi'$, and with the
  presentations $P$ and $P'$.
\end{itemize}
We note that there are only finitely many isomorphism classes of patching
data of level $N$. (This follows from the fact that $R_\infty$ and
$S_\infty$ are topologically finitely generated.) If $D$ is a patching
datum of level $N$ and $1\le N' \le N$, then $D$ gives rise to
patching datum of level $N'$ in an obvious fashion. We denote this
datum by $D \bmod N'$.

For each pair of integers $(M,N)$ with $M\ge N\ge 1$, we define a
patching datum $D_{M,N}$ of level $N$ as follows: the statement of the
proposition gives a homomorphism $\phi_M:R_\infty \onto R$ and an
$R_\infty\otimes_\OL S_M$-module $H_M$. We take
\begin{itemize}
\item $\phi$ to be the composition $R_\infty\onto R \onto R/\d_N$;
\item $X$ to be $H_M/\b_N$;
\item $\psi : X/\a X \iso H/\b_N$ to be the reduction modulo $\varpi^N$
  of the given isomorphism $\psi_M:H_M/ \a H_M \iso H$;
\item $P$ to be any choice of presentation
\[ (S_\infty/\b_N)^d \to (S_\infty/\b_N)^d \to X \to 0. \]
(The facts that $H_M/\a H_M \iso H$ and $d_{S_M}(H_M)\geq 0$ imply
that such a presentation exists.)
\end{itemize}

Since there are finitely many patching data of each level $N\ge 1$, up
to isomorphism, we can find a sequence of pairs $(M_i,N_i)_{i\geq 1}$
such that
\begin{itemize}
\item $M_i \ge N_i$, $M_{i+1}> M_i$, and $N_{i+1}> N_i$ for all $i$;
\item $D_{M_{i+1},N_{i+1}} \bmod N_{i}$ is isomorphic to
  $D_{M_i,N_i}$ for all $i\ge 1$.
\end{itemize}
For each $i\ge 1$, we write $D_{M_i,N_i}=(\phi_i,X_i,\psi_i,P_i)$ and
we fix an isomorphism
between the modules $X_{i+1}/\b_{N_i}X_{i+1}$ and $X_i$ giving rise to an
isomorphism between $D_{M_{i+1},N_{i+1}}\bmod N_i$ and $D_{M_i,N_i}$. We define
\begin{itemize}
\item $\phi_\infty : R_\infty \onto R$ to be the inverse limit of the $\phi_i$;
\item $X_\infty := \varprojlim_i X_i$ where the map $X_{i+1} \to X_i$
  is the composition $X_{i+1}\onto X_{i+1}/\b_{N_i}\iso X_i$;
\item $\psi_\infty$ to be the isomorphism of $R_\infty$-modules $X_\infty/\a X_\infty \iso H$
  (where $R_\infty$ acts on $H$ via $\phi_\infty$) arising from the isomorphisms $\psi_i$;
\item $P_\infty$ to be the presentation
\[ S_\infty^d \to S_\infty^d \to X_\infty \to 0 \]
obtained from the $P_i$. (Exactness follows from the Mittag--Leffler condition.)
\end{itemize}

Then $X_\infty$ is an $R_\infty\widehat\otimes_\OL S_\infty$-module,
and the image of $S_\infty$ in $\End_\OL(X_\infty)$ is contained in
the image of $R_\infty$. (By condition~\ref{cond-image}, the image of
$S_\infty$ in each $\End_\OL(X_i)$ is contained in the image of
$R_\infty$. The same containment of images then holds in each
$\Hom_{\OL}(X_\infty,X_i)$ and hence in
$\End_{\OL}(X_\infty)=\varprojlim_i \Hom_{\OL}(X_\infty,X_i)$.)
 It follows that $X_\infty$ is a finite
$R_\infty$-module. Since $S_\infty$ is formally smooth over $\OL$, we
can and do choose a homomorphism $\imath:S_\infty \to R_\infty$ in $\CC_\OL$,
compatible with the actions of $S_\infty$ and $R_\infty$ on
$X_\infty$. 

Since $\dim_{S_\infty}(X_\infty) = \dim_{R_\infty}(X_\infty)$ and
$\dim R_\infty < \dim S_\infty$, we deduce that
$\dim_{S_\infty}(X_\infty)<\dim S_\infty$. It follows that the first
map $S_\infty^d \to S_\infty^d$ in the presentation $P_\infty$ is
injective. (Denote the kernel by $K$. If $K\neq (0)$, then
$K\otimes_{S_\infty}\Frac(S_\infty) \neq (0)$ and hence
$X_\infty\otimes_{S_\infty}\Frac(S_\infty)\neq (0)$, which is
impossible.) We see that $P_\infty$ is a minimal projective resolution
of $X_\infty$, and by the Auslander--Buchsbaum formula, we deduce that
$\depth_{S_\infty}(X_\infty) = \dim(S_\infty)-1$. Since
$\depth_{R_\infty}(X_\infty)=\depth_{S_\infty}(X_\infty)$, it follows that
$\depth_{R_\infty}(X_\infty)=\dim(R_\infty)$, and applying the
Auslander--Buchsbaum formula again, we deduce that $X_\infty$ is free
over $R_\infty$. Using this and the second part of
condition~\eqref{cond-image}, we also deduce that
$\imath(\a)\subset \ker(\phi_\infty)$.

Finally, the existence of the isomorphism $\psi_\infty:X_\infty/\a
X_\infty \iso H$ tells us that $H$ is free over
$R_\infty/\imath(\a)R_\infty$. However, since the action of $R_\infty$
on $H$ also factors through the quotient
$R_\infty/\ker(\phi_\infty)=R$ and since
$\imath(a)\subset\ker(\phi_\infty)$, we deduce that
 $R_\infty/\imath(\a)R_\infty \cong R$
and that $R$ acts freely on $H$.
\end{proof}

\section{Weight One Forms}
\label{sec:weight-one}

\subsection{Deformations of Galois Representations}
\label{sec:deform-galo-repr-w1}

Let
$$\rhobar: G_{\Q} \rightarrow \GL_2(k)$$
be a continuous, odd, absolutely irreducible  Galois representation.
Let us suppose that $\rhobar | G_p$ is unramified; this implies that
$\rhobar$ remains absolutely irreducible when restricted to $G_{\Q(\zeta_p)}$.
Let $S(\rhobar)$ denote the set of primes of $\Q$ at which $\rhobar$ is
ramified and let $T(\rhobar)\subset S(\rhobar)$ be the subset consisting of those primes
$x$ such that $x\equiv -1 \mod p$, $\rhobar|G_x$ is irreducible and
$\rhobar|I_x$ is reducible. Following Diamond, we call the primes in
$T(\rhobar)$ \emph{vexing}. We further assume that if $x\in
S(\rhobar)$ and $\rhobar|G_x$ is reducible, then $\rhobar^{I_x}\neq
(0)$. Note that this last condition is always satisfied by a twist of
$\rhobar$ by a character unramified outside of $S(\rhobar)$.

Let $Q$  denote a finite set of primes of $\Q$ disjoint from $S(\rhobar)\cup\{p\}$.
(By abuse of notation, we sometimes use $Q$ to denote the product of primes in $Q$.)
For objects $R$ in $\CC_\OL$, a \emph{deformation} of $\rhobar$ to
$R$ is a $\ker(\GL_2(R)\to \GL_2(k))$-conjugacy class of continuous
lifts $\rho : G_\Q\to\GL_2(R)$ of $\rhobar$. We will often refer to
the deformation containing a lift $\rho$ simply by $\rho$. 

\begin{df}
  \label{defn:minimal}
We say that
a deformation $\rho:G_\Q\to \GL_2(R)$ of $\rhobar$ is \emph{minimal
  outside $Q$}
if it satisfies the following properties:
\begin{enumerate}
\item\label{det} The determinant $\det(\rho)$ is equal to the Teichm\"uller lift of $\det(\rhobar)$.
\item\label{outside-NQ} If $x\not\in Q\cup S(\rhobar)$ is a prime of $\Q$, then $\rho|G_x$ is unramified.
\item\label{at-vexing} If $x\in T(\rhobar)$, then $\rho(I_x)\iso \rhobar(I_x)$.
\item\label{at-principal} If $x \in S(\rhobar)- T(\rhobar)$ and $\rhobar|G_x$ is
  reducible, then $\rho^{I_x}$ is a rank one direct summand of $\rho$
  as an $R$-module.
\end{enumerate}
If $Q$ is empty, we will refer to such deformations simply as being
\emph{minimal}.
\end{df}

Note that condition~\ref{outside-NQ} implies that $\rho$ is unramified at $p$. 
 The functor that associates to each object $R$ of
$\CC_\OL$ the set of deformations of $\rhobar$ to $R$ which are
minimal outside $Q$ is represented by a complete Noetherian local
$\OL$-algebra $R_Q$. This follows from the proof
of Theorem 2.41 of ~\cite{DDT}. If $Q=\emptyset$, we will
sometimes denote $R_Q$ by $R^{\min}$. Let $H^1_{Q}(\Q,\ad^0 \rhobar)$
denote the Selmer group defined as the kernel of the map
\[ H^1(\Q,\ad^0 \rhobar) \lra \bigoplus_{x} H^1(\Q_x,\ad^0\rhobar)/L_{Q,x}\]
where $x$ runs over all primes of $\Q$ and 
\begin{itemize}
\item $L_{Q,x} =
  H^1(G_x/I_x,(\ad^0\rhobar)^{I_x})$ if $x
  \not \in Q$;
\item $L_{Q,x} =
  H^1(\Q_x,\ad^0\rhobar)$ if $x \in Q$.
\end{itemize}
Let $H^1_{Q}(\Q,\ad^0 \rhobar(1))$ denote the corresponding
dual Selmer group.

\begin{prop}
\label{prop:tangent-space-w1}
 The reduced tangent space $\Hom(R_Q/\m_\OL,k[\epsilon]/\epsilon^2)$ of $R_{Q}$ has
  dimension  
$$\dim_k H^1_{Q}(\Q,\ad^0 \rhobar(1)) - 1 + 
\sum_{x \in Q} \dim_k H^0(\Q_x,
\ad^0 \rhobar(1)).$$
\end{prop}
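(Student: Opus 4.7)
The plan is to identify the reduced tangent space with the Selmer group $H^1_Q(\Q,\ad^0\rhobar)$ (cut out by the local conditions $L_{Q,v}$ given in the statement) and then compute its dimension via the Greenberg--Wiles Euler characteristic formula, relating it to the dual Selmer group.

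I would first write a deformation of $\rhobar$ to $k[\eps]/\eps^2$ in the form $\rho(g) = (1+\eps c(g))\rhobar(g)$, so that strict-equivalence classes correspond to elements of $H^1(\Q,\ad\rhobar)$ and the Teichm\"uller determinant condition forces $c$ to take values in $\ad^0\rhobar$. The reduced tangent space is therefore the subspace of $H^1(\Q,\ad^0\rhobar)$ cut out by the local conditions of Definition~\ref{defn:minimal}. I would then verify, case by case, that this subspace agrees with the $H^1_Q(\Q,\ad^0\rhobar)$ of the proposition: for $v \not\in Q\cup S(\rhobar)$ (including $v=p$, where $\rhobar$ is unramified, and $v=\infty$), this is the standard identification of unramified classes with $H^1(G_v/I_v,(\ad^0\rhobar)^{I_v})$ via inflation--restriction; at $v\in Q$ no local condition is imposed so $L_{Q,v} = H^1(\Q_v,\ad^0\rhobar)$; and at ramified primes in $S(\rhobar)$ the identification of the minimal-deformation tangent space with $H^1(G_v/I_v,(\ad^0\rhobar)^{I_v})$ is the computation of Wiles~\cite{W} at reducible primes and Diamond~\cite{DiamondVexing} at vexing primes.

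Next I would apply the Greenberg--Wiles formula
\[ \dim_k H^1_Q(\Q,\ad^0\rhobar) - \dim_k H^1_Q(\Q,\ad^0\rhobar(1)) = \dim_k H^0(\Q,\ad^0\rhobar) - \dim_k H^0(\Q,\ad^0\rhobar(1)) + \sum_v\bigl(\dim_k L_{Q,v} - \dim_k H^0(\Q_v,\ad^0\rhobar)\bigr). \]
Both global $H^0$ terms vanish: the first by absolute irreducibility of $\rhobar$, the second because $\rhobar|G_{\Q(\zeta_p)}$ is absolutely irreducible (noted at the start of \S\ref{sec:deform-galo-repr-w1}, since $\rhobar|G_p$ is unramified). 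For the local sum: at $v=\infty$, oddness of $\rhobar$ and $p\neq 2$ give $\dim_k L_\infty - \dim_k H^0(\R,\ad^0\rhobar) = 0-1 = -1$; at finite $v\not\in Q$ the standard equality $\dim_k H^1(G_v/I_v,(\ad^0\rhobar)^{I_v}) = \dim_k H^0(\Q_v,\ad^0\rhobar)$ yields contribution $0$; and at $v\in Q$ (so $v\neq p$) the local Euler characteristic formula combined with local Tate duality give $\dim_k L_{Q,v} - \dim_k H^0(\Q_v,\ad^0\rhobar) = \dim_k H^2(\Q_v,\ad^0\rhobar) = \dim_k H^0(\Q_v,\ad^0\rhobar(1))$. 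Rearranging yields exactly the formula in the proposition.

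The main technical step is the identification, at ramified primes $v \in S(\rhobar)$, of the local deformation tangent space with $H^1(G_v/I_v,(\ad^0\rhobar)^{I_v})$; for this I would simply invoke the computations of Wiles (reducible case) and Diamond (vexing case). Everything else is a routine bookkeeping exercise in Euler characteristics and local Tate duality.
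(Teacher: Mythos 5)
Your proposal is correct and follows essentially the same route as the paper: identify the reduced tangent space with the Selmer group $H^1_Q(\Q,\ad^0\rhobar)$ (the paper cites the analogues in \cite{DDT}, Theorem~2.41/Corollary~2.43, for the local conditions at ramified and vexing primes), then apply the Greenberg--Wiles formula (Theorem~2.18 of \cite{DDT}) and evaluate the local terms exactly as you do — the $-1$ from the infinite place, vanishing contributions at finite $x\notin Q$ including $x=p$, and $\dim_k H^0(\Q_x,\ad^0\rhobar(1))$ at $x\in Q$ via the local Euler characteristic and Tate duality. Your treatment of the vanishing of the global $H^0$ terms via absolute irreducibility of $\rhobar|G_{\Q(\zeta_p)}$ likewise matches the paper's argument.
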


\begin{proof} The argument is very similar to that of
Corollary~2.43 of~\cite{DDT}. The reduced tangent space has dimension
$\dim_k H^1_Q(\Q,\ad^0\rhobar)$. By Theorem 2.18 of
\emph{op.\ cit.}\ this is equal to 
$$\begin{aligned}  \dim_k H^1_Q(\Q,\ad^0\rhobar(1)) + \dim_k
 & H^0(\Q,\ad^0\rhobar)-\dim_k H^0(\Q,\ad^0\rhobar(1)) \\ 
+ \sum_{x} (\dim_k L_{Q,x}-\dim_k &
H^0(\Q_x,\ad^0\rhobar)) -1,
\end{aligned}$$
where $x$ runs over all finite places of $\Q$. The final term is
the contribution at the infinite place. The second and third terms
vanish by the absolute irreducibility of $\rhobar$ and the fact that
$\rhobar|G_p$ is unramified. Finally, as in the proof of Corollary 2.43
of \emph{loc.\ cit.}\ we see that the contribution at the prime
$x$ vanishes if $x\not\in Q$, and equals $\dim_k
H^0(\Q_x,\ad^0\rhobar(1))$ if $x \in Q$.
\end{proof}

Suppose that $x \equiv 1\mod p$ and $\rhobar(\Frob_x)$ has distinct eigenvalues for
each $x \in Q$. Then $H^0(\Q_x,\ad^0\rhobar(1))$ is one
dimensional for $x\in Q$ and the preceding proposition
shows that the reduced tangent space of $R_Q$ has dimension
\[ \dim_k H^1_{Q}(\Q,\ad^0 \rhobar(1)) - 1 + \# Q.\]
Using this fact and the argument of Theorem 2.49 of \cite{DDT}, we
deduce the following result. (We remind the reader that $\rhobar|G_{\Q(\zeta_p)}$ is absolutely
  irreducible, by assumption.)

\begin{prop} 
\label{prop:tw-primes-w1}
Let $q =\dim_k H^1_{\emptyset}(\Q,\ad^0
  \rhobar(1))$. Then $q\geq 1$ and for any integer $N\geq 1$ we can find a set $Q_N$
  of primes of $\Q$ such that 
\begin{enumerate}
\item $\# Q_N =q$.
\item $x \equiv 1 \mod p^N$ for each $x\in Q_N$.
\item For each $x\in Q_N$, $\rhobar$ is unramified at $x$ and $\rhobar(\Frob_x)$ has distinct
eigenvalues.
\item $H^1_{Q_N}(\Q,\ad^0\rhobar(1))=(0)$.
\end{enumerate}
 In particular, the reduced tangent space of $R_{Q_N}$ has dimension
  $q-1$ and $R_{Q_N}$ is a quotient of a power series
  ring over $\OL$ in $q-1$ variables.
\end{prop}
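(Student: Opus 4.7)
The plan is to follow the classical Taylor--Wiles argument, adapted to the present unramified-at-$p$ setting. First I would establish $q \geq 1$ by applying Wiles' formula (Theorem 2.18 of \cite{DDT}) exactly as in the proof of Proposition \ref{prop:tangent-space-w1} above, with $Q = \emptyset$. Absolute irreducibility of $\rhobar$ (and hence of $\rhobar|G_{\Q(\zeta_p)}$) makes the global $H^0$ terms vanish, so the reduced tangent space of $R^{\min}$ has dimension $\dim_k H^1_\emptyset(\Q,\ad^0\rhobar(1)) - 1 = q - 1$. Since this dimension is non-negative, $q \geq 1$. (Alternatively, the unramified-at-$p$ Selmer condition is strictly more restrictive than the corresponding local condition in the classical Wiles setup, and the resulting defect of $1$ at $p$ forces the dual Selmer group to be non-trivial.)

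Next, I would construct the primes $Q_N$ by an inductive Chebotarev argument. Given any non-zero class $\psi \in H^1_\emptyset(\Q,\ad^0\rhobar(1))$, I claim one can find a prime $x$ of $\Q$, disjoint from $S(\rhobar) \cup \{p\}$, with the following properties: $x \equiv 1 \bmod p^N$; $\rhobar$ is unramified at $x$; $\rhobar(\Frob_x)$ has distinct eigenvalues; and the restriction of $\psi$ to $H^1(\Q_x,\ad^0\rhobar(1))$ is non-zero. The standard construction considers the composite field $L = \overline{\Q}^{\ker\rhobar}(\zeta_{p^N})$ and the extension $L_\psi/L$ cut out by $\psi$ restricted to $G_L$. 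The hypothesis that $\rhobar|G_{\Q(\zeta_p)}$ is absolutely irreducible guarantees that $\Gal(L_\psi/\Q)$ has no quotient by which $\ad^0\rhobar(1)$ (viewed as a $\Gal(L/\Q)$-module) becomes trivial on a proper subgroup containing complex conjugation --- equivalently, there exists an element $\sigma \in \Gal(L_\psi/\Q)$ whose image in $\Gal(L/\Q)$ acts on $\ad^0\rhobar(1)$ with a $1$-eigenspace corresponding to a regular semisimple element fixing $\zeta_{p^N}$, while $\sigma$ acts non-trivially on the $\psi$-component. Chebotarev then produces a prime $x$ with $\Frob_x$ in the conjugacy class of $\sigma$.

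I would then apply this inductively: choose $\psi_1 \in H^1_\emptyset(\Q,\ad^0\rhobar(1))$ non-zero and find $x_1$ as above; then choose $\psi_2$ non-zero in $H^1_{\{x_1\}}(\Q,\ad^0\rhobar(1))$ (if this group is still non-zero) and find $x_2$, and so on. At each step the dual Selmer group drops in dimension by exactly one, because the Selmer condition at the added prime is replaced by the unrestricted condition $L_{Q,x} = H^1(\Q_x,\ad^0\rhobar)$, which under Tate local duality corresponds to killing a one-dimensional quotient of $H^1(\Q_x,\ad^0\rhobar(1))$, and our chosen $\psi_i$ maps non-trivially precisely into this quotient. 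After $q$ steps we obtain a set $Q_N = \{x_1,\ldots,x_q\}$ with $H^1_{Q_N}(\Q,\ad^0\rhobar(1)) = 0$.

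Finally, for each $x\in Q_N$, the conditions $x\equiv 1\bmod p$ and $\rhobar(\Frob_x)$ having distinct eigenvalues imply $\dim_k H^0(\Q_x, \ad^0\rhobar(1)) = 1$. Plugging into Proposition \ref{prop:tangent-space-w1} gives the reduced tangent space dimension $0 - 1 + q = q - 1$, so $R_{Q_N}$ is a quotient of a power series ring over $\OL$ in $q - 1$ variables. The main obstacle is the Chebotarev step ensuring that all four conditions (congruence mod $p^N$, distinct eigenvalues of $\rhobar(\Frob_x)$, non-triviality of $\psi|_{G_x}$, and disjointness from $S(\rhobar)\cup\{p\}$) can be met simultaneously; this is precisely where the hypothesis that $\rhobar|G_{\Q(\zeta_p)}$ is absolutely irreducible is used.
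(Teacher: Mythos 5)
Your proposal is correct and follows essentially the same route as the paper: the bound $q\ge 1$ comes from the tangent-space formula of Proposition~\ref{prop:tangent-space-w1} with $Q=\emptyset$, and the sets $Q_N$ are produced by the standard Taylor--Wiles Chebotarev argument (the paper simply invokes the argument of Theorem~2.49 of~\cite{DDT}, using the absolute irreducibility of $\rhobar|G_{\Q(\zeta_p)}$), with the final dimension count identical to yours.
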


We note that the calculations on the Galois side are virtually identical to those that occur
in Wiles' original paper,
with the caveat that the tangent space is of dimension ``one less'' in our case.
On the automorphic side, this $-1$ will be a reflection of the fact that
the Hecke algebras will not (in general) be flat over $\OL$ and the
modular forms we are interested in will contribute to one extra degree
of cohomology.

\subsection{Cohomology of Modular Curves}
\label{sec:cohom-modul-curv}

\subsubsection{Modular Curves} 
\label{sec:mod-curves}
 We begin by recalling some classical facts regarding modular curves.
  Fix an integer $N\geq 5$ such that $(N,p) = 1$, and fix a
 squarefree integer $Q$ with $(Q,Np) = 1$.  Let $X_1(N)$, $X_1(N;Q)$, and
 $X_1(NQ)$ denote the modular curves of level $\Gamma_1(N)$,
 $\Gamma_1(N) \cap \Gamma_0(Q)$, and $\Gamma_1(N) \cap \Gamma_1(Q)$
 respectively as smooth proper schemes over $\Spec(\OL)$.
 To be
 precise, we take $X_1(N)$ and $X_1(NQ)$ to be the base change to
 $\Spec(\OL)$ of the curves denoted by the same symbols
 in \cite[Proposition 2.1]{Gross}. Thus, $X_1(NQ)$
 represents the functor that assigns to each
 $\OL$-scheme $S$ the set of isomorphism classes of triples
 $(E,\alpha_{NQ})$ where $E/S$ is a generalized elliptic curve and $\alpha_{NQ}
 : \mu_{NQ} \into E[NQ]$ is an embedding of group schemes whose image
 meets every irreducible component in each geometric fibre. Given such a
 triple, we can naturally decompose $\alpha_{NQ} = \alpha_N \times
 \alpha_Q$ into its $N$ and $Q$-parts. The group
 $(\Z/N Q\Z)^\times$ acts on $X_1(NQ)$ in the following fashion:
 $a\in(\Z/N Q\Z)^\times$ sends a pair $(E,\alpha_{NQ})$ to
 $\langle a \rangle (E,\alpha_{NQ}):=(E,a \circ \alpha_{NQ})$.
 We let
 $X_1(N;Q)$ be the smooth proper curve over $\Spec(\OL)$ classifying
 triples $(E,\alpha_N,C_Q)$ where $E$ is a generalized elliptic curve,
 $\alpha_N : \mu_N \into E[N]$ is an embedding of group schemes and
 $C_Q \subset E[Q]$ is a subgroup \'{e}tale locally isomorphic to
 $\Z/Q\Z$ and such that the subgroup $Q+\alpha_N(\mu_N)$ of
 $E$ meets every irreducible component of every geometric fibre of $E$.
 Then $X_1(N,Q)$ is the quotient of $X_1(NQ)$
 by the action of $(\Z/Q\Z)^\times \subset (\Z/N Q\Z)^\times$ and the map
 $X_1(NQ)\to X_1(N;Q)$ is \'{e}tale; on points it sends $(E,\alpha_{NQ})$
 to $(E,\alpha_N,\alpha_Q(\mu_Q))$.
 
 For any modular curve $X$ over $\OL$, let $Y \subset X$ denote the
 corresponding open modular curve parametrizing genuine elliptic
 curves.  Let $\pi: \mathcal{E} \rightarrow X$ denote the universal
 generalized elliptic curve, and let $\omega:= \pi_*
 \omega_{\mathcal{E}/X}$, where $\omega_{\mathcal{E}/X}$ is the
 relative dualizing sheaf.  Then the Kodaira--Spencer map
 (see~\cite{Katz}, A1.3.17) induces an isomorphism $\omega^{\otimes 2}
 \simeq \Omega^{1}_{Y/\OL}$ over $Y$, which extends to an isomorphism
 $\omega^{\otimes 2} \simeq \Omega^{1}_{X/\OL}(\cusps)$, where
 $\cusps$ is the reduced divisor supported on the cusps. If $R$ is an
 $\OL$-algebra, we let $X_R= X\times_{\Spec \OL}\Spec R$. If $M$ is an
 $\OL$-module and $\L$ is a coherent sheaf on $X$, we let $\L_M$
 denote $\L\otimes_{\OL}M$.

 We now fix a subgroup $H$ of $(\Z/N\Z)^\times$. We let $X$ (resp.\
 $X_1(Q)$, resp.\ $X_0(Q)$) denote\footnote{We apologize in advance
 that this is not entirely consistent with the usual notation for modular curves. The alternative
 was to adorn the object~$X$ with the (fixed throughout) level structure at~$N$ coming from~$\rhobar$, which the
 first author felt too notationally cumbersome.} the quotient of $X_1(N)$ (resp.\
 $X_1(NQ)$, resp.\ $X_1(N;Q)$) by the action of $H$.
  Note that each of
 these curves carries an action of $(\Z/N\Z)^\times/H$. We assume that
 $H$ is chosen so that $X$ is the moduli space  (rather than the coarse moduli space) of generalized elliptic
 curves with $\Gamma_H(N):=\left\{
 \begin{pmatrix}
   a & b \\ c & d
 \end{pmatrix}
\in \Gamma_0(N) : d \mod N \in H\right\}$-level structure.

\medskip

\subsubsection{Modular forms with coefficients}
The map $j: X_{\OL/\varpi^m} \rightarrow X
$ is a closed immersion.
If $\L$ is any $\OL$-flat sheaf of $\OL_X$-modules on $X$, this
 allows us to identify $H^0(X_{\OL/\varpi^m},j^* \L)$
 with $H^0(X,\L_{\OL/\varpi^m})$. 
 For
 such a sheaf $\L$, we may identify $\L_{K/\OL}$ with the direct limit $\displaystyle{\lim_{\rightarrow} \L_{\OL/\varpi^m}}$.

\subsubsection{Hecke operators} \label{sec:hecke-ops}

Let $\T^{\univ}$ denote the commutative polynomial algebra over the
group ring $\OL[(\Z/NQ\Z)^\times]$ generated by
indeterminates $T_x$, $U_y$ for $x\nmid pNQ$
prime and $y|Q$ prime. If $a \in (\Z/NQ\Z)^\times$, we let $\langle a
\rangle$ denote the corresponding element of $\T^{\univ}$.
We recall in this section how the Hecke algebra $\T^{\univ}$ acts on coherent
cohomology groups.

We have an \'{e}tale covering map $X_1(Q) \rightarrow X_0(Q)$
with Galois group
$(\Z/Q\Z)^\times$.
Let $\Delta$ be a quotient of $\Delta_Q:=(\Z/Q\Z)^\times$ and let $X_\Delta(Q)\ra
X_0(Q)$ be the corresponding cover. We will define an
action of $\T^{\univ}$ on the groups $H^i(X_{\Delta}(Q),\CL_A)$ for
$A$ an $\CO$-module, 
$i=0,1$ and $\CL$ equal to the bundle  $\omega^{\otimes n}$ or $\omega^{\otimes n}(-\infty)$. If $\CC_{\Delta}(Q)\subset X_{\Delta}(Q)$ denotes the divisor of
cusps, we will also define an action of $\T^{\univ}$ on
$H^0(\CC_{\Delta}(Q),\omega^{\otimes n}_A)$.

First of all, if $a \in (\Z/NQ\Z)^\times$ and
$\CL$ denotes either $\omega^{\otimes n}$ or $\omega^{\otimes n}(-\infty)$ on
$X_{\Delta}(Q)$, then we have a natural isomorphism $\langle a
\rangle^* \CL \iso \CL$. We may thus define the operator $\langle a
\rangle$ on $H^i(X_{\Delta}(Q),\CL_A)$ as the pull back 
\[ H^i(X_{\Delta}(Q),\CL_A) \stackrel{\langle a \rangle^*}{\lra} H^i(X_{\Delta}(Q),\langle a \rangle^*
\CL_A) = H^i(X_{\Delta}(Q),\CL_A).\]
When $i=0$, this is just the usual action of the diamond operators (as in
\cite[\S 3]{Gross}, for instance). We define the action of $\langle a
\rangle$ on $H^0(\CC_{\Delta},\omega^{\otimes n}_A)$ in the same way, using the fact
that $\langle a \rangle$ preserves $\CC_{\Delta}(Q)\subset X_{\Delta}(Q)$

Now, let $x$ be a prime number which does not divide
$pNQ$. We let $X_{\Delta}(Q;x)$ denote the modular curve over $\CO$
obtained by adding $\Gamma_0(x)$-level structure to
$X_{\Delta}(Q)$ (or equivalently, by taking the quotient of $X_1(NQ;x)$ by
the appropriate subgroup of $(\Z/NQ\Z)^\times$).
We have two finite flat
projection maps
\[ \pi_i : X_{\Delta}(Q;x) \rightarrow X_{\Delta}(Q) \]
for $i=1,2$. The map $\pi_1$ corresponds to the
natural forgetful map on open modular curves, extended by
`contraction' to the compactifications. The map $\pi_2$ is
defined on the open modular curves $Y_{\Delta}(Q;x) \to
Y_{\Delta}(Q)$ by sending a tuple $(E,\alpha_{NQ},C_x)$
to the tuple $(E':=E/C_x,
\alpha_{NQ}')$ where $\alpha'_{NQ}$ is the level structure
on $E'$ obtained from $\alpha_{NQ}$ by composing with the natural
isomorphism $E[NQ] \iso E'[NQ]$.
The fact that the $\pi_i$ extend
to the compactifications is ensured by \cite[Prop.\
4.4.3]{Brian}. We
also have the `Fricke involution' $w_x: X_{\Delta}(Q;x)\to
X_{\Delta}(Q;x)$ which is defined on the open modular curve
$Y_{\Delta}(X;x)$ by sending a tuple $(E,\alpha_{NQ},C_x)$ as above to
$(E':=E/C_x,\alpha'_{NQ},E[x]/C_x)$. Note that this is not
really an involution, since $w_x^2(E,\alpha_{NQ},C_x) = (E,x\circ \alpha_{NQ}, C_x)
=  \langle x \rangle
(E,\alpha_{NQ},C_x)$. We have $\pi_2 = \pi_1 \circ
w_x$, and hence $\pi_2^* \omega = (w_x^*\circ \pi_1^*) \omega = w_x^*
\omega$.
Let $\CE$ denote the universal elliptic curve over
$Y_{\Delta}(Q;x)$ and let and let $\CC_x\subset \CE$ denote the
universal subgroup of order $x$. Let $\phi$ denote the quotient map
\[ \CE \lra \CE/\CC_x.\]
Then pull back of differentials along $\phi$ defines a map of sheaves
\[ \phi_{12}: \pi_2^{*} \omega \lra \pi_1^* \omega \]
over $Y_{\Delta}(Q;x)$. This map extends over $X_{\Delta}(Q;x)$
by \cite[Prop.\ 4.4.3]{Brian}.

We define the operator $W_x$ on $H^0(X_{\Delta}(Q;x),\omega^n_A)$ by
setting $W_x$ to be the composite
\[  H^0(X_{\Delta}(Q;x),\omega^n_A) \stackrel{w_x^*}{\lra} H^0(X_{\Delta}(Q,x),\pi_2^*\omega_A^n)
\stackrel{\phi_{12}^{\otimes n}}{\lra} H^i(X_{\Delta}(Q;x),\omega_A^n). \] 
Explicitly, we have:
\[ W_xf(E,\alpha_{NQ},C_x) =  \phi^*f(E/C_x,\alpha'_{NQ},E[x]/C_x).\]
Thus, using the identification $x : E/E[x] \iso E$ together with the fact that we have an equality
$w_x^2(E,\alpha_{NQ},C_x) = \langle x \rangle (E,\alpha_{NQ},C_x)$, we
see that:
\[ W_x^2 = x \langle x \rangle. \]
We will use this fact below

We define Hecke operators $T_x$ on
$H^i(X_{\Delta}(Q),\CL_A)$, for $\CL=\omega^n$, by setting $x T_x$ to be the composition
\[ H^i(X_{\Delta}(Q),\CL_A) \to H^i(X_{\Delta}(Q,x),\pi_2^*\CL_A)
\stackrel{\phi_{12}^{\otimes n}}{\to} H^i(X_{\Delta}(Q;x),\pi_1^* \CL_A)
\stackrel{\mathrm{tr}(\pi_1)}{\to} H^i(X_{\Delta}(Q),\CL_A).\]
This is the same definition as in \cite[p.\ 586]{Edixhoven} and
recovers the usual definition when $i=0$.
We define an action of $T_x$ on the cohomology of
$\CL=\omega^n(-\infty)$, in a similar fashion: the operator $xT_x$ is
the composition
\begin{align*} H^i(X_{\Delta}(Q),\omega^{\otimes n}(-\infty)_A) &\to
  H^i(X_{\Delta}(Q,x),(\pi_2^*\omega^{\otimes n})(-\infty)_A) \\
& \stackrel{\phi_{12}^{\otimes n}}{\to} H^i(X_{\Delta}(Q;x),(\pi_1^* \omega^{\otimes n})(-\infty)_A)
\stackrel{\mathrm{tr}(\pi_1)}{\to} H^i(X_{\Delta}(Q),\omega^{\otimes
  n}(-\infty)_A).
\end{align*} 
(In the first map, we use that $\pi_2^*(\omega^{\otimes
  n}(-\infty))\subset (\pi_2^*\omega^{\otimes n})(-\infty)$ and in
the last map we use that the trace maps sections which vanish at the
cusps to sections which vanish at the cusps.)
Let $\CC_{\Delta}(Q) \subset X_{\Delta}(Q)$ be the divisor of cusps,
as above, and
define $\CC_{\Delta}(Q;x)\subset X_{\Delta}(Q;x)$ similarly. Then we
define an action of $T_x$ on $H^0(\CC_{\Delta}(Q),\omega^n_{A})$ by
setting $xT_x$ equal to the composition:
 \[ H^0(\CC_{\Delta}(Q),\omega^{\otimes n}_A) \to
 H^0(\CC_{\Delta}(Q,x),\pi_2^*\omega^{\otimes n}_A)
\stackrel{\phi_{12}^{\otimes n}}{\to}
H^0(\CC_{\Delta}(Q;x),\pi_1^* \omega^{\otimes n}_A)
\stackrel{\mathrm{tr}(\pi_1)}{\to} H^0(\CC_{\Delta}(Q),\omega^{\otimes
  n}_A).\]
(In this line, both $\phi_{12}$ and the trace map are
obtained from the previous ones by passing to the appropriate quotients.)
In
this way the long exact sequence
\[ \dots \lra H^i(X_{\Delta}(Q),\omega^{\otimes n}(-\infty)_A ) \lra
H^i(X_{\Delta}(Q),\omega^{\otimes n}_A ) \lra
H^i(\CC_{\Delta}(Q),\omega^{\otimes n}_A ) \to \dots \]
is $T_x$-equivariant.

\begin{remark}
  \label{rem:boundary-eis}
  \emph{We note that the action of the operators $T_x$ on
    $H^0(\CC_{\Delta}(Q),\omega^{\otimes n}_A)$ is given by
    specializing $q$ equal to 0 in the usual $q$-expansion formula for
    the action of $T_x$ (as in \cite[\S 3]{Gross}, for instance). This
    expansion is with respect to a local parameter $q$ at the cusp
    `$\infty$', but note that the group $(\Z/NQ\Z)^{\times}$ acts
    transitively on the set of cusps in $\CC_{\Delta}(Q)$. In
    particular, suppose
    $f \in H^0(\CC_{\Delta}(Q),\omega^{\otimes n}_k)$ is a non-zero
    mod $p$ eigenform for all $T_x$ (with $x$ prime to $pNQ$) and is
    of character $\chi : (\Z/NQ\Z)^\times \to k^\times$ in the sense
    that $\langle a \rangle f = \chi(a) f$ for all
    $a\in (\Z/NQ\Z)^\times$. Then $f$ cannot vanish at any cusp and
    the formula \cite[(3.5)]{Gross} implies that
    for each $x$ prime to $pNQ$, we have
    $T_x(f) = (1+\chi(x)x^{n-1})f$. Thus, the semisimple Galois
    representation naturally associated to $f$ (in the sense that for
    $x\nmid pNQ$, the representation is unramified at $x$ with
    characteristic polynomial $X^2-T_xX+\langle x\rangle x^{n-1}$) is
    $1\oplus \chi \epsilon^{n-1}$ (where we also think of $\chi$ as a
    character of $G_{\Q}$ via class field theory).}
\end{remark}

For $x$ a prime dividing $Q$, the action of $U_x$ on
$H^i(X_{\Delta}(Q),\CL_A)$ (for $\CL=\omega^{\otimes n}$, or
$\omega^{\otimes n}(-\infty)$) and on
$H^0(\CC_{\Delta}(Q),\omega^{\otimes n}_A)$ is defined similarly. In
this case, we let $X_1(NQ;x)$ denote the smooth $\CO$-curve
parametrizing tuples $(E,\alpha_{NQ},C_x)$ where $E$ is a generalized
elliptic curve, $\alpha_{NQ} : \mu_{NQ} \into E[NQ]$ is an embedding and
$C_x \subset E[x]$ is a subgroup \'{e}tale locally isomorphic to
$\Z/x\Z$ such that $\alpha_{NQ}(\mu_{NQ})+C_x$ meets every irreducible
component in every geometric fiber and
$\alpha_{NQ}(\mu_x)+C_x = E^{\mathrm{sm}}[x]$. We then let $X_{\Delta}(Q;x)$
be the quotient of $X_1(NQ;x)$ by the same subgroup used to define
$X_{\Delta}(Q)$ as a quotient of $X_1(NQ)$. We have two projection
maps $\pi_i : X_{\Delta}(Q;x) \to X_{\Delta}(Q)$ where $\pi_1$ is the
forgetful map and $\pi_2$ sends $(E,\alpha_{NQ},C_x)$ to $(E'=E/C_x,\alpha'_{NQ})$,
as above. By \cite[Prop.\ 4.4.3]{Brian}, we have a map
$\phi_{12} : \pi_{2}^* \omega \to \pi_1^* \omega$ which allow us to
define $U_x$ by exactly the same formulas as above.

\subsubsection{Properties of cohomology groups} \label{specify}

Define the Hecke algebra
$$\Tan  \subset \End_{\OL} H^0(X_1(Q),{\omega}_{K/\OL})$$
to be the subring of endomorphisms generated over $\OL$ by the Hecke operators $T_n$ with
$(n,p N Q) = 1$ together with all the diamond operators $\langle a
\rangle$ for $(a,NQ)=1$. (Here ``an'' denotes anaemic.) Let $\T$ denote the $\OL$-algebra generated by
these same operators together with $U_x$ for $x$ dividing $Q$.
If $Q = 1$, we let $\TE=\Tan_{\emptyset}$  denote $\T$. The ring $\Tan$ is
a finite $\OL$-algebra and hence decomposes as a direct product over
its maximal ideals $\Tan=\prod_\m \Tan_{\m}$. We have  natural
homomorphisms
$$ \Tan \ra  \Tan_{\emptyset} = \TE, \qquad \Tan \hookrightarrow \T $$
where the first is induced by the map $H^0(X,{\omega}_{K/\OL})\into H^0(X_1(Q),{\omega}_{K/\OL})$ and the
second is the obvious inclusion.

For each maximal ideal $\mE$ of $\TE$, there is a finite extension $k'$
of $\TE/\mE$ and a continuous semisimple representation $G_{\Q} \to
\GL_2(k')$ characterized by the fact that for each prime $x \nmid Np$,
the representation is unramified at $x$ and $\Frob_x$ has
characteristic polynomial $X^2 - T_x X + \langle x \rangle$. (To see
this, choose an extension $k'$ and a normalized eigenform $f \in H^0(X,\omega_{k'})$ such
that the action of $\TE$ on $f$ factors through $\mE$. Then apply
\cite[Prop.\ 11.1]{Gross}.) The representation may in fact be defined
over the residue field $\TE/\mE$.  The maximal ideal $\mE$ is said to be
\emph{Eisenstein} if the associated Galois representation is reducible
and \emph{non-Eisenstein} otherwise. If $\mE$ is non-Eisenstein, then
there is a continuous representation $G_{\Q} \to \GL_2(\T_{\emptyset,\mE})$ which
is unramified away from $pN$ and characterized by the same condition on characteristic polynomials.

Let $\mE$ be a non-Eisenstein maximal ideal of $\TE$.  By a slight abuse of notation, we
also denote the preimage of $\mE$ in $\Tan$ by $\mE$. Note that
the resulting ideal $\mE \subset \Tan$ is maximal. 
The localization $\T_{\mE}$ is a direct factor of $\T$
whose maximal ideals correspond (after possibly extending $\OL$) to the 
$U_x$-eigenvalues on $H^0(X_1(Q),\omega_k)[\mE]$. There is a
continuous representation $G_{\Q}\to \GL_2(\Tan_{\mE})$ which is
unramified away from $pNQ$ and satisfies the same condition on
characteristic polynomials as above for $x\nmid pNQ$.
The following lemma is essentially well known in the construction of Taylor--Wiles systems, we give a detailed
proof just to show that the usual arguments apply equally well in weight one.

\begin{lemma} \label{lemma:matt-w1} 
Suppose that for each $x | Q$ we have that $x\equiv 1
\mod p$ and that the polynomial $X^2 - T_{x} X +  \langle x \rangle \in \TE[X]$ has distinct eigenvalues modulo $\mE$.
Let $\m$ denote the maximal ideal of $\T$ containing $\mE$ and
$U_x - \alpha_x$ for some choice of root $\alpha_x$ of $X^2 - T_x X + \langle x \rangle \mod \mE$ for each $x|Q$.
Then there is a $\Tan_{\mE}$-isomorphism
$$  H^0(X,\omega_{K/\OL})_{\mE}\liso
H^0(X_0(Q),\omega_{K/\OL})_{\m}.$$
\end{lemma}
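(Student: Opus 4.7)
I will induct on the number of prime divisors of $Q$, so it suffices to treat the case $Q=\{x\}$ and iterate (at the inductive step, the roles of $X$ and $X_0(Q)$ are played by $X_0(Q')$ and $X_0(Q'x)$). Let $\pi_1,\pi_2\colon X_0(x)\to X$ denote the two standard degeneracy maps, where $\pi_1$ forgets the $\Gamma_0(x)$-structure and $\pi_2$ passes to the quotient by $C_x$. Pulling back along $\pi_1$ and $\pi_2$ gives a $\Tan$-equivariant map
$$\iota := (\pi_1^*,\pi_2^*)\colon H^0(X,\omega_{K/\OL})^{\oplus 2} \lra H^0(X_0(x),\omega_{K/\OL}).$$
A direct $q$-expansion computation (the weight-one analogue of the classical Atkin--Lehner relations) shows that $U_x$ preserves the image of $\iota$ and, in the basis $(\pi_1^* f,\pi_2^* f)$, acts by a matrix whose characteristic polynomial is $X^2 - T_x X + \langle x\rangle$.

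The first non-trivial step is to prove that $\iota$ becomes an isomorphism after localization at $\mE$. Injectivity follows from the $q$-expansion principle. Surjectivity is equivalent to the vanishing of the cokernel, which is concentrated on the ``new-at-$x$'' part of $H^0(X_0(x),\omega_{K/\OL})_{\mE}$. Any mod-$p$ eigensystem occurring in that cokernel would, by the local--global compatibility for Katz weight-one eigenforms proved later as Theorem~\ref{theorem:galoisweight1}, give rise to a residual Galois representation ramified at $x$; this contradicts the hypothesis $x\notin S(\rhobar)$, so the cokernel has no support at $\mE$.

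The second step uses the distinct-eigenvalue hypothesis to split off the desired $U_x$-eigenspace. Since $\Tan_{\mE}$ is henselian and $X^2-T_x X+\langle x\rangle$ has two distinct roots modulo $\mE$, Hensel's lemma gives a factorisation $(X-\wt\alpha_x)(X-\wt\beta_x)\in \Tan_{\mE}[X]$ with $\wt\alpha_x-\wt\beta_x\in \Tan_{\mE}^{\times}$. The idempotent $e_\alpha := (U_x-\wt\beta_x)(\wt\alpha_x-\wt\beta_x)^{-1}$ then splits $H^0(X_0(x),\omega_{K/\OL})_{\mE}$ into the direct sum of its $\wt\alpha_x$- and $\wt\beta_x$-generalized eigenspaces for $U_x$; by construction the $\wt\alpha_x$-summand is exactly $H^0(X_0(x),\omega_{K/\OL})_{\m}$. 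A short computation using the matrix of $U_x$ above identifies this eigenspace with $H^0(X,\omega_{K/\OL})_{\mE}$ via $f\mapsto \pi_1^* f-\wt\beta_x \pi_2^* f$, giving the required isomorphism.

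\textbf{Main obstacle.} The critical ingredient is the vanishing of the $x$-new cokernel after localising at $\mE$. In weight $\ge 2$ this is classical Atkin--Lehner--Li theory; in integral weight one it genuinely requires local--global compatibility at $x$ for Galois representations attached to Katz weight-one eigenforms over arbitrary $\Z_p$-algebras (including torsion classes), precisely the content of Theorem~\ref{theorem:galoisweight1} established later in the paper. Once surjectivity of $\iota$ at $\mE$ is in hand, the Hensel-lemma decomposition and the identification of the $\wt\alpha_x$-eigenspace are formal manipulations.
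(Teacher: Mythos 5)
Your overall skeleton (reduce to $Q=\{x\}$, map two copies of level-$X$ forms into level $X_0(x)$, compute the $U_x$-matrix $\bigl(\begin{smallmatrix} T_x & 1 \\ -\langle x\rangle & 0\end{smallmatrix}\bigr)$, split off the $\alpha_x$-eigenspace by Hensel's lemma) matches the paper, but the step you flag as the main one --- surjectivity of $\iota$ after localizing at $\mE$ --- is where your argument breaks. You claim that any eigensystem in the $x$-new cokernel at $\mE$ ``would give rise to a residual Galois representation ramified at $x$.'' That implication is false: an $x$-new (twist of Steinberg at $x$) eigensystem has a characteristic-zero Galois representation ramified at $x$, but its \emph{residual} representation can perfectly well be unramified at $x$ --- this is exactly the level-raising phenomenon. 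What actually kills such classes here is the distinct-eigenvalues hypothesis: a Steinberg-at-$x$ class forces $U_x^2=\langle x\rangle$, and since $x\equiv 1\bmod p$ the ratio of Frobenius eigenvalues of the residual representation would be $x\equiv 1$, i.e.\ $\alpha_x\equiv\beta_x\bmod\mE$. Your proposal never uses the distinct-eigenvalues hypothesis at this step (only later, for the Hensel splitting), so the central arithmetic input is missing and the stated contradiction does not exist. Moreover, the appeal to Theorem~\ref{theorem:galoisweight1} is problematic: that theorem is proved later in the paper by arguments explicitly modeled on this very lemma, and in any case it is a statement about local--global compatibility at $p$ for the localization at $\m$ of the weight-one Hecke algebra; it does not attach Galois representations to the possibly torsion $x$-new classes in your cokernel, which is what your argument would need.

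Note also that the paper never proves (and does not need) your stronger claim that everything at $\mE$ is old. Its route is: show $\psi^{\vee}\circ\psi$ has unit determinant at $\mE$ (this uses distinct eigenvalues and also gives the split injectivity you wave at via ``the $q$-expansion principle,'' which by itself only gives injectivity of each $\pi_i^*$ separately, not of the pair); deduce that the old part is a direct summand of $H^0(X_0(x),\omega_{K/\OL})_{\mE}$; identify the $\m$-part of that summand with one copy of $H^0(X,\omega_{K/\OL})_{\mE}$ via Hensel's lemma and a length/symmetry argument; and finally show the complementary summand $V$ inside the $\m$-localization vanishes, because a vector of $V[\m]$ would generate a twist of Steinberg, giving $\alpha_x^2=\langle x\rangle=\alpha_x\beta_x$ and contradicting $\alpha_x\neq\beta_x$. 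If you replace your Galois-theoretic surjectivity claim by this Steinberg/level-raising argument at $\m$ (or correctly derive $\alpha_x\equiv\beta_x$ from a hypothetical new class), your outline becomes essentially the paper's proof; as written, the key step is unsupported.
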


\begin{proof}  We first of all remark that the localization
  $H^0(X,\omega_{K/\OL})_{\mE}$ is independent of whether we consider
  $\mE$ as an ideal of $\TE$ or $\Tan$. To see this, it suffices to
  note that $\Tan_{\mE} \to \T_{\emptyset,\mE}$ is surjective. This in
  turn follows from the fact that the Galois representation $G_{\Q}
  \to \GL_2( \T_{\emptyset,\mE})$ can be defined over the image of
  $\Tan_{\mE} \to \T_{\emptyset,\mE}$, and moreover that for $x|Q$,
  the operators $T_x$ and
  $\langle x \rangle$ are given by the trace and determinant of $\Frob_x$.

By induction, we reduce immediately to the case when $Q
  = x$ is prime. 
Let $\pi_1,\pi_2 : X_0(x) \rightarrow X$ denote the natural
projection maps and let $\phi_{12} : \pi_2^{*}\omega \to
\pi_1^* \omega$ be the map described in Section~\ref{sec:hecke-ops}.
We define $\psi:=(\pi^*_1,\phi_{12}\circ \pi_2^*)$ and 
 $\psi^{\vee}:=\frac{1}{x} 
 \binom{\Tr(\pi_1)}{\Tr(\pi_1) \circ  W_x }$
     where $W_x$ is the operator defined in
 Section~\ref{sec:hecke-ops} (with $NQ$ there playing the role of $N$
 here). These
give a sequence of $\Tan$-linear morphisms
$$H^0(X,\omega_{K/\OL})^2 \stackrel{\psi}{\rightarrow} H^0(X_0(x),\omega_{K/\OL}) \stackrel{\psi^\vee}{\rightarrow} 
H^0(X,\omega_{K/\OL})^2,$$
such that the composite map $\psi^{\vee} \circ \psi$ is given by
$$\left(\begin{matrix} x^{-1}\Tr(\pi_1)\circ \pi_1^*  &
    x^{-1}\Tr(\pi_1)\circ \phi_{12}\circ \pi_2^* \\ x^{-1}
    \Tr(\pi_1)\circ W_x \circ \pi_1^* 
     & x^{-1}\Tr(\pi_1)\circ W_x \circ \phi_{12} \circ \pi_2^* \end{matrix} \right)
= \left(\begin{matrix} x^{-1}(x + 1) &  T_x \\ T_x & \langle x \rangle
    (x + 1) \end{matrix} \right).$$
(On the first row, this follows from the definition of $T_x$ and the
fact that $\pi_1$ has degree $x+1$; in the lower left corner we use
the definition of $T_x$ and the fact that $W_x \circ \pi_1^* =
\phi_{12}\circ w_x^* \circ \pi_1^* = \phi_{12}\circ \pi_2^*$; in the
lower right corner we use the facts that $W_x\circ \phi_{12}\circ
\pi_2^* = W_x^2 \circ \pi_1^*$ and $W_x^2 = x \langle x \rangle$.)

If $\alpha_x$ and $\beta_x$ are the roots of
$X^2 - T_x X + \langle x \rangle \mod \mE$, then $T_x \equiv \alpha_x + \beta_x \mod \mE$ and
$\langle x \rangle \equiv \alpha_x \beta_x  \mod \mE$.
Since $x \equiv 1 \mod p$, we have
\begin{align*}
\det(\psi^{\vee} \circ \psi) = x^{-1}(x+1)^2\langle x \rangle -  T^2_x
\equiv 4\langle x \rangle - T^2_x &\equiv 4 \alpha_x \beta_x - (\alpha_x + \beta_x)^2\\
&\equiv -(\alpha_x - \beta_x)^2 \mod \mE.
\end{align*}
By assumption, $\alpha_x \not\equiv \beta_x$, and thus, after localizing at $\mE$,
the composite map $\psi^{\vee} \circ \psi$ is an isomorphism.

We deduce that
$H^0(X,{\omega}_{K/\OL})^2_{\mE}$ is a direct factor of  $H^0(X_0(x),{\omega}_{K/\OL})_{\mE}$ as a
$\Tan_{\mE}$-module. 
Consider the action of $U_x$ on the image of $H^0(X,{\omega}_{K/\OL})^2$.
Let $V_x$ denote the second component $\phi_{12}\circ \pi_2^*$ of
the degeneracy map $\psi$. Then we have equalities of maps
$H^0(X,\omega_{K/\CO})\to H^0(X_0(x),\omega_{K/\CO})$
\[ \pi_1^*\circ T_x = U_x \circ \pi_1^* + \frac{1}{x} V_x \space
\hbox{\;\;\; and \;\;\; } U_x\circ V_x = \pi_1^*\circ \langle x
\rangle .\]
To see that the first of these holds, note that:
\[ (\pi_1^* \circ T_x)(f)(E,\alpha_N,C_x) = \frac{1}{x}\sum_{D \subset E[x]}
  \phi_D^* f(E/D,\phi_D \circ \alpha_N), \] where the sum is over all
order $x$ subgroups $D \subset E[x]$, and $\phi_D$ denotes the
quotient map $E \to E/D$. (This formula holds after base-change to an
\'{e}tale extension over which the subgroups $D$ are defined.)
Restricting the sum to all $D \neq C_x$ gives $(U_x \circ
\pi_1^*)(f)(E,\alpha_N,C_x)$, while the remaining term is
$\frac{1}{x}(\phi_{12}\circ \pi_2^*)(f)(E,\alpha_N,C_x)$. For the second equality, we have:
\[ (U_x \circ V_x)(f)(E,\alpha_N,C_x) = \frac{1}{x}\sum_{D \neq C_x}
  \phi_{D+C_x}^* f(E/(D+C_x),\phi_{D+C_x} \circ \alpha_N), \]
where $D$ is as above and $\phi_{D+C_x}$ is the quotient $E \to
E/(D+C_x)$. Since $D+C_x = E[x]$ for all $D \neq C_x$, each term
$\phi_{D+C_x}^* f(E/(D+C_x),\phi_{D+C_x} \circ \alpha_N)$ can be
identified with $x f(E, x \circ \alpha_N) = x \langle x \rangle
f(E,\alpha_N)$. Since there exactly $x$ subgroups $D\neq C_x$, and we divide by
$x$, we deduce that $U_x \circ V_x = \pi_1^* \langle x \rangle$.

 It follows that the action of $U_x$ on $H^0(X_0(x),{\omega}_{K/\OL})^2_{\mE}$
is given by the matrix
$$ A=\left( \begin{matrix} T_x & x\langle x\rangle \\ -\frac{1}{x} & 0 \end{matrix} \right).$$
There is an identity $(A - \alpha_x)(A - \beta_x) \equiv 0 \mod \mE$ in $M_2(\T_{\emptyset,\mE})$.
Since $\alpha_x \not\equiv \beta_x$,
by Hensel's Lemma, there exist $\walpha_x$ and $\wbeta_x$ in $\T_{\emptyset,\mE}^\times$ such that
$(U_x - \walpha_x)(U_x - \wbeta_x) = 0$ on $(\im \psi)_{\mE}$.  It follows that $U_x - \wbeta_x$ is
a projector (up to a unit) from $(\im \psi)_{\mE}$ to $(\im \psi)_{\m}$.
We claim that there is an isomorphism of $\Tan_{\mE}$-modules
$$H^0(X,{\omega}_{K/\OL})_{\mE} \simeq (\im\psi)_\m= (H^0(X,{\omega}_{K/\OL})^2)_{\m}.$$
It suffices to show that there is a $\Tan_{\mE}$-equivariant injection
from $H^0(X,{\omega}_{K/\OL})_{\mE}$ to  the module 
$(\im\psi)_{\mE}$ such that the image
has trivial intersection with the kernel of $U_x - \wbeta_x$: if there is such an
injection, then, by symmetry, there is also an injection from
$H^0(X,{\omega}_{K/\OL})_{\mE}$ to 
$(\im \psi)_{\mE}$ whose image intersects the
kernel of $U_x-\walpha_x$ trivially; by length considerations both injections are forced to
be isomorphisms. We claim that  the natural inclusion $\pi_1^*$ composed with $U_x-\wbeta_x$ is such a map: from the
computation of the matrix above, it follows that
$$(U_x - \wbeta_x)
\left( \begin{matrix} f \\ 0 \end{matrix} \right) = 
\left(\begin{matrix} T_x f - \wbeta_x f \\ -  f \end{matrix} \right),$$
which is non-zero whenever $f$ is by examining the second coordinate.

We deduce that there is a decomposition of $\Tan_{\mE}$-modules
$$H^0(X_0(x),{\omega}_{K/\OL})_{\m} = H^0(X,{\omega}_{K/\OL})_{\mE}
\oplus V$$
where $V$ is the kernel of $\psi^{\vee}$. It suffices to show that
$V[\m]$ is zero. Let $f \in V[\m]$. We may
regard $f$ as an element of $H^0(X_0(x),\omega_{k})$.
It satisfies the following properties:
\begin{enumerate}
\item $U_x f = \alpha_x f$,
\item $\langle x \rangle f = \alpha_x \beta_x f$,
\item $(\Tr(\pi_1)\circ W_x)f = 0$.
  \end{enumerate}
The first two properties follow from the fact that $f$ is killed by $\m$, and the last
follows from the fact that $f$ lies in the kernel of
$\psi^{\vee}$.

We claim that
\[ \frac{1}{x}(\pi_1^{*}\circ \Tr(\pi_1)\circ W_x) =  U_x + \frac{1}{x}W_x.\]
To see this, we will rewrite the relation $\pi_1^* \circ
T_x = U_x \circ \pi_1^* + \frac{1}{x}V_x$ that we established earlier.
Since $\pi_2 = \pi_1 \circ w_x$, we have that $T_x = \frac{1}{x} \Tr(\pi_1) \circ W_x \circ \pi_1^*$ and
$V_x = W_x \circ \pi_1^*$. The earlier relation can thus be written:
\[ \frac{1}{x}\pi_1^* \circ \Tr(\pi_1) \circ W_x \circ \pi_1^* = U_x \circ \pi_1^* +
  \frac{1}{x} W_x\circ \pi_1^*.\]
Since $\pi_1^*$ is fully faithful, the claim follows.

Now, property (3) above tells us that $ - x U_x f = W_x f$. By
(1), both sides are $k$-multiples of $f$ and applying~$- x U_x f = W_x
f$ once more, we see that $x^2 U_x^2 f = W_x^2f$. Since $W_x^2 =
x\langle x \rangle$, we deduce that $x U_x^2
f = \langle x \rangle f$. Thus, by (1) and (2):
\[ x \alpha_x^2 f = \alpha_x \beta_x f.\]
Since $x\equiv 1 \mod p$ and $\alpha_x \neq \beta_x$, we deduce that $f=0$, as
required.
\end{proof}

We have the following mild generalization of Lemma~\ref{lemma:matt-w1}:

\begin{lemma} \label{lemma:stacky}
Suppose $Q=x$ is prime not dividing~$Np$ and the eigenvalues of the polynomial $X^2 - T_{x} X +  \langle x \rangle \in \TE/\mE[X]$ 
do not have ratio $x$ or $x^{-1}$.
Let $\m$ denote the maximal ideal of $\T$ containing $\mE$ and
$U_x - \alpha_x$ for some choice of root $\alpha_x$ of $X^2 - T_x X + \langle x \rangle \mod \mE$.
If the roots  $\alpha_x$ and $\beta_x$ are distinct, then
there is a $\Tan_{\mE}$-isomorphism
$$  H^0(X,\omega_{K/\OL})_{\mE}\liso
H^0(X_0(x),\omega_{K/\OL})_{\m}.$$
If the roots $\alpha_x$ and $\beta_x$ are equal, then there is
a 
$\Tan_{\mE}$-isomorphism
$$  H^0(X,\omega_{K/\OL})^2_{\mE}\liso
H^0(X_0(x),\omega_{K/\OL})_{\m}.$$
\end{lemma}

\begin{proof} The proof is essentially identical to Lemma~\ref{lemma:matt-w1}.
The only calculations which are different are the following:
If $\alpha_x$ and $\beta_x$ are the roots of
$X^2 - T_x X + \langle x \rangle \mod \mE$, then $T_x \equiv \alpha_x + \beta_x \mod \mE$ and
$\langle x \rangle \equiv \alpha_x \beta_x  \mod \mE$. Hence
$$\begin{aligned}
\det(\psi^{\vee} \circ \psi) =  & \  x^{-1}(x+1)^2 \langle x \rangle -
T^2_x\\
\equiv & \  x^{-1}((x+1)^2 \alpha_x \beta_x - x (\alpha_x + \beta_x)^2)\\
\equiv& \  x^{-1}(\alpha_x - x \beta_x)(\beta_x - x \alpha_x) \mod \mE.
\end{aligned}
$$
This is non-zero under our assumptions.
 If the eigenvalues are
distinct, we proceed as before; the proof that the summand $V$ of
$H^0(X_0(Q),\omega_{K/\OL})_{\m}$ is zero is also the same, since the
final conclusion is that $\beta_x \equiv x \alpha_x \mod \mE$, a contradiction. If the eigenvalues are equal, note that 
\[ \psi(H^0(X,\omega_{K/\OL})^2_{\mE}) =
\psi(H^0(X,\omega_{K/\OL})^2_{\mE})_{\m} \]
since $\T$ acts on this space via the quotient
$\T_{\emptyset,\mE}[X]/(X^2-T_xX+\langle x \rangle)$ (with $X$ corresponding to
$U_x$), which is a local ring, by assumption. Thus, we can decompose
\[ H^0(X_0(x),\omega_{K/\OL})_{\m} = \psi(H^0(X,\omega_{K/\OL})_{\mE})^2
\oplus V, \]
as a direct sum of $\Tan$-modules. The same proof as above shows that $V$ is zero.

\end{proof}

\medskip

As in Section~\ref{sec:hecke-ops}, let $\Delta$ be a quotient of $\Delta_Q:=(\Z/Q\Z)^\times$ and let $X_\Delta(Q)\ra
X_0(Q)$ be the corresponding cover.
If $A$ is an $\OL$-module, we have defined an action of the universal polynomial algebra
$\T^{\univ}$ on the cohomology groups $H^i(X_\Delta(Q), \CL_A)$ for $\CL = \omega^{\otimes n}$ or $\omega^{\otimes
  n}(-\infty)$. 
The ideal $\mE$ gives rise to a maximal ideal $\m$ of $\T^{\univ}$
after a choice of eigenvalue mod $\mE$ for $U_x$ for all $x$ dividing
$Q$. Extending $\OL$ if necessary, we may assume that $\T^{\univ}/\m \cong k$.

\medskip

Let $M\mapsto M^\vee:=\Hom_\OL(M,K/\OL)$ denote the Pontryagin duality functor.

\begin{lemma} 
\label{lem:cohom-basic-props}
Let $\Delta$ be a quotient of $\Delta_Q$. Then:
\begin{enumerate}
\item\label{torsion-free} $H^1(X_\Delta(Q),\CL_{K/\OL})^{\vee}$
  is $p$-torsion free for $\mathcal L$ a vector bundle on $X_\Delta(Q)$.
\item\label{vanish-cusps} For $i=0,1$, we have an isomorphism
\[ H^i(X_\Delta(Q),\omega(-\infty)_{K/\OL})_\m
\liso H^i(X_\Delta(Q),\omega_{K/\OL})_\m.\]
\end{enumerate}
\end{lemma}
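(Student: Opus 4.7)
Both parts will follow from taking a suitable short exact sequence of sheaves on $X_\Delta(Q)$ and analysing the induced long exact sequence in coherent cohomology.

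For part~\ref{torsion-free}, since $\CL$ is a vector bundle it is $\OL$-flat, so tensoring it with $0\to \OL \to K \to K/\OL \to 0$ yields a short exact sequence $0\to \CL\to \CL_K \to \CL_{K/\OL}\to 0$ on $X_\Delta(Q)$. The cohomology sequence contains
\[ H^1(X_\Delta(Q),\CL_K) \lra H^1(X_\Delta(Q),\CL_{K/\OL}) \lra H^2(X_\Delta(Q),\CL). \]
Because $X_\Delta(Q)$ is a smooth proper curve over $\Spec(\OL)$, coherent cohomology vanishes in degrees $\ge 2$, so $H^1(X_\Delta(Q),\CL_{K/\OL})$ is a quotient of the $K$-vector space $H^1(X_\Delta(Q),\CL_K)= H^1(X_\Delta(Q),\CL)\otimes_{\OL}K$ and is therefore $p$-divisible. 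By Pontryagin duality this is equivalent to the dual being $p$-torsion free.

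For part~\ref{vanish-cusps}, I would use the short exact sequence of $\OL$-flat coherent sheaves
\[ 0 \to \omega(-\cusps) \to \omega \to \omega/\omega(-\cusps) \to 0 \]
on $X_\Delta(Q)$, in which the quotient is supported on the reduced cuspidal divisor (which is finite and flat over $\Spec(\OL)$). Tensoring with $K/\OL$ and taking cohomology, the quotient has vanishing $H^1$ because its support is zero-dimensional. After localising the resulting six-term exact sequence at $\m$, both of the desired isomorphisms (for $i=0$ and $i=1$) reduce to the single statement
\[ H^0\bigl(X_\Delta(Q),(\omega/\omega(-\cusps))_{K/\OL}\bigr)_{\m}\;=\;0. \]

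This last vanishing is the standard ``cusps are Eisenstein'' phenomenon and constitutes the main technical content. Via the $q$-expansion principle, the action of $T_\ell$ on the fibre of $\omega$ at a given cusp (for $\ell$ prime to the level) is by a scalar of the form $\chi_1(\ell)+\chi_2(\ell)$ for a pair of Dirichlet characters attached to that cusp, with analogous formulae for the $U_x$. Only finitely many such character pairs arise across the finitely many cusps of $X_\Delta(Q)$; since $\bar\rho$ is irreducible, it is inequivalent to every $\chi_1\oplus\chi_2$, so for each pair there exists a prime $\ell$ with $T_\ell - \chi_1(\ell) - \chi_2(\ell)\notin \m$. A suitable product of such Hecke operators then annihilates the cuspidal group while acting invertibly on the $\m$-localisation. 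The main obstacle is really just bookkeeping: making the Hecke action at the cusps explicit for the general level $\Gamma_\Delta(Q)$ and keeping track of the character data arising from both the $\Gamma_\Delta(N)$ and $\Gamma_0(Q)$ structures.
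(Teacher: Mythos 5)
Your proposal is correct and follows essentially the paper's own route: part (1) deduces divisibility of $H^1(X_\Delta(Q),\CL_{K/\OL})$ from the vanishing of an $H^2$ (the paper uses $0\to\CL_k\to\CL_{K/\OL}\stackrel{\varpi}{\to}\CL_{K/\OL}\to 0$ and vanishing of $H^2$ of a sheaf on the special fibre, while your variant with $0\to\CL\to\CL_K\to\CL_{K/\OL}\to 0$ needs $H^2(X_\Delta(Q),\CL)=0$, which is justified by properness with $1$-dimensional fibres over the affine base $\Spec(\OL)$ rather than by the dimension of $X_\Delta(Q)$ itself), and part (2) is exactly the paper's sequence $0\to\omega(-\cusps)\to\omega\to\Icusps\to 0$ combined with the facts that $\Icusps$ has cohomology only in degree $0$ (because the cuspidal subscheme is finite over $\Spec(\OL)$, hence affine --- it is not literally zero-dimensional) and that the Hecke action on $H^0$ of the cusps is Eisenstein, hence dies after localisation at the non-Eisenstein ideal $\m$. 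Your extra sketch of the Eisenstein computation at the cusps is just an expansion of the paper's one-line assertion and is fine.
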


\begin{proof}
The first claim is equivalent to the divisibility of
$H^1(X_{\Delta}(Q), \CL_{K/\OL})$. Since $X_{\Delta}(Q)$
is flat over $\OL$, there is an exact sequence
$$0 \rightarrow \mathcal L_{k} \rightarrow \mathcal L_{K/\OL} \stackrel{\varpi}{\ra} \mathcal L_{K/\OL} \rightarrow 0.$$
Taking cohomology and using the fact that $X_{\Delta}(Q)$ is a curve
and hence $H^2(X_\Delta(Q),\mathcal L_{k})$ vanishes,
we deduce that  $H^1(X_{\Delta}(Q), \mathcal L_{K/\OL})/\varpi = 0$, from which divisibility follows.

For the second claim, note that 
there is an exact sequence:
$$0 \rightarrow \omega(-\cusps) \rightarrow \omega \rightarrow \omega|_{\CC_{\Delta}(Q)}  \rightarrow 0,$$
where $\CC_{\Delta}(Q)$ denote the divisor of cusps.
The cohomology of $\CC_{\Delta}(Q)$ is concentrated in degree 0.
 Yet the action of Hecke on 
 $H^0(\CC_{\Delta}(Q),\omega)$ is
 Eisenstein (by Remark~\ref{rem:boundary-eis}), thus the lemma. 
\end{proof}

If $\CL$ is a vector bundle on $X_\Delta(Q)$, we define
\[ H_i(X_\Delta(Q),\mathcal L):=
H^i(X_\Delta(Q),(\Omega^1\otimes\CL^*)_{K/\OL})^{\vee} \]
for $i=0,1$, where $\CL^*$ is the dual bundle and
$\Omega^1=\Omega^1_{X_\Delta(Q)/\OL}$ can be identified with
$\omega^{\otimes 2}(-\infty)$ via the Kodaira--Spencer isomorphism. 
If
$\Delta \onto \Delta'$ are two quotients of $(\Z/Q\Z)^\times$ giving rise to
a Galois cover $\pi:X_\Delta(Q)\to X_{\Delta'}(Q)$ and $\CL$
is vector bundle on $X_{\Delta'}(Q)$, then there is a
natural map $\pi_*: H_i(X_{\Delta}(Q),\pi^*\CL)\to
H_i(X_{\Delta'}(Q),\CL)$ coming from the dual of the pullback $\pi^*$ on cohomology. Verdier duality
(\cite[Corollary~11.2(f)]{Verdier}) gives an isomorphism
\[ D: H_i(X_\Delta(Q),\CL)\liso H^{1-i}(X_\Delta(Q),\CL) \]
under which $\pi_*$ corresponds to the trace map $\Tr(\pi):H^{1-i}(X_\Delta(Q),\pi^*\CL)\to
H^{1-i}(X_{\Delta'}(Q),\CL)$.

We endow $H_i(X_{\Delta}(Q),\omega^{\otimes n})$
with a Hecke action
by first identifying $\Omega^1$ with $\omega^2(-\infty)$ and then taking the Pontryagin dual of the Hecke action on
$H^i(X_{\Delta}(Q),\omega^{2-n}(-\infty)_{K/\CO})$.
However, we note that for $\CL =
\omega^n$, the isomorphism $D$ is not
Hecke equivariant: extend $\CO$ if necessary so that it contains a
primitive $NQ$-th root of unity $\zeta$.
Let $w^*$ be the operator on
$H^0(X_{\Delta}(Q),\omega^{\otimes n}_{K/\CO})$ associated to $\zeta$ as in \cite[\S
7.1]{Edixhoven}. Let $\Phi$ denote the composition of isomorphisms:
\begin{eqnarray*}
 H^1(X_{\Delta}(Q),\omega^{2-n}(-\infty)_{K/\CO}) 
  &\stackrel{D}{\lra}& H^0(X_{\Delta}(Q),\Omega\otimes \omega^{n-2}(\infty))^{\vee} \\
  &\stackrel{KS^{\vee}}{\lra}& H^0(X_{\Delta}(Q),\omega^{n})^{\vee} \\
  &\stackrel{(w^*)^{\vee}}{\lra}& H^0(X_{\Delta}(Q),\omega^{n})^{\vee},
\end{eqnarray*}
where $D$ is Verdier duality, and $KS$ is the Kodaira--Spencer
isomorphism. Then by the proof of \cite[Prop.\ 7.3]{Edixhoven}, we
have:
\[ \Phi \circ T_x = x^{1-n} T_x^{\vee} \circ \Phi, \]
with the same relation holding for $U_x$. We also have $\Phi\circ
\langle x \rangle = \langle x \rangle \circ \Phi$.
We let $\Psi : = \Phi^{\vee}$ be the dual isomorphism
\[ \Psi: H^0(X_{\Delta}(Q),\omega^n) \lra
  H_1(X_{\Delta}(Q),\omega^n). \]
When $n = 1$, $\Psi$ is $\T$-linear.

\begin{prop}
\label{prop:balanced-homology-w1} 
Let $\Delta$ be a quotient of $(\Z/Q\Z)^\times$ of $p$-power order. Then the
$\OL[\Delta]$-module $H_0(X_\Delta(Q),\omega)_\m$ is balanced (in
the sense of Definition \ref{defn:balanced}).
\end{prop}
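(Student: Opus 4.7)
The plan is to realize $M=H_0(X_\Delta(Q),\omega)_\m$ as the cokernel of a map in a two-term complex of finite free $S:=\OL[\Delta]$-modules, and then to exploit the vanishing of its Euler characteristic after $\m$-localization.

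First, I exploit that $\pi\colon X_\Delta(Q)\to X_0(Q)$ is a finite \'etale $\Delta$-Galois cover: the pushforward $\pi_*\omega$ on $X_0(Q)$ is \'etale-locally isomorphic to $\omega\otimes_\OL S$, and hence is $S$-flat with $(\pi_*\omega)_\Delta\cong\omega$. Consequently $R\Gamma(X_\Delta(Q),\omega)\simeq R\Gamma(X_0(Q),\pi_*\omega)$ is a perfect complex of $S$-modules concentrated in degrees $0$ and $1$. Since $\Delta$ has $p$-power order, $S$ is a local ring, so this complex is quasi-isomorphic to a two-term complex $[F^0\xrightarrow{d}F^1]$ of finite free $S$-modules. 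The Hecke correspondences descend to $X_0(Q)$ and commute with the $\Delta$-action, so localizing at $\m$ --- a direct factor of the semilocal $\OL$-algebra $\T$ --- yields a two-term complex $[F^0_\m\xrightarrow{d_\m} F^1_\m]$ of finite free $S$-modules.

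Next, I compute the Euler characteristic. Since $F^\bullet\otimes^L_S\OL\simeq R\Gamma(X_0(Q),\omega)$ by $S$-flatness of $\pi_*\omega$ and $(\pi_*\omega)_\Delta\cong\omega$, and Euler characteristics of perfect complexes are preserved under derived tensor, one has
\[ \mathrm{rank}_S F^0_\m - \mathrm{rank}_S F^1_\m = \dim_K H^0(X_0(Q),\omega_K)_\m - \dim_K H^1(X_0(Q),\omega_K)_\m. \]
Serre duality on the proper curve $X_0(Q)$ gives $H^1(X_0(Q),\omega_K)\cong H^0(X_0(Q),\omega_K(-\infty))^\vee$, and by Lemma~\ref{lem:cohom-basic-props}(2) applied to $X_0(Q)$ this agrees with $H^0(X_0(Q),\omega_K)^\vee_\m$ after $\m$-localization. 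Hence the difference on the right is zero and $\mathrm{rank}_S F^0_\m=\mathrm{rank}_S F^1_\m$.

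Finally, Verdier duality $D$ (combined with Lemma~\ref{lem:cohom-basic-props}(2)) identifies $M$ as an $S$-module with $H^1:=H^1(X_\Delta(Q),\omega)_\m=\mathrm{coker}(d_\m)$, up to the anti-involution $\sigma\mapsto\sigma^{-1}$ of $S$ (which preserves the balanced property). Splitting the 4-term exact sequence $0\to H^0\to F^0_\m\xrightarrow{d_\m}F^1_\m\to H^1\to 0$ (where $H^0:=H^0(X_\Delta(Q),\omega)_\m$) into two short exact sequences and taking the long exact sequences of $-\otimes^L_S k$ yields
\[ d_S(M) = \bigl(\mathrm{rank}_S F^1_\m - \mathrm{rank}_S F^0_\m\bigr) + \dim_k(H^0\otimes_S k) - \dim_k \Tor_1^S(\im d_\m, k). \]
The first bracket vanishes by the previous paragraph, and the short exact sequence $0\to H^0\to F^0_\m\to\im d_\m\to 0$ with $F^0_\m$ free gives an injection $\Tor_1^S(\im d_\m, k)\hookrightarrow H^0\otimes_S k$. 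Hence $d_S(M)\geq 0$. The main obstacle is the first step --- producing a Hecke-equivariant two-term free $S$-complex representative of $R\Gamma(X_0(Q),\pi_*\omega)$; once this is in place, the rest is Serre duality and elementary bookkeeping with long exact sequences.
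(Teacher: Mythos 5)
Your overall architecture is genuinely different from the paper's and is, in outline, viable: the paper runs the Hochschild--Serre spectral sequence for $X_\Delta(Q)\to X_0(Q)$, Pontryagin-dualizes it, and bounds $\Tor_1^S(M,\OL)$ directly, whereas you package everything into a two-term perfect complex of $S=\OL[\Delta]$-modules and use the elementary fact that the cokernel of a map of finite free $S$-modules of equal rank is balanced; your final diagram chase is correct. Two caveats on the set-up: the passage from $R\Gamma(X_0(Q),\pi_*\omega)$ to a localized two-term complex $[F^0_\m\to F^1_\m]$ of finite \emph{free} $S$-modules needs more than one line (one must lift the Hecke action to the derived category, split the idempotent attached to $\m$, and use that a perfect complex of tor-amplitude $[0,1]$ over the local ring $S$ is represented by a two-term free complex), though this is standard and comparable to the paper's own remark that the Hecke action lifts to a homotopy action on a complex computing cohomology.

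The genuine gap is in the Euler-characteristic step, and it is precisely the point the paper has to work for. Serre/Verdier duality on $X_0(Q)$ is \emph{not} Hecke-equivariant: it intertwines $T_x$ with $T_x^*=\langle x\rangle^{-1}T_x$, $U_x$ with $U_x^*$ and $\langle a\rangle$ with $\langle a\rangle^{-1}$, so it identifies $H^1(X_0(Q),\omega_K)_\m$ with the $K$-dual of $H^0(X_0(Q),\omega(-\infty)_K)_{\m^*}$, where $\m^*$ is the dual maximal ideal, and not with the dual of the $\m$-localization. Consequently your claimed equality $\dim_K H^0(X_0(Q),\omega_K)_\m=\dim_K H^1(X_0(Q),\omega_K)_\m$ does not follow from duality together with Lemma~\ref{lem:cohom-basic-props}(\ref{vanish-cusps}); one still has to show $\dim_K H^0(X_0(Q)_K,\omega)_{\m^*}=\dim_K H^0(X_0(Q)_K,\omega)_{\m}$, which the paper does by a global twisting argument, using in addition Lemma~\ref{lemma:matt-w1} to see that the dimension is independent of the chosen $U_x$-eigenvalue at $x\in Q$ (twisting can interchange the two roots $\alpha_x,\beta_x$). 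The same conflation reappears in your last paragraph: under $D$ the module $M=H_0(X_\Delta(Q),\omega)_\m$ is identified (with $\Delta$ acting through inversion) with $H^1(X_\Delta(Q),\omega)_{\m^*}$, not with $H^1(X_\Delta(Q),\omega)_{\m}$, so you should localize your complex at $\m^*$ and prove the vanishing of its Euler characteristic there. Once the twisting input and the appeal to Lemma~\ref{lemma:matt-w1} are supplied, and the $\m$ versus $\m^*$ bookkeeping is corrected, your perfect-complex route does give the proposition.
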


\begin{proof}
Let $M = H_0(X_\Delta(Q),\omega \otimes \CL )_\m$ and
$S=\OL[\Delta]$, where $\CL = \OL_X$.
\footnote{We present the proof writing~$\CL$ instead of~$\OL_X$ since we will
use the same argument in the proof of Theorem~\ref{thm:main-thm-w1-v2}
 with~$\CL$ as a more general vector bundle on~$X$.}
Consider the exact sequence of $S$-modules (with trivial $\Delta$-action):
$$0 \rightarrow \OL \stackrel{\varpi}{\rightarrow} \OL \rightarrow
k\rightarrow 0.$$
Tensoring this exact sequence over $S$ with $M$, we obtain an exact sequence:
$$ 0 \rightarrow \Tor^S_1(M,\OL)/\varpi  \rightarrow \Tor^S_1(M,k) \rightarrow M_\Delta  \rightarrow M_\Delta 
\rightarrow M \otimes_S k \rightarrow 0.$$
Let $r$ denote the $\OL$-rank of $M_\Delta$. Then this exact sequence
tells us that
\[ d_S(M):=\dim_k M\otimes_S k - \dim_k \Tor_1^S(M,k) = r - \dim_k \Tor^S_1(M,\OL)/\varpi.\]
We have a second quadrant Hochschild--Serre spectral sequence (\cite[Theorem III.2.20, Remark III.3.8]{milne})
\[ H^i(\Delta,H^j(X_\Delta(Q),(\Omega^1\otimes \omega^{-1} \otimes \CL^*)_{K/\OL}))
\implies H^{i+j}(X_0(Q),(\Omega^1\otimes \omega^{-1} \otimes \CL^*)_{K/\OL}).\] 
Applying Pontryagin
duality, we obtain a third quadrant spectral sequence
\[ H_i(\Delta,H_j(X_\Delta(Q),\omega \otimes \CL)) =
\Tor_i^S(H_j(X_\Delta(Q),\omega \otimes \CL),\OL) \implies
H_{i+j}(X_0(Q),\omega \otimes \CL) .\]
We claim that the differentials in the spectral sequence commute with
the action of $\T^{\univ}$ on the individual terms. This follows from
the fact that the Hochschild--Serre spectral sequence, for a finite
\'{e}tale Galois cover $\pi:X\to Y$ with group $G$, 
\[ H^i(G,H^j(X,\pi^*\CF)) \implies H^{i+j}(Y,\CF) \]
is functorial in $\CF$. Thus for example, to see that the
differentials commute with \[T_x = \frac{1}{x}\Tr(\pi_1)\circ
\phi_{12}\circ \pi_2^*,\]
(where $\pi_1,\pi_2 : X_{*}(Q;x)\to X_{*}(Q)$ are the two projection
maps and $*\in\{0,\Delta\}$), we use the canonical isomorphisms
$H^i(X_{*}(Q;x),\pi_j^*\omega)=H^i(X_*(Q),\pi_{j,*}\pi_j^*\omega)$ and
successively apply the functoriality of the spectral sequence
with $(X,Y,\CF\to \CF')$ taken to equal 
$(X_{\Delta}(Q),X_0(Q),\omega \to \pi_{2,*}\pi_2^* \omega)$,
$(X_{\Delta}(Q;x),X_{0}(Q;x),\pi_2^*(\omega)\stackrel{\phi_{12}}{\ra}\pi_1^*(\omega))$
and
$(X_{\Delta}(Q),X_0(Q),\pi_{1,*}\pi_1^{*}(\omega)\stackrel{\Tr}{\ra}\omega)$.

Localizing at ${\m}$, we obtain an isomorphism $M_\Delta \cong
H_0(X_0(Q),\omega \otimes \CL)_{\m}$ and an exact sequence
\[ (H_1(X_\Delta(Q),\omega \otimes \CL)_{\m})_\Delta \to H_1(X_0(Q),\omega \otimes \CL)_{\m} \to
\Tor^S_1(M,\OL) \to 0 .\]
To show that $d_S(M)\geq 0$, we see that it suffices to show that
$H_1(X_0(Q),\omega \otimes \CL)_{\m}$ is free of rank $r$ as an
$\OL$-module. The module $H_1(X_0(Q),\omega \otimes \CL)$
is $p$-torsion free by
Lemma~\ref{lem:cohom-basic-props}(\ref{torsion-free}).
It therefore suffices to show that 
$\dim_K
H_1(X_0(Q),\omega \otimes \CL)_\m\otimes K=r$. In other words, by the
definition of $r$, it suffices to show that
\[ \dim_K H_0(X_0(Q)_K,\omega \otimes \CL)_\m =
  \dim_K H_1(X_0(Q),\omega \otimes \CL)_\m. \]
(Here we use the slight abuse of notation $H^i(X_K, *)_{\m} = H^i(X,*)_{\m}\otimes_{\CO}K$.)

At this point we will specialize to $\CL = \CO_X$. By definition of
$H_0$ and its Hecke action, the left hand side above is:
\[ \dim_K H^0(X_0(Q)_K,\omega(-\infty))_\m. \]
Using the isomorphism $\Psi$, we see that the right hand side is equal to:
\[ H^0(X_{0}(Q)_K, \omega )_{\m} \]
We are therefore reduced to showing that 
\[ \dim_K H^0(X_0(Q)_K,\omega(-\infty))_\m =  \dim_K
  H^0(X_0(Q)_K,\omega)_{\m} .\]
The result thus follows from
Lemma~\ref{lem:cohom-basic-props}(\ref{vanish-cusps}).
\end{proof}

\subsection{Galois Representations}
\label{sec:gal-rep}
Let $N = N(\rhobar)$ where $N(\rhobar)$ is the Serre level of
$\rhobar$. 
We let
$H$ denote the $p$-part of $(\Z/N(\rhobar)\Z)^\times$. Having
fixed $N$ and $H$, we let $X$ denote the modular curve defined at the
beginning of \S~\ref{sec:cohom-modul-curv}. (We note that $N\geq 5$ by
Serre's conjecture, and also that $\rhobar$ is thus modular of the appropriate level, by 
Theorem~4.5 of~\cite{Edixhoven}.) 
We add, for now, the following assumption:
\begin{assumption} Assume that: \label{novexing}
\begin{enumerate}
\item  The set $T(\rhobar)$ is empty.
\end{enumerate}
\end{assumption}
We address how to remove this assumption in~\S~\ref{section:vexing}.
The only point at which this assumption is employed is in~\S~\ref{section:interlude}.

\begin{remark} {\bf A digression on representability and stacks\rm}.
\emph{Several of the modular  curves we consider do not represent the corresponding
moduli problems for elliptic curves with level structure (due to automorphisms). 
In such cases, the object $X_H(N)$ still exists as a smooth proper Deligne--Mumford stack over $\Spec(\OL)$,
and the sheaf $\omega$ descends to a sheaf on $X_H(N)$ (if not always to the corresponding associated scheme).
For these stacks $X$, one can still make sense of the cohomology groups $H^0(X,\omega)$ and show that
they satisfy many of the required
properties. For example,
suppose that $Y \rightarrow X$ is a finite \'{e}tale morphism of modular
stacks with Galois group $G$, and that
$\LL = \omega^k$ for some $k$. Then there is an isomorphism
$$H^0(X,\LL) \simeq H^0(Y,\LL)^{G}.$$
Taking $Y$ to be representable (which is always possible for the $X$ we consider),
and letting $R$ be an $\OL$-algebra, one may identify $H^0(X_R,\omega^k)$ with the ring of Katz modular forms of weight~$k$ 
over~$R$ as defined in~\cite{Katz}.
\footnote{Here is a somewhat different example: let $H \subset (\Z/13 \Z)^{\times}$ denote the group of squares; there
is a finite  \'{e}tale map $X_1(13) \rightarrow X_H(13)$ with Galois group $\Z/3\Z$ (viewing $X_H(13)$ as a stack).
The underlying scheme of $X_H(13)$ is isomorphic over $\OL$ (for $p \ne 13$) to the projective line,
and hence, na\"{\i}vely, one would expect $H^0(X_H(13),\omega^2)$ to vanish. However, as noted
by Serre~\cite{RibetLetter,RibetComponent}, there exists a Galois  representation $\rhobar: G_{\Q} \rightarrow \GL_2(\F_3)$ with $N(\rhobar) = 13$,  $k(\rhobar) = 2$, and $\eps(\rhobar)$ quadratic.
(The representation $\rhobar$ is induced from $\Q(\sqrt{-3})$). The original conjecture 
$3.2.4_{?}$ of~\cite{Serre}  asserts that $\rhobar$ gives rise to a mod-$3$ modular form on $X_H(13)$.
However, considering $X_H(13)$ as a stack, one finds, for $p = 3$, that the group 
$$H^0(X_H(13)_{\F_3},\omega^2) = H^0(X_1(13)_{\F_3},\omega^2)^{\Z/3\Z}$$
\emph{is} indeed non-zero, even though the scheme underlying $X_H(13)$ has genus zero. 
This is in accordance with Edixhoven's reformulation of Serre's conjecture (Conjecture~4.2 of~\cite{Edixhoven}).}}

\medskip

\emph{Instead of trying to adapt our arguments (when necessary) to the context of stacks, we introduce the following
fix. Choose any prime $q \not\equiv 1 \mod p$  with $q \ge 5$ such that
$\rhobar$ is unramified at $q$ and such that  the ratio of
the eigenvalues of $\rhobar(\Frob_q)$ is neither  $q$ nor $q^{-1}$ (we allow the possibility that the
eigenvalues are the same). The assumption on the ratio of the eigenvalues ensures that $\rhobar$
admits no  deformations which are ramified and Steinberg (unipotent on inertia) at $q$; the assumption that $q \not\equiv 1 \mod p$ guarantees that
there are no other deformations of $\rhobar$ which are ramified at $q$.
First, we claim that the Chebotarev density theorem guarantees the existence of such primes.
Since $\det(\rhobar)$ is unramified at $p$, the fixed field of $\ker(\rhobar)$ does not contain $\Q(\zeta_p)$ (note that $p \ne 2$).
Hence, we may find infinitely many primes $q$ such that the fixed field of
$\ker(\rhobar)$ splits completely and $q \not\equiv 1 \mod p$; such primes satisfy the required hypothesis.
We then add $\Gamma_1(q)$ level structure and  the arguments proceed almost entirely unchanged (the
assumption on $q$ implies that any deformation of $\rhobar$ with fixed determinant
is unramified at $q$). The only difference is that the multiplicity of the corresponding Hecke modules will
either be the same or twice as expected --- depending on whether $\rhobar(\Frob_q)$ has distinct eigenvalues
or not --- by Lemma~\ref{lemma:stacky}. }
\end{remark}

\medskip

By Serre's conjecture~\cite{KhareW} and by the companion form result of Gross~\cite{Gross}
and Coleman--Voloch~\cite{CV2},
there exists a maximal ideal $\mE$ of $\TE$ corresponding to
$\rhobar$. The ideal $\mE$ is generated by $\varpi$, $T_x
-\tr(\rhobar(\Frob_x))$ for all primes $x$ with $(x,Np)=1$ and $\langle
x \rangle - \det(\rhobar(\Frob_x))$ for all $x$ with
$(x,N)=1$. Extending $\OL$ if necessary, we may assume $\TE/\mE=k$. Let
$Q$ be as in Section \ref{sec:cohom-modul-curv}. For each $x \in Q$,
assume that
 the polynomial $X^2 - T_xX +\langle
x \rangle$ has distinct roots in $\TE/\mE = k$ and choose a
root $\alpha_x\in k$ of this polynomial. Let $\m$ denote the maximal
ideal of $\T$ generated by $\mE$ and $U_x -\alpha_x$ for $x\in Q$.

\begin{theorem}[Local--Global Compatibility] \label{theorem:galoisweight1} There exists a
  deformation 
$$\rho_Q: G_{\Q} \rightarrow \GL_2(\T_{\m})$$
 of $\rhobar$ unramified outside $NQ$
and determined by the property that for all primes $x$ satisfying $(x,pNQ) = 1$,
$\tr(\rho_Q(\Frob_x)) = T_x$. 
Let $\rho'_{Q} = \rho_{Q} \otimes \eta$, where~$\eta$ is the unique~$p$-power order character
satisfying~$\eta^{2} =  \langle \det(\rhobar) \rangle \det(\rho_Q)^{-1}$. Then
$\rho'_{Q}$ is a deformation of $\rhobar$  minimal outside
$Q$.  For a prime~$x | Q$, the restriction of~$\rho'_{Q}$ to~$D_{x}$ is 
conjugate to a direct sum of residually unramified  characters~$\xi \oplus \langle \det(\rhobar)\rangle |_{D_x} \xi^{-1}$ where~$\overline{\xi}(\Frob_x) \equiv \alpha_x \mod \m$,
and such that the restriction of~$\xi$ to~$I_x$ and hence via local class field theory to~$\Z^{\times}_x \otimes \Z_p = \Delta_x$ is compatible 
in the usual way with the diamond
operators in~$\T_{\m}$.
\end{theorem}

\begin{remark}
\label{awayfromp}
\emph{
The existence of~$\rho_Q$ follows immediately by using congruences between weight one forms and higher weight (using powers of the Hasse invariant).
The assumptions on~$x|Q$ imply
that~$\rhobar|_{D_x}$ is~$p$-distinguished and, because there are no local extensions, totally split. Hence the required information is preserved under
congruences, and one is reduced once more to higher weight, where this statement is known. Hence the main difficulty in proving
Theorem~\ref{theorem:galoisweight1} is showing that the Galois representation is unramified at~$p$.
}
\end{remark}

\begin{remark}
\label{rem:distinct-evals}
\emph{Under the hypothesis that $\rhobar(\Frob_p)$ has distinct eigenvalues, Theorem~\ref{theorem:galoisweight1}
can be deduced using an argument similar to that of~\cite{Gross}.
Under the hypothesis that $\rhobar(\Frob_p)$ has repeated eigenvalues but is
\emph{not} scalar, we shall deduce this using an argument of Wiese~\cite{Wiese}
and Buzzard. When $\rhobar(\Frob_p)$ is trivial, however, we shall be forced
to find a new argument using properties of local deformation rings.
In the argument below,  we avoid using the fact that
the Hecke eigenvalues for all primes $l$  determine a modular eigenform completely.
One reason for doing this is that we would like to generalize our arguments
to situations in which this fact is no longer true; we apologize in advance that this
increases the difficulty of the argument slightly (specifically, we avoid using the
fact that $T_p$ in weight one can be shown to live inside the Hecke algebra $\T$, although this
will be a consequence of our results).}
\end{remark}

\begin{proof} 

For each $m >0$, we have
$H^0(X_1(Q),\omega_{\OL/\varpi^m})_{\m}\cong
H^0(X_1(Q),\omega_{K/\OL})_{\m}[\varpi^m]$, and we let $I_m$ denote
the annihilator of this space in $\T_{\m}$. Since
$\T_{\m}=\varprojlim_m \T_{\m}/I_m$, it suffices to
construct, for each $m>0$, a representation $\rho_{Q,m} : G_\Q \to
\GL_2(\T_{\m}/I_m)$ satisfying the conditions of the theorem.

Fix $m>0$ and let $A$ be a lift of (some power of) the Hasse invariant such that $A \equiv 1 \mod \varpi^m$; let $n-1$ denote the weight of $A$.
We may assume that $n-1$ is sufficiently divisible by powers of $p$ (and $(p-1)$) to
ensure that $\eps^{n-1} \equiv 1 \mod \varpi^m$.
Multiplication by $A$ induces a  map:
$$\begin{diagram}
 H^0(X_1(Q),\omega_{K/\OL})  & \rTo_{\phi} & H^0(X_1(Q),\omega^{n}_{K/\OL}) \\
 \dInto & & \dInto \\
K/\OL[[q]] & \rEquals & K/\OL[[q]] \end{diagram}.$$
 This map is Hecke equivariant away from $p$ on $\varpi^m$ torsion; indeed, the diagram is only
 commutative modulo $\varpi^m$.

Consider the map
$$ \begin{diagram}
\psi: H^0(X_1(Q),\omega_{K/\OL})^2[\varpi^m]  & \rTo & H^0(X_1(Q),\omega^{n}_{K/\OL})[\varpi^m] \\
\dEquals & & \dEquals \\
 \psi: H^0(X_1(Q),\omega_{\OL/\varpi^m})^2  & \rTo & H^0(X_1(Q),\omega^{n}_{\OL/\varpi^m}). \end{diagram}$$
 defined by $\psi = (\phi, \phi \circ T_p - U_p \circ \phi)$. (The operator~$T_p$ acts in this setting by the results of~\S4 of Gross~\cite{Gross}.)
 For ease of notation, we let $T_p$ (or $\phi \circ T_p$) exclusively refer to the Hecke operator in weight one,
 and let $U_p$ denote the corresponding Hecke operator in weight $n$. (The operator $U_p$ has the expected
 effect on $q$-expansions, since the weight $n$ is sufficiently large with respect to $m$.)
 On $q$-expansions modulo $\varpi^m$, we may compute that
 $\psi = (\phi, \langle p \rangle V_p)$ (see also~4.7 of~\cite{Gross}) .
 We claim that $\psi$ is injective. It suffices to check this on the $\OL$-socle, namely, on $\varpi$-torsion.
On $q$-expansions,  $\phi$ is the identity  and  $V_p(\sum
a_nq^n)=\sum a_nq^{np}$. Suppose we have an identity
$\langle p \rangle V_p  f = g$.
It follows that $\theta g = 0$ in $H^0(X_1(Q),\omega_{\OL/\varpi^m})[\varpi] = H^0(X_1(Q),\omega_{k})$.
 By a  result of Katz~\cite{Katz}, the $\theta$ map has
no kernel in weight $\le p-2$, and so in particular no kernel in weight $1$. Hence $\psi$ is injective.

The action of $U_p$  in weight $n$ on $H^0(X_1(Q),\omega_{\OL/\varpi^m})^2$ via $\psi^{-1}$ is given by
$$\left( \begin{matrix}
    T_p & 1 \\ -\langle p \rangle & 0 \end{matrix} \right),$$ where
here $T_p$ is acting in weight one (cf. Prop~4.1 of~\cite{Gross}), and
hence satisfies the quadratic relation $X^2 - T_p X + \langle p
\rangle = 0$. Note that the action of $U_p + \langle p \rangle U^{-1}_p$
on the image of $\psi$ is given by 
$$\left( \begin{matrix} T_p & 0 \\ 0 & T_p \end{matrix} \right).$$
By Proposition 12.1 and the remark before equation (4.7)
of \cite{Gross}, we see that $\langle p\rangle = \alpha\beta\mod\m$
and $(U_p -\widetilde\alpha)(U_p-\widetilde\beta)$ acts nilpotently on
$\psi(H^0(X_1(Q),\omega_{\OL/\varpi^m})^2_{\m})$,
where $\alpha$ and $\beta$ denote the (possibly non-distinct) eigenvalues of
$\rhobar(\Frob_p)$ and $\widetilde\alpha$, $\widetilde\beta$ are any
lifts of $\alpha$ and $\beta$ to $\OL$.
Explicitly, the only possible eigenvalues of $U_p$
modulo $\m$ in higher weight are determined by $\rhobar$, and are either equal to $\alpha$, $\beta$,
or $0$. Yet $U_p$ acts invertibly on $\psi(H^0(X_1(Q),\omega_{\OL/\varpi^m})^2_{\m})$, as can be
seen by considering the matrix description of $U_p$ (which has invertible determinant given by~$\langle p \rangle$).

Let $\Tan_{n}$ denote the subalgebra of
$\End_{\OL}(H^0(X_1(Q),\omega^n_{\OL}))$ generated over $\OL$ by the
operators $T_x$ for primes $(x,NQp)=1$ and diamond operators $\langle
a \rangle$ for $(a,NQ)=1$. Let $\T_{n}$ denote the subalgebra of
$\End_{\OL}(H^0(X_1(Q),\omega^n_{\OL}))$ 
generated by $\Tan_{n}$ and $U_x$ for $x$ dividing
$Q$, and let $\WT_{n}$ denote the subalgebra generated by $\T_{n}$ and
$U_p$ (recall that we are denoting $T_p$ in weight $n$ by $U_p$).
By a slight abuse of notation, let $\mE$ denote the
maximal ideal of $\Tan_{n}$
corresponding to $\rhobar$. Similarly, let $\m$ denote the maximal
ideal of $\T_{n}$ generated by $\mE$ and $U_x-\alpha_x$ for $x\in
Q$.

Let $\hatm_{\alpha}$ and
$\hatm_{\beta}$ denote the 
ideals of $\WT_{n}$ containing
$\m$ and
$U_p - \alpha$ or $U_p - \beta$ respectively. 
If $\alpha = \beta$, we simply write
$\hatm = \hatm_{\alpha} = \hatm_{\beta}$.
Note that since $n>1$, we have 
\[ H^0(X_1(Q),\omega_{\OL}^n) \otimes \OL/\varpi^m  \iso
H^0(X_1(Q),\omega_{\OL/\varpi^m}^n) \]
and hence we may regard the latter as a module for  $\WT_n$ (and its sub-algebras $\T_n$ and
$\Tan_n$).  The proof of Theorem \ref{theorem:galoisweight1} will be
completed in Sections~\ref{section:interlude}--\ref{section:case3}.

 \subsection{Interlude: Galois Representations in higher weight} \label{section:interlude}
In this section, we summarize some results about
Galois representations associated to ordinary
Hecke algebras in weight $n \ge 2$. As above, let $\alpha$ and $\beta$
be the eigenvalues
of $\rhobar(\Frob_p)$.
There is a natural map $\T_{n,\m} \rightarrow \WT_{n,\wtm_{\alpha}}$.
If $\alpha = \beta$ this map is injective, otherwise, write
$\T_{n,\m_{\alpha}}$ for the image.
 There are continuous Galois representations
\[ \rho_{n,\alpha} : G_\Q \to \GL_2(\T_{n,\m_{\alpha}}) \] 
\[ \wrho_{n,\alpha} : G_\Q \to \GL_2(\WT_{n,\wtm_{\alpha}}) \]
with the following properties:
\begin{enumerate}[\quad (a)]
\item The representation $\wrho_{n,\alpha}$  is obtained from $\rho_{n,\alpha}$
by composing $\rho_{n,\alpha}$ with the natural inclusion
map $\T_{n,\m_{\alpha}} \rightarrow
\WT_{n,\wtm_{\alpha}}$.
\item\label{char-pol} $\rho_{n,\alpha}$  and $\wrho_{n,\alpha}$ are unramified at all primes $(x,pNQ)=1$ and
  the characteristic polynomial of $\rho_{n,\alpha}(\Frob_x)$ for such $x$ is
\[ X^2-T_x X + x^{n-1}\langle x \rangle .\]
\item If $E$ is a field of characteristic zero, and
$\phi: \WT_{n,\wtm} \rightarrow E$ is a homomorphism, then
 $\phi \circ \wrho_{n,\alpha}|G_p$
 is equivalent to a representation of the
  form
\[ 
\begin{pmatrix}
  \epsilon^{n-1}\lambda(\phi(\langle p \rangle)/\phi(U_p)) & * \\
  0 & \lambda(\phi(U_p)) 
\end{pmatrix}
\]
where $\lambda(z)$ denotes the unramified character sending $\Frob_p$
to $z$.
\end{enumerate}

These results follow from standard facts about Galois
representations attached to classical ordinary modular forms, together
with the fact that there is an inclusion
$$\T_{n,\m_{\alpha}} \hookrightarrow \WT_{n,\wtm_{\alpha}}
\hookrightarrow \prod  E_i$$
with $E_i$ running over a finite collection of finite extensions of
$K$ corresponding to the ordinary eigenforms of weight $n$ and level $\Gamma_1(NQ)$.

\begin{theorem} \label{theorem:higherweight} Under Assumption~\ref{novexing},
$\rho_{n,\alpha}$ is a deformation of $\rhobar$ that satisfies
conditions~\eqref{outside-NQ} and~\eqref{at-principal} of Definition
\ref{defn:minimal}, with the exception of the condition of being unramified at~$p$.
\end{theorem}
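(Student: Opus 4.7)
The plan is to verify three things: that $\rho_{n,\alpha}$ is a deformation of $\rhobar$, condition~\eqref{outside-NQ}, and condition~\eqref{at-principal} of Definition~\ref{defn:minimal}. Condition~\eqref{at-vexing} is vacuous under Assumption~\ref{novexing}, and condition~\eqref{det} is not claimed.

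The first two tasks are quick. By property~\eqref{char-pol} of~\S\ref{section:interlude}, $\tr \rho_{n,\alpha}(\Frob_x) \equiv \tr \rhobar(\Frob_x) \bmod \m_\alpha$ for every $x \nmid pNQ$, so Chebotarev together with the absolute irreducibility of $\rhobar$ identifies the semisimple reduction of $\rho_{n,\alpha}$ with $\rhobar$. Property~\eqref{char-pol} also shows $\rho_{n,\alpha}$ is unramified outside $pNQ$; since $N = N(\rhobar)$ is the Serre conductor, every prime $x \mid N$ lies in $S(\rhobar)$, yielding~\eqref{outside-NQ}.

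For~\eqref{at-principal}, fix $x \in S(\rhobar)$ with $\rhobar|G_x$ reducible and set $R = \T_{n,\m_\alpha}$. My plan is to combine characteristic-zero pointwise information with a Nakayama/Hensel argument. The pointwise input comes from the injection $R \hookrightarrow \prod_i E_i$ displayed in~\S\ref{section:interlude}, whose factors correspond to classical ordinary eigenforms $f_i$ of weight $n$ and level $\Gamma_1(Q)\cap \Gamma_H(N)$. I would apply classical local--global compatibility at $x$ to each $f_i$: the level structure bounds the conductor of $\rho_{f_i}|G_x$ above by $e_x := v_x(N)$, while the ramification of $\rhobar$ at $x$ forces it below by the Serre conductor exponent, which is also~$e_x$. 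Thus each $\rho_{f_i}|G_x$ is minimally ramified, and combined with the no-vexing-primes hypothesis (so $\rhobar|G_x$ is genuinely reducible) and $\rhobar^{I_x}\neq 0$, one obtains an exact sequence
\[ 0 \to \chi_{1,i} \to \rho_{f_i}|G_x \to \chi_{2,i} \to 0 \]
with $\chi_{2,i}|I_x$ trivial and $\chi_{1,i}|I_x$ having the conductor of the corresponding character of $\rhobar$; in particular $\rho_{f_i}^{I_x}$ is a rank-one $E_i$-direct summand.

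To upgrade to an integral statement, I would choose a basis of $R^2$ in which $\rhobar|G_x$ is upper-triangular with unramified lower-right entry $\overline{\chi}_2$, then apply Hensel's lemma --- using that $\overline{\chi}_1/\overline{\chi}_2$ is nontrivial on $I_x$, which holds because $x\in S(\rhobar)$ forces $\rhobar^{I_x}$ to be one-dimensional --- to lift this integrally. The resulting $\rho_{n,\alpha}|G_x$ is upper-triangular over $R$ with diagonal $(\chi_1,\chi_2)$, $\chi_2|I_x$ trivial, and some $\sigma\in I_x$ for which $\chi_1(\sigma)-1 \in R^\times$. A short calculation then produces a canonical $I_x$-fixed vector in $R^2$ with unit second coordinate, exhibiting $\rho_{n,\alpha}^{I_x}$ as a rank-one $R$-direct summand. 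I expect the main obstacle to be precisely this final integral step: gluing the pointwise information at the various $f_i$ into a single filtration over $R$ without assuming $R$ is reduced, and handling uniformly the two symmetric cases (up to swapping lines) of which character of $\rhobar|G_x$ is the unramified one.
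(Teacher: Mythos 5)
There is a genuine gap, and it sits exactly at the step you flag as the heart of your argument: the claim that "conductor of $\rho_{f_i}|G_x$ bounded above by the level and below by the Serre conductor, hence equal, hence minimally ramified." Equality of conductors does \emph{not} pin down the inertial action when ${x \equiv 1 \pmod p}$, because a tamely ramified character $\psi$ of $p$-power order has conductor exponent $1$ and twisting by it does not raise the conductor in the relevant cases. Concretely, if $x \in S(\rhobar)$ is of Steinberg type ($\rhobar|I_x$ unipotent, Serre exponent $1$) and $x\equiv 1 \pmod p$, then a weight-$n$ eigenform whose local component at $x$ is a principal series with nebentypus $\psi\neq 1$ of $p$-power order has $\rho_{f}|I_x \cong \psi\oplus 1$, conductor exponent exactly $1$, and residual representation congruent to $\rhobar$; similarly at a principal-type prime one can have $\rho_f|I_x\cong\langle\chi\rangle\psi\oplus 1$ with unchanged conductor. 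Your argument never excludes these, and indeed it cannot at full $\Gamma_1(N)$-level, where such forms genuinely occur. This is precisely why the paper works on $X=X_1(N)/H$ with $H$ the $p$-part of $(\Z/N\Z)^\times$: the nebentypus, equivalently $\det(\rho_{n,\alpha})|I_x$ for $x\neq p$, then has order prime to $p$, which is the input that kills the extra $p$-power twists. The paper's proof is essentially a citation of this observation together with local--global compatibility (Carayol, Diamond--Taylor; \cite[Lemma 5.1.1]{CDT}); your proposal uses the $\Gamma_H$-structure only through the conductor bound, which is the same as for $\Gamma_1(N)$ and hence insufficient.

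Two further points. First, your Hensel/splitting step is justified by the assertion that $\overline{\chi}_1/\overline{\chi}_2$ is nontrivial on $I_x$ "because $x\in S(\rhobar)$ forces $\rhobar^{I_x}$ to be one-dimensional"; this fails at the Steinberg-type primes just mentioned, where $\rhobar|I_x$ is nontrivial unipotent, both diagonal characters are unramified, and the ratio is trivial on $I_x$ --- yet these primes are covered by condition~\eqref{at-principal} and must be handled. Second, the passage from pointwise characteristic-zero information at the $E_i$ to the statement that $\rho_{n,\alpha}^{I_x}$ is a rank-one direct summand over the (possibly non-reduced, torsion-laden) algebra $\T_{n,\m_\alpha}$ is not a routine patching-up: it is the actual content of the Diamond--Taylor/Carayol results invoked in the paper, and your proposal explicitly defers it ("I expect the main obstacle to be precisely this final integral step"). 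As written, the proposal establishes the easy parts (that $\rho_{n,\alpha}$ is a deformation of $\rhobar$ and condition~\eqref{outside-NQ}) but not condition~\eqref{at-principal}.
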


This follows from the choice of $N$ and $H$ together with
the local Langlands correspondence and results of Diamond--Taylor and
Carayol (see \cite[Lemma 5.1.1]{CDT}). (The choice of $H$ ensures that
for each prime $x\neq p$, $\det(\rho_{n,\alpha})|I_x$ has order prime
to $p$.)
Note that without Assumption~\ref{novexing}, the representation $\rho_{n,\alpha}$
still satisfies condition~\eqref{outside-NQ} of Definition \ref{defn:minimal}; the issue is that $\rho_{n,\alpha}$
may have extra ramification at those primes not in $T(\rhobar)$.

We now fix one of the eigenvalues of $\rhobar(\Frob_p)$, $\alpha$
say, and write $\wtm = \wtm_{\alpha}$. 
The existence of $\wrho_{n,\alpha}$ gives $B:= \WT^2_{n,\wtm}$ the structure
of a $\WT_{n,\wtm}[G_{\Q}]$-module. Recall that $G_p$ is the decomposition group of
$G_{\Q}$ at $p$.

\begin{lemma} \label{lemma:ABC} Suppose that $\rhobar(\Frob_p)$ is not a scalar.
Then there exists an exact sequence of 
$\WT_{n,\wtm}[G_p]$-modules
$$0 \rightarrow A \rightarrow B \rightarrow C \rightarrow 0$$
such that:
\begin{enumerate}
\item $A$ and $C$ are free $\WT_{n,\wtm}$-modules of rank one.
\item The sequence splits $B \simeq A \oplus C$ as a sequence of 
$\WT_{n,\wtm}$-modules.
\item The action of $G_p$ on $C$ factors through $G_{\F_p} = G_p/I_p$,
and Frobenius acts via the operator $U_p \in \WT_{n,\wtm}$.
\item The action of $G_{p}$ on $A$ is unramified and is via the character $\eps^{n-1} \lambda(\langle p \rangle U^{-1}_p)$.
\end{enumerate}
\end{lemma}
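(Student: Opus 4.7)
The plan is to exploit the fact that $R:=\WT_{n,\wtm}$ is $\OL$-flat (as a subring of $\End_\OL$ of a finite free $\OL$-module) and therefore reduced, so that the natural map $R\hookrightarrow R\otimes_\OL K=\prod_i E_i$ is injective, where the $E_i$ are the residue fields at the minimal primes of $R$ corresponding to ordinary eigenforms. By property~(c) of \S\ref{section:interlude}, on each factor $E_i$ the representation $\wrho_{n,\alpha}|G_p$ is upper triangular with diagonal characters $\chi_1:=\epsilon^{n-1}\lambda(\langle p\rangle/U_p)$ and $\chi_2:=\lambda(U_p)$, both viewed as characters $G_p\to R^\times$ (recall $U_p\in R^\times$ because we have localized at $\wtm$ with $U_p\equiv\alpha\in k^\times$). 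Pulling back through $R\hookrightarrow\prod_i E_i$ yields the identities $\tr\,\wrho_{n,\alpha}(\sigma)=\chi_1(\sigma)+\chi_2(\sigma)$ and $\det\,\wrho_{n,\alpha}(\sigma)=\chi_1(\sigma)\chi_2(\sigma)$ in $R$ for every $\sigma\in G_p$; equivalently, by Cayley--Hamilton, $(T-\chi_1(\sigma))(T-\chi_2(\sigma))=0$ in $M_2(R)$ where $T:=\wrho_{n,\alpha}(\sigma)$.

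In the subcase that $\rhobar(\Frob_p)$ has distinct eigenvalues $\alpha\ne\beta$, pick any lift of Frobenius $\Phi\in G_p$; modulo $\wtm$ one has $\chi_1(\Phi)\equiv\beta$ and $\chi_2(\Phi)\equiv\alpha$, so $u:=\chi_1(\Phi)-\chi_2(\Phi)\in R^\times$. Setting $T:=\wrho_{n,\alpha}(\Phi)$, I would define orthogonal idempotents
$$e_1:=u^{-1}(T-\chi_2(\Phi)),\qquad e_2:=1-e_1\in M_2(R),$$
and take $A:=e_1 B$, $C':=e_2 B$, obtaining an $R$-module decomposition $B=A\oplus C'$ with both summands rank-one projective (rank one on each $E_i$ by property~(c)) hence free over the local ring $R$. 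For $G_p$-stability of $A$, observe that $A\otimes_R E_i$ is the image of $T-\chi_2(\Phi)$ on $E_i^2$, which by property~(c) coincides with the unique $\chi_1|_{E_i}$-subrepresentation of $\wrho_{n,\alpha}\otimes E_i$ and is therefore $G_p$-stable. Since $A$ is an $R$-direct summand of $B$ it is saturated, so $A=B\cap\prod_i(A\otimes E_i)$ inside $B\otimes_\OL K$; as the right-hand side is manifestly $G_p$-stable, we conclude $\wrho_{n,\alpha}(\sigma)(A)\subset A$ for all $\sigma\in G_p$. Setting $C:=B/A\simeq C'$, the quotient $G_p$-action is realized as $\chi_2|_{E_i}$ on each $E_i$ and hence as $\chi_2$ in $R$; items (1)--(4) of the lemma then follow at once from the definitions of $\chi_1$ and $\chi_2$.

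The main obstacle is the remaining non-scalar subcase in which $\rhobar(\Frob_p)$ is a nontrivial Jordan block: here $\alpha=\beta$ in $k$, so $\chi_1\equiv\chi_2\pmod{\wtm}$ on all of $G_p$, and since in the setting of Theorem~\ref{theorem:galoisweight1} the weight $n-1$ is a positive multiple of $p-1$, the difference $\chi_1(\tau)-\chi_2(\tau)=\epsilon^{n-1}(\tau)-1$ also lies in $\wtm$ for every $\tau\in I_p$. Consequently no single element of $G_p$ provides a unit difference, and the idempotent construction above fails outright. In this subcase I would instead take $A:=\sum_{\sigma\in G_p}(\wrho_{n,\alpha}(\sigma)-\chi_2(\sigma))\cdot B$, which is $G_p$-stable by the identity
$$\wrho_{n,\alpha}(\sigma)(\wrho_{n,\alpha}(\sigma')-\chi_2(\sigma'))=(\wrho_{n,\alpha}(\sigma\sigma')-\chi_2(\sigma\sigma'))-\chi_2(\sigma')(\wrho_{n,\alpha}(\sigma)-\chi_2(\sigma)),$$
and then pass to its saturation in $B$; the existence of a unique $G_p$-stable line in $\bar B$ (which is present exactly because $\rhobar|G_p$ is a nontrivial extension) combined with Nakayama's lemma should force the saturation to be a free rank-one $R$-direct summand, with the quotient character given by $\chi_2$ via the trace and determinant identities established in the first paragraph.
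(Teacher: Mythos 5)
Your first two paragraphs are sound: the trace and determinant identities in $\WT_{n,\wtm}$ do follow from property (c) of \S\ref{section:interlude} together with the injection $\WT_{n,\wtm}\hookrightarrow\prod_i E_i$, and when $\alpha\neq\beta$ the element $e_1=u^{-1}(T-\chi_2(\Phi))$ really is an idempotent, so that subcase is complete. The genuine gap is in your last paragraph, i.e.\ precisely the subcase (non-scalar with $\alpha=\beta$) for which the lemma is actually needed in Case 2 of Theorem~\ref{theorem:galoisweight1}. The assertion that uniqueness of the $G_p$-stable line in $B/\wtm B$ together with Nakayama ``should force'' the saturation $A_1$ of $A_0=\sum_{\sigma}(\wrho_{n,\alpha}(\sigma)-\chi_2(\sigma))B$ to be a free rank-one direct summand is not an argument: knowing that the image of $A_1$ in $B/\wtm B$ is that single line does not make $A_1$ cyclic, because the map $A_1/\wtm A_1\to B/\wtm B$ need not be injective; and a saturated, $\OL$-torsion-free submodule of generic rank one over $\WT_{n,\wtm}$ need not be free or a summand (for instance $M=\OL\times\OL$ over $R=\{(a,b)\in\OL\times\OL:\ a\equiv b\ \mathrm{mod}\ \varpi\}$ is torsion-free of generic rank one but requires two generators). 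So the decisive integral step --- producing the splitting --- is exactly what is left unproved, and item (4) for $A$ is likewise not addressed in this case.

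The paper closes this by applying Nakayama on the quotient side, uniformly in both subcases, and this is how you should repair your sketch. Let $C=B/A_0$, which is by construction the maximal $\WT_{n,\wtm}$-quotient on which $G_p$ acts through $\lambda(U_p)$. Its reduction $C\otimes k$ is the maximal quotient of $\rhobar|G_p$ on which $\Frob_p$ acts by $\alpha$, and this is one-dimensional precisely because $\rhobar(\Frob_p)$ is non-scalar; hence $C$ is cyclic by Nakayama, say $C\cong\WT_{n,\wtm}/J$. On each characteristic-zero fibre $E_i$ the two diagonal characters of property (c) are distinct (since $\eps^{n-1}$ is ramified while $\lambda(U_p)$ is not), so $\dim_{E_i}C\otimes E_i=1$; thus $J\otimes\Q=0$, and as $J$ sits inside the $\OL$-flat ring $\WT_{n,\wtm}$ it is $\varpi$-torsion-free, so $J=0$ and $C$ is free of rank one. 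Freeness of $C$ yields the $\WT_{n,\wtm}$-splitting $B\simeq A\oplus C$ with $A$ free of rank one, and the $G_p$-action on $A$ is read off from the fibres $A\otimes E_i$ (where it is $\eps^{n-1}\langle p\rangle U_p^{-1}$) because $A$, being free over an $\OL$-flat ring, embeds into $A\otimes\Q$. Note that this argument makes no use of the dichotomy on $\rhobar(\Frob_p)$ beyond non-scalarity, so your case division (and the idempotent computation in the first subcase) is unnecessary once the quotient-side argument is in place.
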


\begin{proof} 
Let $C$ denote the maximal $\WT_{n,\wtm}$-quotient
on which $\Frob_p$ acts by $U_p$.
The construction of $C$ is given by taking  a quotient,
and thus its formulation is preserved under taking quotients of $B$.
Since $B$ is free of rank two, $B/\wtm$ has dimension $2$.
The action of $U_p$ on $C/\wtm$ is, by definition, given by the scalar $\alpha$.
Yet $B/\wtm$ as a $G_{p}$-representation is given by $\rhobar$,
and $\rhobar(\Frob_p)$  either
has distinct eigenvalues $\alpha$ and $\beta$ or is non-scalar by definition.
Hence $\dim C/\wtm = 1$, and by Nakayama's lemma, $C$ is cyclic.
On the other hand, by  well-known
properties of
the Galois actions arising from classical modular forms, we see that $C \otimes \Q$ has rank one, and thus $C$ is
free as a $\WT_{n,\wtm}$-module. It follows that $B \rightarrow C$
splits, and that  $A$ is also free of rank one.
Considering  once more the local Galois structure of
representations arising from  classical modular forms, it follows that
$G_{p}$ acts on $A \otimes \Q$ via $\eps^{n-1} \lambda(\langle p \rangle U^{-1}_p)$.
Since $\WT_{n,\wtm}$ is $\OL$-flat, the action of $G_{p}$ on $A$
itself is given by the same formula, proving the lemma.
\end{proof}

When $\rhobar(\Frob_p)$ is scalar, there does not exist such a
decomposition.
Instead,  in~\S~\ref{section:case3}, we shall study the local properties
of $\rho_{n,\alpha}$ and $\wrho_{n,\alpha}$ using finer properties of local
deformation rings.

 \subsection{Proof of Theorem~\ref{theorem:galoisweight1},
  Case \texorpdfstring{$1$: $\alpha \ne \beta$, $\rhobar(\Frob_p)$}{} has distinct eigenvalues}
 \label{section:case1}

We claim there is an isomorphism of $\T^{\univ}$-modules:
$$H^0(X_1(Q),\omega_{\OL/\varpi^m})_{\m} \simeq (\psi
H^0(X_1(Q),\omega_{\OL/\varpi^m})^2)_{\hatm_{\alpha}}$$
obtained by composing $\phi$ with the $\WT_{n}$-equivariant projection onto $(\psi H^0(X_1(Q),\omega_{\OL/\varpi^m})^2)_{\hatm_{\alpha}}$.
The argument is  similar to the proof of
Lemma~\ref{lemma:matt-w1}. 
We know that $U_p$ satisfies the equation $X^2-T_p X+\langle p\rangle
=0$ on the image of $\psi$ but we may not use Hensel's Lemma to deduce that
  there exist $\walpha$ and $\wbeta$ in $\T_{\m}/I_m$ such that
$(U_p - \walpha)(U_p - \wbeta) = 0$ on the $\m$-part of the image of
$\psi$, since we do not know \emph{a priori} that
$T_p$ lies in $\T_{\m}$.  Instead, we note the following.
Since $U_p$ acts invertibly on the image of $\psi$, we deduce from the equality
$T_p = U^{-1}_p \langle p \rangle + U_p$ that $T_p - \alpha - \beta$ lies in $\hatm_{\alpha}$
and $\hatm_{\beta}$, and thus acts nilpotently on $H^0(X_1(Q),\omega_{\OL/\varpi^m})_{\m}$.
It follows that $\T_{\m}/I_m[T_p]\subset \End(H^0(X_1(Q),\omega_{\OL/\varpi^m})_{\m})$ is a local ring with maximal ideal $\wtm$ which acts on
$H^0(X_1(Q),\omega_{\OL/\varpi^m})_{\m}$.
The operator $U_p$ \emph{does} satisfy the quadratic relation $X^2 - T_p X + \langle p \rangle = 0$
over $\T_{\m}/I_m[T_p]$, and hence by Hensel's Lemma there exists $\walpha$ and $\wbeta$
in $\T_{\m}/I_m[T_p]$ such that $(U_p-\walpha)(U_p-\wbeta)=0$ on the
$\m$-part of the image of $\psi$.
The argument then proceeds as in the proof of Lemma~\ref{lemma:matt-w1}, noting
(tautologically) that $H^0(X_1(Q),\omega_{\OL/\varpi^m})_{\wtm} = H^0(X_1(Q),\omega_{\OL/\varpi^m})_{\m}$.

It follows from the result just established
that $\T_{\m}/I_m[T_p][U_p]\subset \End(\im(\psi)_{\hatm_{\alpha}})$ is a quotient of $\WT_{n,\hatm_\alpha}$,
and $\T_{\m}/I_m$ is the corresponding quotient of $\T_{n,\m_{\alpha}}$.
Note that the trace of any lift of Frobenius on 
the corresponding quotient of $\rho_{n,\alpha}$  is equal to
$U_p + \langle p \rangle U^{-1}_p$, which is equal to $T_p$
in $\End(\mathrm{Im}(\psi)_{\hatm_{\alpha}})$. (We use here, as below, that $\eps^{n-1}$
is trivial modulo $\varpi^m$.)
In particular, this implies that $T_p \in \T_{\m}/I_m$.
We now define $\rho_{Q,m}$ to be the composition of
$\rho_{n,\alpha}$ with the surjection
$\T_{n,\m_\alpha}\onto \T_{\m}/I_m$, and
$\wrho_{Q,m}$ to be $\wrho_{n,\alpha}$ on the corresponding quotient
$\T_{\m}/I_m[U_p]$ of $\WT_{n,\wtm_{\alpha}}$. (Since $U_p = \walpha$ in $\T_{\m}/I_m[U_p]$,
the corresponding quotients of $\T_{n,\m_{\alpha}}$ and $\WT_{n,\wtm_{\alpha}}$ are the same.)
The character $\nu$ defined by the formula
$\nu:=\langle \det \rhobar
\rangle \det(\rho_{Q,m})^{-1}$ is thus unramified outside $Q$ and of $p$-power order. Since $p>2$, $\nu$ admits a square root $\eta$ (also
unramified outside $Q$).  We have established that $\rho'_{Q,m}:=
\rho_{Q,m}\otimes \eta$ satisfies all the
conditions of Definition \ref{defn:minimal}, except the condition that
it be unramified at $p$.
Equivalently, it suffices to show that $\wrho_{Q,m}$ is unramified at $p$.
By Lemma~\ref{lemma:ABC}, we may write
\[ \wrho_{Q,m}|G_p \cong
\begin{pmatrix}
  \lambda(\wbeta) & * \\
  0 & \lambda(\walpha) 
\end{pmatrix}.
\]
By symmetry, we could equally well have defined $\rho_{Q,m}$ 
by regarding $\T_{\m}/I_m$ as a quotient of
$\T_{n,\m_\beta}$. (Note that the Chebotarev density theorem and
\cite[Th\'eor\`eme 1]{carayol} imply that $\rho_{Q,m}$ is uniquely determined
by the condition that $\tr\rho_{Q,m}(\Frob_x)=T_x$ for all $(x,pNQ)=1$.) 
It follows that we also have
\[ \wrho_{Q,m}|G_p \cong
\begin{pmatrix}
  \lambda(\walpha) & * \\
  0 & \lambda(\wbeta) 
\end{pmatrix}.
\] 
Since $\alpha\neq\beta$, this forces $\rho_{Q,m}|G_p$ to split as a direct
sum of the unramified character $\lambda(\walpha)$ and
$\lambda(\wbeta)$. (Moreover, we see that 
$T_p = U_p^{-1}\langle p\rangle + U_p 
= \walpha + \wbeta = \tr(\rho_{Q,m}(\Frob_p)) \in \T_{\m}$.)

\subsection{Proof of Theorem~\ref{theorem:galoisweight1}, 
Case \texorpdfstring{$2$: $\alpha = \beta$, $\rhobar(\Frob_p)$}{} non-scalar}

We will assume below that~$\alpha = \beta$ is a generalized
eigenvalue of  $\rhobar(\Frob_p)$, and furthermore that $\rhobar(\Frob_p)$ is non-scalar.
However, we first prove the lemma below.

\begin{lemma} [Doubling] \label{lemma:doubling} Without any assumption
on $\rhobar(\Frob_p)$, the action of 
$\WT_{n,\wtm}$ on 
$\psi(H^0(X_1(Q),\omega_{\OL/\varpi^m})^2_{\m})$
factors through a quotient isomorphic to
$$\T_{\m}/I_m[T_p][X]/(X^2 - T_p X + \langle p \rangle),$$
where $U_p$ acts by $X$.
\end{lemma}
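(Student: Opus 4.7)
The plan is to work directly with the explicit matrix presentation of $U_p$ on $M := \psi(H^0(X_1(Q),\omega_{\OL/\varpi^m})^2_{\m})$ derived just before the lemma, namely the matrix $\left(\begin{smallmatrix} T_p & 1 \\ -\langle p \rangle & 0 \end{smallmatrix}\right)$ in the coordinates given by $\psi$. First I would observe that $U_p$ is invertible on $M$: the determinant of this matrix is $\langle p \rangle$, a unit, so $U_p(T_p - U_p) = \langle p \rangle$, whence $T_p = U_p + \langle p \rangle U_p^{-1}$ as operators on $M$. In particular, the weight-one operator $T_p$ lies in the image of $\WT_{n,\wtm}$ in $\End_{\OL}(M)$.

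Next, the Hecke operators $T_x, \langle a \rangle, U_y$ (for $x \nmid NQp$, $(a,NQ)=1$, $y \mid Q$) generating the sub-algebra $\T_n \subset \WT_n$ act on $M$ diagonally in the two copies supplied by $\psi$, through the natural quotient landing in $\T_{\m}/I_m$ after $\m$-localization. Combined with the previous step, the image of $\WT_{n,\wtm}$ in $\End_\OL(M)$ is generated over $\T_{\m}/I_m[T_p]$ by $U_p$, and Cayley--Hamilton applied to the matrix of $U_p$ forces $U_p^2 - T_p U_p + \langle p\rangle = 0$. This produces a natural surjection
\[ \T_{\m}/I_m[T_p][X]/(X^2 - T_p X + \langle p\rangle) \onto \text{(image of $\WT_{n,\wtm}$ in $\End_\OL(M)$)} \]
sending $X \mapsto U_p$.

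The heart of the argument is then to show this surjection is an isomorphism, i.e.\ that $\{1, U_p\}$ acts freely on $M$ over $\T_{\m}/I_m[T_p]$. By the injectivity of $\psi$ on the $\m$-localization, established earlier via Katz's result on the kernel of the $\theta$-operator in weight one, I may identify $M$ with $H^0(X_1(Q),\omega_{\OL/\varpi^m})_{\m}^{\oplus 2}$. For $a, b \in \T_{\m}/I_m[T_p]$, the operator $a + bU_p$ is then represented by the matrix $\left(\begin{smallmatrix} a+bT_p & b \\ -b\langle p\rangle & a \end{smallmatrix}\right)$, which annihilates $M$ only if $a=b=0$, using that $\T_{\m}/I_m[T_p]$ acts faithfully on $H^0(X_1(Q),\omega_{\OL/\varpi^m})_{\m}$ by its very definition as a sub-ring of endomorphisms.

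The main obstacle, though more notational than conceptual, is carefully distinguishing the weight-one operator $T_p$ --- a priori only an endomorphism of $H^0(X_1(Q),\omega_{\OL/\varpi^m})_{\m}$, whose relation to $\T_{\m}/I_m$ is not yet clear --- from the weight-$n$ operator $U_p \in \WT_{n,\wtm}$, and in tracking how the former is recovered inside the image of the latter via the invertibility relation above. The doubling lemma then cleanly packages this comparison as a ring-theoretic quotient of $\WT_{n,\wtm}$ controlling the action on $M$, which will be exactly the statement needed in the next step in order to descend $\wrho_{n,\alpha}$ to a Galois representation valued in the appropriate quadratic extension of $\T_{\m}/I_m$.
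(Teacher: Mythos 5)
Your proposal is correct and follows essentially the same route as the paper: note that the image of $\WT_{n,\wtm}$ on $\psi(H^0(X_1(Q),\omega_{\OL/\varpi^m})^2_{\m})$ contains $T_p = U_p + \langle p\rangle U_p^{-1}$ and that $U_p$ satisfies the quadratic, then rule out any extra relation $B + AU_p = 0$ with $A,B \in \T_{\m}/I_m[T_p]$ by writing $U_p$ as the explicit $2\times 2$ matrix and using the injectivity of $\psi$ (equivalently, faithfulness of $\T_{\m}/I_m[T_p]$ on $H^0(X_1(Q),\omega_{\OL/\varpi^m})_{\m}$). Your write-up merely makes the surjection-then-injectivity structure and the bookkeeping of the weight-one $T_p$ more explicit than the paper does.
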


\begin{proof} The action of $\WT_{n,\wtm}$ certainly contains
$T_p:=U_p + \langle p \rangle U^{-1}_p$, and moreover $U_p$
also satisfies the indicated relation. Thus it suffices to show that $U_p$
does not satisfy any \emph{further} relation. Such a relation would be of the form
$A U_p + B = 0$ for operators $A$, $B$ in $\T_{\m}/I_m[T_p]$.
By considering the action of $U_p$ as a matrix on the image of $\psi$, however, this
would imply an identity:
$$A \left( \begin{matrix}  T_p & 1 \\ -\langle p \rangle & 0 \end{matrix} \right)
 + B \left(\begin{matrix} 1 & 0 \\ 0 & 1 \end{matrix} \right) = 
 \left( \begin{matrix} 0 & 0 \\ 0 & 0 \end{matrix} \right),$$
 from which one deduces that $A = B = 0$ (the fact that one can
 deduce the vanishing of the entries is equivalent to the injectivity of $\psi$).
\end{proof}

Following Wiese~\cite{Wiese}, we call this phenomenon ``doubling'', because the corresponding
quotient of $\WT_{n,\wtm}$ contains two copies of the image of $\T_{n,\m}$.
We show that this implies that the corresponding Galois representation is unramified.

Note that the trace under~$\rho_{n,\alpha}$ of any lift of Frobenius
is sent 
to
$U_p + \langle p \rangle U^{-1}_p 
=  T_p$ in~$\T_{\m}/I_m$, and so
this in particular implies that $T_p \in \T_{\m}/I_m$. The image of $\T_{n,\m}$ under the map
$$\T_{n,\m} \rightarrow \WT_{n,\wtm} \onto \T_{\m}/I_m[T_p][U_p]$$
is given by $\T_{\m}/I_m=\T_{\m}/I_m[T_p]$. 
We thus obtain a Galois representation
$$\rho_{Q,m}: G_{\Q} \rightarrow \GL_2(\T_{\m}/I_m).$$
As in~\S~\ref{section:case1},
it suffices to prove that $\rho_{Q,m}$ is unramified at $p$.
Consider the Galois representation
$\wrho_{Q,m}: G_{\Q} \rightarrow \GL_2(\T_{\m}/I_m[U_p])$ obtained
by tensoring over $\T_{n,\m}$ with $\WT_{n,\wtm}$.
By  Lemma~\ref{lemma:doubling}, there is an isomorphism
$$\T_{\m}/I_m[U_p] \simeq \T_{\m}/I_m \oplus \T_{\m}/I_m$$
as a $\T_{\m}/I_m$-module. Since $\wrho_{Q,m}$
is obtained from $\rho_{Q,m}$ by tensoring with a doubled
module, it follows that
 there is an isomorphism
$\wrho_{Q,m} \simeq \rho_{Q,m} \oplus \rho_{Q,m}$
as a  $\T_{\m}/I_m[G_p]$-module (or even $\T_{\m}/I_m[G_{\Q}]$-module).

\begin{lemma} Let  $(R,\m)$ be a local ring, and let $N$, $M$, and $L$ be $R[G_{p}]$-modules
which are free $R$-modules of rank two.
Suppose there is an exact sequence of $R[G_{p}]$-modules
$$0 \rightarrow N \rightarrow M \oplus M \rightarrow L \rightarrow 0$$
which is split as a sequence of $R$-modules. Suppose
that $L/\m$ is indecomposable as  a $R[G_{p}]$-module.
Then $N \simeq M \simeq L$ as $R[G_{p}]$-modules.
\end{lemma}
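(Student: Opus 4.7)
The plan is to reduce the exact sequence modulo $\m$ and use the indecomposability of $Q/\m$ to pin down the structure. Because the sequence is $R$-split, tensoring with $k := R/\m$ preserves exactness and gives a short exact sequence
$$0 \to N/\m \to M/\m \oplus M/\m \to Q/\m \to 0$$
of $k[G_p]$-modules, with each of $N/\m$, $M/\m$, $Q/\m$ of $k$-dimension two. I would then write the middle map as $(\phi_1, \phi_2)$ where each $\phi_i : M \to Q$ is an $R[G_p]$-homomorphism, and the key step is to show that one of the $\phi_i$ reduces to a surjection modulo $\m$.

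Suppose for contradiction that neither does. Each image $\phi_i(M/\m)$ is then a proper $k[G_p]$-submodule of $Q/\m$, hence of $k$-dimension at most one; but their sum is all of $Q/\m$, so both images have dimension exactly one and are distinct. This yields a decomposition $Q/\m = \phi_1(M/\m) \oplus \phi_2(M/\m)$ as $k[G_p]$-modules, contradicting the indecomposability of $Q/\m$. Hence, after relabeling, $\phi_1 \bmod \m$ is surjective. By Nakayama's lemma, $\phi_1 : M \onto Q$ is surjective, and since $M$ and $Q$ are free $R$-modules of the same rank, $\phi_1$ must in fact be an isomorphism of $R[G_p]$-modules (its determinant is a unit).

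Finally, I would identify $N$ with the graph of $-\phi_1^{-1}\phi_2$. The $R[G_p]$-linear map $\alpha : M \to M \oplus M$ defined by $\alpha(m) = (-\phi_1^{-1}\phi_2(m),\, m)$ is $G_p$-equivariant, satisfies $(\phi_1, \phi_2)\circ \alpha = 0$, is injective (its second component is the identity), and maps onto $N$ (any $(m_1, m_2) \in N$ satisfies $\phi_1(m_1) = -\phi_2(m_2)$, forcing $m_1 = -\phi_1^{-1}\phi_2(m_2)$). Thus $\alpha$ induces an $R[G_p]$-isomorphism $M \simeq N$, and combining with $\phi_1 : M \simeq Q$ gives $N \simeq M \simeq Q$.

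The only non-formal step is the indecomposability argument in paragraph two; everything else is automatic from freeness and Nakayama. Without the hypothesis on $Q/\m$, the two component maps $\phi_1, \phi_2$ could perfectly well have complementary rank-one images with none of them an isomorphism, and the conclusion would fail, so this is where all the content of the lemma lies.
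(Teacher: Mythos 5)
Your proof is correct, but it is worth noting that the paper does not actually argue this lemma at all: it simply cites Proposition~4.4 of Wiese, remarking that Wiese states it for $\F_p$-algebras and with the indecomposability condition placed on the submodule $N$ rather than the quotient $Q$, ``but the proof is exactly the same.'' Your argument is a clean, self-contained replacement that works directly with the quotient formulation: the whole content is isolated in the observation that if neither component map $\phi_i\colon M\to Q$ is surjective modulo $\m$, then the two images are distinct one-dimensional $k[G_p]$-submodules whose (necessarily direct) sum is $Q/\m$, contradicting indecomposability; Nakayama plus the determinant trick then makes $\phi_1$ an $R[G_p]$-isomorphism, and the graph map $m\mapsto(-\phi_1^{-1}\phi_2(m),m)$ identifies $N$ with $M$. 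Two small bonuses of your route: you never actually use the $R$-splitting hypothesis except to reduce mod $\m$, and for that only right-exactness of $\otimes_R k$ plus freeness is needed (indeed the splitting is automatic anyway since $Q$ is free, hence projective); and you avoid the dualization implicit in transferring Wiese's submodule-side hypothesis to the quotient side, which is exactly the point the paper waves at parenthetically.
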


\begin{proof}
This is Proposition~4.4 of Wiese~\cite{Wiese} (we use $L$ here instead of $Q$ in~\cite{Wiese}
 to avoid
notational conflicts). Note that
the lemma is stated for $\F_p$-algebras $R$ and the stated condition is on the sub-module $N$
rather than the quotient $L$, but the proof is exactly the same.
\end{proof}

We apply this as follows. Consider the sequence of 
$\WT_{n,\wtm}$-modules considered in Lemma~\ref{lemma:ABC}.
If we tensor this sequence with the quotient of
$\WT_{n,\wtm}$ corresponding to the doubling isomorphism
$$\T_{\m}/I_m[U_p] = \T_{\m}/I_m \oplus \T_{\m}/I_m,$$
then the corresponding 
quotient of $B$ is  $\wrho_{Q,m}$,
which, by doubling, is
free of rank $4$ over $\T_{\m}/I_m$ and as a $\T_{\m}/I_m[G_{p}]$-module
is given by $\rho_{Q,m} \oplus \rho_{Q,m}$.
The corresponding quotients $A$ and $C$ are similarly free over
$\T_{\m}/I_m$ of rank $2$.
The action of $\Frob_p$ on $L/\m$ is given by $U_p$.
Since $U_p$ does not lie in $\m$
--- as this would contradict doubling --- it follows
that $(U_p - \alpha)$ acts nilpotently but non-trivially on $L/\m$,
and hence $L/\m$ is indecomposable (indeed, by construction,
$L/\m$ is free of rank one over $k[U_p]/(U_p - \alpha)^2$). Hence, by the lemma above,
there are isomorphisms $L \simeq \rho_{Q,m}$ as a $G_{p}$-module.
Yet $L$ is a quotient of $C$, which is  by construction unramified, and thus $\rho_{Q,m}$ is
also unramified.
Finally, we note that the trace of Frobenius
at $p$ is given by $U_p + \langle p \rangle U^{-1}_p = T_p$, so $T_p 
=\tr( \rho_{Q,m}(\Frob_p))$.

\subsection{Proof of Theorem~\ref{theorem:galoisweight1}, Case 
\texorpdfstring{$3$: $\alpha = \beta$, $\rhobar(\Frob_p)$}{} scalar} \label{section:case3}
The construction of the previous section 
gives a representation $\rho_{Q,m}$ which satisfies all the required deformation properties
with the possible exception of knowing that $\rho_{Q,m}$ is unramified at $p$.
In order to deal with the case when $\rhobar(\Frob_p)$ is scalar,
we shall have to undergo a closer study of local deformation rings.
Suppose that
$$\rhobar:G_{p} \rightarrow \GL_2(k)$$
is trivial. (If $\rhobar$ is scalar, it is trivial after twisting.)
 We introduce some local framed universal deformation rings associated
 to  $\rhobar$. Fix a lift $\phi_p \in G_p$ of $\Frob_p$.

In the definition below,   an \emph{eigenvalue} of a linear operator is defined to
be a root of
the corresponding characteristic polynomial.

\begin{df} For $A$ in $\CC_{\OL}$, let $D(A)$ denote the set of framed deformations of $\rhobar$ to
$A$, and let $\widetilde{D}(A)$ denote the framed
deformations together with an eigenvalue $\alpha$ of $\phi_p$.
Let these functors be represented by rings $\Rv$ and $\Rva$ respectively.
\end{df}

There is a natural inclusion $\Rv \rightarrow \Rva$ and $\Rva$ is
isomorphic to a quadratic extension of $\Rv$ (given by the
characteristic polynomial of $\phi_p$).
Kisin constructs certain quotients of $\Rv$ which capture characteristic zero quotients with good $p$-adic Hodge theoretic properties.
Let $\eps$ denote the cyclotomic character, let $\omega$ denote the
Teichm\"uller lift of the mod-$p$ reduction of $\eps$, and let $\chi = \eps \omega^{-1}$,
so $\chi \equiv 1 \mod \varpi$. We modify the choice of $\phi_p$ if
necessary so that $\chi(\phi_p)=1$. Let $R^{\univ,\chi^{n-1}}$ and
$\widetilde{R}^{\univ,\chi^{n-1}}$ denote the quotients of $\Rv$ and
$\Rva$ corresponding to deformations with determinant $\chi^{n-1}$.

\begin{theorem}
\label{thm:ord-def-ring}
 Fix an integer $n \ge 2$. 
\begin{enumerate}
\item There exists a unique
  reduced $\OL$-flat quotient
$\Rda$ of $\widetilde{R}^{\univ,\chi^{n-1}}$ such that points on the generic fiber of $\Rda$ correspond to
representations $\rho:G_{p} \rightarrow \GL_2(E)$ such that:
$$\rho \sim \left( \begin{matrix} \chi^{n-1} \lambda(\alpha^{-1}) & * \\ 0 &  \lambda(\alpha) \end{matrix}
\right)$$
\item The ring $\Rda$ is an integral domain which is normal,
  Cohen--Macaulay, and of relative dimension 4 over $\OL$.
\end{enumerate}
\end{theorem}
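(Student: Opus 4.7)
\medskip

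\noindent\textbf{Plan of proof.}
The strategy is to construct $\Rda$ via a projective resolution in the style of Kisin's work on local deformation rings. Introduce the auxiliary functor $\widetilde F : \CC_\OL \to \mathrm{Sets}$ sending $A$ to the set of quadruples $(\rho, \alpha, \CL)$ where $(\rho, \alpha) \in \widetilde{D}(A)$ with $\det \rho = \chi^{n-1}$, and $\CL \subset A^2$ is a rank-one $A$-module direct summand which is $G_p$-stable and on which $G_p$ acts by the unramified character $\lambda(\alpha)$. Because specifying $\CL$ amounts to specifying an $A$-point of $\mathbb P^1_A$, the functor $\widetilde F$ is represented by a closed subscheme $\widetilde Y$ of $\mathbb P^1_{\widetilde R^{\univ,\chi^{n-1}}}$, cut out by the closed conditions of $G_p$-stability of $\CL$ and of $G_p$ acting by $\lambda(\alpha)$; in particular the structure map $\pi : \widetilde Y \to \Spec \widetilde R^{\univ,\chi^{n-1}}$ is projective.

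Next I would prove that $\widetilde Y$ is smooth over $\OL$ of relative dimension $4$. Covering $\widetilde Y$ by the two standard affine charts of $\mathbb P^1$, a change of basis puts $\CL$ in the standard form, after which the moduli problem becomes: classify framed upper-triangular deformations of $\rhobar$ with fixed characters $\chi^{n-1}\lambda(\alpha^{-1})$ and $\lambda(\alpha)$ on the diagonal. A tangent-obstruction calculation identifies the obstruction group with an $H^2$ of a filtered piece of $\ad^0$; this $H^2$ vanishes by Tate local duality (the dual $H^0$ involves $\chi^{n-1}\omega$ on inertia, which is nontrivial since $p$ is odd). The local Euler characteristic formula for $G_p$ together with the $3$-dimensional framing contribution yields the relative dimension $4$.

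I would then define $\Rda$ to be the maximal $\OL$-flat, reduced quotient of $\widetilde R^{\univ,\chi^{n-1}}$ through which $\pi_\# : \widetilde R^{\univ,\chi^{n-1}} \to \Gamma(\widetilde Y, \OL_{\widetilde Y})$ factors. Uniqueness of such a quotient with the stated characterization of its generic fibre is automatic, since an $\OL$-flat reduced complete local $\OL$-algebra is determined by the image of its characteristic zero points, and for a generic-fibre point $\rho : G_p \to \GL_2(E)$ of the claimed form the line $\CL$ is forced to be the $\phi_p = \alpha$ eigenspace (unique as soon as $\chi^{n-1}\lambda(\alpha^{-2}) \neq 1$, which holds in characteristic zero). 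By construction $\pi$ factors as $\widetilde Y \to \Spec \Rda \hookrightarrow \Spec \widetilde R^{\univ,\chi^{n-1}}$, and the resulting map $\pi : \widetilde Y \to \Spec \Rda$ is projective and birational (an isomorphism on generic fibres).

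The main obstacle is establishing part $(2)$. Irreducibility of $\widetilde Y$ (hence that $\Rda$ is a domain) follows from connectedness of the special fibres of $\pi$ together with irreducibility of the generic fibre; the relative dimension $4$ for $\Rda$ then follows from birationality. For normality, I would apply Zariski's main theorem: $\pi$ is a projective birational morphism from a smooth scheme with geometrically connected fibres to a reduced $\OL$-flat scheme, so $\pi_*\OL_{\widetilde Y} = \OL_{\Spec \Rda}$ and $\Spec \Rda$ is normal. Cohen--Macaulayness is the subtlest point; I would try to deduce it either by exhibiting $\Rda$ as a quotient of the regular ring $\widetilde R^{\univ,\chi^{n-1}}$ by an ideal generated by a regular sequence of the expected length (comparing the codimension in the framed deformation ring with the minimal number of generators of the defining ideal, computed from the tangent--obstruction sequence), or by invoking results on the Cohen--Macaulay property of Kisin-type local models, using that $\widetilde Y \to \Spec \Rda$ is a resolution with nice fibre structure. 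This last step is where the hardest technical work lies.
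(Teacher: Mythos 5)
Your construction of the projective modification $\widetilde Y\subset\mathbb{P}^1$ over $\widetilde{R}^{\univ,\chi^{n-1}}$ and the definition of $\Rda$ as the reduced $\OL$-flat image is exactly the Kisin-style argument the paper invokes for part (1), and your regularity-plus-connectedness route to irreducibility is in the spirit of the cited Lemma 3.4.3 of \cite{G} (one small slip: to match the displayed shape, the unramified character $\lambda(\alpha)$ must act on the quotient $A^2/\CL$, not on the line $\CL$ itself). The genuine problems are in the two substantive claims of part (2). For normality, your appeal to Zariski's main theorem is circular: a projective birational morphism $\pi:\widetilde Y\to\Spec \Rda$ from a smooth scheme with connected fibres does \emph{not} give $\pi_*\mathcal{O}_{\widetilde Y}=\mathcal{O}_{\Spec \Rda}$ — that equality is essentially equivalent to the normality you are trying to prove. (The normalization of a nodal curve is projective, birational, has connected fibres, and has smooth source and reduced target, yet the target is not normal and $\pi_*\mathcal{O}\neq\mathcal{O}$.) Nothing in your setup rules out $\Spec\Rda$ being a non-normal scheme with $\widetilde Y$ as its "resolution."

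For Cohen--Macaulayness, the first strategy you propose cannot work: if $\Rda$ were cut out of the formally smooth ring $\widetilde{R}^{\univ,\chi^{n-1}}$ by a regular sequence it would be a complete intersection, hence Gorenstein, whereas $\Rda$ is \emph{not} Gorenstein — the paper's Theorem~\ref{theorem:three} shows its canonical module needs three generators (the Artinian reduction is $k[x,y,z]/(x^2,y^2,z^2,xy,xz,yz)$), and the multiplicity-two theorem depends precisely on this failure. The second strategy ("CM of Kisin-type local models from the existence of a nice resolution") is not a theorem either: having a smooth projective birational modification does not imply the target is Cohen--Macaulay or normal; one would need rational-singularity-type input or explicit equations. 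This is exactly where the paper's proof leans on Snowden \cite{Snowden}: the special fibre $\Rda\otimes k$ is identified with the completion of his explicitly presented ring $\mathcal{B}_1$ at $b=(1,1;0)$, and normality, Cohen--Macaulayness, and the dimension count are read off from that explicit presentation. Your proposal does not supply a substitute for this computation, so part (2) remains unproved.
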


\begin{proof} Part $1$ of this theorem is due to Kisin. For part $2$,
  the fact that $\Rda$ is an integral domain follows from the proof of
  Lemma 3.4.3 of \cite{G}. The rest of part $2$ follows 
from the method and results of Snowden (\cite{Snowden}). 
More precisely, Snowden works over an arbitrary finite extension of $\Q_p$ containing
$\Q_p(\zeta_p)$, and assumes that $n = 2$, so $\chi = \eps \omega^{-1} = \eps$.
However, this is exactly the hardest case --- since for us $p \ne 2$, $\chi^{n-1} \ne \eps$,
 our deformation problem
 consists of a single potentially crystalline component. In particular, the arguments of~\cite{Snowden}
show that $\Rda \otimes k$ is an integral
normal Cohen--Macaulay ring of dimension four, which is not Gorenstein,
and is identified (in the notation of \emph{ibid}.) with  the completion of
$\mathcal{B}_1$ at $b = (1,1;0)$.
\end{proof}

Let $\Rd$ denote the image of $\Rv$ in $\Rda$.
We also define the following rings:

\begin{df} Let $\Ru$ denote the largest quotient of $\Rd$ corresponding to unramified
deformations of $\rhobar$. Let $\Rua$ denote the corresponding quotient of $\Rda$.
\end{df}

We are now in a position to define two ideals of $\Rv$.

\begin{df} The \emph{unramified ideal} $\II$ is the kernel of the map $\Rv \rightarrow \Ru$.
 The \emph{doubling ideal} $\JJ$ is the annihilator of $\Rda/\Rd$ as an $\Rv$-module.
 \end{df}

\begin{lemma} There is an equality $\JJ =  \II$. \label{lemma:IIinJJ}
\end{lemma}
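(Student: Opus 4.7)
The plan is to identify $\Rda/\Rd$ as a cyclic $\Rv$-module and then base change to $\Ru$. First, $\Rva$ is a free $\Rv$-module of rank two with basis $\{1,\alpha\}$, since $\alpha$ satisfies the quadratic $X^2 - tX + 1 = 0$ (with $t = \tr\rho(\phi_p) \in \Rv$ and $\det\rho(\phi_p) = \chi^{n-1}(\phi_p) = 1$). Writing $\Rda = \Rva/J$ for an ideal $J \subset \Rva$, I see that $\Rda$ is generated as an $\Rv$-module by the images of $1$ and $\alpha$, while $\Rd$ is the sub-$\Rv$-module generated by $1$ alone. Hence $\Rda/\Rd$ is cyclic over $\Rv$, generated by the image of $\alpha$, and
\[ \JJ = \Ann_{\Rv}(\Rda/\Rd) = \{b \in \Rv : \exists\, a \in \Rv \text{ with } b\alpha - a \in J\}. \]

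Next I would pass to the unramified quotient. Since being unramified is a condition on the underlying representation only (not on the auxiliary choice of eigenvalue), the matrix entries $\rho(g)_{ij}$ for $g \in I_p$ that must vanish already lie in $\Rd$; hence the unramified ideal of $\Rda$ is generated by the unramified ideal of $\Rd$, and $\Rua = \Rda \otimes_{\Rd} \Ru$. Applying $-\otimes_{\Rd}\Ru$ to the short exact sequence $0 \to \Rd \to \Rda \to \Rda/\Rd \to 0$ yields the right-exact sequence of $\Ru$-modules
\[ \Ru \longrightarrow \Rua \longrightarrow (\Rda/\Rd)\otimes_{\Rd}\Ru \longrightarrow 0. \]
The cokernel is a quotient of $\Rda/\Rd$, and hence is annihilated by $\JJ$. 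I would then conclude by showing that this cokernel is a \emph{faithful} $\Ru$-module: its annihilator in $\Rv$ is then precisely $\II = \ker(\Rv \to \Ru)$, so $\JJ \subset \II$.

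The faithfulness claim is equivalent to saying that $\Rua$ is free of rank two over $\Ru$ with basis $\{1, \bar\alpha\}$. Since $\bar\alpha$ satisfies $X^2 - \bar t X + 1 = 0$ over $\Ru$ (where $\bar t$ is the trace of the universal unramified Frobenius), there is a natural surjection $\Ru[X]/(X^2 - \bar t X + 1) \twoheadrightarrow \Rua$ of free rank-two $\Ru$-modules, and the main obstacle is to rule out any further $\Ru$-linear relation between $1$ and $\bar\alpha$ in $\Rua$. I expect this to follow from the fact that the Frobenius eigenvalue is not generically defined over $\Ru$ itself, combined with the explicit description of the ordinary deformation ring $\Rda$ provided by Snowden's work cited in Theorem~\ref{thm:ord-def-ring}; any extra linear relation would, via the quadratic, produce a non-trivial relation on the universal unramified Frobenius trace that is incompatible with that description.
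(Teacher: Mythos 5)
Your reduction is exactly the paper's: base-changing $0 \to \Rd \to \Rda \to \Rda/\Rd \to 0$ along $\Rd \onto \Ru$ and observing that the cokernel is a quotient of $\Rda/\Rd$ (hence killed by $\JJ$) while, if it is faithful over $\Ru = \Rv/\II$, its annihilator in $\Rv$ is exactly $\II$, is precisely how the paper reduces the lemma to the faithfulness of $\Rua/\Ru$ as an $\Ru$-module. (A minor imprecision: $\Rv$ and $\Rva$ as defined in the paper do not have fixed determinant, so the quadratic there is $X^2-tX+d$ rather than $X^2-tX+1$; this does not affect the structure of your argument, since all you need is that $\Rda$ is generated over $\Rd$ by $1$ and $\alpha$.) The problem is the last step. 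You reduce faithfulness to showing that the surjection $\Ru[X]/(X^2-\bar{t}X+1)\onto \Rua$ admits no further $\Ru$-linear relation between $1$ and $\bar\alpha$, and then only say you ``expect'' this to follow from the eigenvalue not being ``generically defined over $\Ru$'' together with Snowden's description. That expectation is the entire content of the lemma, and the route you sketch cannot work as stated: since the determinant is fixed to be $\chi^{n-1}$ and must become trivial on inertia in any unramified quotient, both $\Ru$ and $\Rua$ are annihilated by $\varpi^m$, so these rings have no generic fibre at all, and they are non-reduced torsion rings full of zero divisors; faithfulness is exactly the assertion that no zero divisor $b$ satisfies $b\bar\alpha \in \Ru$, and a hypothetical relation $a+b\bar\alpha=0$ does not obviously descend to a relation in the trace $\bar t$ alone, so your final sentence restates the problem rather than outlining a proof.

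What the paper does at this point is compute both rings explicitly. Using the integral (Snowden-style) presentation of $\Rda$, it identifies $\Ru$ with the moduli of matrices over $\OL/\varpi^m$ that are trivial modulo $\varpi$ and have determinant one, namely $\Ru \simeq \OL/\varpi^m[[\phi_1,\phi_2,\phi_3,\phi_4]]/(\phi_1+\phi_4+\phi_1\phi_4-\phi_2\phi_3)$, and identifies $\Rua$ with $\Ru[\beta]/(\beta^2-(\phi_1+\phi_4)\beta-(\phi_1+\phi_4))$, which is visibly free of rank two over $\Ru$, so that $\Rua/\Ru\simeq\Ru$ is faithful. To complete your version you would have to prove this identification, and in particular that the unramified-with-eigenvalue moduli is the whole of the corresponding quotient of $\Rda$ — this is where the explicit integral description of Kisin's ordinary quotient $\Rda$ genuinely enters, and it is the step your proposal leaves open.
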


\begin{proof}  We first prove the inclusion $\JJ \subseteq \II$. By definition, $\Rv/\JJ$ acts faithfully on $\Rda/\Rd$, 
and it is the largest such quotient. Hence it suffices to show that
$\Rv/\II$ acts faithfully on 
$$(\Rda/\Rd) \otimes \Rv/\II
\simeq (\Rda/\II)/(\Rd/\II).$$
Since
$\Rda/\II \simeq \Rua$ and $\Rd/\II \simeq \Ru$,  it suffices to show
that $\Rua/\Ru$ is a faithful $\Ru = \Rv/\II$-module. (It is not \emph{a priori} obvious that
the map $\Ru \rightarrow \Rua$ is injective, so the notation $\Rua/\Ru$ is slightly misleading; however,
we prove it is so by explicit computation below.)
Let $\varpi^m$ denote the greatest power of $\varpi$ dividing $(\chi^{n-1}(g) - 1) \OL$
for all $g$ in the decomposition group at $p$. 
By considering determinants, $\Ru$ (and $\Rua$) is annihilated by $\varpi^m$.
The moduli space of matrices $\phi$ in $\OL/\varpi^m$ which
are trivial modulo $\varpi$ and have determinant one, that is, with
$$\phi = \left( \begin{matrix} 1 + \phi_1 & \phi_2 \\ \phi_3 & 1 + \phi_4 \end{matrix} \right),$$
is represented by:
$$\OL/\varpi^m[[\phi_1,\phi_2,\phi_3,\phi_4]]/(\phi_1 + \phi_4 + \phi_1 \phi_4 - \phi_2 \phi_3).$$
We show shortly that this ring is isomorphic to $\Ru$; admit this for a moment. 
The corresponding moduli space $\Rua$
of such matrices together with an eigenvalue $\alpha = 1 + \beta$ is represented by
$$\Rua \simeq \Ru[\beta]/(\beta^2 - (\phi_1 + \phi_4) \beta  - (\phi_1 + \phi_4))
\simeq \Ru \oplus \Ru,$$
where the last isomorphism is as an $\Ru$-module. Clearly  $\Ru$
acts faithfully on $(\Ru \oplus \Ru)/\Ru \simeq \Ru$, proving the inclusion $\JJ \subseteq \II$.
We now prove the equality of rings above. It suffices to prove it for $\Rua$. By construction,
the ring above certainly surjects onto $\Rua$. Hence, it suffices to show that this ring is naturally
a quotient of $\Rda$.
As in Snowden, the ring $\Rda$   represents the functor given by deformations to $A$ with
eigenvalue $\alpha$ satisfying the following equations:

\begin{enumerate}
\item $\phi \in M_2(A)$ has determinant $1$.
\item $\alpha$ is a root of the characteristic polynomial of $\phi$.
\item $\tr(g) = \chi^{n-1}(g) + 1$ for $g \in I_p$.
\item $(g-1)(g'-1) = (\chi^{n-1}(g) - 1)(g' - 1)$ for $g, g' \in I_p$.
\item $(g-1)(\phi - \alpha) = (\chi^{n-1}(g) - 1)(\phi - \alpha)$ for $g \in I_p$.
\item $(\phi - \alpha)(g -1) = (\alpha^{-1} - \alpha)(g - 1)$ for $g \in I_p$.
\end{enumerate}
To understand where these equations come from, one should imagine writing down the following equations:
$$\rho(\phi_p) = \phi \approx
\left( \begin{matrix} \alpha^{-1} & * \\ 0 & \alpha  \end{matrix} \right),
\qquad \rho(g) \approx \left( \begin{matrix}\chi^{n-1}(g)
  & * \\ 0 & 1 \end{matrix} \right), \ g \in I_p.$$
  We caution, however, that although one can find such a basis for any representation when $A$
  is a field,
  we do not claim that there exists any universal such basis (indeed, we presume that there does not).
Returning to our argument, it is now  trivial to observe that the quotient of $\Rda$ in which $g \in I_p$ is the identity is equal to the
ring we asserted to be $\Rua$ above (and that the image of $\Rv$ in this ring is what we asserted to be $\Ru$).

\medskip

We now prove the opposite inclusion, namely that $\II \subseteq \JJ$.
Instead of writing down a presentation of $\Rda$, it will suffice to note the following, which follows
from the explicit description above:
The ring $\Rda$ is generated over $\OL$ by the following elements which
all lie in the maximal ideal:
\begin{enumerate} 
\item Parameters $\phi_i$ (for $i = 1$ to $4$) corresponding to the image of
$\rho(\phi_p) - 1$,
\item Parameters $x_{ij}$ for $i = 1$ to $4$ and a finite number of $j$
 corresponding to the image of inertial elements $m_j = \rho(g_j) - 1$.
 \item An element $\beta$, where $\alpha = 1+ \beta$ is an eigenvalue of
 $\rho(\phi_p)$.
 \end{enumerate}
 Moreover, $\Rd$ is generated as a sub-algebra by $\phi_i$ and
 the $x_{ij}$, and $\beta$ satisfies
 $$\beta^2 - (\phi_1 + \phi_4) \beta - (\phi_1 + \phi_4) = 0.$$
 Since the determinant of $\phi$ is one, it follows that
 $\alpha + \alpha^{-1} = 2 + \phi_1 + \phi_4$.
 By definition,
 there is a decomposition of $\Rd$-modules $\Rda/\JJ = \Rd/\JJ \oplus
 \beta \Rd/\JJ$ with each summand being free over $\Rd/\JJ$.
From the equality ($6$), we deduce that the relation
 $$(\phi - 1)m_j  - (\phi_1 + \phi_4) m_j = (\alpha^{-1} - 1 - \phi_1 - \phi_4) m_j = 
 -(\alpha - 1) m_j = -\beta m_j$$
 holds in $M_2(\Rda)$, and hence also in $M_2(\Rda/\JJ)$.
 Yet by assumption, over $\Rd/\JJ$, the modules $\Rd/\JJ$ and $\beta \Rd/\JJ$ have
 trivial intersection, from which it follows that $\beta m_j = 0$ in $M_2(\beta \Rd/\JJ)$.
 In particular, since the latter module is generated by $\beta$, we must have
 $x_{ij} \in \JJ$ for all $i$ and $j$. Since $\II$ is generated by $x_{ij}$,
 we deduce that $\II \subset \JJ$, and hence that
 $\II = \JJ$.
 \end{proof}

\begin{remark} \emph{Why might one expect an equality $\II = \JJ$?
One reason is as follows. The doubling ideal $\JJ$ represents the largest
quotient of $\Rda$ on which the eigenvalue of Frobenius $\alpha$
\emph{cannot be distinguished from its inverse $\alpha^{-1}$}. 
Slightly more precisely, it is the largest quotient for which there is an isomorphism
$\Rda/\JJ \rightarrow \Rda/\JJ$ fixing the image of $\Rd$ and sending $\alpha$ to~$\alpha^{-1}$.
It is clear that such an isomorphism exists for
 unramified representations. Similarly, for a ramified ordinary
quotient, one might expect that the $\alpha$ can be distinguished
from $\alpha^{-1}$ by looking at the ``unramified quotient line'' of the representation.
Indeed, for characteristic zero representations this is clear --- one even
has $\Rd[1/\varpi] \simeq \Rda[1/\varpi]$.}
\end{remark}

\begin{lemma} There is a surjection $\T_{n,\m} \otimes_{\Rd} \Rda \rightarrow \WT_{n,\wtm}$.
\end{lemma}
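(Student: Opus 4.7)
The plan is to assemble a commutative square of $\OL$-algebra homomorphisms
$$
\begin{array}{ccc}
\Rd & \longrightarrow & \T_{n,\m} \\
\downarrow & & \downarrow \\
\Rda & \longrightarrow & \WT_{n,\wtm}
\end{array}
$$
(the verticals being the inclusion $\Rd\hookrightarrow\Rda$ built into the definition of $\Rd$ and the injection $\T_{n,\m}\hookrightarrow\WT_{n,\wtm}$ valid in the case $\alpha=\beta$), from which we obtain a map $\T_{n,\m}\otimes_{\Rd}\Rda \to \WT_{n,\wtm}$. Surjectivity will then be essentially automatic: by construction $\WT_{n,\wtm}$ is generated as a $\T_{n,\m}$-algebra by $U_p$ alone, and the map $\Rda\to\WT_{n,\wtm}$ will send the extra generator $\alpha\in\Rda$ (the distinguished eigenvalue of $\phi_p$) precisely to $U_p$.

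First I would construct the map $\Rda\to\WT_{n,\wtm}$. By property (c) of Section~\ref{section:interlude}, $\wrho_{n,\alpha}|_{G_p}$, in a suitable choice of framing, is of the ordinary shape with $\lambda(U_p)$ on the unramified quotient and $\epsilon^{n-1}\lambda(\langle p\rangle/U_p)$ on the sub. Together with the distinguished eigenvalue $\alpha := U_p$ of $\phi_p$, this data defines a point of the functor represented by $\Rda$ in Theorem~\ref{thm:ord-def-ring}(1), and the universal property supplies the desired homomorphism, sending $\alpha\mapsto U_p$.

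Next I would produce the map $\Rd\to\T_{n,\m}$. The restriction $\rho_{n,\alpha}|_{G_p}$, equipped with the restricted framing, yields a map $\varphi:\Rv\to\T_{n,\m}$. Because $\wrho_{n,\alpha}$ is obtained from $\rho_{n,\alpha}$ by composing with $\T_{n,\m}\hookrightarrow\WT_{n,\wtm}$, the composite $\Rv\xrightarrow{\varphi}\T_{n,\m}\hookrightarrow\WT_{n,\wtm}$ agrees with $\Rv\to\Rva\to\Rda\to\WT_{n,\wtm}$. The injectivity of $\T_{n,\m}\hookrightarrow\WT_{n,\wtm}$ then forces $\ker\varphi$ to contain the kernel of $\Rv\twoheadrightarrow\Rd$ (the latter being defined as the image of $\Rv$ in $\Rda$), so $\varphi$ descends to $\Rd\to\T_{n,\m}$. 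The square commutes by construction; the resulting map $\T_{n,\m}\otimes_{\Rd}\Rda\to\WT_{n,\wtm}$ has image containing $\T_{n,\m}$ together with $U_p$, hence equals all of $\WT_{n,\wtm}$.

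The one step requiring genuine care is verifying that $\wrho_{n,\alpha}|_{G_p}$ really lands in the locus cut out by $\Rda$, which means matching the determinant condition $\chi^{n-1}$. Since $n-1$ is the weight of a power of the Hasse invariant one has $(p-1)\mid(n-1)$, so $\chi^{n-1}$ and $\epsilon^{n-1}$ coincide on $G_p$; the residual discrepancy is an unramified character of order prime to $p$ coming from the diamond characters (whose $p$-part was killed by the choice of $H$), and such a character admits a square root by Hensel and can be absorbed into an unramified twist of the representation compatible with the choice of $\alpha$. Once this normalisation bookkeeping is done, everything reduces to the universal properties of the deformation rings and the generation statement above.
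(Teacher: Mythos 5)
There is a genuine gap in the step you yourself flag as the crux: the construction of the map $\Rda \to \WT_{n,\wtm}$. You argue that, by property (c) of the interlude, $\wrho_{n,\alpha}|_{G_p}$ is of the ordinary shape over $\WT_{n,\wtm}$ and then invoke ``the universal property'' of $\Rda$. Neither half of this works. Property (c) is a statement about $\phi\circ\wrho_{n,\alpha}|_{G_p}$ for homomorphisms $\phi:\WT_{n,\wtm}\to E$ into \emph{characteristic-zero fields}; it does not give an integral filtration of $\wrho_{n,\alpha}|_{G_p}$ over $\WT_{n,\wtm}$ itself. Indeed, the lemma is needed precisely in Case 3, where $\rhobar(\Frob_p)$ is scalar, and the paper notes explicitly (right after Lemma~\ref{lemma:ABC}) that in this case no such decomposition of $B=\WT_{n,\wtm}^2$ exists — that failure is the whole reason for the detour through local deformation rings. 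Moreover, $\Rda$ has no universal property of the kind you use: by Theorem~\ref{thm:ord-def-ring} it is the unique \emph{reduced, $\OL$-flat} quotient of $\widetilde{R}^{\univ,\chi^{n-1}}$ whose generic-fibre points have the prescribed shape; it does not represent a functor of ``ordinary deformations with chosen Frobenius eigenvalue'' on arbitrary (possibly torsion) coefficient rings, so one cannot produce a map $\Rda\to\WT_{n,\wtm}$ simply by exhibiting an ordinary-looking representation over $\WT_{n,\wtm}$.

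The correct route, and the one the paper takes, is to exploit that $\WT_{n,\wtm}$ acts faithfully on $H^0(X_1(Q),\omega^n_{\OL})_{\wtm}$ and hence embeds into a product $\prod E_i$ of characteristic-zero fields attached to classical ordinary eigenforms of weight $n$ and level $\Gamma_1(NQ)$. Since $U_p$ is a Frobenius eigenvalue, $\WT_{n,\wtm}$ is naturally an $\Rva$-algebra, and each projection $\Rva\to E_i$ yields a point of the prescribed ordinary shape; the defining (reduced, $\OL$-flat, generic-fibre) characterization of $\Rda$ then shows that $\Rva\to\prod E_i$, and hence (by injectivity of $\WT_{n,\wtm}\hookrightarrow\prod E_i$) $\Rva\to\WT_{n,\wtm}$, factors through $\Rda$, sending $\alpha$ to $U_p$. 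From that point on your argument agrees with the paper's: the restriction to $\Rv$ lands in $\T_{n,\m}$ and factors through $\Rd$ (the image of $\Rv$ in $\Rda$), one gets the map $\T_{n,\m}\otimes_{\Rd}\Rda\to\WT_{n,\wtm}$, and surjectivity follows because the image contains $\T_{n,\m}$ and $U_p$, which generate $\WT_{n,\wtm}=\T_{n,\m}[U_p]$. Your closing discussion of the determinant normalization is a reasonable bookkeeping remark, but it does not repair the missing construction above.
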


\begin{proof} Recall that $\WT_{n,\wtm} = \T_{n,\m}[U_p]$. Since $U_p$ is given as an eigenvalue
of Frobenius,  $\WT_{n,\wtm}$ naturally has the structure of a $\Rva$-algebra. We claim that
the map from $\Rva$ to $\WT_{n,\wtm}$ factors through $\Rda$. Since $\WT_{n,\wtm}$ acts
faithfully on a space of modular forms,
there is an injection:
$$\WT_{n,\wtm} \hookrightarrow \prod E_i$$
into a product of fields corresponding to the Galois representations associated
to the ordinary modular forms of weight $n$ and level $\Gamma_1(NQ)$.
By the construction of $\Rda$, it follows that the map from $\Rva$ to
this product factors through
via $\Rda$. This also implies that the map from  $\Rv$ to $\WT_{n,\wtm}$ 
(and hence 
to $\T_{n,\m}$) factors through
$\Rd$, and hence there exists a map
$$\T_{n,\m} \otimes_{\Rd} \Rda \rightarrow \WT_{n,\wtm},$$
sending $\alpha \in \Rda$ to $U_p$. Yet the image of this map contains
both $\T_{n,\m}$ and $U_p$, and is thus surjective.
\end{proof}

\begin{df} Let the \emph{global
doubling ideal} $\JJg$ be the annihilator of $\WT_{n,\wtm}/\T_{n,\m}$ as an $\Rd$-module.
\end{df}

Since there is a surjection
$\T_{n,\m} \otimes_{\Rd} \Rda \rightarrow \WT_{n,\wtm}$, it follows that
 $\WT_{n,\wtm}/\T_{n,\m}$
 is a quotient of
 $$(\T_{n,\m} \otimes_{\Rd} \Rda)/\T_{n,m}
 = (\T_{n,\m} \otimes_{\Rd} \Rda)/\T_{n,m} \otimes_{\Rd} \Rd
 \simeq \T_{n,\m} \otimes_{\Rd} \Rda/\Rd$$
  as an $\Rd$-module.
 In particular, by considering the action on the last factor, we deduce that $\JJ \subset \JJg$.
 In particular, $\II \subset \JJg$, or equivalently, on any quotient of $\T_{n,\m}$ on which
 the corresponding quotient of $\WT_{n,\wtm}$ is doubled
 (in the sense that the quotient of $\WT_{n,\wtm}$ is free of rank $2$ as a module
 for the image of $\T_{n,\m}$), the action of the Galois group
 at $p$ is unramified. 
 In particular, by Lemma~\ref{lemma:doubling}, this applies to
 the quotient of $\WT_{n,\wtm}$ given by $\T_{\m}/I_m[T_p][U_p]$.
 Specifically, as in  the previous sections, we obtain corresponding 
 Galois representations:
$$\rho_{Q,m}:G_{\Q} \rightarrow \GL_2(\T_{\m}/I_m), \qquad 
\wrho_{Q,m}: G_{\Q} \rightarrow \GL_2(\T_{\m}/I_m[U_p]).$$
(The trace of any lift of Frobenius on this quotient is equal to
$U_p + \langle p \rangle U^{-1}_p  =  T_p$, and so
$T_p \in \T_{\m}/I_m$.)
From the discussion above, we deduce that $\wrho_{Q,m}$ and
thus $\rho_{Q,m}$ is unramified at $p$, and that
$\mathrm{Trace}(\rho_{Q,m}(\Frob_p)) = T_p$. The rest of the argument
follows as in Case~\ref{section:case1}, and
this completes the proof of
Theorem~\ref{theorem:galoisweight1} 
 \end{proof}

\subsection{Modularity Lifting}
\label{sec:modularity-lifting-w1}
We now return to the situation of Section
\ref{sec:deform-galo-repr-w1}. 
Taking $Q=1$ in Theorem~\ref{theorem:galoisweight1}, we obtain a
minimal deformation $\rho':G_\Q \to \GL_2(\T_{\emptyset,\mE})$ of $\rhobar$ and hence a
homomorphism $\varphi:R^{\min} \to \T_{\emptyset,\mE}$ which is easily
seen to be surjective. Recall that Assumption \ref{novexing} is still
in force.

\begin{theorem}
  \label{thm:main-thm-w1}
The map $\varphi : R^{\min}\onto\T_{\emptyset,\mE}$ is an isomorphism and
$\T_{\emptyset,\mE}$ acts freely on $H_0(X,\omega)_{\mE}$.
\end{theorem}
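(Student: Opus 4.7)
The plan is to apply the patching result Proposition~\ref{prop:patching} with $R = R^{\min}$ and $H = H_0(X,\omega)_{\mE}$, deducing that $H$ is free over $R^{\min}$. Since $\T_{\emptyset,\mE}$ acts faithfully on $H$ and the surjection $\varphi : R^{\min} \onto \T_{\emptyset,\mE}$ factors this action, freeness will force $\ker(\varphi)$ to annihilate $H$, and hence $\varphi$ to be an isomorphism. Freeness of $H$ as a $\T_{\emptyset,\mE}$-module is then immediate.

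To set up the patching data, let $q = \dim_k H^1_{\emptyset}(\Q,\ad^0\rhobar(1))$ and $R_\infty = \OL[[x_1,\dots,x_{q-1}]]$. For each $N \ge 1$, Proposition~\ref{prop:tw-primes-w1} provides a Taylor--Wiles set $Q_N$ of size $q$ with $x \equiv 1 \mod p^N$ for every $x \in Q_N$. Let $\Delta_N$ be the maximal $p$-power quotient of $(\Z/Q_N\Z)^\times$ (so $\Delta_N \cong (\Z/p^N\Z)^q$), set $S_N = \OL[\Delta_N]$, and let $X_{\Delta_N}(Q_N)$ be the corresponding cover of $X_0(Q_N)$. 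Fix a root $\alpha_x \in k$ of $X^2 - T_x X + \langle x\rangle \mod \mE$ for each $x \in Q_N$ and let $\m$ denote the corresponding maximal ideal of the Hecke algebra at level $Q_N$. Theorem~\ref{theorem:galoisweight1} then yields a deformation $\rho'_{Q_N}$ of $\rhobar$ which is minimal outside $Q_N$ and is valued in $\T_{\m}$, hence a surjection $R_{Q_N} \onto \T_{\m}$. Since the reduced tangent space of $R_{Q_N}$ has dimension $q-1$, we may lift this to a surjection $\phi_N : R_\infty \onto R_{Q_N}$. Set $H_N = H_0(X_{\Delta_N}(Q_N),\omega)_{\m}$, regarded as an $R_\infty \otimes_\OL S_N$-module via $\phi_N$ composed with $R_{Q_N} \onto \T_{\m}$ and via the diamond action of $\Delta_N$.

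Three hypotheses of Proposition~\ref{prop:patching} must be verified. The image of $S_N$ in $\End_\OL(H_N)$ must be contained in that of $R_\infty$: at each $x \in Q_N$, $\rhobar(\Frob_x)$ has distinct eigenvalues, so $\rho'_{Q_N}|G_x$ decomposes as a sum of two characters, and the ratio of these characters restricted to $I_x$ produces a homomorphism $\Delta_N \to R_{Q_N}^\times$ which coincides with the diamond action on $H_N$; composition with $\phi_N$ realizes the $S_N$-action inside the image of $R_\infty$. The coinvariants isomorphism $(H_N)_{\Delta_N} \iso H$ follows from the Hochschild--Serre spectral sequence used in the proof of Proposition~\ref{prop:balanced-homology-w1} (which identifies $(H_N)_{\Delta_N}$ with $H_0(X_0(Q_N),\omega)_{\m}$), combined with the homological analogue of Lemma~\ref{lemma:matt-w1}, which identifies the latter with $H_0(X,\omega)_{\mE}$. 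Finally, $H_N$ is balanced over $S_N$ by Proposition~\ref{prop:balanced-homology-w1}.

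The main obstacle is the first condition: verifying that the $S_N$-action on $H_N$ is realized through the universal deformation. This requires analyzing $\rho'_{Q_N}|G_x$ at each $x \in Q_N$ to extract a tame inertial character matching the diamond action on $H_N$, and lifting the resulting homomorphism $S_N \to R_{Q_N}$ to one $S_N \to R_\infty$ compatible with $\phi_N$ (the formal smoothness of $S_\infty = \OL[[\Z_p^q]]$ over $\OL$ makes such a lift available). Once all three hypotheses are established, Proposition~\ref{prop:patching} yields the freeness of $H$ over $R^{\min}$, and the theorem follows.
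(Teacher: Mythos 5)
Your proposal is essentially the paper's own proof: the same application of Proposition~\ref{prop:patching} with $R=R^{\min}$, $H=H_0(X,\omega)_{\mE}$, Taylor--Wiles sets from Proposition~\ref{prop:tw-primes-w1}, the modules $H_N=H_0(X_{\Delta_N}(Q_N),\omega)_{\m}$, condition (a) supplied by the construction in Theorem~\ref{theorem:galoisweight1}, condition (b) via Hochschild--Serre plus (the homological form of) Lemma~\ref{lemma:matt-w1}, and condition (c) via Proposition~\ref{prop:balanced-homology-w1}. The only small correction: take a \emph{chosen} surjection $\Delta_{Q_N}=\prod_{x\in Q_N}(\Z/x)^{\times}\onto(\Z/p^N\Z)^q$ rather than the maximal $p$-power quotient (which may be strictly larger since $x\equiv 1 \bmod p^N$ is only a lower bound), exactly as the paper does.
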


\begin{proof}
 We view $H_0(X,\omega)_{\mE}$ as an $R^{\min}$-module via $\varphi$. Since $\varphi$ is surjective, to prove the theorem, it suffices to show
  that $H_0(X,\omega)_{\mE}$ is free over $R^{\min}$. To show this, we will apply Proposition
  \ref{prop:patchingimaginary}.

We set $R=R^{\min}$ and $H=H_0(X,\omega)_{\mE}$ and we define 
\[ q := \dim_k H^1_\emptyset(G_{\Q},\ad^0 \rhobar(1)) .\]
Note that $q \geq 1$ by Proposition \ref{prop:tw-primes-w1}. As in
Proposition \ref{prop:patchingimaginary}, we set $S_N=\OL[(\Z/p^N\Z)^q]$ for
each integer $N\geq 1$ and we let
$R_\infty$ denote the power series ring
$\OL[[x_1,\ldots,x_{q-1}]]$. 
For each integer $N\geq 1$, fix a set of primes $Q_N$ of $\Q$
satisfying the properties of Proposition \ref{prop:tw-primes-w1}. We can
and do fix a surjection $\wt\phi_N : R_\infty \onto R_{Q_N}$ for each
$N\geq 1$. We let $\phi_N$ denote the composition of $\wt\phi_N$ with
the natural surjection $R_{Q_N}\onto R^{\min}$. Let
\[ \Delta_{Q_N} = \prod_{x\in Q_N}(\Z/x)^\times \] and choose a
surjection $\Delta_{Q_N}\onto \Delta_N:= (\Z/p^N\Z)^q$. Let
$X_{\Delta_N}(Q_N)\to X_0(Q_N)$ denote the corresponding Galois
cover. For each $x \in Q_N$, choose an eigenvalue $\alpha_x$ of
$\rhobar(\Frob_x)$. We let $\T_{Q_N}$ denote the Hecke algebra denoted
$\T$ in Section~\ref{specify} with the $Q$ of that section taken to be
the current $Q_N$. We let $\m$ denote the maximal ideal of $\T_{Q_N}$
generated by $\mE$ and $U_x -\alpha_x$ for each $x\in Q_N$. We set
$H_N:= H_0(X_{\Delta_N}(Q_N),\omega_{\OL})_\m$. Then $H_N$ is
naturally an $\OL[\Delta_N]=S_N$-module. By Theorem~\ref{theorem:galoisweight1},
we deduce the
existence of a surjective homomorphism $R_{Q_N}\onto
\T_{Q_N,\m}$.  Since $\T_{Q_N,\m}$ acts on $H_N$, we get an induced
action of $R_\infty$ on $H_N$ (via $\wt\phi_N$ and the map
$R_{Q_N}\onto \T_{Q_N,\m}$). We can therefore view $H_N$ as a module
over $R_\infty\otimes_{\OL}S_N$. 

To apply Proposition
\ref{prop:patchingimaginary}, it remains to check points~(\ref{cond-image})--(\ref{cond-balanced}). We check these conditions one by one:
\begin{itemize}
\item[(a)] The image of $S_N$ in $\End_\OL(H_N)$ is contained in the
  image of $R_\infty$ by construction (see
  Theorem~\ref{theorem:galoisweight1}). The second part of
  condition~\eqref{cond-image} is a consequence of the following:
  for each $x \in Q_N$, the restriction to $G_x$ of the universal
  representation $G_{\Q} \to \GL_2(R_{Q_N})$ is of the form
  $\chi_{\alpha_x} \oplus \chi_{\beta_x}$ where each summand is of
  rank 1 over $R_{Q_N}$ and where $\chi_{\alpha_x}$ lifts
  $\lambda(\alpha_x)$. By restricting $\chi_{\alpha_x}$ to $I_x$ for
  each $x\in Q_N$, we obtain, by local class field theory, a map
  $\OL[\Delta_{Q_N}] \to R_{Q_N}$. The quotient of $R_{Q_N}$ by the
  image of the augmentation ideal of $\OL[\Delta_{Q_N}]$ is just $R^{\min}$.
\item[(b)] As in the proof of Proposition~\ref{prop:balanced-homology-w1},
we have a Hochschild-Serre spectral sequence
\[ \Tor_i^{S_N}(H_j(X_{\Delta_N}(Q_N),\omega)_\m,\OL)\implies
H_{i+j}(X_0(Q_N),\omega)_\m .\] We see that $(H_N)_{\Delta_N} \cong
H_0(X_0(Q_N),\omega)_\m$. Then, by Lemmas~\ref{lemma:matt-w1}
and~\ref{lem:cohom-basic-props} we obtain an isomorphism
$(H_N)_{\Delta_N} \cong H_0(X,\omega)_{\mE} = H$, as required.
\item[(c)] The module $H_N$ is finite over $\OL$ and hence over
  $S_N$. Proposition \ref{prop:balanced-homology-w1} implies that
  $d_{S_N}(H_N)\geq 0$.
\end{itemize}
We may therefore apply Proposition \ref{prop:patchingimaginary} to deduce that
$H$ is free over $R$, completing the proof.
\end{proof}

We now deduce Theorem~\ref{weightone}, under
Assumption~\ref{novexing}, from the previous result. In the statement
of Theorem~\ref{weightone}, we take $X_U=X=X_1(N)/H$ and
$\CL_\sigma=\OL_X$ and, as in the statement, we let $\T$ be the Hecke algebra of
$H^1(X,\omega)$ (generated by prime-to-$Np$ Hecke operators) and $\m$
the maximal ideal of $\T$ corresponding to $\rhobar$. Analogous to the
discussion preceding Prop.~\ref{prop:balanced-homology-w1}), we have
a Hecke equivariant isomorphism  $H_0(X,\omega)\iso H^1(X,\omega)$ which thus gives
rise to an isomorphism $\T_{\emptyset,\mE}\iso \T_\m$. 

We also show that
$H_0(X,\omega)_{\mE}$ has rank one as a $\T_{\emptyset,\mE}$-module: this follows by multiplicity one for $\GL(2)/\Q$ if $H_0(X,\omega_K)_{\mE}$
is non-zero. In the finite case, we argue as follows.
By Nakayama's lemma it suffices to show that
$H^0(X,\omega_{k}(-\infty))[\mE]$ has dimension one.
We claim that~$U_x \in \T_{\emptyset,\mE}$ for all~$x|N$. This is a consequence of the assumption that~$N(\rhobar) = N$ as we now explain.
Suppose that~$x \| N$. Then the~$\T^2_{\m}$-representation has a unique invariant~$\T_{\m}$-line on
which~$\Frob_x$ acts by~$U_x$, and so~$U_x \in \T_{\m}$. On the other hand, if~$x^2 | N$, then~$U_x = 0$ is also in~$\T_{\m}$.
Since we have also shown that $T_p \in \T_{\emptyset,\mE}$, we may
deduce this from the fact that $q$-expansion is completely
determined by the Hecke eigenvalues $T_x$ for all $(x,N) = 1$ and~$U_x$ for all~$x | N$.

\subsection{Vexing Primes}
\label{section:vexing}
\label{sec:mod-curves-vexing}

In this section, we detail the modifications to the previous arguments which
are required to deal with vexing primes. To recall the difficulty, recall
that a prime $x$ different from $p$ is {\bf vexing\rm} if:
\begin{enumerate}
\item $\rhobar | D_x$ is absolutely irreducible.
\item $\rhobar | I_x \simeq \xi \oplus \xi^c$ is reducible.
\item $x \equiv -1 \mod p$.
\end{enumerate}
The vexing nature of these primes can be described as follows:
in order to realize $\rhobar$ automorphically, one must work with
$\Gamma_1(x^n)$ structure where $x^n$ is the Artin conductor of $\rhobar | D_x$.
However, according to local Langlands, at such a level we also expect to see
\emph{non}-minimal deformations of $\rhobar$, namely, deformations with
$\rho | I_x \simeq \psi \langle \xi \rangle \oplus \psi^{-1}\langle \xi^c \rangle$, where $\psi$
is a character of $(\F_{x^2})^{\times}$ of $p$-power order.
Diamond~\cite{DiamondVexingTwo} was the first to address this problem by observing that one can
cut out a smaller space of modular forms by using the  local Langlands correspondence.
The version of this argument in~\cite{CDT} can be explained as follows.
By Shapiro's Lemma, working with trivial coefficients at level $\Gamma(x^n)$ is the same as working at level
prime to $x$ where one now replaces trivial coefficients $\Z$ by a local
system $\CF$ corresponding to the group ring of the corresponding geometric cover.
In order to avoid non-minimal lifts of $\rhobar$,
 one works with a smaller local system $\CF_{\sigma}$  cut out of
$\CF$ by a representation $\sigma$ of the Galois group of the cover to
capture exactly the minimal automorphic lifts of $\rhobar$. The
representation $\sigma$ corresponds to a fixed inertial type at $x$.
In our setting (coherent cohomology) we may carry out a completely analogous construction.
Thus, instead,
we shall construct a vector bundle $\CL_{\sigma}$ on $X$. 
We then replace
$H^*(X_1(N),\omega)$ by the groups $H^*(X_1(N),\omega \otimes  \CL_{\sigma})$.
The main points to check are as follows:
\begin{enumerate}
\item The spaces $H^0(X_1(N),\omega^{\otimes n} \otimes  \CL_{\sigma})$ 
for $n \geq 1$ do
indeed cut out the requisite spaces of automorphic forms.
\item This construction is sufficiently functorial so that all the associated cohomology
groups admit actions by Hecke operators.
\item These cohomology groups  inject into natural spaces of $q$-expansions.
\item  This construction is compatible with arguments involving the
Hochschild--Serre spectral sequence and Verdier duality.
\end{enumerate}

\medskip

We start by discussing some more refined properties of modular curves,
in the spirit of~\S~\ref{sec:mod-curves}.
Let $S(\rhobar)$, $T(\rhobar)$ and $Q$ be as in
Section~\ref{sec:deform-galo-repr-w1}. Let $P(\rhobar)$ denote the set
of $x\in S(\rhobar)-T(\rhobar)$ where $\rhobar$ is ramified and reducible.
 
We will now introduce compact open subgroups $V\vartriangleleft U\subset
\GL_2(\A^\infty)$ and later we will fix a representation $\sigma$ of $U/V$ on a
finite free $\OL$ module $W_\sigma$. (In applications, $U$, $V$ and $\sigma$ will be chosen to capture all
minimal modular lifts of $\rhobar$. If the set of vexing primes $T(\rhobar)$ is
empty, then $U=V$ and all minimal lifts of $\rhobar$ will appear in $H^0(X_U,\omega)$. As indicated above, there is a
complication if $T(\rhobar)$ is non-empty. In this case, minimal modular lifts of $\rhobar$
will appear in the $\sigma^*:=\Hom(\sigma,\OL)$-isotypical part of $H^0(X_V,\omega)$.)

For each prime $x\in S(\rhobar)$, let $c_x$ denote the Artin conductor
of $\rhobar|G_x$. Note that $c_x$ is even when $x\in T(\rhobar)$. For $x\in S(\rhobar)$, we define subgroups $V_x
\subset U_x \subset \GL_2(\Z_x)$ as follows:
\begin{itemize}
\item If $x \in P(\rhobar)$, we let 
\[ U_x=V_x = \left\{ g\in \GL_2(\Z_x) : g\equiv
  \begin{pmatrix}
    * & * \\
    0 & d 
  \end{pmatrix}
\mod x^{c_x} \text{,\  $d\in(\Z/x^{c_x})^\times$ has $p$-power order}
\right\} .\]
\item  If $x \in T(\rhobar)$, then let $U_x = \GL_2(\Z_x)$ and 
\[ V_x = \ker \left( \GL_2(\Z_x) \lra \GL_2(\Z/x^{c_x/2})\right). \]
\item If $x \in S(\rhobar)- (T(\rhobar)\cup P(\rhobar))$, 
\[ U_x = V_x = \left\{ g\in \GL_2(\Z_x) : g\equiv
  \begin{pmatrix}
    * & * \\
    0 & 1 
  \end{pmatrix}
\mod x^{c_x}\right\}. \]
\end{itemize}

 For $x$ a prime not in $S(\rhobar)$, we let 
\[ U_x=V_x=\GL_2(\Z_x)\]
Finally, if $x$ is any rational prime, we define subgroups $U_{1,x}\subset U_{0,x} \subset
\GL_2(\Z_x)$ by:
\begin{eqnarray*} U_{0,x} & = & \left\{ g\in \GL_2(\Z_x) : g\equiv
  \begin{pmatrix}
    * & * \\
    0 & * 
  \end{pmatrix}
\mod x \right\} \\  
U_{1,x} & =& \left\{ g\in \GL_2(\Z_x) : g\equiv
  \begin{pmatrix}
    * & * \\
    0 & 1 
  \end{pmatrix}
\mod x 
\right\}. 
\end{eqnarray*}

We now set
\begin{align*}
  U=\prod_x U_x, \qquad U_i(Q)=\prod_{x\not
  \in Q}U_x\times \prod_{x\in Q} U_{i,x} \\ 
  V=\prod_x V_x, \qquad V_i(Q)=\prod_{x\not
  \in Q}V_x\times \prod_{x\in Q} U_{i,x},
\end{align*}
for $i=0,1$. For $W$ equal to one of $U$, $V$, $U_i(Q)$ or $V_i(Q)$, we have a smooth projective modular curve
$X_W$ over $\Spec(\OL)$ which is a moduli space of generalized
elliptic curves with $W$-level structure\footnote{Again, in order to
  obtain a representable moduli problem, we may need
  to introduce auxiliary level structure at a prime $q$ as in
  Section~\ref{sec:gal-rep}. This would be necessary if every prime in
$S(\rhobar)$ were vexing, for example.}.
Let $Y_W \subset X_W$ be the open curve
parametrizing genuine elliptic curves and let $j: Y_W \into X_W$
denote the inclusion. As in Section \ref{sec:cohom-modul-curv}, we let
$\pi: \mathcal{E} \rightarrow X_W$ denote the universal generalized
elliptic curve, we let
$\omega:= \pi_* \omega_{\mathcal{E}/X_W}$ and we let $\cusps$
denote the reduced divisor supported
on the cusps. If $M$ is an $\OL$-module and $\mathcal L$ is a sheaf of
$\OL$-modules on $X_M$, then we denote by $\mathcal L_M$ the sheaf
$\mathcal L\otimes_\OL M$ on $X_W$. If $R$ is an $\OL$-algebra, we
will sometimes denote $X_W \times_{\Spec(\OL)}\Spec(R)$ by $X_{W,R}$.

There is a natural right action of $U/V$ on $X_V$ coming from the
description of $X_V$ as a moduli space of generalized elliptic curves
with level structure (\cite[\S IV]{deligne-rapoport}). It follows from
\cite[IV 3.10]{deligne-rapoport} that we have
$X_V/(U/V) \iso X_U$. Away from the cusps, the map $Y_V\to Y_U$ is \'{e}tale and Galois
with Galois group $U/ V$ and the map $X_V \to X_U$ is tamely ramified. Similar remarks apply to the maps
$X_{V_i(Q)}\to X_{U_i(Q)}$ for $i=0,1$.

The natural map $X_{U_1(Q)}\to X_{U_0(Q)}$ is \'{e}tale and Galois with
Galois group 
$$\Delta_Q:= \prod_{x\in Q} U_{0,x}/U_{1,x}\cong\prod_{x\in Q}(\Z/x)^{\times}.$$

\subsubsection{ Cutting out spaces of modular forms}
Let $G=U/V=\prod_{x\in T}\GL_2(\Z/x^{c_x/2})$ and let $\sigma$
denote a representation of $G$ on a finite free $\OL$-module
$W_\sigma$. We will now proceed to define a vector bundle $\CL_\sigma$
on $X$ such that
\[ H^0(X_U,\omega^{\otimes n}\otimes_{\OL_X}\CL_\sigma) \liso
(H^0(X_V,\omega^{\otimes n})\otimes_{\OL}W_\sigma)^{G} = \Hom_{\OL[G]}(W_{\sigma}^*,H^0(X_V,\omega^{\otimes n})),\]
where $W_\sigma^*$ is the $\OL$-dual of $W_\sigma$. The sheaf
$\CL_\sigma$ will thus allow us to extract the $W_\sigma^*$-part of the
space of modular forms at level $V$. We shall also define a cuspidal
version $\CL_{\sigma}^{\sub}\subset \CL_{\sigma}$ which extracts the
$W_\sigma^*$-part of the space of cusp forms at level $V$:
\[ H^0(X_U,\omega^{\otimes n}\otimes_{\OL_X}\CL_\sigma^{\sub}) \liso
(H^0(X_V,\omega^{\otimes n}(-\infty))\otimes_{\OL}W_\sigma)^{G} = \Hom_{\OL[G]}(W_{\sigma}^*,H^0(X_V,\omega^{\otimes n}(-\infty))).\]

The definitions are as follows. Let $f$ denote
the natural map $X_V \to X_U$ and define 
\begin{align*}
 \CL_{\sigma} & := (f_* (\CO_{X_V} \otimes_{\CO}W_{\sigma}))^G \\ 
 \CL_{\sigma}^{\sub} & := (f_* (\CO_{X_V}(-\infty) \otimes_{\CO}W_{\sigma}))^G, 
\end{align*}
where $G$ acts diagonally in both cases. Note that
\[ \CL_{\sigma} = (f_* f^*(\CO_{X_U}\otimes_{\CO}W_\sigma))^G =
((f_*\CO_{X_V})\otimes_{\CO}W_\sigma)^G \] by the projection
formula. Similarly, by arguing locally, we see that
\[ \CL_{\sigma}^{\sub} = (f_*(\CO_{X_V}(-\infty))\otimes_{\CO}W_\sigma)^G. \]

For $i=0,1$ we denote the pull back of $\CL_\sigma$ to $X_{U_i(Q)}$
also by $\CL_\sigma$. This notation is justified since, by flatness of
the map $X_{U_i(Q)}\to X_U$, the pullback is isomorphic to
$(f_*(\CO_{X_{V_i(Q)}}\otimes_{\CO}W_\sigma))^G$, where we continue to
denote by $f$ the natural map $X_{V_i(Q)} \to X_{U_i(Q)}$. When we use
the same notation for sheaves on different spaces, the underlying
spaces will always be clear from the context.

On $X_{U_i(Q)}$ we reserve the notation $\CL_{\sigma}^{\sub}$ for $
(f_*(\CO_{X_{V_i(Q)}}(-\infty))\otimes_{\CO}W_\sigma)^G$. The pull back of
$\CL_{\sigma}^{\sub}$ on $X_{U}$ to $X_{U_i(Q)}$ is a sub-sheaf
of $\CL_{\sigma}^{\sub}$ on $X_{U_i(Q)}$ (the quotient being supported
at ramified cusps).

We now discuss Hecke actions on cohomology. 
 Let $X$ denote $X_{U_i(Q)}$ and let $X(V)$ denote $X_{V_i(Q)}$ for
 some choice of $i=0$ or $1$. Let $f$ denote the map $X(V)\to X$. (Note that if
 $Q$ is empty, then we recover $f: X_V \to X_U$.)
 Let $x
\not \in S(\rhobar)\cup \{ p\}$. As in Section~\ref{sec:hecke-ops}, we have a modular curve $X_0(x)$,
obtained from $X$ by the addition of an appropriate level structure
at $x$,
together with degeneracy maps $\pi_1,\pi_2 : X_0(x) \to X$. (The level
structure at $x$ depends on whether or not $x \in Q$.) We define
$X_0(V;x)$ similarly, starting from $X(V)$.
The natural map $X_0(V;x)\to X_0(x)$ is again denoted $f$. Then note that we have a natural isomorphism
\[ \phi(\sigma)_{12}: \pi_2^* \CL_\sigma \liso \pi_1^* \CL_\sigma \]
of sheaves on $X_0(x)$. Indeed for $i=0,1$, by flatness of the map $\pi_i : X_0(x) \to
X$, the pullback $\pi_i^*\CL_\sigma$ is canonically isomorphic to
\[ (f_*(\CO_{X_0(V;x)}\otimes_\CO W_\sigma))^G, \]
independently of $i$.
(The only point to note is that the morphism $X_0(x) \times_{\pi_i,
  X}X(V) \to X_0(x)$ is canonically isomorphic to $X_0(V;x)\to X_0$.) 
Similarly, if $a \in \Z$ is coprime to the elements of $S(\rhobar)\cup
Q$, we have a morphism $\langle a \rangle : X \to X$ which corresponds
to multiplication by $a$ on the level structure. Then $\langle a
\rangle^* \CL_{\sigma}$ is canonically isomorphic to $\CL_\sigma$.

Let $M$ denote an $\CO$-module and let $n$ be an integer. Then using
the isomorphisms $\pi_2^* \CL_\sigma \iso \pi_1^*\CL_\sigma$ of the
previous paragraph, and following the definitions of
Section~\ref{sec:hecke-ops}, we can define Hecke operators on the
cohomology of $\omega^n\otimes \CL_{\sigma}\otimes_{\CO}M$. For example,
$xT_x$ is defined as the composite (taking $M = \CO$ for simplicity):
\[ H^i(X_U, \omega^n\otimes\CL_\sigma)
  \stackrel{\pi_2^*}{\lra}
  H^i(X_0(U;x), \pi_2^* \omega^n\otimes\CL_\sigma)
  \stackrel{\phi_{12}\otimes \phi_{12}(\sigma)}{\lra}
  H^i(X_0(U;x), \pi_1^* \omega^n\otimes\CL_\sigma)
  \stackrel{\Tr(\pi_1)}{\lra}
  H^i(X_U,\omega^n\otimes\CL_\sigma).\]

Let $\CL_{\sigma}^{\sub}$ denote the sheaf
\[ (f_*(\CO_{X_0(V;x)}\otimes_\CO W_\sigma))^G, \] on $X_0(U;x)$. (In
what follows we will be using $\CL_{\sigma}^{\sub}$ and $\CL_{\sigma}$
to denote sheaves on both $X_U$ and $X_0(U;x)$, but the underlying
space will be clear in each instance). We then have canonical
inclusions
\[ \pi_1^*(\CL_{\sigma}^{\sub}), \pi_2^*(\CL_{\sigma}^{\sub}) \subset
  \CL_{\sigma}^{\sub} \]
of sheaves on $X_0(U;x)$. Note also that the composition of morphisms
of sheaves on $X_U$
\[ \pi_{1,*}(\CL_{\sigma}^{\sub}) \hookrightarrow \pi_{1,*}(\CL_{\sigma}) =
  \pi_{1,*}(\pi_1^* \CL_{\sigma}) \stackrel{\Tr(\pi_1)}{\lra}
  \CL_{\sigma} \] factors through the sheaf
$\CL_{\sigma}^{\sub}$. This then allows us to define Hecke operators
on the cohomology of
$\omega^n\otimes \CL_{\sigma}^{\sub}\otimes_{\CO}M$.

In summary, we have operators:
\begin{itemize}
\item $T_x$ and $\langle a \rangle$ on
\[ H^j(X_U,
  \omega^n\otimes_{\CO_{X_U}}\CL_\sigma^{\sub}\otimes_{\CO}M) \text{\ \ and\
    \ } H^j(X_U,
  \omega^n\otimes_{\CO_{X_U}}\CL_\sigma\otimes_{\CO}M)\] for all $x \not\in
  S(\rhobar)\cup\{p\}$ and $a$ coprime to the elements of
  $S(\rhobar)$, and
\item  $T_x,U_y,\langle a \rangle$ on \[ H^j(X_{U_i(Q)},
  \omega^n\otimes_{\CO_{X_U}}\CL_\sigma^{\sub}\otimes_{\CO}M) \text{\ \ and\
    \ }H^j(X_{U_i(Q)},
  \omega^n\otimes_{\CO_{X_U}}\CL_\sigma\otimes_{\CO}M)\] for all $x \not\in
  S(\rhobar)\cup Q\cup\{p\}$, $y \in Q$ and $a$ coprime to the
  elements of $S(\rhobar)\cup Q$.
\end{itemize}

Part~\eqref{H0} of the following lemma shows that $\CL_\sigma$ and $\CL_{\sigma}^{\sub}$ do
indeed allow us to extract the $W_{\sigma}^*$-part of the space of
modular forms at level $V$.

\begin{lemma}
  \label{lem:properties-of-L}
Let $X$ denote $X_U$ (resp.\ $X_{U_i(Q)}$ for $i=0$ or 1), let $X(V)$ denote $X_V$
(resp.\ $X_{V_i(Q)}$) and let $f$ denote the map $X(V)\to X$.  
Then 
  \begin{enumerate}
  \item\label{bundle} The sheaves $\CL_{\sigma}^{\sub}$ and $\CL_\sigma$ are locally free of finite rank on $X$.
\item\label{H0} If $A$ is an $\OL$-algebra and $\CV$ is a coherent locally free sheaf of $\OL_{X_A}$-modules, then 
  \begin{align*}
 H^0(X_A ,\CV \otimes_{\CO_{X_A}} (\CL_\sigma)_A) &\liso
(H^0(X(V)_A,f^*\CV)\otimes_{\OL}W_\sigma)^{G} \\    
H^0(X_A ,\CV \otimes_{\CO_{X_A}} (\CL_\sigma^{\sub})_A) &\liso
(H^0(X(V)_A,(f^*\CV)(-\infty))\otimes_{\OL}W_\sigma)^{G}.
  \end{align*}

\end{enumerate}
\end{lemma}

\begin{proof}
We give the proof for $\CL_{\sigma}$;  the case of $\CL_{\sigma}^{\sub}$ is
treated in exactly the same way.
Let $Y$
(resp.\ $Y(V)$) denote the non-cuspidal open subscheme of $X$ (resp.\ $X(V)$).
We have $\CL_\sigma|_{Y} = f_*(\OL_{Y(V)}\otimes_{\OL}W_\sigma)^{G}$
 since the inclusion $Y\to X$ is flat.
Since the map $Y(V)\to Y$ is \'{e}tale, it follows from \cite[\S III.12 Theorem 1
(B)]{Mumford} (and its proof) that $\CL_\sigma|_{Y}\iso \OL_{Y}\otimes_{\OL}W_\sigma$.
To show that $\CL_\sigma$ is locally free of finite rank on $X$, it remains
to check that its stalks at points of $X-Y$ are free. Let $x$ be a
point of $X-Y$. We can and do assume that for each point $x'$ of $X(V)$
lying above $x$, the natural map on residue fields is an
isomorphism. We have
\[ \CL_{\sigma,x} = \left( \bigoplus_{x' \mapsto x} \OL_{X_V,x'}\otimes W_\sigma\right)^{G}.\]
Choose some point $x'\mapsto x$ and let $I(x'/x)\subset G$ be the
inertia group of $x'$. Then projection onto the $x'$-component defines
an isomorphism
\[ \left( \bigoplus_{x'' \mapsto x} \OL_{X(V),x''}\otimes
  W_\sigma\right)^{G} \liso \left( \OL_{X(V),x'}\otimes
  W_\sigma\right)^{I(x'/x)}.\]
Now $I(x'/x)$ is abelian of order prime to $p$ (see \cite[\S VI.5]{deligne-rapoport}). Extending $\OL$, we
may assume that each character $\chi$ of $I(x'/x)$ is defined over
$\OL$. Let $W_{\sigma,\chi}$ and $\OL_{X_V,x',\chi}$ denote the
$\chi$-parts of $W_\sigma$ and $\OL_{X_V,x'}$. Then $W_\sigma\cong
\oplus_\chi W_{\sigma,\chi}$ and similarly
$\OL_{X(V),x'}\cong \oplus_\chi \OL_{X(V),x',\chi}$. Each $W_{\sigma,\chi}$
(resp.\ $\OL_{X(V),x',\chi}$) is free over $\OL$ (resp.\  $\OL_{X,x}$), being a summand of a
free module. (Note that $f$ is finite flat.) We now have
\[ \CL_{\sigma,x} \liso \left( \OL_{X(V),x'}\otimes
  W_\sigma\right)^{I(x'/x)} \liso \bigoplus_{\chi}
W_{\sigma,\chi}\otimes_{\OL} \OL_{X(V),x',\chi^{-1}},\]
which is free over $\OL_{X,x}$. This establishes part
\eqref{bundle}. 

We now turn to part \eqref{H0}. We first of all note that the proof of
the previous part shows that
\[ (\CL_{\sigma})_A \liso (f_*
  (\CO_{X(V)_A}\otimes_{\CO}W_\sigma))^{G},\] as sheaves on $X_A$.
Now, let $\CV$ be as in the statement of the lemma. Then,
\begin{align*}
 \CV\otimes_{\CO_{X_A}}(\CL_{\sigma})_A & \cong  \CV\otimes_{\CO_{X_A}}   (f_*
  (\CO_{X(V)_A}\otimes_{\CO}W_\sigma))^{G} \\
& \cong (\CV\otimes_{\CO_{X_A}}f_*
  (\CO_{X(V)_A}\otimes_{\CO}W_\sigma))^{G} \\
& \cong (f_*f^*(\CV\otimes_{\CO}W_\sigma))^G.
\end{align*}
Here $G$ acts trivially on $\CV$ and the third isomorphism follows
from the projection formula. Taking global sections we obtain,
\begin{align*}
 H^0(X_A,\CV\otimes(\CL_{\sigma})_A) &=
 (H^0(X(V)_A,f^*(\CV\otimes_{\CO}W_{\sigma}))^G \\
& =  (H^0(X(V)_A,f^*(\CV))\otimes_{\CO}W_\sigma)^G, 
\end{align*}
as required. 
\end{proof}

Let $X$ and $X(V)$ be as in the statement of the previous lemma. Let $\sigma^* =
\SHom_{\CO}(W_\sigma,\CO)$ be the dual of the representation
$\sigma$.
We now consider the dual vector bundle $\CL_{\sigma}^* =
\SHom_{\CO_{X}}(\CL_\sigma,\CO_X)$ and its relation to
$\CL_{\sigma^*}$. In addition, we let $A$ denote an $\CO$-algebra and
we consider the situation base changed to $\Spec A$. First of all, note that we
have an isomorphism
\[ X(V)_A/G
 \iso X_A \]
and in particular, $\CO_{X_A}\iso f_*(\CO_{X(V)_A})^G$. (When $A=\OL$, this follows from \cite[\S IV Prop.\
 3.10]{deligne-rapoport}. The same argument works when $A=k$. These two
 cases, and the flatness of $X(V)$ over $\OL$, imply the result when
 $A=\OL/\varpi^n$. The general result follows from this by
 \cite[Prop.\ A7.1.4]{katz-mazur}. Alternatively, as pointed out to us
 by the referee, one can see directly that $X(V)_A/G=X_A$ by applying
 the argument of the proof of
 Lemma~\ref{lem:properties-of-L}~\eqref{bundle}.) We have shown
 in the proof of Lemma~\ref{lem:properties-of-L}~\eqref{H0} that
\[ (\CL_{\sigma})_A \cong  (f_*(\CO_{X(V)_A}\otimes_{\CO}W_\sigma))^{G},\] 
as sheaves on $X_A$. By the projection formula, we therefore also have 
\[ (\CL_{\sigma})_A \cong
(f_*(\CO_{X(V)_A})\otimes_{\CO}W_\sigma)^{G}.\] 
Applying this with $\sigma^*$ in place of
$\sigma$, we see that
\begin{align*}
  (\CL_{\sigma^*})_A & \cong
  (f_*(\CO_{X(V)_A})\otimes_{\CO}\SHom_{\CO}(W_\sigma,\CO))^G \\
& \cong \left(\SHom_{f_*(\CO_{X(V)_A})}(f_*(\CO_{X(V)_A})\otimes_{\CO}W_\sigma,f_*(\CO_{X(V)_A}))\right)^G 
\end{align*}
Since $\CO_{X_A}\iso f_*(\CO_{X(V)_A})^G$, we have a map
\[ (\CL_{\sigma^*})_A \lra
\SHom_{\CO_{X_A}}((\CL_{\sigma})_A,\CO_{X_A}) = (\CL_{\sigma})_A^*\]
given by restriction to $G$-invariants. This map is induced from the
corresponding map $\CL_{\sigma^*}\to \CL_{\sigma}^*$ over $\Spec
\CO$. In addition, when restricted to $Y_A$, this map is an
isomorphism since the equivalence of \cite[\S III.12 Theorem 1
(B)]{Mumford} for locally free sheaves is compatible with taking
duals. In particular, the map $\CL_{\sigma^*} \to \CL_\sigma^*$ is
injective and remains injective after base change to $\Spec A$, for
all $A$. 

\begin{lemma}
  \label{lem:dual-of-L}
  The injection $\CL_{\sigma^*}\into \CL_{\sigma}^{*}$ restricts to an isomorphism
\[ \CL_{\sigma^*}^{\sub} \liso (\CL_{\sigma}^*)(-\infty).\]
Similarly, we have
\[ (\CL_{\sigma}^{\sub})^*(-\infty) \cong \CL_{\sigma^*}. \]
\end{lemma}

\begin{proof}
The second statement follows immediately from the first by reversing the roles of $\sigma$ and
$\sigma^*$. Thus, we consider the first statement.
  Away from the cusps, all three inclusions 
\[ \CL_{\sigma^*}^{\sub} \into \CL_{\sigma^*}\into \CL_\sigma^*
\text{\ \ and\ \ } (\CL_\sigma^*)(-\infty) \into \CL_\sigma^* \] 
 are isomorphisms. It therefore suffices to show
  that at each closed point $x$ of $\CC$, the natural map gives rise to an isomorphism
\[ (\CL_{\sigma^*}^{\sub})^{\wedge}_x \liso
(\CL_{\sigma}^{^*})(-\infty)^{\wedge}_{x} \]
along the formal completions at $x$. Extending $\CO$ if necessary, we
may assume that all cusps of $X$ and $X(V)$ are defined over $\CO$ and
for each point $x'$ of $X(V)$ lying over $x$, there is a uniformizer
$q$ at $x'$ so the map
\[ \CO_{X,x}^{\wedge} \to \CO_{X(V),x'}^{\wedge}  \]
is isomorphic to 
\[ \CO[[q^e]] \to \CO[[q]] .\]
Here, $e=\#I(x'/x)$ and we may assume that $\CO$ contains the
primitive $e$-th roots of unity and the inertia group $I(x'/x)$ is
isomorphic to $\mu_e \subset \CO^\times$ via $\sigma \mapsto
\frac{\sigma(q)}{q}$. Choose a primitive $e$-th root of unity $\zeta$
and for $i=0,\dots,e-1$, let $\chi_i : I(x'/x) \cong \mu_e \to \CO^{\times}$ be
the character which sends $\zeta$ to $\zeta^i$. Then, the
$\chi_i$-part of $\CO_{X(V),x'}^{\wedge}$ is given by
\[ \CO_{X(V),x',\chi_i}^{\wedge} = q^i\CO[[q^e]] \subset \CO[[q]].\]
Thus, as in the proof of Lemma~\ref{lem:properties-of-L}, we have
\[ (\CL_{\sigma})^{\wedge}_{x} \cong \bigoplus_{i=0}^{e-1}
q^i\CO[[q^e]]\otimes_{\CO}W_{\sigma,\chi_i^{-1}}.\]
Taking duals over $\CO_{X,x}^{\wedge}\cong \CO[[q^e]]$, we obtain
\[ (\CL_{\sigma}^*)_x^{\wedge} \cong \bigoplus_{i=0}^{e-1}
q^{-i}\CO[[q^e]]\otimes_{\CO}(\Hom_{\CO}(W_{\sigma,\chi_i^{-1}},\CO)). \]
Note that $\Hom_{\CO}(W_{\sigma,\chi_i^{-1}},\CO)=W_{\sigma^*,\chi_i}$ 
and since $q^e$ is a uniformizer at $x$, we obtain,
under the natural map, identifications
\begin{align*}
 (\CL_{\sigma}^*)(-\infty)^{\wedge}_x & \cong \bigoplus_{i=0}^{e-1}
 q^{e-i}\CO[[q^e]]\otimes_{\CO}W_{\sigma^*,\chi_i}\\
 & \cong \bigoplus_{i=1}^{e}q^i \CO[[q^e]]\otimes_\CO W_{\sigma^*,\chi_i^{-1}}. 
\end{align*}
This is precisely $(\CL_{\sigma^*}^{\sub})^{\wedge}_x$ by the proof of Lemma~\ref{lem:properties-of-L}~\eqref{bundle}.
\end{proof}

We deduce the following.

\begin{corr}
  \label{cor:vanishing-H1}
If $n>1$, then
\[ H^1(X , \omega^{\otimes n}\otimes\CL_\sigma) = \{0\} \]
 and hence 
 \[ H^0(X , \omega^{\otimes n}\otimes \CL_\sigma\otimes_{\OL} \OL/\varpi^m) = (H^0(X(V), \omega^{\otimes n})\otimes_{\OL}W_\sigma)^{G}
 \otimes_{\OL} \OL/\varpi^m.\]
Moreover, the analogous result holds for $n>2$ if we replace $\CL_{\sigma}$ by
$\CL_{\sigma}^{\sub}$.
\end{corr}

\begin{proof}

The second statement follows immediately from
the first and from Lemma~\ref{lem:properties-of-L}\eqref{H0} by considering the long exact
sequence in cohomology associated to the short exact sequence
\[ 0 \lra \omega^{\otimes n}\otimes_{\OL_X}\CL_\sigma \stackrel{\varpi^m}{\lra}
\omega^{\otimes n}\otimes_{\OL_X}\CL_\sigma  \lra (\omega^{\otimes n}\otimes_{\OL_X}\CL_\sigma
)/\varpi^m \lra 0.\]
To prove the first statement, it suffices to show that
$H^1(X,\omega^{\otimes
  n}\otimes_{\OL_X}\CL_\sigma\otimes_{\OL}k)=\{0\}$. By Serre duality,
this is equivalent to the vanishing of
$H^0(X_k,\omega^{\otimes (2-n)}\otimes_{\OL_{X}}
\CL_\sigma^*(-\infty))$. By Lemma~\ref{lem:dual-of-L}, we are
therefore reduced to showing $H^0(X_k,\omega^{\otimes (2-n)}\otimes_{\OL_{X}}
\CL_{\sigma^*}^{\sub})=0$. However, by
Lemma~\ref{lem:properties-of-L}~\eqref{H0} again, we have
\begin{align*}
 H^0(X_k,\omega^{\otimes (2-n)}\otimes_{\OL_{X}}
 \CL_{\sigma^*}^{\sub}) \cong (H^0(X(V)_k,\omega^{\otimes
   (2-n)}(-\infty))\otimes_{\OL} W_{\sigma^*})^G 
\end{align*}
which vanishes since $n>1$. The case of $\CL_{\sigma}^{\sub}$ is
proved in the same way. 
\end{proof}

\subsubsection{The proof of Theorem~\ref{weightone} in the presence
of vexing primes}

To complete the proof of Theorem~\ref{weightone}, it suffices
to note the various modifications which must be made to the argument.
For vexing primes $x$, let $c_x$ denote the conductor of $\rhobar$ (which is necessarily
even).
We define a $\OL$-representation $W_{\sigma_x}$ of $\GL_2(\Z/x^{c_x/2}\Z)$ to be the
representation $\sigma_x$ as in \S~5 of~\cite{CDT}. The collection
$\sigma=(\sigma_x)_{x\in T(\rhobar)}$ gives rise to a sheaf
$\CL_{\sigma}$ on $X_U$ as above.
Let $\T_{\emptyset}$ denote the ring 
of Hecke operators acting on $H_0(X_U,\omega \otimes \CL_{\sigma})$
generated by Hecke operators away from $S(\rhobar)\cup\{p\}$. 
The analogue of 
Theorem~\ref{thm:main-thm-w1} is as follows:

\begin{theorem}
  \label{thm:main-thm-w1-v2}
The map $\varphi : R^{\min} \onto\T_{\emptyset,\mE}$ is an isomorphism and
$\T_{\emptyset,\mE}$ acts freely on $H_0(X_U,\omega \otimes \CL_{\sigma})_{\mE}$.
\end{theorem}

\begin{proof} The proof is the same as the proof of Theorem~\ref{thm:main-thm-w1}; we indicate below
the modifications that need to be made.

\begin{enumerate}
\item (Lemma~\ref{lemma:matt-w1}): Exactly the same argument shows
  that there is an isomorphism of Hecke modules:
\[ H^0(X_U,(\omega\otimes\CL_\sigma)_{K/\OL})_{\mE} \liso
H^0(X_{U_0(Q)},(\omega\otimes\CL_\sigma)_{K/\OL})_\m.\]
The only point to note is that there is an operator
\[ W_x : H^0(X_{U_0(x)},(\omega\otimes\CL_{\sigma})_{K/\OL}) \to
H^0(X_{U_0(x)},(\omega\otimes\CL_{\sigma})_{K/\OL}) \]
such that $W_x^2 = x\langle x \rangle $ and
\[ \frac{1}{x}\pi_1^*\circ\Tr(\pi_1)\circ W_x = U_x + \frac{1}{x}W_x. \]
To see this, one can note that the corresponding operator $W_x$ on
$H^0(X_{V_0(x)},\omega_{K/\CO})$ (defined in Section~\ref{sec:hecke-ops}) commutes with the action of $G=U/V$, and
hence induces the desired operator $W_x$ on
\[  H^0(X_{U_0(x)},(\omega\otimes\CL_{\sigma})_{K/\OL}) =
(H^0(X_{V_0(x)},\omega_{_{K/\OL}})\otimes_{\CO}W_\sigma)^G .\]

\item (Proposition~\ref{lem:cohom-basic-props}~\eqref{vanish-cusps}):
  The corresponding statement holds: namely, the natural map
\[ H^i(X_{U_{\Delta}(Q)},(\omega\otimes \CL_{\sigma}^{\sub})_{K/\CO})_{\m}
\to H^i(X_{U_{\Delta}(Q)},(\omega\otimes \CL_{\sigma})_{K/\CO})_{\m} \]
is an isomorphism for $i=0,1$ whenever $\m$ is
non-Eistenstein. Indeed, if $\CC_{V_{\Delta}(Q)}\subset
X_{V_{\Delta}(Q)}$ is the cuspidal subscheme, then we have an exact
sequence of sheaves on $X_{U_{\Delta}(Q)}$:
\[ 0 \lra \omega\otimes\CL_{\sigma}^{\sub} \lra \omega\otimes
\CL_{\sigma} \lra
(f_*((\omega\otimes_{\CO}W_{\sigma})|_{\CC_{V_{\Delta}(Q)}}))^G, \]
and it suffices to show that the cohomology of the last term (which is
concentrated in degree 0) is Eisenstein. However, the argument of
Remark~\ref{rem:boundary-eis} (noting that  the group $\prod_{x\in
  T(\rhobar)}\GL_2(\Z_x)\times \prod_{x \in P(\rhobar)} \Z_x^{\times}$
acts transitively on the set of cusps in $X_{V_{\Delta}(Q)}$) shows that
\[ H^0(X_{U_{\Delta}(Q)},(f_*((\omega\otimes
W_{\sigma})|_{\CC_{V_{\Delta}(Q)}}))^G) =
(H^0(\CC_{V_{\Delta}(Q)},\omega)\otimes_{\CO} W_{\sigma})^G \]
is Eisenstein, which gives the desired result.

\item (Proposition~\ref{prop:balanced-homology-w1}): We need to show that the
$\OL[\Delta]$-module $M = H_0(X_{\Delta}(Q),\omega \otimes 
\ \CL_{\sigma})_{\m}$ is balanced. First of all note that 
$\Omega^1_{X_{\Delta}(Q)/\CO}\otimes\CL_{\sigma}^* =
\omega^2(-\infty)\otimes \CL_{\sigma}^* = \omega^2\otimes
\CL_{\sigma^*}^{\sub}$ by Lemma~\ref{lem:dual-of-L}, and hence 
\[  H_i(X_{U_{\Delta}(Q)},\omega^n \otimes \CL_\sigma)=  H^i(X_{U_{\Delta}(Q)},(\omega^{2-n} \otimes
\CL_{\sigma^*}^{\sub})_{K/\CO})^{\vee}. \]
We use this to endow the left hand side with a Hecke action.

We now modify definitions of $\Phi$ and $\Psi$ from 
Section~\ref{specify}: let $\Phi$ denote the composition of isomorphisms:
\begin{eqnarray*}
 H^1(X_{U_{\Delta}(Q)},(\omega^{2-n}\otimes\CL^{\sub}_{\sigma^*})_{K/\CO}) 
  &\stackrel{D}{\lra}& H^0(X_{U_{\Delta}(Q)},\Omega\otimes \omega^{n-2}\otimes\CL_{\sigma}(\infty))^{\vee} \\
  &\stackrel{KS^{\vee}}{\lra}& H^0(X_{U_{\Delta}(Q)},\omega^{n}\otimes\CL_{\sigma})^{\vee},
\end{eqnarray*}
where $D$ is Verdier duality, and $KS$ is the Kodaira--Spencer
isomorphism, and we have used Lemma~\ref{lem:dual-of-L}. Then by the proof of \cite[Prop.\ 7.3]{Edixhoven}, we
have:
\[ \Phi \circ T_x = x^{1-n} T_x^{t,\vee} \circ \Phi, \] for all $x$
prime to $NQ$, and the same relation holds for the operators $U_y$
with $y|Q$. We also have
$\Phi \circ \langle a \rangle = \langle a^{-1} \rangle \circ \Phi$ for
$x | NQ$ because $D$ switches $\langle a\rangle^* $ with
$\langle a \rangle_*^{\vee} = \langle a^{-1} \rangle^{*,\vee}$. The
transposed operators $T_x^t$ and $U_x^t$ are defined in
\cite{Edixhoven}.  We have:
\[ (T_x^t f)(E,\alpha_{NQ}) = \sum_{\phi : (E',\alpha_{NQ}') \to (E,
    \alpha_{NQ})} \phi^{t,*} f(E',\alpha'_{NQ}) \]
where the sum is over all $x$-isogenies $\phi$ such that $\phi \circ
\alpha_{NQ}' = \alpha_{NQ}$. In the definition of $T_x^t$, we can replace the
sum over $\phi$ by the sum over the corresponding dual isogenies $E
\to E'$. However, note that if $\phi : (E',\alpha_{NQ}') \to
(E,\alpha_{NQ})$ is compatible with level structures at $NQ$, then so
is $\phi^t : (E, \alpha_{NQ}) \to (E', x \circ \alpha_{NQ}')$. In this
way we see that
\[ T_x^t = \langle x^{-1} \rangle T_x\] on
$H^0(X_{U_{\Delta}(Q)}, \omega^n)$.
The Pontryagin dual
$\Psi := \Phi^{\vee}$ is thus an isomorphism
\[ \Psi:  H^0(X_{U_{\Delta}(Q)},\omega^{n} \otimes \CL_{\sigma})  \lra
  H_1(X_{U_{\Delta}(Q)},\omega^n\otimes\CL_{\sigma}) \]
such that
\begin{eqnarray*}
\Psi \circ (x^{1-n} \langle x^{-1} \rangle T_x ) &=& T_x \circ
  \Psi \\
\Psi \circ (y^{1-n} U_y^t) &=& U_y \circ \Psi \\
  \Psi \circ \langle a^{-1} \rangle &=& \langle a \rangle \circ \Psi.
\end{eqnarray*}

Now, with $\CL =  \CL_{\sigma}$, the proof of Proposition~\ref{prop:balanced-homology-w1} proceeds in exactly the same manner, up to the point where
it suffices to show that
\[ \dim_K H_0(X_{U_0(Q),K},\omega \otimes \CL_{\sigma})_\m =  \dim_K
  H_1(X_{U_0(Q),K},\omega \otimes \CL_{\sigma})_{\m}.\]
As before, by definition, the left hand side of this is equal to
\[ \dim_K H^0(X_{U_0(Q),K},\omega \otimes
  \CL_{\sigma^*}^{\sub})_\m, \]
which in turn, by point (2) above, is equal to:
\[ \dim_K H^0(X_{U_0(Q),K},\omega \otimes
  \CL_{\sigma^*})_\m, \]
On the other hand, using the isomorphism $\Psi$, we see that the right
hand side is equal to:
\[ \dim_K H^0(X_{U_{0}(Q)},\omega^{n} \otimes \CL_{\sigma})_{\m^*} \]
where $\m^*$ is a maximal ideal of the polynomial ring $R$ generated over
$\CO$ by the operators $T_x,U_y^t,\langle a \rangle$. Specifically,
let $\alpha : \T \to \T/\m = k$ be the reduction map. Then $\m^*$ is
the kernel of the map $\beta :
R \to k$ defined by: $\beta(T_x) = \alpha(\langle x \rangle T_x)$,
$\beta(U^t_y) = \alpha(U_y)$, and $\beta(\langle a \rangle) =
\alpha(\langle a^{-1}\rangle)$.

Thus, we need to see that $H^0(X_{U_0(Q),K},\omega \otimes
\CL_{\sigma})_{\m^*}$ and $H^0(X_{U_0(Q),K},\omega \otimes
\CL_{\sigma^*})_\m$ have the same dimension. One way to see this is as
follows: after choosing an embedding $K\into \C$, we can identify both
sides in terms of automorphic representations of $\GL_2/\Q$ which are
limits of discrete series at $\infty$, unramified outside
$S(\rhobar)\cup Q$ and satisfy appropriate local
conditions at the primes in $S(\rhobar)\cup Q$. The operation which
sends each such automorphic representation $\pi$ to its contragredient
then interchanges 
\[ H^0(X_{U_0(Q),K},\omega \otimes
\CL_{\sigma})_{\m^*}\otimes_{K}\C \text{\ and\ }  H^0(X_{U_0(Q),K},\omega \otimes
\CL_{\sigma^*})_\m\otimes_K \C, \]
from which the result follows.

\item (Theorem~\ref{theorem:galoisweight1}): The analogue of this
  theorem is true. Namely, let $\T$ denote the subalgebra of
  endomorphisms of
\[ H^0(X_{U_{1}(Q)},(\omega\otimes\CL_{\sigma})_{K/\CO}) \]
generated by the operators $T_x$, $U_y$ and $\langle a \rangle$. For
each $x \in Q$, we assume that the Hecke polynomial $X^2 - T_x X
+\langle x \rangle$ has distinct roots in $\TE/\mE$ and we let
$\alpha_x$ be one of these roots. Let $\m$ be the ideal of $\T$
generated by $\mE$ and $U_x - \alpha_x$ for $x\in Q$. Then there is a
Galois representation 
\[ \rho_Q : G_{\Q} \to \GL_2(\T_\m) \]
deforming $\rhobar$, unramified away from $S(\rhobar)\cup\{p\}$ and
such that $\Frob_x$ has trace $T_x$ for all $x\not \in
S(\rhobar)\cup\{p\}$. Moreover $\rho'_Q := \rho_Q\otimes \eta$, where
$\eta$ is defined as before, is a deformation of $\rhobar$ minimal
outside $Q$.

This is proved as follows: as before it suffices to fix an $m \geq 1$ and work with the
quotient $\T_\m/J_m$ of $\T_\m$ acting faithfully on
$H^0(X_{U_{1}(Q)},(\omega\otimes\CL_{\sigma})_{\CO/\varpi^m})_\m$. Then
we have
\[ H^0(X_{U_{1}(Q)},(\omega\otimes\CL_{\sigma})_{\CO/\varpi^m})_\m
\subset H^0(X_{V_{1}(Q)},\omega_{\CO/\varpi^m})_\m \otimes_{\CO}
W_\sigma. \] From this inclusion and the arguments of
Sections~\ref{sec:gal-rep}--\ref{section:case3}, we immediately
deduce the existence of $\rho_Q$ over $\T_\m/J_m$ such that $\rho_Q'$
satisfies all the conditions of Definition~\ref{defn:minimal}, except
possibly for condition~\eqref{at-vexing}. More precisely, we
construct a deformation over the Hecke algebra of
\[  H^0(X_{V_{1}(Q)},\omega_{\CO/\varpi^m})_\m \]
which satisfies these properties exactly as we did in Sections~\ref{sec:gal-rep}--\ref{section:case3}. (The new level structures at the primes in
$T(\rhobar)$ do not affect the arguments; the only essential difference is that
the modular curves are no longer geometrically connected. Thus, in any
argument involving $q$-expansions, one needs
to consider $q$-expansions at a cusp on each connected component
instead of at the single cusp $\infty$. Note, however, that the use of~$q$-expansions
was only used for the following two facts:
the identity~$\phi \circ T_p - U_p \circ \phi  = \langle p \rangle V_p$ and
the claim that~$\theta V_p = 0$, which was used to show that~$(\phi,\phi \circ T_p - U_p \circ \phi)$
was injective. On the other hand, the group~$\prod_{x \in T(\rhobar)} \GL_2(\Z_x)$ acts invertibly on~$X_{V_{1}(Q)}$
and hence also the cohomology group above, acts transitively on the set of connected components,
and commutes with the Hecke operators at~$p$. Hence it suffices to check these identities
on the component at~$\infty$, where the required conclusions follow from our previous computation.)
We then use the above inclusion of Hecke
modules to deduce the result over the algebra $\T_\m/J_m$.

It remains to show that condition~\eqref{at-vexing} of
Definition~\ref{defn:minimal} holds. For this, we use that fact that
multiplication by a high power of a lift of the Hasse invariant of
level $X_{V_1(Q)}$ realizes  
\[ H^0(X_{U_{1}(Q)},(\omega\otimes\CL_{\sigma})_{\CO/\varpi^m})_\m \]
as a Hecke equivariant subquotient of 
\[ (H^0(X_{V_{1}(Q)},\omega^n )_\m \otimes_{\CO}W_\sigma)^G \]
for some sufficiently large $n$. (This follows from
Corollary~\ref{cor:vanishing-H1}.) It therefore suffices to show that
the deformation of $\rhobar$ over the Hecke algebra of 
\[ (H^0(X_{V_{1}(Q)},\omega^n )_\m \otimes_{\CO}W_\sigma)^G \]
satisfies condition~\eqref{at-vexing} of
Definition~\ref{defn:minimal}. However, this is precisely the point of
the representation $W_\sigma$: it cuts out the automorphic
representations giving rise to minimal deformations of $\rhobar$ at
the primes in $T(\rhobar)$ (see \cite[Lemma 5.1.1]{CDT}). (Note that, since $n$ is
large, the space $H^0(X_{V_{1}(Q)},\omega^n )_\m$ is torsion free.)
This completes the proof.

 \end{enumerate}
\end{proof}

Theorem~\ref{weightone} follows from the previous result and Verdier duality as in
Section~\ref{sec:modularity-lifting-w1}. We remark that 
$H_0(X,\omega\otimes\CL_\sigma)_{\mE}$ is of rank one over $\T_{\emptyset,\mE}$ when
$H_0(X,\omega_K\otimes\CL_\sigma)_{\mE}$ is non-zero. This
follows from multiplicity one  for $\GL(2)$ and
 \cite[Lemma 4.2.4(3)]{CDT}.

\section{Complements}
\label{sec:complements}

 \subsection{Multiplicity Two} \label{section:multiplicitytwo}
  Although this is not needed for our main results, we deduce in this section some
  facts about global multiplicity of Galois representations in modular Jacobians.
  Recall that $k$ denotes a finite field of odd characteristic, $\OL$
  denotes the ring of integers
of some finite extension $K$ of $\Q_p$ with uniformizer $\varpi$ and $\OL/\varpi = k$.

We recall some standard facts about Cohen--Macaulay rings
from ~\cite{Eisenbud}, \S~21.3 (see also~\cite{Kunz}).
Let $(A,\m,k)$ be a  complete local Cohen--Macaulay ring of
dimension $n$.  Then $A$ admits a canonical module $\omega_A$.
 Moreover, if $(x_1, \ldots x_m)$ is a regular sequence for $A$,
and $B = A/(x_1, \ldots, x_m)$, then 
$$\omega_B:= \omega_A \otimes_A B$$
is a canonical module for $B$. It follows that
$$\omega_A \otimes_A A/\m = \omega_A \otimes_A (B \otimes_B B/\m)
= \omega_B \otimes_B B/\m.$$
If $m = n$, so $B$ is of dimension zero, then
$\Hom(*,\omega_B)$ is a dualizing functor, and so
$$\dim_k B[\m] = \dim_k \omega_B \otimes B/\m = \dim_k \omega_A \otimes A/\m.$$
Moreover, we have the following:

\begin{lemma} \label{lemma:dual} Let $A$ be a finite flat local $\Z_p$-algebra. Suppose that
$A$ is Cohen--Macaulay. Then 
$\Hom_{\Z_p}(A,\Z_p)$ is a canonical module for $A$.
\end{lemma}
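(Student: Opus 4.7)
The plan is to reduce the lemma to the standard fact from commutative algebra: if $R$ is a Cohen--Macaulay local ring of dimension $d$ with canonical module $\omega_R$, and $A$ is a finite $R$-algebra that is Cohen--Macaulay of the same dimension $d$, then $\Hom_R(A, \omega_R)$ is a canonical module for $A$ (see, e.g., Bruns--Herzog, Theorem~3.3.7). Since $\Z_p$ is a regular local ring of dimension~$1$, it is its own canonical module ($\omega_{\Z_p} = \Z_p$), so applying this with $R = \Z_p$ would give $\omega_A = \Hom_{\Z_p}(A, \Z_p)$ immediately, provided the hypotheses are satisfied.

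First I would verify that $\dim A = 1 = \dim \Z_p$. Finite flatness of $A$ over $\Z_p$ makes $A$ a free $\Z_p$-module of finite rank, so $p$ is a non-zero-divisor on $A$ and $A/pA$ is a finite $\F_p$-algebra, hence Artinian; this forces $\dim A = 1$. The Cohen--Macaulay hypothesis on $A$ is given, and $A$ is automatically finite over $\Z_p$ by assumption.

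Next I would invoke (or reprove) the general theorem. Its content is the adjunction isomorphism
\[ \Ext^i_A\bigl(N,\, \Hom_{\Z_p}(A, \Z_p)\bigr) \;\cong\; \Ext^i_{\Z_p}(N, \Z_p), \]
valid for any $A$-module $N$ because $A$ is projective (indeed free) over $\Z_p$, so that $\Hom_{\Z_p}(A, \Z_p)$ behaves like an induced module and transfers Ext-computations from $A$ to $\Z_p$. Taking $N = k = A/\m_A$, the elementary computation using the resolution $0 \to \Z_p \xrightarrow{p} \Z_p \to \F_p \to 0$ gives $\Ext^0_{\Z_p}(k, \Z_p) = 0$ and $\Ext^1_{\Z_p}(k, \Z_p) \cong k$, which is precisely the defining property of the canonical module of a one-dimensional Cohen--Macaulay local ring with residue field $k$.

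The only point requiring a little care is checking that $\Ext^1_{\Z_p}(k, \Z_p)$ is \emph{one-dimensional as a $k$-vector space}, not merely isomorphic to $k$ as an abelian group. This follows from the naturality of the adjunction in the $N$-slot with respect to the $\End_A(k) = k$-action, which endows both sides compatibly with their $k$-vector space structure. Beyond this bookkeeping, the proof is essentially a formality, and I do not expect any substantive obstacle.
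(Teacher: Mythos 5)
Your proposal is correct and follows essentially the same route as the paper: the paper's proof is a one-line appeal to the standard fact (Theorem~21.15 of Eisenbud) that for a module-finite Cohen--Macaulay algebra $A$ over a regular (or Gorenstein) local ring $R$ of the same dimension, $\Hom_R(A,R)$ is a canonical module, which is exactly the Bruns--Herzog statement you invoke with $R=\Z_p$. Your added verifications (that $\dim A = 1$, and the adjunction computation of $\Ext^i_A(k,\Hom_{\Z_p}(A,\Z_p))$ with its $k$-structure) are correct and simply make explicit what the citation leaves implicit.
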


\begin{proof} More generally, if $A$ is a module-finite
extension of
a regular (or Gorenstein) local ring $R$, then (by Theorem~21.15 of~\cite{Eisenbud}) $\Hom_{R}(A,R)$
is a canonical module for $A$. 
\end{proof}

Finally, we note the following:

\begin{lemma}   \label{lemma:vexCM} If $B$ is a complete local Cohen--Macaulay $\OL$-algebra
and admits a dualizing module
$\omega_B$ with $\mu$ generators, then the same is true for the power series
ring $A = B[[T_1,\ldots, T_n]]$. Moreover, the same is also
true for $B \widehat\otimes_{\OL} C$, for any  complete local $\OL$-algebra  $C$ which is 
a complete intersection.
\end{lemma}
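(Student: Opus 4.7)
The plan is to prove the power series assertion first and then reduce the complete intersection assertion to it via a regular-sequence quotient.

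For the power series case, the inclusion $B \hookrightarrow A := B[[T_1,\ldots,T_n]]$ is flat, and $(T_1,\ldots,T_n)$ is a regular sequence on $A$ whose quotient is $B$. Hence $A$ is Cohen--Macaulay of dimension $\dim B + n$. Setting $\omega_A := \omega_B[[T_1,\ldots,T_n]]$, reduction modulo $(T_1,\ldots,T_n)$ recovers $\omega_B$, which by the descent principle for canonical modules along regular-sequence quotients (recalled in the excerpt from \cite{Eisenbud}, \S~21.3) shows $\omega_A$ is canonical for $A$. Since $A/\m_A = B/\m_B = k$ and $\omega_A \otimes_A k = \omega_B \otimes_B k$, Nakayama's lemma gives $\omega_A$ exactly $\mu$ generators over $A$.

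For the complete intersection case, write $C = R/(f_1,\ldots,f_m)$ with $R := \OL[[S_1,\ldots,S_n]]$ regular and $(f_1,\ldots,f_m)$ a regular sequence in $R$. Setting $D := B \widehat\otimes_\OL R = B[[S_1,\ldots,S_n]]$, one has $B \widehat\otimes_\OL C = D/(f_1,\ldots,f_m)D$. The power series case just established gives that $D$ is Cohen--Macaulay with canonical module $\omega_D$ having $\mu$ generators. Granting that $(f_1,\ldots,f_m)$ remains a regular sequence on $D$, the iterated regular-sequence quotient rule then shows $D/(f_1,\ldots,f_m)D$ is Cohen--Macaulay with canonical module $\omega_D/(f_1,\ldots,f_m)\omega_D$, which has $\mu$ generators by Nakayama (since tensoring further with $k$ recovers $\omega_D\otimes_D k$).

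The main obstacle is thus verifying the regular sequence property for $(f_1,\ldots,f_m)$ on $D$. In the Cohen--Macaulay ring $D$ this is equivalent to the height equality $\dim D/(f_1,\ldots,f_m)D = \dim D - m$. When $B$ is $\OL$-flat---the setting arising naturally for Hecke algebras on torsion-free cohomology---$D$ is flat over $R$ and regularity transfers directly by flat base change. More generally, one argues via the Koszul complex $K_\bullet(f_1,\ldots,f_m;R)$, which resolves $C$ as an $R$-module; its base change to $D$ computes $\Tor^R_i(D,C)$ for all $i$, and the vanishing of the higher $\Tor$---equivalent to the desired regularity---follows from depth and dimension computations in the Cohen--Macaulay ring $D$ together with the complete-intersection structure of $C$.
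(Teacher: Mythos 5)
Your skeleton is the same as the paper's: treat $A=B[[T_1,\ldots,T_n]]$ via the regular sequence $(T_1,\ldots,T_n)$ and the fact recalled from \cite{Eisenbud}~\S 21.3, then write $C$ as a quotient of $R=\OL[[S_1,\ldots,S_n]]$ by a regular sequence and reduce $B\widehat\otimes_{\OL}C$ to the power series case. One small logical point on the first step: you do not need to exhibit $\omega_A$ explicitly. Since $A$ is complete local Cohen--Macaulay it has a canonical module, and the quoted fact applied to the regular sequence $(T_1,\ldots,T_n)$ (together with uniqueness of canonical modules) already gives $\omega_A\otimes_A k\cong\omega_B\otimes_B k$, hence $\mu$ generators by Nakayama; this is exactly what the paper does. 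Your explicit identification $\omega_A\cong\omega_B[[T_1,\ldots,T_n]]$ is true, but the justification you give uses the \emph{converse} of the quoted fact (a module whose reduction is $\omega_B$ is canonical), which is not what was recalled; if you want the explicit description, the right tool is flat base change with Gorenstein closed fibre.

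The genuine issue is the step you flag: whether $(f_1,\ldots,f_m)$ remains a regular sequence on $D=B[[S_1,\ldots,S_n]]$. Your flat case is correct: if $B$ is $\OL$-flat then $D$ is $R$-flat and regularity transfers, and this is the situation in which the paper actually uses the lemma (there $B$ is built from the $\OL$-flat ring $\Rda$, power series rings and group rings). But the ``more generally'' paragraph is not a proof and cannot be completed: since $D$ is Cohen--Macaulay, regularity of $(f_1,\ldots,f_m)$ on $D$ is \emph{equivalent} to the dimension identity $\dim(B\widehat\otimes_{\OL}C)=\dim B+\dim C-1$, and no amount of depth/dimension bookkeeping will establish this without some flatness over $\OL$, because it is false in general. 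Concretely, take $B=\OL/\varpi^2$ (zero-dimensional, Gorenstein) and $C=\OL[[T]]/(\varpi T)$ (a complete intersection, as $\varpi T$ is a nonzerodivisor in $\OL[[T]]$). Then $\varpi T$ is a zerodivisor on $D=(\OL/\varpi^2)[[T]]$, and $B\widehat\otimes_{\OL}C=(\OL/\varpi^2)[[T]]/(\varpi T)$ has dimension $1$ but depth $0$ (the nonzero element $\varpi$ is annihilated by the whole maximal ideal), so it is not Cohen--Macaulay and the conclusion of the lemma fails for it. So your argument, and indeed the statement itself, needs a hypothesis guaranteeing the transfer (e.g.\ $B$ or $C$ flat over $\OL$). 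It is worth noting that the paper's own proof simply asserts that $B\widehat\otimes_{\OL}C$ is a quotient of $B[[S_1,\ldots,S_n]]$ by a regular sequence --- it elides exactly the point you noticed; in the flat setting where the lemma is applied, your flat-base-change argument is the correct way to close it, and the Koszul/Tor sketch should be dropped.
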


\begin{proof} For power series rings this is a special case of the discussion above.
Consider now the case of $B \widehat\otimes_{\OL} C$. By assumption, 
$C$ is a quotient of $\OL[[T_1, \ldots, T_n]]$ by a regular sequence.
Hence $B \widehat\otimes_{\OL} C$ is a quotient of $B[[T_1, \ldots, T_n]]$ by a regular sequence,
and the result follows 
from the discussion above applied to the maps $B[[T_1, \ldots, T_n]] \rightarrow B$ and
$B[[T_1, \ldots, T_n]] \rightarrow B \widehat\otimes_{\OL} C$ respectively.
\end{proof}

As an example, this applies to $B[\Delta]$ for any finite abelian group $\Delta$ of $p$-power
order, since $\OL[\Delta]$ is a complete intersection.
 						
\medskip

Let $\rhobar : G_p \to \GL_2(k)$ be unramified with $\rhobar(\Frob_p)$
scalar. Let $\Rda$ denote the framed deformation ring of ordinary representations  of weight $n$ over $\OL$-algebras
with fixed determinant 
(together with a Frobenius eigenvalue $\alpha$ acting on an ``unramified quotient'')
as in Theorem \ref{thm:ord-def-ring}.

\begin{theorem} $\Rda$ is a complete normal local Cohen--Macaulay ring
of relative dimension~$4$ over~$\OL$.
 \label{theorem:three} Let $\omega_{\Rda}$ denote the canonical module
of $\Rda$. Then $\dim_k \omega_{\Rda}/\m = 3$.
\end{theorem}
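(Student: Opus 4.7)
The assertions that $\Rda$ is complete local Noetherian, normal, and Cohen--Macaulay of relative dimension $4$ over $\OL$ are already contained in Theorem~\ref{thm:ord-def-ring}(2). Only the computation of the Cohen--Macaulay type, $\dim_k \omega_{\Rda}/\m = 3$, requires proof.

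My plan is to reduce this to an explicit socle computation in the special fibre. By the opening discussion of \S~\ref{section:multiplicitytwo}, if $A$ is a complete local Cohen--Macaulay ring of Krull dimension $n$ and $(x_1,\ldots,x_n)$ is a system of parameters (hence automatically a regular sequence), then
$$\dim_k \omega_A \otimes A/\m \;=\; \dim_k B[\m_B], \qquad B := A/(x_1,\ldots,x_n).$$
Since $\Rda$ is $\OL$-flat we may take $x_1 = \varpi$, and the quantity we want is unchanged on passing to $\Rda \otimes_\OL k$, which is Cohen--Macaulay of Krull dimension $4$. By Theorem~\ref{thm:ord-def-ring} (i.e.\ the result of Snowden invoked there), this special fibre is isomorphic to the completion of Snowden's moduli space $\mathcal{B}_1$ at the point $b = (1,1;0)$, and Snowden supplies an explicit determinantal presentation for it.

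With the presentation in hand, I would exhibit a system of parameters $x_2,\ldots,x_5$ of $\Rda \otimes_\OL k$ and compute the socle of the resulting Artinian quotient directly. In parallel I would try a more intrinsic route using the generators and relations of \S~\ref{section:case3}: $\Rda$ is generated over $\OL$ by $\phi_1,\ldots,\phi_4$, a finite collection of inertial variables $x_{ij}$, and the Frobenius eigenvalue $\beta$, subject to the relations (1)--(6) together with the monic quadratic $\beta^2 = (\phi_1+\phi_4)(\beta+1)$. Since this quadratic is monic, $\Rda$ is free of rank $2$ over the subring $\Rd$ generated by $\phi_i$ and $x_{ij}$, and $\omega_{\Rda} = \Hom_{\Rd}(\Rda,\omega_{\Rd})$. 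Modulo $\m_{\Rd}$ the quadratic degenerates to $\beta^2 = 0$, and a short Nakayama calculation then shows that the Cohen--Macaulay type of $\Rda$ equals that of $\Rd$, reducing the problem to computing the type of $\Rd$.

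The main obstacle is the numerical verification that the socle has dimension exactly $3$. The complication is that the inertial variables $x_{ij}$ are not independent modulo the relations (3)--(5), so before the socle can be read off one must first pass to a clean reduced presentation; once this is done, the computation becomes a mechanical exercise in $k$-linear algebra. That the answer is $3$ (rather than $1$ or $2$) is ultimately a reflection of the non-Gorenstein nature of Snowden's component already observed in Theorem~\ref{thm:ord-def-ring}, and a careful tracking of Snowden's coordinates at $(1,1;0)$ pins down the precise value.
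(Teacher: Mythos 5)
Your setup matches the paper's strategy exactly: quote Theorem~\ref{thm:ord-def-ring} for normality, Cohen--Macaulayness and dimension, take $\varpi$ as the first parameter, pass to the special fibre $\Rda\otimes k$ (Snowden's $\mathcal{B}_1$ completed at $b=(1,1;0)$), and reduce the statement $\dim_k\omega_{\Rda}/\m=3$ to a socle count in an Artinian quotient by a system of parameters. But the proposal stops precisely where the theorem's actual content begins. You never write down the presentation of the special fibre, never exhibit a system of parameters, and never compute the socle; you explicitly defer ``the numerical verification that the socle has dimension exactly $3$'' as the main obstacle and gesture at ``a careful tracking of Snowden's coordinates.'' That tracking \emph{is} the proof. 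The paper writes the special fibre explicitly as the quotient of $k[[a,b,c,\phi_1,\phi_2,\phi_3,\phi_4,\beta]]$ by the relations coming from $m^2=0$, $mn=\beta n$, $P_\phi(\alpha)=0$, $\det(\phi)=1$ (so the inertial part is a single traceless square-zero matrix $m$, which already disposes of your worry about dependent inertial variables $x_{ij}$), exhibits the regular sequence $\{a,\ b+\beta,\ c+\phi_1,\ \phi_2+\phi_3\}$, and identifies the resulting Artinian ring as $k[x,y,z]/(x^2,y^2,z^2,xy,xz,yz)$, whose socle is visibly $3$-dimensional. Without some version of this computation the value $3$ is not established: the non-Gorenstein remark in Theorem~\ref{thm:ord-def-ring} only gives type $\ge 2$, and nothing in your argument rules out any other value.

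The alternative ``intrinsic'' route via $\Rd$ does not close this gap and introduces new unverified claims: that $\Rda$ is free of rank two over $\Rd$ (the paper only establishes such a splitting modulo the doubling ideal $\JJ$, and for the unramified quotients $\Ru\subset\Rua$), that $\Rd$ is Cohen--Macaulay, and that the type is preserved under the quadratic extension. Even granting all of this, you would still face an explicit socle computation, now for $\Rd$, which again is not carried out. So the proposal is a correct plan along the paper's lines, but with the decisive step --- the explicit regular sequence and the identification of the Artinian quotient with its $3$-dimensional socle --- missing.
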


\begin{proof} Following the previous discussion, 
to determine $ \dim_k \omega_{\Rda}/\m$, it suffices to find a regular sequence of length $5$ ( $= \dim \Rda$),
take the quotient $C$, and compute $\dim_k C[\m]$.
Since $\Rda$ is $\OL$-flat, $\varpi$ is regular, and thus we may choose $\varpi$ as the
first term of our regular sequence.
Yet the method of Snowden shows that
$\Rda \otimes k$ is given  by the following relations (the completion of $\mathcal{B}_1$ at $b = (1,1;0)$ in the notation of~\cite{Snowden}):
$$m = \left( \begin{matrix} a & b \\ c & -a \end{matrix}\right), n = \phi - \mathrm{id}  = \left( \begin{matrix} \phi_1 & \phi_2 \\ \phi_3 & \phi_4
\end{matrix} \right), \beta = \alpha -  1,
m n = \beta m, P_{\phi}(\alpha) = 0, m^2 = 0, \det(\phi)=1$$
Explicitly, in terms of equations, this is given by the quotient $A$ of
$$ k[[a,b,c,\phi_1,\phi_2,\phi_3,\phi_4,\beta]]$$ by the following relations:
$$\phi_1 + \phi_4 + \phi_1 \phi_4 - \phi_2 \phi_3 = 0,
\qquad
\beta^2 - (\phi_1 + \phi_4) \beta - (\phi_1 + \phi_4) = 0,$$
$$a \phi_1 + b \phi_3 = a \beta,
a \phi_2 + b \phi_4 = b \beta,
-a \phi_3 + c\phi_1 = c \beta,
a \phi_4 - c \phi_2 = a \beta,
a^2 + b c =  0.$$
 
 \medskip
 
 For a complete local $k$-algebra $(R,\m)$ with residue field $k$, let
$$H_R(t) = \sum_{n=0}^{\infty} \dim_k(\m^n/\m^{n+1}) t^n \in \Z[[t]]$$
denote the corresponding Hilbert series.
We define a partial  ordering of elements of $\Z[[t]]$ as follows: say that
$$\sum_{n=0}^{\infty} a_n t^n \ge \sum_{n=0}^{\infty} b_n t^n$$
whenever $a_n \ge b_n$ for all $n$. 
\begin{lemma} \label{lemma:hilbert} Let $x \in \m^d$. 
We have
$$\frac{H_{R/x}(t)}{1-t} \ge H_R(t) \cdot \frac{1-t^d}{1 - t},$$ 
and equality holds if and only if $x$ is a regular element. 
Moreover, if there is an isomorphism 
$$R \simeq \mathrm{gr}(R) = 
\bigoplus \m^n/\m^{n+1},$$
and $x$ is pure of degree $d$, 
 then equality holds if and only if $x$ is a regular element.
  \end{lemma}

\begin{proof}
There is an exact sequence as follows:
$$R/\m^{n} \rightarrow R/\m^{n} \rightarrow R/(x,\m^{n}) \rightarrow 0.$$
The kernel of the first map certainly contains $\m^{n-d}/\m^{n}$.
If $H_R(t) = \sum a_n t^n$ and $H_{R/x}(t) = \sum b_n t^n$, it follows that
$$\begin{aligned}
\text{coefficient of $t^{m+d-1}$ in $\displaystyle{\frac{H_{R/x}(t)}{(1-t)}}$} \ = & \  \sum_{n=0}^{m+d-1} b_n = \dim R/(x,\m^{m+d}) \\
= & \ \dim \coker(R/\m^{m+d} \rightarrow R/\m^{m+d} )\\
= & \ \dim \ker(R/\m^{m+d} \rightarrow R/\m^{m+d} )\\
\ge & \ \dim \m^m/\m^{m+d} \\
= & \ a_m + a_{m+1} + \ldots + a_{m+d-1} \\
= & \  \text{coefficient of $t^{m+d-1}$ in} \  H_R(t)(1 + t + \ldots + t^{d-1}).
\end{aligned}$$
This proves the inequality. (Note that the coefficients of~$t^n$ for~$n < d$ are automatically the same.) On the other hand, assume that
$x$ is not a regular element. By assumption, there exists a non-zero element $y \in R$ such that
$xy = 0$. By Krull's intersection theorem, there exists an $m$ such that
$y \notin \m^m$. For such an $m$, it follows that the
kernel of $R/\m^{m+d} \rightarrow R/\m^{m+d}$ is strictly
bigger than $\m^m/\m^{m+d}$, and the  inequality above is strict. 
Finally, assume that $x$ is a regular element, and that $R \simeq \mathrm{gr}(R)$. 
Then the kernel of the map with $n = m+d$ above is precisely  $\m^{m}/\m^{m+d}$, and
we have equality.
\end{proof}

Note that, in the non-graded case, the converse is not true, namely, $x$ may be regular of degree one and yet
the equality $H_{R/x}(t) = H_R(t)(1-t)$ fails; as an example
one may take  $R = k[[\phi^6,\phi^7,\phi^{15}]]$ and $x = \phi^6$. Then $x$ is regular,
but
$$\frac{H_{R/x}(t)}{1-t} = \frac{1 + 2t + 2t^2 + t^3}{1-t} > \frac{1+2t+t^2+t^3+t^5}{1-t}
= H_{R}(t).$$
The point is that $x$ is no longer regular in $\mathrm{gr}(R)$. (In fact, $R$ is a Cohen--Macaulay
domain, but the depth of $\mathrm{gr}(R)$ is zero; this example was taken
from~\cite{Rossi}).

\medskip

Let $B \simeq A/\beta$. The equation $\beta^2 - (\phi_1 + \phi_4) \beta - (\phi_1 + \phi_4) = 0$ in~$A$
becomes $\phi_1 + \phi_4 = 0$ in~$B$, and hence $B$ is the quotient of
$$ k[[a,b,c,\phi_1,\phi_2,\phi_3]]$$ by the following relations:
$$-\phi^2_1 - \phi_2 \phi_3 = 0,$$
$$a \phi_1 + b \phi_3 = 0,
a \phi_2 - b \phi_1 = 0,
-a \phi_3 + c\phi_1 = 0,
-a \phi_1 - c \phi_2 = 0,
a^2 + b c =  0.$$
All the relations in $B$ are pure of degree two, and hence there is an isomorphism
$B \simeq \gr(B)$.

\begin{lemma} The first few terms of $H_B(t)$ are
$$H_B(t) = 1 + 6t + 15 t^2 + \ldots $$
\end{lemma}

\begin{proof} Clearly $\dim(B/\m) = 1$. $B$ is a quotient of a power
series ring  $S = k[[a,b,c,\phi_1,\phi_2,\phi_3]]$.
Moreover, since all the relations are quadratic, we have
$$\dim \m/\m^2 = \dim \m_S/\m^2_S =  6.$$
The six generators of $S$ give rise,  \emph{a priori}, to 
$$\dim \m^2_S/\m^3_S = \displaystyle{\binom{7}{2} = 21}$$
 generators of $\m^2/\m^3$. 
Note, however, that we have $6$ quadratic relations. In order to prove that $\dim \m^2/\m^3 = 21 - 6 = 15$, it
suffices to show that these six relations are linearly independent.
Choose a basis of $\m^2_S/\m^3_S$ coming from the lexiographic ordering $a > b > c > \phi_1 > \phi_2 > \phi_3$.
With respect to this basis, the matrix of relations is as follows:
$$\left(
\begin{array}{ccccccc}
a^2 		&   0 & 0 & 0 & 0 & 0 & 1 \\
ab 		&   0 & 0 & 0 & 0 & 0 & 0 \\
ac 		&   0 & 0 & 0 & 0 & 0 & 0 \\
a\phi_1 	&   0 & 1 & 0 & 0 & -1 & 0 \\
a\phi_2 	&   0 & 0 & 1 & 0 & 0 & 0 \\
a\phi_3 	&   0 & 0 & 0 & -1 & 0 & 0 \\
b^2 		&   0 & 0 & 0 & 0 & 0 & 0 \\
bc 		&   0 & 0 & 0 & 0 & 0 & 1 \\
b\phi_1 	&   0 & 0 & -1 & 0 & 0 & 0 \\
b\phi_2 	&   0 & 0 & 0 & 0 & 0 & 0 \\
b\phi_3 	&   0 & 1 & 0 & 0 & 0 & 0 \\
c^2 		&   0 & 0 & 0 & 0 & 0 & 0 \\
c\phi_1 	&   0 & 0 & 0 & 1 & 0 & 0 \\
c\phi_2 	&   0 & 0 & 0 & 0 & -1 & 0 \\
c\phi_3 	&   0 & 0 & 0 & 0 & 0 & 0 \\
\phi^2_1 	&   -1 & 0 & 0 & 0 & 0 & 0 \\
\phi_1\phi_2  &   0 & 0 & 0 & 0 & 0 & 0 \\
\phi_1\phi_3  &   0 & 0 & 0 & 0 & 0 & 0 \\
\phi^2_2  &   0 & 0 & 0 & 0 & 0 & 0 \\
\phi_2\phi_3  &   -1 & 0 & 0 & 0 & 0 & 0 \\
\phi^2_3  &   0 & 0 & 0 & 0 & 0 & 0 \\
\end{array}
\right)
$$
The minor consisting of rows
$1$,  $4$,  $6$, $9$, $14$, and $16$ has determinant $1$,
and hence the result follows.
\end{proof}

Recall (Theorem~3.4.1 of~\cite{Snowden}) that $A$
is, in addition to being  Cohen--Macaulay, also a domain.   Since
$\beta \ne 0$ (it is non-zero in $\m/\m^2$), it follows that $\beta$ is a regular element,
and hence
$B = A/\beta$ is also Cohen--Macaulay.

\begin{lemma} \label{lemma:newhilbert} If $I \subset B$ is an ideal generated by a regular sequence of elements
of pure degree one of length $3$, then
$$H_{B/I}(t) = 1 + 3t.$$
Moreover, if $I$ is any ideal generated by three pure  elements of degree one such that $H_{B/I}(t) = 1 + 3 t$, then
the generators of $I$ consist of a regular sequence.
\end{lemma}

\begin{proof}
Let $R$ be a complete local Cohen--Macaulay Noetherian graded $k$-algebra with residue field $k$.
 Replacing $R$ by $R \otimes_k \overline{k}$ does not effect the Hilbert series of $R$.
 Assume that $\dim(R) \ge 1$, so that $\m$ is not an associated prime.
We claim that $R \otimes_k \overline{k}$ admits
a regular 
element $x \in \m$ of  pure degree one. Without loss of generality, we assume that
$k = \overline{k}$. The set of zero divisors is the union of the associated
primes. By assumption, $\m$ is not  one of the associated primes.
Hence, for every associated prime $\p$, the image of $\p$ in $\m/\m^2$ is proper (since otherwise
$\p = \m$ by Nakayama's Lemma).  Because $R$ is Noetherian, there exist only finitely many associated primes.
Hence the union of the images of all such $\p$ cut out  a finite number of proper linear subspaces of $\m/\m^2$. Since
$k$ is infinite, such a union misses an infinite number of points, and hence there exists an $x \in \m \setminus \m^2$
which is not a zero-divisor. By induction, there exists a regular sequence of length $\dim(R)$ generated
by pure degree one elements. It follows that, after a finite extension, $B$ admits a regular sequence of length $3$
generated by pure degree one elements. By Lemma~\ref{lemma:hilbert} (in the graded case), if $I$ is the corresponding ideal, then
$$H_{B/I}(t) = H_B(t)(1-t)^3 = (1 + 6t + 15 t^2 + \ldots)(1-t)^3 = 1 + 3t + O(t^3).$$
If $\m$ is the maximal ideal of $B/I$, we deduce that $\m^2/\m^3 = 0$, and thus by Nakayama's Lemma that $\m^2 = 0$,
and $H_{B/I}(t) = 1 + 3t$.
Conversely, if~$I$ is any ideal generated by three pure elements such that~$H_{B/I}(t) = 1+3t$, then by  Lemma~\ref{lemma:hilbert}, we deduce that
the three generators of $I$ consist of a regular sequence.
\end{proof}

\begin{lemma} $\{\beta,a,\phi_2 + \phi_3,b+c+\phi_1  \}$ is a regular sequence in $A$.
\end{lemma}

\begin{proof} It suffices to show that $\{a,\phi_2 + \phi_3,b+c+\phi_1 \}$ is regular in $B = A/\beta$.
By Lemma~\ref{lemma:newhilbert}, it suffices to show that the Hilbert series of $B/I$ with
$I = (a,\phi_2 + \phi_3,b+c+\phi_1  )$ is $1+3t$. If $C = B/I$, then we compute that
$C$ is given by the quotient of
$$ k[[b,c,\phi_2]]$$ by the following relations:
$$- (b + c)^2 + \phi^2_2  = 0,$$
$$- b \phi_2 = 0,
b(b+c) = 0,
(b+c)c = 0,
- c \phi_2 = 0,
 b c =  0.$$
Let $x = b$, $y = c$, and $z = \phi_2$. Then, from the second, fifth, and sixth relations, we deduce
that
$$xz = yz = xy = 0.$$
Combining this with the third and fourth equations yields:
$$x^2 = x^2 + xy = 0, \ y^2 = xy + y^2 = 0.$$
The first equation yields
$$z^2 = -x^2 - y^2 - 2xy + z^2 = 0.$$
It follows that $C$ is a quotient of 
$$k[x,y,z]/(x^2,y^2,z^2,xy,xz,yz).$$
On the other hand, since all the relations are trivial in $\m^2$, we have $\dim(\m/\m^2) = 3$.
Hence the Hilbert polynomial of $C$ is $1 + 3t$, and the sequence is regular
in $A$.
\end{proof}
Since $\dim C[\m] = 3$, this completes the proof.
\end{proof}

From now until the end of Section \ref{section:multiplicitytwo}, we let $\rhobar:G_{\Q} \rightarrow \GL_2(k)$ be  an absolutely irreducible 
modular ($=$ odd) representation of Serre conductor $N =
N(\rhobar)$ and Serre weight $k(\rhobar)$ with $p+1 \ge k(\rhobar) \ge 2$. This is an abuse of notation as we have already fixed a
representation $\rhobar$ in Section \ref{sec:deform-galo-repr-w1} but
we hope it will not lead to confusion.
Assume that $\rhobar$ has minimal
conductor amongst all its twists at all other primes (one can always twist  $\rhobar$ to satisfy
these condition.)
One knows that $\rhobar$ occurs as  the mod-$p$ reduction
of a modular form of weight $2$ and level $N^*$,
where $N^* = N$ if $k = 2$ and $Np$ otherwise.
Let $\T$ denote the ring of endomorphisms of $J_1(N^*)/\Q$ generated
by the Hecke operators $T_l$ for \emph{all} primes~$l$ (including $p$), and let $\m$
denote the maximal ideal of $\T$ corresponding to $\rhobar$.
Assume that $p \ge 3$.

\begin{theorem}[Multiplicity one or two]  \label{theorem:multiplicitytwo} 
If $\rhobar$ is either ramified at $p$ or
unramified at $p$ and $\rhobar(\Frob_p)$ is non-scalar,
then $J_1(N^*)[\m] \simeq \rhobar$, that is, $\m$ has multiplicity one.
If $\rhobar$ is unramified at $p$ and $\rhobar(\Frob_p)$ is scalar, then
$J_1(N^*)[\m] \simeq \rhobar \oplus \rhobar$, that is, $\m$ has multiplicity two.
\end{theorem}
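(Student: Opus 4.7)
The plan is to reduce the multiplicity computation to a question about the number of $\T_{\m}$-generators of $\mathrm{Ta}_p(J_1(N^*))_\m$, decompose this Tate module via the ordinary/Hodge filtration into a free piece and a canonical-module piece, and then apply the deformation-theoretic computations of Theorem~\ref{thm:ord-def-ring} and Theorem~\ref{theorem:three} to handle the scalar case.

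First I would translate multiplicity into a commutative-algebra invariant. Since $\rhobar$ is absolutely irreducible, $J_1(N^*)[\m]\cong\rhobar^{\oplus\mu}$ has $k$-dimension $2\mu$, and via the identification $J_1(N^*)[\m]=\mathrm{Ta}_p(J_1(N^*))_\m\otimes_{\T_\m}k$ this gives $2\mu=\mu_{\T_\m}(\mathrm{Ta}_p(J_1(N^*))_\m)$, the minimal number of $\T_\m$-generators. I would then establish the structural identity
\[\mu_{\T_\m}\bigl(\mathrm{Ta}_p(J_1(N^*))_\m\bigr)=1+\mu_{\T_\m}(\omega_{\T_\m}),\]
by combining the $q$-expansion pairing (which, via Lemma~\ref{lemma:dual}, identifies $H^0(X_1(N^*),\Omega^1)_\m$ with $\omega_{\T_\m}$) with Serre duality on the modular curve (which identifies $H^1(X_1(N^*),\OL)_\m$ with $\T_\m$) and the Hodge filtration on the de Rham realization of $\mathrm{Ta}_p(J_1(N^*))_\m$. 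The resulting extension of $\T_\m$-modules
\[0\to\omega_{\T_\m}\to\mathrm{Ta}_p(J_1(N^*))_\m\to\T_\m\to 0\]
splits over $\T_\m$ because $\T_\m$ is free, yielding the displayed identity.

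For cases (1) and (2) the theorem is then immediate: the classical multiplicity-one results of Mazur, Wiles, Gross, Edixhoven, and Buzzard--Diamond show that $\T_\m$ is Gorenstein, so $\omega_{\T_\m}\cong\T_\m$ and $\mu_{\T_\m}(\omega_{\T_\m})=1$, giving $\mu=1$. For case (3) I would apply a weight-two analog of the patching argument of Theorem~\ref{weightone} to minimal ordinary deformations at $p$, identifying $\T_\m$ with the image of an ordinary deformation ring whose patched version is $\Rda\,\widehat\otimes_\OL\OL[[x_1,\ldots,x_s]]$. Lemma~\ref{lemma:vexCM} ensures that the number of generators of the canonical module is preserved when tensoring with complete intersections (the Taylor--Wiles auxiliary group ring) and with formally smooth extensions (the framing variables), so the computation $\dim_k\omega_{\Rda}/\m=3$ of Theorem~\ref{theorem:three} transfers to $\mu_{\T_\m}(\omega_{\T_\m})=3$. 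Hence $\mu=(1+3)/2=2$; the matching lower bound $\mu\geq 2$ in the scalar case is classical.

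The main obstacle is the bookkeeping in the scalar case: verifying that the local $3$-generator statement for $\omega_{\Rda}$ descends cleanly through the patched ring to the global canonical module $\omega_{\T_\m}$. This requires the patching framework of Section~\ref{sec:patching} to give a Cohen--Macaulay patched module and a Cohen--Macaulay patched ring, so that the Auslander--Buchsbaum formula together with Lemma~\ref{lemma:vexCM} control the generator counts. A secondary difficulty is the identification $H^1(X_1(N^*),\OL)_\m\cong\T_\m$ in the non-Gorenstein setting; this relies on Serre duality together with the double-duality relation $\Hom_{\OL}(\omega_{\T_\m},\OL)\cong\T_\m$ valid for any Cohen--Macaulay finite flat local $\OL$-algebra, which follows from Lemma~\ref{lemma:dual}.
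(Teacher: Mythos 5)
Your proposal follows essentially the same route as the paper: reduce the multiplicity to counting $\T_\m$-generators of the two pieces of Gross's connected--\'etale sequence for the ordinary $p$-divisible group (the \'etale piece free of rank one over $\T_\m$, the connected piece the dual $\Hom(\T_\m,\Z_p)=\omega_{\T_\m}$ via Lemma~\ref{lemma:dual}), and then compute $\mu_{\T_\m}(\omega_{\T_\m})=3$ in the scalar case by Kisin-style patching against $\Rda$ using Theorems~\ref{thm:ord-def-ring} and~\ref{theorem:three} together with Lemmas~\ref{lem:local-rings} and~\ref{lemma:vexCM}, giving $\mu=(1+3)/2=2$. The only cosmetic differences are that the paper takes the short exact sequence of Tate modules directly from Gross (Prop.~12.9 and the proof of Prop.~12.10) rather than from a ``Hodge filtration on the de Rham realization,'' and that it handles full level by first working with the twist-fixed algebra and then using $\T_{\m}\simeq\T_{\twist,\m}\otimes_{\OL}\OL[\Twist]$, again via Lemma~\ref{lemma:vexCM}.
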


\begin{remark} \emph{ 
By results of
Mazur~\cite{MazurEisenstein} (Prop.~14.2), Mazur--Ribet~\cite{MazurRibet} (Main Theorem), 
Gross~\cite{Gross} (Prop.~12.10), Edixhoven~\cite{Edixhoven} (Thm.~9.2), Buzzard~\cite{Buzzard},
and Wiese~\cite{WieseMult} Cor.~4.2,
the theorem is known except in the case when $\rhobar$ is
unramified at $p$ and $\rhobar(\Frob_p)$ is scalar.
In this case, Wiese~\cite{WieseMult} has shown that the multiplicity is always \emph{at least}
two. Thus our contribution to this result is to show that the multiplicity
is \emph{exactly} two in the scalar case. }
\end{remark}

\begin{remark} \emph{It was historically the case that multiplicity one was an
\emph{ingredient} in modularity lifting theorems, e.g., Theorem~2.1 of~\cite{W}.
It followed that the methods used to prove such theorems required a careful
study of
the geometry of $J_1(N^*)$. However, a refinement of the Taylor--Wiles method
due to Diamond showed that one could \emph{deduce} multiplicity one in certain
circumstances  while simultaneously proving a modularity theorem (see~\cite{DiamondMult}).
Our argument is in the spirit of Diamond, where it is the geometry of a local deformation ring
rather than $J_1(N^*)$ that is the crux of the matter.}
\end{remark}

\begin{proof} 
Let $G$ denote the part of the $p$-divisible
group of $J_1(N^*)$ which is associated to $\m$. 
By~\cite{Gross}, Prop~12.9, as well as the proof
of Prop~12.10, recall there is an exact sequence of groups
$$0 \rightarrow T_p G^0 \rightarrow T_p G \rightarrow T_p G^e \rightarrow 0$$
which is stable under $\T_{\m}$. Moreover,  $T_p G^0$ is free of
rank one over $\T_{\m}$, and $T_p G^e = \Hom(T_p G^0,\Z_p)$.

\medskip

We may assume that $\rhobar$ is unramified at $p$ and
$\rhobar(\Frob_p)$ is scalar. Thus $N^*=pN$ (since $p$ is odd).  Let $M$ denote the
largest factor of $N$ which is only divisible by
the so called  ``harmless'' primes, that is, the primes $v$ such that~$v \equiv 1 \mod p$
and such that $\rhobar|G_v$ is absolutely irreducible.
Define the group $\Twist$  as follows:
$$\Twist:=  \Z_p \otimes \prod_{x | M} (\Z/x\Z)^{\times}$$
$\Twist$ measures the group of Dirichlet characters  (equivalently,
characters of $G_{\Q}$) congruent to $1 \mod \varpi$
 which preserve the set of lifts of $\rhobar$
 of minimal conductor under twisting (by assumption, $\rhobar$
 has minimal conductor amongst its twists, so an easy exercise
 shows that these are the only twists with this property). Extending
 $\OL$ if necessary, we may assume that each character $\twist \in
 \widehat{\Twist} :=\Hom(\Twist,\overline{\Q}_p^\times)$ is valued in $\OL^\times$.
For~$\twist \in \widehat{\Twist}$, and let $\chi_\twist$ denote the
 character $\eps \cdot \langle \rhobar \eps^{-1} \rangle \twist$ of
 $G_\Q$.
 For $v|N^*$ we define a quotient $R_v=R_{v,\twist}$ of the universal framed
 deformation ring with determinant $\chi_\twist$ of $\rhobar|G_v$ as follows:
\begin{enumerate}
\item  When $v = p$, $R_v=R_{v,\twist}$ is
the ordinary framed deformation ring $\Rd$ of Section
\ref{section:case3} (with $n=2$).
\item When $v\ne p$, $R_v=R_{v,\twist}$ is the unrestricted framed deformation
  ring with determinant~$\chi_\twist | G_v$.
\end{enumerate}
The isomorphism types of these deformation rings do not depend on $\twist$.
Let $R=R_{\twist}$ denote the (global) universal deformation
ring of $\rhobar$ corresponding to
deformations with determinant $\chi_\phi$ which are unramified outside $N^*$ and which are
classified (after a choice of framing) by $R_v$ for each $v|N^*$. 
Let $R^{\square}$ denote the framed version of $R$, with framings at
each place $v|N^*$.
Let $\Tan_{\twist}$ be the anaemic weight $2$, level $\Gamma_1(N^*)$ ordinary Hecke algebra (so it does
not contain $U_p$) which acts on 
$$S_{\twist}:=\bigoplus_{\chi} S_2^{\ord}(\Gamma_1(N^*),\chi,\OL),$$
where $\chi$ runs over all the characters of $(\Z/Np\Z)^{\times}$ with
$\chi|\Twist = \phi$.
Note that 
$$S_2^{\ord}(\Gamma_1(N^*),\OL) \otimes \Q = \bigoplus_{\widehat{\Twist}} S_{\twist} \otimes \Q.$$
The reason for dealing with the harmless primes in this manner is as follows.
For all \emph{non}-harmless $v \ne p$, the conductor-minimal
deformation ring of $\rhobar|G_v$ (which classifies deformations which appear at
level $\Gamma_1(N)$) is isomorphic to the ring $R_{v,\phi}$ (up to
unramified twists). 
Equivalently, for non-harmless primes, any characteristic zero lift~$\rho$ of~$\rhobar$ is uniquely twist
equivalent to a lift of minimal conductor.
The only time this is not true is when~$v \equiv 1 \mod p$ and~$\rhobar$ has trivial invariants under~$I_v$.
Since we are assuming that~$\rhobar |_{G_v}$ is minimally ramified amongst all its twists, this
only happens when~$\rhobar |_{G_v}$ is absolutely irreducible and~$v \equiv 1 \mod p$.
However, the  na\"{\i}ve conductor-minimal deformation deformation
ring at a harmless prime is equal to the unrestricted deformation ring
and does not have fixed inertial determinant, and one needs the determinant
to be fixed for the Taylor--Wiles method to work correctly.

Consider the Galois representation $\rho : G_\Q \to
\GL_2(\Tan_{\phi,\m})$ associated to eigenforms in $S_{\twist}$. The
character $\epsilon^{-1}\det \rho$ can be regarded as a character
$\chi:(\Z/Np\Z)^{\times} \to (\Tan_{\phi,\m})^{\times}$ with $\chi|\Twist
=\phi$. Let $\psi$ denote the restriction of $\chi$ to $\Z_p\otimes
\prod_{x \nmid M}(\Z/x\Z)^\times$, which we may regard as a character
of $G_\Q$. After twisting $\rho$ by $\psi^{-1/2}$, we obtain a
Galois representation
$$G_{\Q} \rightarrow \GL_2(\Tan_{\phi,\m})$$
with determinant $\chi_\phi=\eps \cdot \langle \rhobar \eps^{-1} \rangle \twist$ which is
classified by $R=R_\twist$. Our hypotheses on $\rhobar$  (that $\rhobar$ is absolutely
irreducible and unramified at $p$) imply
that $\rhobar|G_{\Q(\zeta_p)}$ is absolutely irreducible.
Kisin's improvement of the Taylor-Wiles method yields an isomorphism
$R_{\twist}[1/p]\simeq \Tan_{\twist,\m}[1/p]$. (Here we apply the
Taylor-Wiles type patching results 
Prop.\ 3.3.1 and Lemma 3.3.4 of \cite{kisin-moduli} --- as in the proof
of Theorem~3.4.11 of \emph{ibid.} --- except that the rings denoted $B$
and $D$ in the statements of these results may no longer be integral domains in our
situation (though their generic fibres will be formally smooth over
$K$ by Lemma \ref{lem:local-rings} below). This is
due to the fact that the rings $R_{v,\twist}$ defined above may have multiple irreducible
components for certain $v\neq p$. On the other hand, the only place in \cite{kisin-moduli} where the assumption
that $B$ and $D$ be integral domains is used is in the first paragraph of the proof of Lemma
3.3.4. In our case, it will suffice to show that each irreducible
component of $R_{v,\phi}$ is in the support of $S_{\phi}$. This
follows from now standard results on the existence of modular
deformations with prescribed local inertial types.)

Now, $R^{\square}$ is (non-canonically) a power series ring over $R$, and is realized as a quotient of 
$$\left( \Rd \widehat\otimes \widehat\bigotimes_{v|N} R_{v} \right)[[x_1,\ldots,x_n]] \rightarrow R^{\square}$$
by a sequence of elements that can be extended to a system of
parameters; this last fact follows from the proof of
Prop 5.1.1 of \cite{Snowden} and the fact that $R$ is finite over
$\OL$ (for the most general results concerning the finiteness of deformation rings over $\OL$,
see Theorem~10.2 of~\cite{thorne}). 
As a variant of this, we may consider deformations of $\rhobar$ together with an eigenvalue
$\alpha$ of Frobenius at $p$. Globally, this now corresponds to a modified global
deformation ring $\Ra = \Ra_{\twist}$ and the corresponding framed version $\Ra^{\square}$, where
we now map to the full Hecke algebra $\T_{\m}$.
There are surjections:
$$\Rloc[[x_1,\ldots,x_n]]:= \left( \Rda \widehat\otimes \widehat\bigotimes_{v|N} R_{v} \right)[[x_1,\ldots,x_n]] \rightarrow \Ra^{\square}
\rightarrow \Ra \rightarrow \Ra/\varpi.$$
Since $\Ra$ is finite over $\OL$, it follows that $\Ra/\varpi$ is Artinian.
Again, as in the proof of Prop~5.1.1 of~\cite{Snowden} (see also Prop.~4.1.5 of~\cite{KisinCDM}),
the kernel of the composition of these maps
is given by a system of parameters, one of which is $\varpi$. 
On the other hand, we have:
\begin{lemma}
\label{lem:local-rings} The rings $R_v=R_{v,\phi}$ for $v \ne p$ are complete
  intersections. Moreover, their generic fibres $R_v[1/p]$ are
  formally smooth over $K$.
\end{lemma}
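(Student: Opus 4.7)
The plan is to reduce, for each prime $v\neq p$, to an explicit tame deformation problem and then to verify both assertions by a direct calculation with a presentation of $R_{v,\twist}$. Since $v\ne p$, the wild inertia subgroup $P_v\subset I_v$ has pro-order prime to $p$, so by a standard application of Carayol's lemma every lift of $\rhobar|G_v$ to an object of $\CC_\OL$ has its restriction to $P_v$ conjugate (inside $\GL_2$ of the lift ring) to a fixed lift of $\rhobar|P_v$. Hence $R_{v,\twist}$ is naturally isomorphic to the fixed-determinant framed deformation ring, with determinant $\chi_\twist$, for the representation of the tame quotient $G_v/P_v$ obtained after modding out by this fixed wild piece.

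Next, I would use the standard topological presentation of $G_v/P_v$ --- generated by a Frobenius lift $\phi_v$ and a generator $\sigma_v$ of tame inertia, subject to the braid relation $\phi_v \sigma_v \phi_v^{-1} = \sigma_v^{v}$ --- to write $R_{v,\twist}$ explicitly as a quotient of a power series ring over $\OL$. The matrix entries of the unknowns $\phi_v$ and $\sigma_v$ give the generators, while the four entries of the braid relation together with the determinant condition give the relations. A counting argument based on the local Euler characteristic formula $\chi(G_v,\ad^0)=0$ (valid at primes away from $p$), combined with Tate local duality, shows that these relations form part of a regular sequence and that $R_{v,\twist}$ has relative dimension exactly $3$ over $\OL$. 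This exhibits $R_{v,\twist}$ as a complete intersection.

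For the formal smoothness of $R_{v,\twist}[1/p]$ over $K$, I would compute the framed tangent space at each characteristic zero closed point $\rho$ of $\Spec R_{v,\twist}[1/p]$. This tangent space is controlled by $H^1(G_v,\ad^0\rho)$ together with the $3$-dimensional framing contribution, and a direct computation using Tate duality and the local Euler characteristic formula matches it with the Krull dimension of the completed local ring, giving regularity at every closed point of the generic fibre and hence formal smoothness over $K$. The main technical subtlety is the case analysis across the various shapes of $\rhobar|G_v$ --- most delicately when $\rhobar|G_v$ is scalar and $v\equiv 1\pmod p$, so that $R_{v,\twist}$ has several irreducible components (corresponding to unramified, principal series, and twist-of-Steinberg lifts). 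In each such case, however, both the complete intersection property and the smoothness of the generic fibre are verified by the same calculation, applied component by component. These computations are standard, and a systematic treatment is available in Shotton's thesis and in arguments of the type used in \cite{Snowden}; I would cite those rather than reprove them in detail.
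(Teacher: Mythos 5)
There is a genuine gap, and it sits exactly where you wave your hands: the formal smoothness of $R_v[1/p]$ is not a property that can be ``verified component by component,'' nor can it be outsourced to a general theorem about framed deformation rings at $v\neq p$, because it is false in general. The generic fibre is singular precisely at characteristic-zero points $\rho$ with $H^2(G_v,\ad^0\rho)\neq 0$, equivalently those admitting a nonzero map $\rho\to\rho(1)$; the basic example is a split point $\rho\cong\chi\oplus\chi\eps$, which lies both in the unramified (or split principal series) locus and in the closure of the twist-of-Steinberg locus, hence on two distinct irreducible components, hence at a non-regular point. In the very case you single out as most delicate --- ``unramified, principal series, and twist-of-Steinberg'' components all present, $v\equiv 1 \bmod p$ --- such points do occur and the generic fibre is \emph{not} formally smooth, so your per-component calculation would prove a false statement. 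The lemma survives because that configuration never arises here: the relevant $v\neq p$ divide the twist-minimal level, so $\rhobar|G_v$ is ramified and falls into the three shapes listed in the paper. In two of them $R_v$ is literally a power series ring over $\OL[\Delta]$ for a cyclic $p$-group $\Delta$; in the remaining one ($v\equiv 1\bmod p$, $\rhobar|I_v$ nontrivial unipotent) there is no unramified component at all, every characteristic-zero lift is either a nonsplit Steinberg twist or a ramified principal series $\chi_\twist\psi\oplus\psi^{-1}$, one checks $H^2(G_v,\ad^0\rho)=0$ at each such point, and the Steinberg and principal-series components are disjoint in characteristic zero (they are separated by the trace of a generator of tame inertia, as in the paper's footnote). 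This case analysis, using the specific shape of $\rhobar|G_v$, is the actual content of the proof and cannot be skipped.

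The complete-intersection step is also thinner than you make it. Your reduction to a representation of the tame quotient $G_v/P_v$ is not literally correct when $\rhobar|G_v$ is wildly ramified (as can happen at the type {\bf M} primes, where $\xi$ may be wild): fixing the restriction to $P_v$ is fine, but one must then deform the whole representation with the wild part pinned, not a representation of $G_v/P_v$. More seriously, the generator/relation count from the braid relation only bounds the dimension of $R_v$ from below (and as stated the numbers do not even match: eight entries against four braid relations plus two determinant conditions leaves expected relative dimension $2$, so you must first argue a relation is redundant); to conclude that the relations form a regular sequence you need a matching upper bound on $\dim R_v$, which is the substantive point (it is, in effect, Shotton's later theorem, not something you can cite here). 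The paper avoids all of this: in the one non-smooth case not handled by the explicit $\OL[\Delta]$ description, it notes $\dim Z^1(G_v,\ad^0\rhobar)=4$ and $\dim H^2(G_v,\ad^0\rhobar)=1$, so $R_v$ is a quotient of $\OL[[x_1,\dots,x_4]]$ by a single element and is a complete intersection with no counting argument at all, the smoothness of the generic fibre then pinning down the dimension and the nonvanishing of the relation.
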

\begin{proof}  There are three cases in which $R_v$ is not smooth.
In two of these cases,  we shall prove that $R_v$ is a power series ring over $\OL[\Delta]$ for
some finite cyclic abelian $p$-group $\Delta$. Since $\OL[\Delta]$ is manifestly
a complete intersection with formally smooth generic fibre, this suffices to prove the lemma in these
cases. In the other case, we
will show that $R_v$ is a quotient of a power series ring by a single
relation. This shows that it is a complete intersection. 
The three situations in which $R_v$ is not smooth correspond to primes $v$ such that:
\begin{enumerate}
\item $v \equiv 1 \mod p$, $\rhobar |G_v$ is reducible, and 
$\rhobar |I_v \simeq \chi \oplus 1$ for some ramified $\chi$.
\item $v \equiv -1 \mod p$, $\rhobar |G_v$ is absolutely irreducible and induced
from a character $\xi$. 
\item $v \equiv 1 \mod p$, $\rhobar^{I_v}$ is 1-dimensional and
  $\rhobar^{\ss}|G_v$ is unramified.
\end{enumerate}
Suppose that $v$ is a vexing prime (the second case).
Any conductor-minimal
deformation of $\rhobar$ is induced  from a character of the form
$\langle\xi \rangle \psi$ over the quadratic unramified extension of $\Q_v$, where $\psi \mod \varpi$ is trivial. It follows that $\psi$
is tamely ramified, and in particular, up to unramified  twist, it may be identified with a character of 
$\F^{\times}_{v^2}$ of $p$-power order. We may therefore write down the universal
deformation explicitly, which identifies $R_v$ with a power series ring over $\OL[\Delta]$,
where $\Delta$ is the maximal $p$-quotient of $\F_{v^2}^{\times}$.

Suppose that we are in the first case, and so, after an unramified
twist, $\rhobar|G_v \cong \chi \oplus 1$. 
All $R_{v}$-deformations of $\rhobar$ are
of the form $(\langle \chi \rangle \psi \oplus  \psi^{-1})\otimes
(\chi_\phi \langle\chi^{-1}\rangle)^{1/2}$, where $\psi \equiv 1 \mod \varpi$.
It follows that $\psi$ is tamely ramified, and in particular, decomposes as an unramified character
and a character of $\F^{\times}_{v}$ of $p$-power order.  We may therefore write down the universal
framed deformation explicitly, which identifies $R_v$ with a power series ring over $\OL[\Delta]$,
where $\Delta$ is the maximal $p$-quotient of $\F_{v}^{\times}$.

In the third case, the deformation rings are not quite as easy to describe explicitly, so we
use a more general argument. As noted by the referee, the following argument may also
be easily modified to deal with the first two cases.
We first note that $R_{v,\phi}$ is a quotient of a power
series ring over $\OL$ in $\dim Z^1(G_v,\ad^0\rhobar)= 4$ variables
by at most $\dim H^2(G_v,\ad^0\rhobar)=1$ relation.  Closed points on the
generic fiber of $R_{v,\phi}$ correspond to lifts $\rho$ of
$\rhobar|G_v$ which are either unramified twists of the Steinberg
representation or lifts which decompose (after inverting $p$) into a
sum $\chi_\phi \psi \oplus \psi^{-1}$ with $\psi|I_v$ of $p$-power
order. The completion of $R_v[1/p]$ at such a point is the
corresponding characteristic 0 deformation ring of the lift $\rho$ (see Prop.~2.3.5 of~\cite{kisin-moduli}). In
each case, we have $\dim H^2(G_v,\ad^0\rho)=0$ and hence this ring is
a power series ring (over the residue field at the point) in $\dim
Z^1(G_v,\ad^0 \rho) = 3$ variables. It follows that $R_v \cong
\OL[[x_1,x_2,x_3,x_4]]/(r)$ for some $r\neq 0$ and $R_v[1/p]$ is
formally smooth over $K$\footnote{One may take $r$ to be $C(T)-T$,
where $C$ is the  Chebyshev-type polynomial determined by the
relation $C(t+t^{-1}) = t^v+ t^{-v}$,
and $T$ is the trace of a generator of tame inertia (note that $T-2 \in \m_{R_v}$).
The generic fibre of $R_v$ has $(q+1)/2$ geometric components,
where $q$ is the largest power of $p$ dividing $v-1$ (see also Theorem~1.0.1(A2.2) of~\cite{Reduzzi}).
One component corresponds to lifts of $\rhobar$ on which
inertia is nilpotent, and in particular has trace $T = 2$.
The remaining $(q-1)/2$ components correspond to
representations which are finitely ramified of order dividing $q$, on which
$T = \zeta + \zeta^{-1}$ for   some primitive $q$-th root of unity $\zeta \ne 1$.}. This concludes the proof of the lemma.
\end{proof}
 By Lemma~\ref{lemma:vexCM}, it follows
 that $\Rloc[[x_1,\ldots,x_n]]$  is Cohen--Macaulay,
and  hence  the sequence of parameters giving rise to the quotient $\Ra/\varpi$
 is a regular sequence. In particular,
$\Ra$ is Cohen--Macaulay and $\varpi$-torsion free.
Moreover, again by Lemma~\ref{lemma:vexCM}, the number of generators of the canonical module of 
of $\Rloc[[x_1,\ldots,x_n]]$ (and hence of $\Ra$) is equal to the number of generators
of the canonical module of $\Rda$, which is $3$, by Theorem~\ref{theorem:three}.
Since patching arguments may also be applied to the adorned Hecke algebras $\T_{\m}$,
The method of Kisin yields  an isomorphism $\Ra[1/p] = \T_{\twist,\m}[1/p]$ (note that $\Rda$ is a domain, and
$\Rda[1/p]$ is formally smooth). Since (as proven above) $\Ra$ is
$\OL$-flat, it follows that $\Ra \simeq \T_{\twist,\m}$.
In particular, we deduce that $\T_{\twist,\m}$ is Cohen--Macaulay,
and 
 that $\dim \omega_{\T_{\twist,\m}}/\m = 3$.
There is an isomorphism as follows:
 $$S_2(\Gamma_1(N^*),\OL)_{\m}  \otimes K = \bigoplus_{\widehat{\Twist}} S_{\twist^2,\m} \otimes K,$$
 where, since~$p$ is odd, we write every element  of~$\widehat{\Twist}$ uniquely as a square.
 If~$\T_{\twist,\m}$ denotes the Hecke action on~$S_{\twist,\m} \otimes K$, then twisting by~$\twist$ induces an isomorphism
 $\T_{\twist^2,\m} \simeq \T_{1,\m} \otimes_{\OL} \OL(\twist),$ since this is precisely the effect
 twisting has on the action of the diamond operators. (Here~$\OL(\twist) = \OL$ with the~$\Phi$ action twisted by~$\twist$.)
   If $\T_{\m}$ is the Hecke ring at full level $\Gamma_1(N^*)$, then the restriction map induces an inclusion map
   $$\T_{\m} \hookrightarrow \bigoplus \T_{\twist^2,\m}  \simeq \T_{1,\m} \otimes \bigoplus \OL(\twist).$$
  Since~$\T_{\m}$ is local, the image lands inside~$\T_{1,\m} \otimes \OL[\Twist]$.  Because the map 
  above is an isomorphism
  after tensoring with~$K$, and since all the  relevant spaces of modular forms are $\varpi$-torsion free,
  we have an isomorphism~$\T_{\m} \simeq \T_{1,\m} \otimes_{\OL} \OL[\Twist]$.
   Hence, applying  Lemma~\ref{lemma:vexCM} once more, we deduce that $\T_{\m}$
 is Cohen--Macaulay and  $\dim \omega_{\T_{\m}}/\m = 3$
  
Since $\T_{\m}$ 
is finite over $\Z_p$, we deduce by 
Lemma~\ref{lemma:dual} 
that $\Hom(\T_{\m},\Z_p)$
is the canonical module of $\T_{\m}$, and thus $\Hom(\T_{\m},\Z_p)/\m$ also has dimension three.
Yet we have identified  $\Hom(\T_{\m},\Z_p)$ with $T_p G^e$, and it follows that $\dim G^0[\m]  =
\dim T_p G^e/\m =  3$, and hence
$$\dim J_1(N^*)[\m] = \frac{1}{2} \dim G[\m] = \frac{1}{2}(\dim G^0[\m] + \dim G^e[\m])
= \frac{3 + 1}{2} = 2.$$
\end{proof}

\begin{remark} \emph{If  $X_H(N^*) = X_1(N^*)/H$ is the smallest
quotient of $X_1(N^*)$ where one might
expect $\rhobar$ to occur, a similar argument shows that
$J_H(N^*)[\m]$ has multiplicity two if $\rhobar$ is unramified and scalar at $p$,
and has multiplicity one otherwise,
 providing that $p \ne 3$ and $\rhobar$ is not induced from
a character of $\Q(\sqrt{-3})$. The only extra ingredient required is
the result  of Carayol (see~\cite{carayolred}, Prop.~3 and
also~\cite{edixhoven-fermat}, Prop.~1.10).}
\end{remark}

\begin{remark} \emph{We expect that these arguments should also apply
in principle when $p = 2$; the key point is that one should
instead use the quotient $\Rda_3$ of $\Rda$ (in the notation of~\cite{Snowden},~\S~4),
 corresponding to \emph{crystalline}
ordinary deformations. The special fibre of $\Rda_3$ is (in this case)
also given by $\mathcal{B}_1$, and thus one would deduce that
the multiplicity of $\rhobar$ is 
two when $\rhobar(\Frob_2)$ is scalar, assuming that $\rhobar$ is
not induced from a quadratic
extension. The key point to check is that the arguments above are compatible
with the modifications to the $R = \T$ method for $p = 2$ developed by
Khare--Wintenberger and Kisin (in particular, this will require that $\rhobar$
is not dihedral.)}
\end{remark}

\subsection{Finiteness of deformation rings}

\begin{lemma} \label{lemma:forbrian} Let $F/\Q$ be a number field, let $k$ be a finite field, and let $S$ denote a finite
set of places not containing any $v|p$. Let $G_{F,S}$ denote the Galois group of the maximal
extension of $F$ unramified outside $S$. 
Let
$$\rhobar: G_{F,S} \rightarrow \GL_2(k)$$ 
be a continuous absolutely irreducible representation, and
let $\Rmint$ denote the universal deformation ring of $\rhobar$. Suppose that the Galois
representation associated to any~$\Qbar_p$-point of~$\Rmint$ has finite image, and suppose that there
are only finitely many~$\Qbar_p$-points of~$\Rmint$. Then~$\Rmint[1/p]$ is reduced; equivalently, 
$\Rmint[1/p]^{\red} = \Rmint[1/p]$.
\end{lemma}

\begin{proof}  Because $S$ is a finite set of primes, it follows
from the discussion in~\S1 p.387 of~\cite{Mazdef} that $\Rmint$ is a complete local Noetherian~$W(k)$-algebra.
 The assumption that~$\Rmint$ has only finitely many~$\Qbar_p$-points implies 
 that~$\Rmint[1/p]^{\red}$ is isomorphic to a product of  finitely many fields indexed by prime ideals $\p^{\red}$ of 
$\Rmint[1/p]^{\red}$. Since $\Spec(\Rmint[1/p]^{\red})$ and $\Spec(\Rmint[1/p])$ are naturally isomorphic as
sets, there is a bijection between primes $\p$ of $\Rmint[1/p]$ and $\p^{\red}$ of $\Rmint[1/p]^{\red}$.
Hence
$\Rmint[1/p]$ is a Noetherian semi-local ring, which therefore decomposes as a direct sum of its localizations
over all finite ideals $\p$. It suffices to show that the localizations of $\Rmint[1/p]$ and $\Rmint[1/p]^{\red}$
at every prime $\p$ are isomorphic. Denote this localization of $\Rmint[1/p]$ by $(A,\m)$. Note that 
$A/\m = A/\p = \Rmint[1/p]^{\red}/\p^{\red} \simeq E$ for some finite extension $E$ of $\Q_p$.
We have Galois representations as follows:
$$G_{F,S} \rightarrow \GL_2(\Rmint) \rightarrow \GL_2(\Rmint[1/p]) \rightarrow \GL_2(A) \rightarrow \GL_2(A/\m^2) \rightarrow \GL_2(E)$$
To show that $A = E$, it suffices, by Nakayama's Lemma, to show that
$A/\m^2 = A/\m$.  Because $E$ is of characteristic zero, the map
$A/\m^2 \rightarrow E$ splits, and $A/\m^2$ has the structure of an $E$-algebra.
If $A/\m^2 \ne A/\m$, then the map $A/\m^2 \rightarrow A/\m$  factors through a surjection
 $A/\m^2 \rightarrow E[\eps]/\eps^2$. 
Because $\rhobar$ is absolutely irreducible, the ring $\Rmint$ is generated by the traces of the images of
elements of $G_{F,S}$ (Proposition~4, ~S1.8 of~\cite{Mazdef}.) 
It follows that the traces of the elements of $G_{F,S}$ generate
$\Rmint[1/p]$ and all its quotients  over $W(k) \otimes \Q$.
 It thus suffices to show that the images of the elements
 of $G_{F,S}$ in $\GL_2(E[\eps]/\eps^2)$ all have traces in $E$. 
Consider the corresponding Galois representation
 $$\rho: G_{F,S} \rightarrow \GL_2(E[\eps]/\eps^2).$$
The composite to $\GL_2(E)$ has finite image by assumption.
 Denote the corresponding finite image Galois representation over $E$ by $V$.
Hence $\rho$ arises from some extension
$$0 \rightarrow V \rightarrow W \rightarrow V \rightarrow 0.$$
Consider the restriction of this representation to a finite extension $L/F$ such that $G_{L,S}$ acts trivially on $V$. Then the action of
$G_L$ on $W$ factors through a $\Z_p$-extension which is unramified outside primes outside those
above $S$, and is  in particular unramified at all primes $v|p$. Such extensions are trivial by class field theory.
Hence the extension splits over $G_{L,S}$. However, because $G_{L,S}$ has finite image in $G_{F,S}$, the extension
also splits over $G_{F,S}$, because the inflation map is injective (the kernel is computed by~$H^1$ of a finite group acting in characteristic zero).
It follows that the extension is trivial over $G_{F,S}$. Yet this implies that the trace of the image of any element lies in $E$,
which completes the proof.
\end{proof}

If~$\rhobar:G_{Q,S} \rightarrow \GL_2(k)$ as above is modular,
then one can often deduce the assumptions (and hence the conclusions)
of Lemma~\ref{lemma:forbrian} from work of Buzzard--Taylor and Buzzard~\cite{BuzzT,BuzzWild}.

\section{Imaginary Quadratic Fields}
\label{sec:imag-quadr-fields}

In this section, we apply our methods to Galois representations of
regular weight over imaginary quadratic fields. The argument, formally, is very
similar to what happens to weight one Galois representations over $G_{\Q}$.
The most important difference is that we are not able to prove the existence of
Galois representations associated to torsion classes in cohomology, and
so our results are predicated on a conjecture that suitable Galois representations
exist (Conjecture~\ref{conj:A}).

\subsection{Deformations of Galois Representations}
\label{sec:deform-galo-repr-imaginary}

Let $F$ be an imaginary quadratic field, and let $p \ge 3$ be a prime that
is unramified in $F$.  
Suppose that $v|p$ is a place of $F$ and $A$ is an
Artinian local $\OL$-algebra. We say that a continuous representation
$\rho:G_v \to \GL_2(A)$ is \emph{finite flat} if there is a finite flat
group scheme $\CF/\OL_{F_v}$ such that $\rho \cong
\CF(\overline{F}_v)$ as $\Z_p[G_v]$-modules, and $\det(\rho | I_v)$ is the cyclotomic character.
 We say that $\rho$ is
\emph{ordinary} if $\rho$ is conjugate in $\GL_2(A)$ to a
representation of the form 
\[ \begin{pmatrix}
\epsilon\chi_1 & * \\
0 & \chi_2 
\end{pmatrix} \]
where $\chi_1$ and $\chi_2$ are unramified.

Let
$$\rhobar: G_{F} \rightarrow \GL_2(k)$$
be a continuous Galois representation such that the restriction
$$\rhobar: G_{F(\zeta_p)} \rightarrow \GL_2(k)$$
is  absolutely irreducible.
Let $S(\rhobar)$ denote the set of primes not dividing $p$ where $\rhobar$ is ramified.
We assume the following:
\begin{enumerate}
\item $\det(\rhobar)$ is the mod-$p$ reduction of the cyclotomic character.
\item $\rhobar$ is either ordinary or finite flat at $v|p$.
\item If $x \in S(\rhobar)$, then either:
\begin{enumerate}
\item $\rhobar | I_x$ is irreducible.
\item  $\rhobar | I_x$ is unipotent.
\item $\rhobar | D_x$ is reducible, and $\rho | I_x$ is of the form $\psi \oplus \psi^{-1}$.
\item If $\rhobar | D_x$ is irreducible and $\rhobar | I_x$ is reducible, then ${\NF}(x)
\not\equiv -1 \mod p$.
\end{enumerate}
\end{enumerate}

Let $Q$  denote a finite set of primes in $\OL_F$ not containing
any primes above $p$ and not containing any primes at which $\rhobar$
is ramified.  For objects $R$ in $\CC_\OL$, we say that a representation
$\rho: G \rightarrow \GL_2(R)$ is unipotent if, after some change of basis, the
image of $\rho$ is a subgroup of the matrices of the form
$\displaystyle{\left(\begin{matrix} 1 & * \\ 0 & 1 \end{matrix} \right)}$.
For objects $R$ in $\CC_\OL$, we may consider lifts $\rho : G_F \to \GL_2(R)$ of $\rhobar$ with the following properties:
\begin{enumerate}
\item $\det(\rho) = \eps$. 
\item If $v|p$, then $(\rho \otimes_R (R/\m_R^n))|G_v$ is finite flat or ordinary for all $n\geq 1$.
\item If $v|p$ and $\rhobar|G_v$ is finite flat, then
$(\rho\otimes_R(R/\m_R^n))|G_v$ is finite flat for all $n\geq 1$.
\item If $x \notin Q \cup S(\rhobar) \cup \{v|p\}$, then
$\rho|G_{x}$ is unramified.
\item {\bf S \rm}: \label{unipotent}  If  $x  \in S(\rhobar)$, and $\rhobar | I_x$ is unipotent,
then $\rho | I_x$ is unipotent. 
\item  {\bf P \rm}:  \label{gamma1} If $x \in S(\rhobar)$, and $\rhobar | I_x \simeq  \psi  \oplus \psi^{-1}$, then
$\rho | I_x \simeq  \langle \psi \rangle \oplus \langle \psi \rangle^{-1}$.
\item {\bf M \rm}: \label{irreducible} If $x \in S(\rhobar)$, $\rhobar | D_x$ is irreducible, and $\rhobar |I_x = \psi_1 \oplus \psi_2$ is reducible, then
$\rho | I_x = \langle \psi_1 \rangle \oplus \langle \psi_2 \rangle$.
\item {\bf H}: \label{double} If $\rhobar | I_x$ is irreducible, then
  $\rho(I_x)\iso \rhobar(I_x)$. (This also follows automatically from the
determinant condition.)
\end{enumerate}
In cases $6$ and $7$ (and $8$),  there is an isomorphism
$\rho(I_x) \iso \rhobar(I_x)$.
For $x \in S(\rhobar)$, we say that $\rhobar|D_x$ is of type {\bf S\rm}pecial, {\bf P\rm}rincipal,
{\bf M\rm}ixed, 
or {\bf H\rm}armless respectively if is of the type indicated above.
Note that primes of type ${\bf M\rm}$ are called vexing by~\cite{DiamondVexing}, but we have eliminated
the most troublesome of the vexing primes, namely those $x$ with ${\NF}(x) \equiv -1 \mod p$.
The corresponding deformation functor is represented by a complete
Noetherian local $\OL$-algebra $R_Q$ (this follows from the proof
of Theorem 2.41 of ~\cite{DDT}). If $Q=\emptyset$, we will
sometimes denote $R_Q$ by $R^{\min}$. Let $H^1_{Q}(F,\ad^0 \rhobar)$
denote the Selmer group defined as the kernel of the map
\[ H^1(F,\ad^0 \rhobar) \lra \bigoplus_{x} H^1(F_x,\ad^0\rhobar)/L_{Q,x}\]
where $x$ runs over all primes of $F$ and 
\begin{itemize}
\item $L_{Q,x} =
  H^1(G_x/I_x,(\ad^0\rhobar)^{I_{x}})$ if $x
  \not \in Q\cup \{v|p\}$;
\item $L_{Q,x} =
  H^1(F_x,\ad^0\rhobar)$ if $x \in Q$ and
  $x\nmid p$;
\item $L_{Q,v} =
  H^1_{\ff}(F_v,\ad^0\rhobar)$ if $v|p$ and $v\not\in Q$;
\end{itemize}
(The group $H^1_{\ff}(F_v,\ad^0\rhobar)$ 
is defined as in~\S2.4 of~\cite{DDT}.) Let $H^1_{Q}(F,\ad^0 \rhobar(1))$ denote the corresponding
dual Selmer group.

\begin{prop}
\label{prop:tangent-space}
 The reduced tangent space $\Hom(R_Q/\m_\OL,k[\epsilon]/\epsilon^2)$ of $R_{Q}$ has
  dimension at most 
$$\dim_k H^1_{Q}(F,\ad^0 \rhobar(1)) - 1 + 
\sum_{x \in Q} \dim_k H^0(F_{x},
\ad^0 \rhobar(1)).$$
\end{prop}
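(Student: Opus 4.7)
The plan is to identify the reduced tangent space of $R_Q$ with a Selmer group and then apply the Greenberg--Wiles / Poitou--Tate formula, following the template of Proposition~\ref{prop:tangent-space-w1}. By the standard deformation-theoretic dictionary (as in Theorem 2.41 of \cite{DDT}), the tangent space is naturally isomorphic to $H^1_Q(F,\ad^0\rhobar)$ once one verifies, case by case, that the tangent space of each local deformation functor matches the corresponding local Selmer condition $L_{Q,v}$. This amounts to the unramified check outside $S(\rhobar) \cup Q \cup \{v|p\}$, the flat or ordinary condition at $v|p$, and (less trivially) that at $x \in S(\rhobar)$ each of the five inertial types (\textbf{S}, \textbf{P}, \textbf{M}, \textbf{Sc}, \textbf{H}) cuts out a subspace of $H^1(F_x,\ad^0\rhobar)$ of the expected dimension.

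With this identification in hand, I would apply Theorem 2.18 of \cite{DDT} to obtain
\begin{align*}
\dim_k H^1_Q(F,\ad^0\rhobar) = {} &\dim_k H^1_Q(F,\ad^0\rhobar(1)) + \dim_k H^0(F,\ad^0\rhobar) \\
&\quad {} - \dim_k H^0(F,\ad^0\rhobar(1)) + \sum_v\bigl(\dim_k L_{Q,v} - \dim_k H^0(F_v,\ad^0\rhobar)\bigr),
\end{align*}
where $v$ runs over all places of $F$. Absolute irreducibility of $\rhobar$ gives $H^0(F,\ad^0\rhobar) = 0$, and the term $-\dim_k H^0(F,\ad^0\rhobar(1)) \leq 0$ is dropped to obtain the desired upper bound.

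The remaining work is a careful accounting of the local defects $\delta_v := \dim_k L_{Q,v} - \dim_k H^0(F_v,\ad^0\rhobar)$. At the unique complex place, $G_{F_v}$ is trivial so $\delta_v = 0 - 3 = -3$. At unramified primes outside the bad set, $\delta_v = 0$. At $x \in Q$, Tate local duality and the local Euler characteristic formula give $\delta_x = \dim_k H^0(F_x,\ad^0\rhobar(1))$. At $v|p$, both the flat and the ordinary Selmer conditions give $\delta_v = [F_v:\Q_p]$, so summing over $v|p$ contributes $[F:\Q] = 2$ since $p$ is unramified in $F$. At each $x \in S(\rhobar)$, a type-by-type analysis should show $\delta_x \leq 0$: each of the conditions \textbf{S}, \textbf{P}, \textbf{M}, \textbf{Sc}, \textbf{H} is designed precisely so that the rigidified local tangent space, after fixing the determinant, has dimension at most $\dim_k H^0(F_x,\ad^0\rhobar)$. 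Summing all contributions yields total local defect at most $-3 + 2 + \sum_{x\in Q}\dim_k H^0(F_x,\ad^0\rhobar(1)) = -1 + \sum_{x\in Q}\dim_k H^0(F_x,\ad^0\rhobar(1))$, which after substitution into the Wiles formula gives the advertised inequality.

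The main technical obstacle will be the last step: verifying $\delta_x \leq 0$ for each inertial type at $x \in S(\rhobar)$. This is a standard but genuinely case-by-case check in the style of \cite{DiamondVexing, DiamondVexingTwo, CDT}, and must be combined with the precise computation of $\delta_v$ at $v|p$ using Fontaine--Laffaille theory (in the flat case) or an explicit parameterization of ordinary deformations with fixed determinant (in the ordinary case). The structural reason the imaginary quadratic case produces the constant $-1$ in the bound is the balance between the complex-place contribution of $-3$ and the $v|p$ contribution of $+2$, to be contrasted with the $\Q$-setting of \S\ref{sec:deform-galo-repr-w1} where the real place contributes $-1$ and the prime $p$ contributes $0$ (since $\rhobar|G_p$ is unramified there).
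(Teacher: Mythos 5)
Your proposal is correct and follows essentially the same route as the paper: bound the reduced tangent space by the Selmer group $H^1_Q(F,\ad^0\rhobar)$, apply the Greenberg--Wiles formula from \cite{DDT}, and account for the local defects — $-3$ at the complex place, at most $+2$ at the places above $p$ (the paper cites Prop.~2.27 of \cite{DDT} and Cor.~2.4.3 of \cite{CHT} for exactly the bound you attribute to Fontaine--Laffaille theory and the ordinary parameterization), $\dim_k H^0(F_x,\ad^0\rhobar(1))$ at $x\in Q$, and nonpositive defect at the remaining ramified primes, with the paper deferring those minimal-condition checks to Corollary~2.43 of \cite{DDT} just as you defer them to a standard case-by-case analysis.
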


\begin{proof} The argument follows along the exact lines of
Corollary~2.43 of~\cite{DDT}. The only difference in the calculation occurs 
at $v|p$ and at $v = \infty$. Specifically,  when $v|p$
and $p$ splits, the contribution to the Euler characteristic formula
(Theorem~2.19 of~\cite{DDT}) is
$$\sum_{v|p} (\dim_k H^1_{\ff}(F_v,\ad^0 \rhobar) - \dim_k H^0(F_v,\ad^0 \rhobar)),$$
which, by Proposition~2.27 of~\cite{DDT}, is at most $2$. However, the contribution
at the prime at $\infty$ is $- \dim_k H^0(\C, \ad^0 \rhobar) = -3$.
When $p$ is inert, the contribution at $p$ is 
$$  \dim_k H^1_{\ff}(F_p,\ad^0 \rhobar) - \dim_k H^0(F_p,\ad^0 \rhobar)$$
which is also at most 2 (see, for instance, Corollary~2.4.3 of~\cite{CHT} and note that  
there is an inclusion $H^1(G_{F_p}/I_{F_p},k)\subset
H^1_{\ff}(F_p,\ad\rhobar)\cap H^1(F_p,k)$ where we view $k$ as the
scalar matrices in $\ad \rhobar$). 
\end{proof}

Suppose that ${\NF}(x)\equiv 1\mod p$ and $\rhobar(\Frob_x)$ has distinct eigenvalues for
each $x\in Q$. Then $H^0(F_x,\ad^0\rhobar)$ is one
dimensional for $x\in Q$ and the preceding proposition
shows that the reduced tangent space of $R_Q$ has dimension at most
\[ \dim_k H^1_{Q}(F,\ad^0 \rhobar(1)) - 1 + \# Q.\]

We now show that one may choose a judicious set of primes (colloquially
referred to as Taylor--Wiles primes) to annihilate the dual Selmer group.

\begin{prop} 
\label{prop:tw-primes-imaginary}
Let $q =\dim_k H^1_{\emptyset}(F,\ad^0
  \rhobar(1))$ and suppose that $\rhobar|G_{F(\zeta_p)}$ is absolutely
  irreducible. Then $q\geq 1$ and for any integer $N\geq 1$ we can find a set $Q_N$
  of primes of $F$ such that 
\begin{enumerate}
\item $\# Q_N =q$.
\item ${\NF}(x) \equiv 1 \mod p^N$ for each $x\in Q_N$.
\item For each $x\in Q_N$, $\rhobar$ is unramified at $x$ and $\rhobar(\Frob_x)$ has distinct
eigenvalues.
\item $H^1_{Q_N}(F,\ad^0\rhobar(1))=(0)$.
\end{enumerate}
 In particular, the reduced tangent space of $R_{Q_N}$ has dimension at most
  $q-1$ and $R_{Q_N}$ is a quotient of a power series
  ring over $\OL$ in $q-1$ variables.
\end{prop}

\begin{proof}
That $q \geq 1$ follows immediately from Proposition \ref{prop:tangent-space}.
Now suppose that $Q$ is a finite set of primes of $F$ containing no primes
  dividing $p$ and no primes where $\rhobar$ is ramified. Suppose that
  $\rhobar(\Frob_x)$ has distinct eigenvalues and
  ${\NF}(x)\equiv 1 \mod p$ for each $x\in Q$. Then we have
  an exact sequence
\[ 0 \lra H^1_{Q}(F,\ad^0\rhobar(1)) \lra H^1_\emptyset(F,\ad^0\rhobar(1))
\lra \bigoplus_{x\in Q} H^1(G_x/I_x,\ad^0\rhobar(1)). \]
Moreover, for each $x\in Q$, the space $H^1(G_x/I_x,\ad^0\rhobar(1))$ is
one-dimensional over $k$ and is isomorphic to
$\ad^0\rhobar/(\rhobar(\Frob_x)-1)(\ad^0\rhobar)$ via the map which sends a
class $[\gamma]$ to $\gamma(\Frob_x)$. It follows that we may ignore
condition (1): if we can find a set $\tilde{Q}_N$ satisfying
conditions (2), (3) and (4), then $\# \tilde{Q}_N \geq q$ and by 
removing elements of $\tilde{Q}_N$ if necessary, we can obtain a set
$Q_N$ satisfying (1)--(4).

By the Chebotarev density theorem, it therefore suffices to show that for each non-zero class
$[\gamma]\in H^1_\emptyset(F,\ad^0\rhobar(1))$, we can find an element
$\sigma \in G_F$ such that 
\begin{itemize}
\item $\sigma | G_{F(\zeta_{p^N})} = 1$;
\item $\rhobar(\sigma)$ has distinct eigenvalues;
\item $\gamma(\sigma)\not\in (\rhobar(\sigma)-1)(\ad^0\rhobar)$. 
\end{itemize}
The existence of such a $\sigma$ can be established exactly as in the
proof of  Theorem~2.49 of~\cite{DDT}. 
\end{proof}

\subsection{Homology of Arithmetic Quotients}
\label{sec:hom-arithm-quot-imaginary}

 Let $\A$ denote the adeles of $\Q$, and
$\A^\infty$ the finite  adeles. Similarly, let $\A_F$ and
$\A_F^\infty$ denote the adeles and finite adeles of $F$. Let $\G = \mathrm{Res}_{F/\Q} \PGL(2)$, and write $G_{\infty} = 
\G(\R) = \PGL_2(\C)$.
Let $K_{\infty}$ denote a maximal compact of $G_{\infty}$ with
connected component $K^{0}_{\infty}$.
For any compact open subgroup $K$ of
$\G(\A^\infty)$, we may define an arithmetic orbifold $Y(K)$ as follows:
$$Y(K):= \G(\Q) \backslash \G(\A)/K^0_{\infty} K.$$

\begin{remark} \label{remark:orbifold}
\emph{ If $K$ is a sufficiently small (neat) compact subgroup, then $Y(K)$ is a manifold.
Moreover, it will also be a a (disjoint union of) $K(\pi,1)$ spaces, since each component is the
quotient 
of a contractible space.  Recall that for a $K(\pi,1)$-manifold $M$, there is a functorial isomorphism
$$H^n(\pi_1(M),\star) \simeq H^n(M,\star)$$
for all $n$. 
For orbifolds $M = \Gamma \backslash \mathbf{H}$ of a similar shape (with contractible  $\mathbf{H}$),
the cohomology of $M$ \emph{as an orbifold} satisfies the same formula. Note that the cohomology in this sense
may differ from the cohomology of the underlying space. (For example, the underlying manifold of $\PSL_2(\Z)$ is
the punctured sphere which is contractible, whereas the underlying orbifold has interesting cohomology.)
We take the convention that, for any $K$, the cohomology of $Y(K)$ is understood to be the cohomology
in the orbifold sense, namely, that the cohomology of each component is the cohomology of the corresponding
arithmetic lattice. The main advantage of this approach is that, for any finite index normal subgroup $K' \unlhd K$,
the corresponding map of orbifolds
$$Y(K') \rightarrow Y(K)$$
is a covering map with Galois group $K/K'$. This approach is the analogue (in the world of PEL Shimura varieties)
of working with stacks rather than the underlying schemes at non-representable level.}
\end{remark}

\medskip

We will specifically be interested in the following $K$.  Let
$S(\rhobar)$ and $Q$ be as above.

\subsubsection{Arithmetic Quotients} \label{section:ar1}
If $v$ is a place of $F$ and $c\geq 1$ is an integer, we define
\begin{eqnarray*}
  \Gamma_0(v^c)&= &\left\{ g \in \PGL_2(\OL_{v}) \ | \
g \equiv 
\left(\begin{matrix} 1 & * \\ 0 & * \end{matrix} \right) \mod \pi_{v}^c
\right\} \\
  \Gamma_1(v^c)&= &\left\{ g \in \PGL_2(\OL_{v}) \ | \
g \equiv 
\left(\begin{matrix} 1 & * \\ 0 & 1 \end{matrix} \right) \mod \pi_{v}^c
\right\} \\
\Gamma_p(v^c) & = & \left\{ g \in \PGL_2(\OL_{v}) \ | \
g \equiv 
\left(\begin{matrix} 1 & * \\ 0 & d \end{matrix} \right) \mod \pi_{v}^c
\textrm{, $d$ has $p$-power order} \right\} \\
\end{eqnarray*}
Let $K_{Q}=\prod_{v}K_{Q,v}$ and
$L_{Q}=\prod_vL_{Q,v}$ denote the open compact
subgroups of $\G(\A)$ such that:
\begin{enumerate}
\item  If $v \in Q$,  $K_{Q,v}=\Gamma_1(v)$.
\item  If $v \in Q$,  $L_{Q,v}=\Gamma_0(v)$.
\item If $v$ is not in $S(\rhobar)\cup\{v|p\} \cup Q$, then
$K_{Q,v} = L_{Q,v} = \PGL_2(\OL_v)$.
\item If $v|p$, then $K_{Q,v}=L_{Q,v}=\GL_2(\OL_v)$ if $\rhobar|D_v$
  is finite flat. Otherwise, $K_{Q,v}=L_{Q,v}=\Gamma_0(v)$.
\item  If $v \in S(\rhobar)$,  $K_{Q,v} = L_{Q,v}$ is defined as follows:
\begin{enumerate}
\item If $\rhobar$ is of type~{\bf S\rm} at $v$,  then  $K_{Q,v}=\Gamma_0(v)$.

\item If $\rhobar$ is of type~{\bf P\rm}, {\bf M\rm} or {\bf H\rm} at $v$, then
  $K_{Q,v}=\Gamma_p(v^c)$, where $c$ is the conductor of $\rhobar|D_v$.
\end{enumerate}
\end{enumerate}

 We define the arithmetic quotients $Y_0(Q)$  and $Y_1(Q)$
to be $Y(L_{Q})$ and $Y(K_{Q})$ respectively.
These spaces are the analogues
of the modular curves corresponding to the congruence
subgroups  consisting of $\Gamma_0(Q)$ and $\Gamma_1(Q)$ intersected
with a level specifically tailored to the ramification structure of $\rhobar$.
 Topologically,
they are a finite disconnected union of finite volume arithmetic hyperbolic
$3$-orbifolds.

\subsubsection{Hecke Operators}
We recall the construction of the Hecke operators. \label{subsection:heckeoperators-imaginary}
Let $g \in \G(\A^\infty)$ be an invertible matrix.
For $K\subset \G(\A^\infty)$ a compact open subgroup, the Hecke
operator $T(g)$ is defined on the homology modules $H_\bullet(Y(K),\OL)$ by considering the composition:
$$H_{\bullet}(Y(K),\OL) \rightarrow
H_{\bullet}(Y(g K g^{-1} \cap K),\OL)\ra H_\bullet(Y(K\cap g^{-1}Kg),\OL)  \rightarrow
H_{\bullet}(Y(K),\OL),$$
the first map coming from the corestriction ($=$ transfer) map, the second coming
from the map $Y(gKg^{-1}\cap K,\OL) \ra Y(K\cap g^{-1} Kg,\OL)$
induced by right multiplication by $g$ on $\G(\A)$ and the third
coming from the natural map on homology. (We recall that, since we are viewing these spaces as orbifolds,
the map $Y(g K g^{-1} \cap K) \rightarrow Y(K)$ is always a covering map.)
The Hecke operators act on $H_{\bullet}(Y(K),\OL)$ but
do not preserve the homology of the connected components. 
The group of components is isomorphic, via the determinant map, to
$$F^{\times} \backslash \A^{\infty,\times}_F/\A^{\infty,\times 2}_F \det(K).$$
This is the mod-$2$ reduction of a ray class group.
For $\alpha \in \A_F^{\infty,\times}$, we define the
Hecke operator $T_{\alpha}$ by taking 
$$g = \left(\begin{matrix} \alpha & 0 \\ 0 & 1 \end{matrix} \right).$$
If $\alpha \in \A^{\infty,\times}_F$ is a unit at all finite places, we denote
the corresponding operator by $\langle \alpha \rangle$ and refer to it as a diamond operator; it
acts as an automorphism on $Y(K)$ for all the $K$ considered above.

\medskip

\begin{df}
Let $\Tan_{Q}$ denote the sub-$\OL$-algebra of $\End_{\OL} \ H_{1}(Y_1(Q),\OL)$
generated by Hecke endomorphisms 
$T_{\alpha}$ for
all $\alpha$ which are trivial at primes in $Q\cup S(\rhobar)\cup \{v|p\}$.
Let $\T_Q$ denote the $\OL$-algebra generated by
the same operators together with $T_{\alpha}$ for
$\alpha$ non-trivial at places in $Q$. 
If $Q = \emptyset$, we write $\T=\TE$ for $\T_Q$.
\end{df}
These rings are commutative. 
If $\eps \in \OL^{\times}_F$ is a global unit, then $T_{\eps}$ acts by
the identity. If $\a \subseteq \OL_F$ is an ideal prime to the level, we may define the Hecke operator
$T_{\a}$  
as~$T_{\alpha}$
 where
$\alpha \in \A^{\times,\infty}_F$ is any element which represents the ideal $\a$ and such that
$\alpha$ is $1$ for each component dividing the level. In particular, if  $\a = x$ is prime, then $T_x$
is uniquely defined when $x$ is prime to the level but not when $x$ divides the level.

\subsection{Conjectures on Existence of Galois Representations}

Let $\m$ denote a maximal ideal of $\T_Q$, and let $\T_{Q,\m}$ denote the
completion. It is a local ring which is finite (but not necessarily flat) over
$\OL$. 

\begin{df}
\label{defn:non-eis}
We say that $\m$ is \emph{Eisenstein} if $T_\lambda-2\in\m$ for all but
finitely primes $\lambda$ which split completely in some fixed abelian extension
of $F$. We say that $\m$ is \emph{non-Eisenstein} if it is not Eisenstein.
\end{df}

We say that $\m$ is \emph{associated to $\rhobar$} if for each
$\lambda\not\in S(\rhobar)\cup Q\cup \{ v|p\}$, we have an inclusion $T_\lambda -
\tr(\rhobar(\Frob_v))\in \m$. 

\begin{conjectureA} 
\label{conj:A} Suppose that~$\m$ is non-Eisenstein and is associated to $\rhobar$,  and that~$Q$ is a set of  primes~$v$
 such that~$N(v) \equiv 1 \mod p$, $\rhobar$ is unramified at~$v$, and~$\rhobar(\Frob_v)$ has distinct eigenvalues.
 Then there exists a continuous Galois representation
$\rho = \rho_{\m}:G_F \rightarrow \GL_2(\T_{Q,\m})$ with the following properties:
\begin{enumerate}
\item\label{char-poly} If $\lambda\not\in S(\rhobar)\cup Q\cup\{v|p\}$ is a prime of $F$, then $\rho$ is unramified
at $\lambda$, and the characteristic polynomial of $\rho(\Frob_{\lambda})$ is
$$X^2 - T_{\lambda} X + {\NF}(\lambda) \in \T_{Q,\m}[X].$$
\item If $v \in S(\rhobar)$, then:
\begin{enumerate}
\item If $\rhobar|D_v$ is of type {\bf S\rm}, then $\rho | I_v$ is unipotent.
\item If $\rhobar|D_v$ is of type {\bf P\rm}, so that $\rhobar | I_v
  \cong \psi \oplus \psi^{-1}$, then $\rho|I_v \cong \langle \psi \rangle \oplus \langle \psi \rangle^{-1}$.
\end{enumerate}
\item\label{lgc-Q} If $v\in Q$,  the operators $T_{\alpha}$ for $\alpha \in F^{\times}_v \subset \A^{\infty,\times}_F$
are invertible. Let $\phi$ denote the character of $D_v = \Gal(\overline{F}_v/F_v)$ which, by class field theory,
is associated to the resulting homomorphism:
$$F^{\times}_v \rightarrow \T^{\times}_{Q,\m}$$
given by sending $x$ to $T_x$. By assumption, the image of $\phi \mod \m$ is unramified, and so
factors through $F^{\times}_v/\OL^{\times}_v \simeq \Z$, and so $\phi(\Frob_v) \mod \m$ is
well defined;  assume that $\phi(\Frob_v) \not\equiv \pm 1 \mod \m$.
Then $\rho|D_v \sim  \phi \eps \oplus \phi^{-1}$.
\item If $v | p$, then~$\rhobar|D_v$ is finite flat, and if~$\rhobar|D_v$ is ordinary, then~$\rho|D_v$ is ordinary.
\end{enumerate}
\end{conjectureA}

Some form of this conjecture has been suspected to be true at least as far back
as the investigations of F.~Grunewald in the early 70's (see~\cite{Grunewald,GHM}). Related conjectures about the existence
of $\rhobar_{\m}$ 
were made for $\GL(n)/\Q$ 
by Ash~\cite{Ash}, and for $\GL(2)/F$ by Figueiredo~\cite{Fig}.
Say that a deformation of~$\rho_Q$ of~$\rhobar$ is minimal outside~$Q$ if it arises
from a quotient of the ring~$R_{Q}$ of
\S~\ref{sec:deform-galo-repr-imaginary}.

\begin{lemma}  Assume Conjecture~\ref{conj:A}.
Assume that there exists a maximal ideal
$\m$ of $\T_{Q}$  associated to $\rhobar$. Suppose that $Q$ consists entirely of  Taylor--Wiles
primes. 
Then there exists a representation:
$\rho_Q: G_F \rightarrow \GL_2(\T_{Q,\m})$
 whose traces generate $\T_{Q,\m}$ and
 such that
$\rho_Q$ is a minimal deformation of $\rhobar$ outside $Q$ with  cyclotomic determinant.
\end{lemma}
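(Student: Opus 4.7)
The plan is to obtain $\rho'_Q$ by applying Conjecture~\ref{conj:A} and then twisting by a square root character to correct the determinant.

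First, Conjecture~\ref{conj:A} directly provides a continuous $\rho_\m : G_F \to \GL_2(\T_{Q,\m})$. The association hypothesis combined with the Chebotarev density theorem shows that the reduction $\rho_\m \bmod \m$ has the same trace as $\rhobar$ on Frobenii outside $S(\rhobar)\cup Q\cup\{v|p\}$; since $\rhobar$ is absolutely irreducible, a standard pseudo-character reconstruction argument (Carayol, Nyssen--Rouquier) then gives $\rho_\m \bmod \m \cong \rhobar$. Thus $\rho_\m$ is a continuous deformation of $\rhobar$ to $\T_{Q,\m}$.

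Second, I construct the twisting character. The formula $\det\rho_\m(\Frob_\lambda) = \langle\lambda\rangle\NF(\lambda)$ of Conjecture~\ref{conj:A}(\ref{char-poly}) combined with the fact that $\m$ is associated to $\rhobar$ shows that
\[ \chi \;:=\; \langle \det\rhobar\rangle\cdot\langle\bar\eps\rangle^{-1}\cdot \eps\cdot(\det\rho_\m)^{-1}\]
is a continuous character of $G_F$ valued in $1+\mathfrak{n}$, where $\mathfrak{n}$ is the maximal ideal of $\T_{Q,\m}$. Since $p\geq 3$, the $p$-adic logarithm yields a unique square root $\eta$ of $\chi$ valued in $1+\mathfrak{n}$. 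Setting $\rho'_Q := \rho_\m\otimes\eta$ gives $\det\rho'_Q = \eps\langle\bar\eps\rangle^{-1}\langle\det\rhobar\rangle$, i.e.\ condition (1) of the definition. The remaining conditions follow by transporting Conjecture~\ref{conj:A}: condition (4) is immediate from clause (\ref{char-poly}); conditions (2)--(3) at $v|p$ come from clause (4); and the type (S), (P), (Sc) conditions at bad primes come from clause (2), noting that Teichm\"uller lifts are the unique characters of order prime to $p$ with the specified residual character. Conditions (H) and (M) at the remaining bad primes are automatic, since the kernel of $\GL_2(\T_{Q,\m})\onto\GL_2(k)$ is pro-$p$ while $\rhobar(I_x)$ has order prime to $p$: the inertial image lifts uniquely through its Teichm\"uller lift.

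For the trace generation, let $T'\subseteq\T_{Q,\m}$ denote the $\OL$-subalgebra generated by $\{\tr\rho'_Q(g) : g\in G_F\}$. The fixed determinant $\det\rho'_Q$ together with the value of $\det\rho_\m$ determines $\eta^2$, and hence by square-root extraction in $1+\mathfrak{n}$ we recover $\eta(\Frob_\lambda)\in T'$; it follows that $T_\lambda = \eta(\Frob_\lambda)^{-1}\tr\rho'_Q(\Frob_\lambda)\in T'$ for all $\lambda\notin S(\rhobar)\cup Q\cup\{v|p\}$, and the diamond operators $\langle\alpha\rangle$ then sit in $T'$ via the determinant formula. For $x\in Q$ chosen as Taylor--Wiles primes, so that $\rhobar(\Frob_x)$ has distinct eigenvalues $\alpha_x,\beta_x$, Hensel's lemma applied to the characteristic polynomial $X^2-T_xX+\langle x\rangle\NF(x)\in T'[X]$ splits it into distinct linear factors over the henselian local ring $T'$, and the defining choice of $\m$ pins down which root is $U_x$. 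Hence all Hecke generators of $\T_{Q,\m}$ lie in $T'$, giving $T'=\T_{Q,\m}$.

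The main obstacle is not the construction of $\rho'_Q$---which is essentially immediate once Conjecture~\ref{conj:A} is granted---but the verification that every local deformation condition is satisfied. The most delicate cases are the type (M) and (H) primes, which are not explicitly addressed in Conjecture~\ref{conj:A}; one must argue via the pro-$p$ nature of the kernel of $\GL_2(\T_{Q,\m})\onto\GL_2(k)$ together with the prime-to-$p$ order of the tame quotient of $I_x$ (for $x\nmid p$) to force the inertial image through its canonical Teichm\"uller lift.
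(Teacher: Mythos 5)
Your overall construction coincides with the paper's proof: take $\rho_\m$ from Conjecture~\ref{conj:A} and set $\rho'_Q=\rho_\m\otimes\eta$ with $\eta^2=\eps\langle\overline{\eps}\rangle^{-1}\langle\det\rhobar\rangle\det(\rho_\m)^{-1}$, the square root existing because $p>2$. The serious omission is that you never control the \emph{ramification} of the twisting character. Knowing that $\chi$ takes values in $1+\m_{\T_{Q,\m}}$ only gives existence and uniqueness of the square root; it says nothing about where $\eta$ is ramified. But every one of your ``transport'' claims -- unramifiedness outside $Q\cup S(\rhobar)\cup\{v|p\}$, finite flatness or ordinarity at $v\mid p$, and the conditions of types {\bf S}, {\bf P}, {\bf Sc} -- is only preserved by the twist if $\eta$ is unramified at the place in question. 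This is exactly the one point the paper's proof does check: it notes that $\det(\rho_\m)$ agrees with $\eps\langle\overline{\eps}\rangle^{-1}\langle\det\rhobar\rangle$ up to a character of finite $p$-power order \emph{unramified at all $v\notin Q$}, and that this follows from Conjecture~\ref{conj:A}(1) together with the definition of the level $K_Q$ (which bounds the nebentypus/diamond character via $\det(K_{Q,v})$). Without that verification your argument does not yield a deformation that is minimal outside $Q$.

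Two further steps are flawed as written. Your treatment of the type {\bf M} and {\bf H} primes -- ``automatic because the kernel of $\GL_2(\T_{Q,\m})\to\GL_2(k)$ is pro-$p$ while $\rhobar(I_x)$ has prime-to-$p$ order'' -- is not a valid argument: for $x\nmid p$ the tame quotient of $I_x$ surjects onto $\Z_p$, so the image of inertia in a deformation need not have prime-to-$p$ order (the same reasoning would ``prove'' that deformations are unramified at Taylor--Wiles primes, which is false). What actually rescues these cases is the fixed determinant together with the relation $\phi\sigma\phi^{-1}=\sigma^{\NF(x)}$: writing the inertial image as its Teichm\"uller part times an element $u$ of the pro-$p$ kernel, the Frobenius action and the determinant condition force $u^{\NF(x)+1}=1$, and $p\nmid \NF(x)+1$ precisely because vexing primes ($\NF(x)\equiv-1\bmod p$) have been excluded; note the paper itself records that condition {\bf H} follows from the determinant condition. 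Finally, your trace-generation argument is circular: to put $\eta(\Frob_\lambda)$ into $T'$ you invoke ``the value of $\det\rho_\m(\Frob_\lambda)=\langle\lambda\rangle\NF(\lambda)$,'' but the diamond operator $\langle\lambda\rangle$ is one of the generators of $\T_{Q,\m}$ you are trying to capture and is not yet known to lie in $T'$; as written this step, and hence the conclusion $T_\lambda,\langle\alpha\rangle, U_x\in T'$, is not established.
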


\begin{proof} By Conjecture~\ref{conj:A}, the representation $\rho_{Q}:=\rho_{\m}$ to $\T_{Q,\m}$
is such a representation. Moreover, assumption~\ref{lgc-Q} above guarantees (by Hensel's Lemma)
that the~$T_{\alpha}$ for~$\alpha | Q$ lie in the~$\OL$-subalgebra generated by traces.
\end{proof}

\subsubsection{Properties of homology groups}
Let $\mE$ denote a  non-Eisenstein maximal ideal of $\TE$. We have  natural
homomorphisms
$$ \Tan_Q \ra \Tan  = \TE, \qquad \Tan_Q \hookrightarrow \T_Q  $$
induced by the map $H_1(Y_1(Q),\OL)\ra H_1(Y,\OL)$ and by the natural inclusion.
(The surjectivity of this map  is an immediate consequence of the interpretation
of these groups in terms of group cohomology and the fact that the abelianization of $\PSL_2(\F_x)$
is trivial for $N(x) > 3$.)
 The ideal $\mE$
of $\TE$
pulls back to an ideal of $\Tan_Q$ which we also denote by $\mE$ in a slight
abuse of notation. The ideal 
$\mE$ may give rise to multiple maximal ideals $\m$ of $\T_Q$.

\begin{remark}  \label{remark:defineU} \emph{If $x\not\in Q\cup S(\rhobar)\cup \{v|p\}$ is
    prime, then there is an operator $T_x \in \Tan_Q$. If
    $x \in Q$,
     then we let $U_x$ denote the operator
$U_x:=T_{\pi_x}$, where $\pi_x$, by abuse of notation, is the adele which is trivial
away from $x$ and the uniformizer $\pi_x$ at $x$. 
However, this operator  is only well defined up to
a diamond operator $\langle \alpha \rangle$, where $\alpha \in \OL^{\times}_x \subset \A^{\infty,\times}_F$.
On the other hand, by Conjecture~\ref{conj:A}, the image of $U_x$ modulo $\m$ is well defined,
because the associated character $\phi$ is unramified.}
\end{remark}

 If $x\not\in S(\rhobar)\cup \{v|p\}$ is a prime of
$F$  such that ${\NF}(x) \equiv 1 \mod p$ and
$\rhobar(\Frob_x)$ has distinct eigenvalues, then the representation
$\rhobar|G_x$ does not admit ramified semistable deformations.
The following lemma is the homological manifestation of this fact.

\begin{lemma} \label{lemma:matt} 
Suppose that for each $x \in Q$ we have that $N_{F/\Q}(x)\equiv 1
\mod p$ and that the polynomial $X^2 - T_{x} X +  N_{F/\Q}(x) \in \TE[X]$ has distinct eigenvalues modulo 
$\mE$.
Let $\m$ denote the maximal ideal of $\T_Q$ containing $\mE$ and
$U_x - \alpha_x$ for some choice of root $\alpha_x$ of $X^2 - T_x X + 1 \mod \m$ for each $x\in Q$.
Then there is an isomorphism of $\Tan_{Q,\mE}$-modules
$$  H_1(Y_0(Q),\OL)_{\m}\iso H_1(Y,\OL)_{\mE}.$$
\end{lemma}

\begin{proof}  Note that, by the universal coefficient theorem, we have~$H^1(Y,K/\OL) = H_1(Y,\OL)^{\vee}$
(and similarly for~$Y_0(Q)$). We proceed  as
in the proof
of Lemma~\ref{lemma:matt-w1} 
to deduce that there is an isomorphism
$$H^1(Y_0(x),K/\OL)_{\m} = H^1(Y,K/\OL)_{\mE} \oplus V.$$
 In light of the universal coefficient theorem, it suffices to show that~$V = 0$.
The remainder of the proof now proceeds as in Lemma~\ref{lemma:matt-w1}.
\end{proof}

\medskip

There is a natural covering map $Y_1(Q) \rightarrow Y_0(Q)$
with Galois group
$$\Delta_Q := \prod_{x \in Q} (\OL_F/x)^{\times}.$$
 If $\mu$ is a finitely generated $\OL[\Delta_Q]$-module, it
gives rise to a local system on $Y_0(Q)$. 
Let~$\T^{\univ}$ be the polynomial algebra generated analogously to the 
one in Section~\ref{sec:hecke-ops} 
 by Hecke endomorphisms 
$T_{\alpha}$ for
all $\alpha$ which are trivial at primes in $Q\cup S(\rhobar)\cup \{v|p\}$ and by~$U_x$ for~$x \in Q$
(see Remark~\ref{remark:defineU}).
We have an action of~$\T^{\univ}$
 on the homology groups $H_i(Y_0(Q),\mu)$
and the Borel-Moore homology groups $H_i^{BM}(Y_0(Q),\mu)$.
The ideal $\mE$ gives rise to a maximal ideal $\m$ of $\T^{\univ}$
after a choice of eigenvalue mod $\m$ for $U_x$ for all $x$ dividing $Q$.

\medskip

We let $\Delta$ denote a quotient of
$\Delta_Q$ and $Y_\Delta(Q)\ra Y_0(Q)$ the corresponding Galois
cover. 
Further suppose that $\Delta$ is a $p$-power order quotient of
$\Delta_Q$. Then $\OL[\Delta]$ is a local ring.
Note that by Shapiro's Lemma there is an isomorphism $H_1(Y_0(Q),\OL[\Delta]) \cong H_1(Y_\Delta(Q),\OL)$.

\begin{lemma} 
\label{lem:hom-basic-props}
Let $\mu$ be a finitely generated $\OL[\Delta]$-module. Then:
\begin{enumerate}
\item\label{vanish-0-3} $H_i(Y_0(Q),\mu)_{\m}=(0)$ for $i=0,3$.
\item\label{h2-torsion-free} If $\mu$ is $p$-torsion free, then $H_2(Y_0(Q),\mu)_{\m}$ is $p$-torsion free.
\item\label{vanish-boundary} For all $i$, we have an isomorphism
\[ H_i(Y_0(Q),\mu)_\m \iso H_i^{BM}(Y_0(Q),\mu)_\m.\]
\end{enumerate}
\end{lemma}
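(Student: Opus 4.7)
The three parts all follow by reducing to the vanishing of certain Eisenstein-type contributions; the maximal ideal $\m$ is chosen non-Eisenstein precisely so that these contributions die after localization.

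For part (1), $i=3$: since $\GL_2/F$ has $\Q$-rank one over an imaginary quadratic field, the Borel--Serre compactification $\overline{Y_0(Q)}$ is a compact $3$-orbifold with non-empty boundary, and the inclusion $Y_0(Q)\hookrightarrow \overline{Y_0(Q)}$ is a homotopy equivalence; the top homology of a compact $n$-orbifold with non-empty boundary vanishes with any coefficient system, so $H_3(Y_0(Q),\mu)=0$ before localization. For $i=0$: the module $H_0(Y_0(Q),\mu)$ is a direct sum indexed by the (finite) component group $A_{L_Q}=F^\times\backslash \A_F^{\infty,\times}/\det(L_Q)$, and the Hecke correspondence (Section~\ref{subsection:heckeoperators}) acts on this component group through the determinant with eigensystem $T_\lambda\mapsto 1+\NF(\lambda)\langle\lambda\rangle$ and $\langle a\rangle\mapsto \langle a\rangle^{2}$. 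This is Eisenstein in the sense of Definition~\ref{defn:non-eis} (after a suitable character twist it becomes trivial on a set of split primes), so the non-Eisenstein hypothesis on $\m$ forces the localization to vanish.

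For part (2), apply the long exact sequence in homology to the short exact sequence $0\to\mu\stackrel{\varpi}{\to}\mu\to\mu/\varpi\to 0$ (which is exact precisely because $\mu$ is $p$-torsion free). The connecting map gives a surjection
$$H_3(Y_0(Q),\mu/\varpi)_\m \onto H_2(Y_0(Q),\mu)_\m[\varpi],$$
and part (1), applied with coefficients $\mu/\varpi$, kills the source; so $\varpi$ acts injectively on $H_2(Y_0(Q),\mu)_\m$.

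For part (3), use the long exact sequence of the Borel--Serre pair $(\overline{Y_0(Q)},\partial\overline{Y_0(Q)})$:
$$\cdots\lra H_{i+1}(\partial\overline{Y_0(Q)},\mu)\lra H^{BM}_i(Y_0(Q),\mu)\lra H_i(Y_0(Q),\mu)\lra H_i(\partial\overline{Y_0(Q)},\mu)\lra\cdots,$$
where the Borel--Moore homology is identified with the relative homology $H_i(\overline{Y_0(Q)},\partial;\mu)$ via excision and the ordinary homology with $H_i(\overline{Y_0(Q)},\mu)$ via the homotopy equivalence above. The boundary $\partial\overline{Y_0(Q)}$ is a disjoint union of nilmanifolds (quotients of $2$-tori) indexed by the cusps, and a standard computation going back to Harder shows that the Hecke operator $T_\lambda$ acts on each $H_*(\partial\overline{Y_0(Q)},\mu)$ through the same Eisenstein eigensystem $T_\lambda\mapsto 1+\NF(\lambda)\langle\lambda\rangle$ appearing in $H_0$ — ultimately because boundary cohomology is built from characters induced from a Borel subgroup. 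Consequently $H_*(\partial\overline{Y_0(Q)},\mu)_\m=0$ and the comparison map $H^{BM}_i(Y_0(Q),\mu)_\m\to H_i(Y_0(Q),\mu)_\m$ becomes an isomorphism. The main obstacle lies in making the Hecke action on the component group and on the boundary homology explicit enough to identify it with the Eisenstein eigensystem — this is standard but requires unwinding the double-coset definition of $T_\lambda$ against the cuspidal structure of $\overline{Y_0(Q)}$.
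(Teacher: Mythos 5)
Your parts (1) and (2) are essentially the paper's argument: the paper reduces part (1) by d\'evissage to $\mu=k$, notes $H_3(Y_0(Q),k)=0$ (your Borel--Serre observation gives the same thing for all $\mu$ at once), and kills $H_0$ because the Hecke action there factors through the degree/determinant and is hence Eisenstein; part (2) is the same long exact sequence attached to $0\to\mu\stackrel{\varpi}{\to}\mu\to\mu/\varpi\to 0$, and you correctly apply part (1) to the coefficient module $\mu/\varpi$.

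Part (3), however, has a genuine gap. First, the eigensystem you ascribe to the boundary is not correct beyond degree zero: by Harder, the Hecke action on the boundary (co)homology in characteristic zero is given by pairs of Grossencharacters of $F$, so $T_\lambda$ acts by sums of character values $\phi_1(\lambda)+\phi_2(\lambda)$, not literally by $1+\NF(\lambda)\langle\lambda\rangle$ in every degree; what saves the argument is only that every such eigensystem is Eisenstein in the sense of Definition~\ref{defn:non-eis}, since the mod-$\varpi$ reductions of the $\phi_i$ cut out an abelian extension on whose completely split primes $T_\lambda\equiv 2$. Second, and more seriously, Harder's computation is a characteristic-zero statement, whereas you need the vanishing of $H_i(\partial Y_0(Q),\mu)_\m$ for an arbitrary finitely generated, typically torsion, $\OL[\Delta_Q]$-module $\mu$; ``standard but requires unwinding'' does not bridge this. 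The paper supplies the missing step: d\'evissage reduces to $\mu=k$; the boundary components are tori, whose homology with constant coefficients is torsion free, so any eigensystem occurring in the boundary homology with $k$-coefficients already occurs with $\OL$-, and hence $K$-, coefficients, where Harder's theorem applies and shows it is Eisenstein. Without this torsion-freeness reduction (or some substitute), the assertion that localizing at a non-Eisenstein $\m$ annihilates the boundary homology with coefficients in $\mu$ is unjustified, and that assertion is exactly what part (3) rests on.
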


\begin{proof} Consider part $(1)$. By Nakayama's Lemma, we reduce to the case when $\mu = k$. Yet $H_3(Y_0(Q),k) = 0$
and the action of Hecke operators on $H_0(Y_0(Q),k)$ (which preserve the connected components) is via the degree map, 
and this action is Eisenstein (in the sense that the only $\m$ in the support of $H_0$ are Eisenstein).
For part $(2)$,
since $\mu$ is $\OL$-flat (by assumption), there is an exact sequence
$$0 \rightarrow \mu \rightarrow \mu \rightarrow \mu/\varpi \rightarrow 0.$$
Taking cohomology, localizing at $\m$, and using the vanishing of $H_3(Y_0(Q),\mu)_{\m}$ from
part $(1)$, we deduce that $H^2(Y_0(Q),\mu)_{\m}[\varpi] = 0$, hence the result.
For part $(3)$, there is a long exact sequence
$$ \ldots \rightarrow 
H_i(\partial Y_0(Q),\mu) \rightarrow H_i(Y_0(Q),\mu) \rightarrow H^{BM}_i(Y_0(Q),\mu) \rightarrow H_{i-1}(\partial Y_0(Q),\mu) \rightarrow \ldots $$
from which we observe that it suffices to show that $H_i(\partial Y_0(Q),\mu)_{\m}$ vanishes for all $i$.
(The action of Hecke operators on the boundary is the obvious one coming from topological considerations.
For an explicit exposition of the relevant details, see p.107 of~\cite{taylorthesis}.)
By Nakayama's Lemma, we once more reduce to the case when $\mu = k$. The cusps are given by
tori (specifically, elliptic curves with CM by some order in $\OL_{F}$), and since the cohomology with constant
coefficients of tori is torsion free, the case when $\mu = k$ reduces to the case when $\mu = \OL$ and then 
$\mu = K$.
We claim that 
the action of $\T^{\univ}$ on the homology of the  cusps in characteristic zero given by a
sum of algebraic Grossencharacters for the field $F$; such a representation
is Eisenstein by class field theory.
This follows from the work of~\cite{HarderGL2};  an explicit reference
 is~\S2.10 of~\cite{Berger}.
\end{proof}

\begin{prop}
\label{prop:balanced-homology} 
The $\OL[\Delta]$-module $H_1(Y_0(Q),\OL[\Delta])_{\m} \cong H_1(Y_\Delta(Q),\OL)_{\m}$ is balanced (in
the sense of Definition \ref{defn:balanced}).
\end{prop}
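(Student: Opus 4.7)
Let $M = H_1(Y_0(Q),\OL[\Delta])_{\m} \cong H_1(Y_\Delta(Q),\OL)_{\m}$ and $S = \OL[\Delta]$. The plan is to mimic the argument of Proposition~\ref{prop:balanced-homology-w1}, with Poincar\'e duality on the arithmetic $3$-manifold replacing Serre duality on the modular curve. First I would tensor the exact sequence of $S$-modules $0 \to \OL \xrightarrow{\varpi} \OL \to k \to 0$ (with trivial $\Delta$-action) against $M$, producing
\[ 0 \to \Tor_1^S(M,\OL)/\varpi \to \Tor_1^S(M,k) \to M_\Delta \xrightarrow{\varpi} M_\Delta \to M \otimes_S k \to 0. \]
Setting $r = \rank_\OL M_\Delta$, the very same dimension counting as in Proposition~\ref{prop:balanced-homology-w1} yields
\[ d_S(M) = r - \dim_k \Tor_1^S(M,\OL)/\varpi, \]
so the task is to prove $\dim_k \Tor_1^S(M,\OL)/\varpi \le r$.

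Next I would feed in the homology Hochschild--Serre spectral sequence for the Galois $\Delta$-cover $Y_\Delta(Q) \to Y_0(Q)$,
\[ E^2_{i,j} = \Tor_i^S\bigl(\OL, H_j(Y_\Delta(Q),\OL)\bigr)_{\m} \Longrightarrow H_{i+j}(Y_0(Q),\OL)_{\m}, \]
and exploit the fact that by Lemma~\ref{lem:hom-basic-props}(\ref{vanish-0-3}) the abutment vanishes in degrees $0$ and $3$. The vanishing in degree $0$ forces $H_0(Y_\Delta(Q),\OL)_\m = 0$, so all terms $E^2_{i,0}$ vanish; this identifies $E^\infty_{0,1} = M_\Delta$ with $H_1(Y_0(Q),\OL)_\m$ (confirming the numerical value of $r$) and makes $E^\infty_{1,1} = H_1(\Delta,M) = \Tor_1^S(M,\OL)$ a quotient of $H_2(Y_0(Q),\OL)_{\m}$. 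Since Lemma~\ref{lem:hom-basic-props}(\ref{h2-torsion-free}) tells us that $H_2(Y_0(Q),\OL)_{\m}$ is $\OL$-torsion free, I would obtain
\[ \dim_k \Tor_1^S(M,\OL)/\varpi \;\le\; \dim_k H_2(Y_0(Q),\OL)_{\m}/\varpi \;=\; \rank_\OL H_2(Y_0(Q),\OL)_{\m}. \]

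It therefore remains to show $\rank_\OL H_2(Y_0(Q),\OL)_{\m} \le r = \rank_\OL H_1(Y_0(Q),\OL)_{\m}$. For this I would invoke Poincar\'e duality on the oriented hyperbolic $3$-manifold $Y_0(Q)$, which in characteristic zero gives $H^i_c(Y_0(Q),K) \cong H_{3-i}(Y_0(Q),K)$. Lemma~\ref{lem:hom-basic-props}(\ref{vanish-boundary}) lets me drop the compact support condition after localizing at $\m$, yielding $H^1(Y_0(Q),K)_{\m} \cong H_2(Y_0(Q),K)_{\m^{*}}$, where the starred ideal reflects the fact that Poincar\'e duality intertwines Hecke operators with their adjoints $T^{*}_\alpha = \langle\alpha\rangle^{-1}T_\alpha$. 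Combining this with the universal coefficient isomorphism $H^1(Y_0(Q),K)_{\m} \cong H_1(Y_0(Q),K)_{\m}^{*}$ gives $\rank H_{2,\m^{*}} = \rank H_{1,\m}$, and a global twist (exactly as in the closing lines of the proof of Proposition~\ref{prop:balanced-homology-w1}) identifies the ranks of $H_{1,\m}$ and $H_{1,\m^{*}}$. Stringing these equalities together produces the desired bound and completes the argument.

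The main obstacle is the final rank comparison: one must carefully track how Poincar\'e duality transforms the Hecke action so that the comparison can be made at a single maximal ideal. Once this bookkeeping is done, everything else is formal, and the proof runs parallel to the weight one case; the serious arithmetic input is entirely contained in Lemma~\ref{lem:hom-basic-props} (torsion freeness of $H_2$ and vanishing of boundary cohomology at a non-Eisenstein ideal) and Lemma~\ref{lemma:matt} (used implicitly in the global twisting step).
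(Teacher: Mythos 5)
Your proposal is correct and follows essentially the same route as the paper: the tensoring computation of $d_S(M)$, the Hochschild--Serre spectral sequence combined with Lemma~\ref{lem:hom-basic-props}, torsion-freeness of $H_2(Y_0(Q),\OL)_{\m}$, and a characteristic-zero comparison of $H_2$ with $H_1$ via Poincar\'e duality and the Borel--Moore (compact support) comparison at the non-Eisenstein ideal. The only difference is presentational: you track the Hecke-adjoint bookkeeping ($\m$ versus $\m^*$) explicitly and resolve it by a global twist as in Proposition~\ref{prop:balanced-homology-w1}, whereas the paper states the duality of dimensions at $\m$ directly; your extra care here is harmless and arguably makes the step more transparent.
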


\begin{proof} The argument is almost identical to the proof of Proposition~\ref{prop:balanced-homology-w1}. 
Let $M$ denote the module $H_1(Y_0(Q),\OL[\Delta])_{\m}$ and $S=\OL[\Delta]$. Consider the exact sequence of $S$-modules (with trivial $\Delta$-action):
$$0 \rightarrow \OL \stackrel{\varpi}{\rightarrow} \OL \rightarrow
k\rightarrow 0$$
where $\varpi$ denotes a uniformizer in $\OL$.
Tensoring this exact sequence over $S$ with $M$, we obtain an exact sequence:
$$ 0 \rightarrow \Tor^S_1(M,\OL)/\varpi  \rightarrow \Tor^S_1(M,k) \rightarrow M_\Delta  \rightarrow M_\Delta 
\rightarrow M \otimes_S k \rightarrow 0.$$
Let $r$ denote the $\OL$-rank of $M_\Delta$. Then this exact sequence
tells us that
\[ d_S(M) =\dim_k M\otimes_S k - \dim_k \Tor_1^S(M,k) = r - \dim_k \Tor^S_1(M,\OL)/\varpi.\]
We have a Hochschild--Serre spectral sequence
\[ H_i(\Delta,H_j(Y_0(Q),S))=\Tor^S_i(H_j(Y_0(Q),S),\OL)
\implies H_{i+j}(Y_0(Q),\OL).\] 
We obtain an action of $\T^{\univ}$ on the spectral sequence by
essentially the same argument as that of Proposition~\ref{prop:balanced-homology-w1}.
Localizing at ${\m}$, and using the fact that
$H_i(Y_0(Q),S)_{\m}=(0)$ for $i=0,3$ by Lemma \ref{lem:hom-basic-props}(\ref{vanish-0-3}), we obtain an exact sequence
\[ (H_2(Y_0(Q),S)_{\m})_\Delta \to H_2(Y_0(Q),\OL)_{\m} \to
\Tor^S_1(M,\OL) \to 0 .\]
To show that $d_S(M)\geq 0$, we see that it suffices to show that
$H_2(Y_0(Q),\OL)_{\m}$ is free of rank $r$ as an
$\OL$-module. By Lemma \ref{lem:hom-basic-props}(\ref{h2-torsion-free}), it then
suffices to show that $\dim_K H_2(Y_0(Q),K)_{\m}=r$.
Inverting $\varpi$ and applying Hochschild--Serre again, we obtain
isomorphisms
\[ (H_i(Y_0(Q),S \otimes_\OL K)_{\m})_\Delta \iso H_i(Y_0(Q),K)_{\m} \]
for $i=1,2$. It follows that $r=\dim_K H_1(Y_0(Q),K)_{\m}$. By Poincar\'e duality, we have
\[ \dim_K H_2(Y_0(Q),K)_{\m} = \dim_K H_1^{BM}(Y_0(Q),K)_{\m}.\]
(Because we are working with $\PGL$, the dual maximal ideal $\m^*$ is identified with $\m$.)
Finally, by Lemma \ref{lem:hom-basic-props}(\ref{vanish-boundary}),we have
\[ \dim_K H_1(Y_0(Q),K)_{\m} =\dim_K H_1^{BM}(Y_0(Q),K)_{\m},\]
as required.
\end{proof}

\subsection{Modularity Lifting}
\label{sec:modularity-lifting-imaginary}

We now associate to $\rhobar$ the ideal $\mE$ of $\TE$ which is
generated by $(\varpi, T_\lambda - \tr(\rhobar(\Frob_\lambda)))$ where
$\lambda$ ranges over all primes $\lambda \not \in S(\rhobar)\cup\{v|p\}$ of
$F$. We make the hypothesis
that $\mE$ is a \emph{proper} ideal of $\TE$. In other words,
we are assuming that $\rhobar$ is `modular' of minimal level and
trivial weight. Since $\TE/\mE \hookrightarrow k$  it follows that
$\mE$ is maximal. Since $\rhobar$ is absolutely irreducible, it follows
by Chebotarev density that $\mE$ is non-Eisenstein.

We now assume that Conjecture \ref{conj:A} holds for $\mE$. In other
words, there is a continuous Galois representation 
\[ \rho_\m : G_F \ra \GL_2(\T_{\mE}) \]
satisfying the properties of Conjecture \ref{conj:A}. The definition
of $\mE$ and the Chebotarev density theorem imply that $\rho_{\mE} \mod \mE$
is isomorphic to $\rhobar$.
Properties (1)--(4) of Conjecture
\ref{conj:A} then imply that $\rho_{\mE}$
 gives rise to a homomorphism
\[ \varphi : R^{\min} \to \T_{\mE} \]
such that the universal deformation pushes
forward to $\rho_{\mE}$. The following is the main result of this
section.

\begin{theorem}
  \label{thm:main-thm-imaginary}
If we make the following assumptions:
\begin{enumerate}
\item the ideal $\mE$ is a proper ideal of $\TE$, and
\item Conjecture \ref{conj:A} holds for all $Q$,
\end{enumerate}
then the map $\varphi : R^{\min}\to\T_{\mE}$ is an isomorphism and
$\T_{\emptyset,\mE}$ acts freely on $H_1(Y,\OL)_{\mE}$.
\end{theorem}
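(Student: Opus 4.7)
The plan is to prove Theorem \ref{thm:main-thm-imaginary} by applying the abstract patching Proposition \ref{prop:patching} in exactly the same pattern as Theorem \ref{thm:main-thm-w1}, with coherent cohomology in weight one replaced by Betti homology of the arithmetic $3$-manifold $Y$. I will take $R = R^{\min}$, $H = H_1(Y,\OL)_{\mE}$ (viewed as an $R$-module via $\varphi$), $q = \dim_k H^1_\emptyset(F,\ad^0\rhobar(1))$, and $R_\infty = \OL[[x_1,\dots,x_{q-1}]]$. Since $\rhobar|G_{F(\zeta_p)}$ is absolutely irreducible (this follows from the hypotheses of Theorem \ref{theorem:RequalsT}), Proposition \ref{prop:tw-primes} gives $q \ge 1$ and, for each $N$, a set of Taylor--Wiles primes $Q_N$ of cardinality $q$ together with a surjection $\widetilde{\phi}_N : R_\infty \twoheadrightarrow R_{Q_N}$ whose composition with $R_{Q_N}\twoheadrightarrow R^{\min}$ provides $\phi_N$. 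I would then choose a surjection $\Delta_{Q_N} \twoheadrightarrow \Delta_N := (\Z/p^N\Z)^q$, form the corresponding cover $Y_{\Delta_N}(Q_N)\to Y_0(Q_N)$, and set $H_N := H_1(Y_{\Delta_N}(Q_N),\OL)_\m$, which is naturally a module over $S_N = \OL[\Delta_N]$.

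Next I would verify the three hypotheses of Proposition \ref{prop:patching}. For condition \eqref{cond-image}, I would invoke Conjecture \ref{conj:A} to obtain a deformation $\rho'_{Q_N} : G_F \to \GL_2(\T_{Q_N,\m})$ which is minimal outside $Q_N$; this yields the surjection $R_{Q_N}\twoheadrightarrow \T_{Q_N,\m}$, and the standard argument in the Taylor--Wiles method shows that the action of $\Delta_N$ on $H_N$ (via diamond operators at primes of $Q_N$) is realized through the image of $R_\infty$, since at each $x \in Q_N$ the corresponding diamond group $\Delta_{N,x}$ acts through the inertial part of $\rho'_{Q_N}|G_x$, which by Conjecture \ref{conj:A}(3) factors as $\chi_1 \oplus \chi_2$ with $\chi_1$ unramified. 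For condition \eqref{cond-coninvts}, the Hochschild--Serre spectral sequence (together with Lemma \ref{lem:hom-basic-props}\eqref{vanish-0-3} applied to $S_N$-coefficients to kill boundary terms) yields $(H_N)_{\Delta_N} \cong H_1(Y_0(Q_N),\OL)_{\m}$, and Lemma \ref{lemma:matt} identifies this with $H_1(Y,\OL)_{\mE} = H$ as an $R_\infty$-module. For condition \eqref{cond-balanced}, $H_N$ is finite over $S_N$ (being finite over $\OL$), and Proposition \ref{prop:balanced-homology} asserts that $H_N$ is balanced.

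With all three conditions verified, Proposition \ref{prop:patching} applies and yields that $H = H_1(Y,\OL)_{\mE}$ is free over $R^{\min}$; since $\varphi$ is surjective and $\T_{\mE}$ acts faithfully on $H$, this forces $\varphi$ to be an isomorphism and gives the claimed freeness of $H_1(Y,\OL)_{\mE}$ over $\T_{\mE}$.

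The main obstacle is condition \eqref{cond-image}, i.e.\ showing that the $S_N$-action on $H_N$ is absorbed by $R_\infty$. This is where Conjecture \ref{conj:A} is essential: without Galois representations attached to $\T_{Q_N,\m}$ having the correct local behaviour at the Taylor--Wiles primes $x \in Q_N$ (namely that $\rho|D_x$ decomposes as $\chi_1 \oplus \chi_2$ with $\chi_1(\Frob_x) = U_x$), there is no way to factor the diamond action through $R_{Q_N}$ and hence through $R_\infty$. The other subtlety, which is handled by Proposition \ref{prop:balanced-homology}, is that unlike the classical Taylor--Wiles situation the $\OL$-rank of $H_N$ is not constant as $N$ grows; what replaces constancy of rank is the balanced condition $d_{S_N}(H_N) \ge 0$, and in the patched limit this manifests itself as the presentation $S_\infty^d \to S_\infty^d \to X_\infty \to 0$ that is the central new ingredient of the method, of codimension $l_0 = 1$ rather than the usual middle-degree codimension $0$.
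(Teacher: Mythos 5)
Your proposal is correct and follows essentially the same route as the paper: the same choice of $R$, $H$, $q$, $R_\infty$, the same Taylor--Wiles primes from Proposition \ref{prop:tw-primes}, the same verification of conditions (a)--(c) via Conjecture \ref{conj:A}, Hochschild--Serre with Lemma \ref{lem:hom-basic-props} and Lemma \ref{lemma:matt}, and Proposition \ref{prop:balanced-homology}, feeding into Proposition \ref{prop:patching}. The concluding step (freeness of $H$ over $R^{\min}$ plus surjectivity of $\varphi$ forces $\varphi$ to be an isomorphism) is also exactly the paper's argument.
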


\begin{proof}
  By property (1) of Conjecture \ref{conj:A}, the map $\varphi : R^{\min}
  \to \T_{\emptyset,\mE}$ is surjective. To prove the theorem, it therefore suffices to show
  that $H_1(Y,\OL)_{\mE}$ is free over $R^{\min}$ (where we view $H_1(Y,\OL)_{\mE}$ as an
  $R^{\min}$-module via $\varphi$). To show this, we will apply
  Proposition~\ref{prop:patchingimaginary}.

  We set $R=R^{\min}$ and $H=H_1(Y,\OL)_{\mE}$ and we define 
\[ q := \dim_k H^1_\emptyset(G_F,\ad^0 \rhobar) .\]
Note that $q \geq 1$ by Proposition \ref{prop:tw-primes-imaginary}. As in
Proposition \ref{prop:patchingimaginary}, we set $\Delta_\infty = \Z_p^q$ and
let $\Delta_N = (\Z/p^N\Z)^q$ for each integer $N\geq 1$. We also let
$R_\infty$ denote the power series ring
$\OL[[x_1,\ldots,x_{q-1}]]$. It remains to show that conditions~\ref{cond4} and~\ref{cond5} of
Proposition \ref{prop:patchingimaginary} are satisfied. For this we will use the
existence of  Taylor--Wiles primes together with the results
established in Section ~\ref{sec:hom-arithm-quot-imaginary}.

For each integer $N\geq 1$, fix a set of primes $Q_N$ of $F$
satisfying the properties of Proposition \ref{prop:tw-primes-imaginary}. We can
and do fix a surjection $\wt\phi_N : R_\infty \onto R_{Q_N}$ for each
$N\geq 1$. We let $\phi_N$ denote the composition of $\wt\phi_N$ with
the natural surjection $R_{Q_N}\onto R^{\min}$. Let
\[ \Delta_{Q_N} = \prod_{x\in Q_N}(\OL_F/x)^\times \]
and choose a surjection $\Delta_{Q_N}\onto \Delta_N$. Let
$Y_{\Delta_N}(Q_N)\to Y_0(Q_N)$ denote the corresponding Galois
cover. We set $H_N:= H_1(Y_{\Delta_N}(Q_N),\OL)_{\m}$ where $\m$ is
the ideal of $\T_{Q_N}$ which contains $\mE$ and $U_x-\alpha_x$ for
each $x\in Q$, for some choice of $\alpha_x$. Then $H_N$ is
naturally an $\OL[\Delta_N]=S_N$-module. Applying Conjecture
\ref{conj:A} to $\T_{Q_N,\m}$, 
we deduce the
existence of a surjective homomorphism $R_{Q_N}\onto
\T_{Q_N,\m}$. Since $\T_{Q_N,\m}$ acts on $H_N$, we get an induced
action of $R_\infty$ on $H_N$ (via $\wt\phi_N$ and the map
$R_{Q_N}\onto \T_{Q_N,\m}$). We can therefore view $H_N$ as a module
over $R_\infty\otimes_{\OL}S_N$. To apply Proposition
\ref{prop:patchingimaginary}, it remains to check points \eqref{cond-image}--\eqref{cond-balanced}.
 We check these conditions one by one:
\begin{itemize}
\item[(a)] The image of $S_N$ in $\End_\OL(H_N)$ is contained in the
  image of $R_\infty$ by Conjecture \ref{conj:A}, because it is
  given by the image of the diamond operators. The second part of
  condition~\eqref{cond-image} follows from Conjecture~\ref{conj:A}
  part~\eqref{lgc-Q}
  (exactly as in the proof of Theorem~\ref{thm:main-thm-w1}).
 
\item[(b)] We have a Hochschild-Serre spectral sequence
\[ \Tor_i^{S_N}(H_j(Y_{\Delta_N}(Q_N),\OL)_{\m})\implies
H_{i+j}(Y_0(Q_N),\OL)_{\m} .\]
Applying part (1) of Lemma \ref{lem:hom-basic-props}, we see that
$(H_N)_{\Delta_N} \cong H_1(Y_0(Q_N),\OL)_{\m}$. Then, by Lemma
\ref{lemma:matt} we see that $(H_N)_{\Delta_N} \cong H_1(Y,\OL)_{\mE} =
H$, as required.
\item[(c)]  $H_N$ is finite over $\OL$ and hence over $S_N$.
Proposition \ref{prop:balanced-homology} implies that
  $d_{S_N}(H_N)\geq 0$.
\end{itemize}
We may therefore apply Proposition \ref{prop:patchingimaginary} to deduce that
$H$ is free over $R$ and the theorem follows.
\end{proof}

\medskip

If $H_1(Y,\OL)_{\mE} \otimes \Q \ne 0$, then we may deduce that the multiplicity $\mu$ for
$H$ as a $\T_{\emptyset,\mE}$-module is one by multiplicity one for $\PGL(2)/F$. The proof also exhibits
$\T_{\emptyset,\mE}$ as a quotient of a power series ring in $q-1$ variables by $q$ elements. In particular, if 
$\dim(\T_{\emptyset,\mE}) = 0$, then $\T_{\emptyset,\mE}$ is a complete intersection. From these remarks we see that
Theorem~\ref{theorem:RequalsT} follows from Theorem~\ref{thm:main-thm-imaginary}.

\subsection{The distinction between \texorpdfstring{$\GL$}{GL} and \texorpdfstring{$\PGL$}{PGL}}

The reader may wonder why, when considering Galois
representations over imaginary quadratic fields,
 we consider the group $\G = \PGL$ rather than $\GL$. When $F = \Q$ or an imaginary quadratic field, the associated locally symmetric spaces are very similar (the
same up to components), and working with $\PGL$ has the disadvantage of forcing the determinant to be cyclotomic
rather than cyclotomic up to finite twist.  The main reason we use $\PGL$ is related to an 
issue which arises (and was pointed out to us by the referee) when
the class number of $F$ is divisible by $p$.
 Suppose that $\rhobar$ is a modular
 representation of level one, and suppose that the minimal fixed determinant deformation ring is $\OL$. Then, if the class group
 of $\OL_F$ is $\Z/p\Z$, the minimal Hecke ring $\T_{\m}$ is expected to be of the form $\OL[\Z/p\Z]$ rather than $\OL$, 
 and the map
 $\Rmin \rightarrow \T_{\m}$ will not be surjective. The issue is that the Hecke algebra even at minimal level sees the
 twists of the corresponding automorphic form by characters of the class group. If these characters have
 $p$-power order, they contribute to the localization of $\T$ at any maximal ideal $\m$. 
 This is analogous to what might happen classically if one has a representation $\rhobar$ over $\Q$ of tame level $N$;
 one has to be careful in choosing a minimal level, since the Hecke algebra of $X_1(N)$ will contain spurious
 twists if $N-1$ is divisible by $p$. The latter issue is easily resolved by a careful choice of level structure at $N$,
 namely, replacing $X_1(N)$ by $X_H(N)$ which is the quotient of $X_1(N)$ by the $p$-Sylow subgroup 
 of the group $(\Z/N\Z)^{\times}$ of diamond operators. 
 However, it is not possible to avoid the \emph{class group} in this way by choosing appropriate level structure, because
 the level structure  only sees ramification.  One fix is to work with $\PGL$, but there is another fix for imaginary quadratic fields $F$ which we sketch now.
 The natural approach is to replace the spaces $Y$, $Y_0(Q)$ and $Y_1(Q)$ by their quotients by the group
 $\Cl_p(\OL_F):=\Cl(\OL_F) \otimes \Z_p$. For example, the natural level structure at $Y$ admits a ring of diamond operators which act via
 an extension of $\Cl(\OL_F)$ by a group of order prime to $p$, and hence there is a canonical splitting and thus a canonical
 quotient $Y/\Cl_p(\OL_F)$ which gives the ``correct'' space. Note that, for $p$ odd, the group $\Cl_p(\OL_F)$ acts
 freely on the components, so this quotient is given explicitly by a subset of the connected components of $Y$.
 In the example above, the natural ring of Hecke operators $\T_{\m}$
 acting on $Y$ (now generated by $T_{\alpha}$ such that the image of $(\alpha)$ in $\Cl(\OL_F)$ has order prime to $p$)
 will be isomorphic to $\OL$.
 This construction, however, is not as canonical as one would like. For example,
 the ring of diamond operators on $Y_1(Q)$ naturally
 acts through a group whose $p$-Sylow subgroup is $\RCl_p(Q) = \RCl(Q) \otimes \Z_p$, the ($p$-part of the) ray
 class group of conductor $Q$. This group surjects onto $\Cl_p(\OL_F)$, but there is no natural section. It seems that
 the Taylor--Wiles method still applies as long as one restricts the set of Taylor--Wiles primes to $x \in Q$ such that
 the map
 $$\RCl_p(Q) \rightarrow \Cl_p(\OL_F)$$
 splits. This imposes a further Chebotarev condition on the Taylor--Wiles primes $x \in Q$ which corresponds to 
 $x$ splitting completely
 in a metabelian extension of $F$. Explicitly, if $\a \in \Cl_p(\OL_F)$ has $p$-power order $h$, let
 $\a^h = (\alpha)$. The necessary condition on $x$ is that (assuming $p$ is prime to the order of the unit group
 of $F$) that
 $$\alpha^{\frac{N(x) - 1}{p}} \equiv 1 \mod x,$$
 or equivalently that $x$ splits completely in $F(\alpha^{1/p},\zeta_p)$. 
 We ultimately decided, however, to impose the simplifying assumption that $\det(\rho)$ is cyclotomic,
 in part because  the main example of interest concerns elliptic curves over $F$ which naturally have cyclotomic
determinant.

\begin{remark} \emph{
A different approach to modularity lifting for $\GL$ is to allow the determinant to vary,
specifically, to fix the determinant only up to a character which is ramified only at the
Taylor--Wiles primes in~$Q$ (and so not at $v|p$). This is possibly the most general
way to proceed, although it requires working with $\ell_0 > 0$ even for $\GL(2)/F$ for totally
real fields of degree $[F:\Q] >1$.  We give some indication of this method
by considering the case of $\GL(1)$ in section~\ref{section:GL1}. The general
case of $\GL(n)$ is then a fibre product of this argument with the fixed determinant
arguments for $\PGL(n)$.
}
\end{remark}

\begin{remark}
\emph{ Our methods may easily be modified to prove an $R^{\min}  = \T_{\m}$
theorem for ordinary representations in weights other than weight zero (given the
appropriate modification of Conjecture~\ref{conj:A}). In weights which are not invariant
under the Cartan involution (complex  conjugation), one knows \emph{a priori}
for non-Eisenstein ideals $\m$ that $\T_{\m}$ is finite. Note that in this case it is sometimes
possible to prove unconditionally that $R^{\min}[1/p] = 0$, see Theorem~1.4 of~\cite{CEven}.
}
\end{remark}

\begin{remark}
\emph{
One technical tool that is conspicuously absent when $l_0 = 1$  is the technique of solvable
base change. When proving modularity results for $\GL(2)$ over totally real fields, for example,
one may pass to a finite solvable extension to avoid various technical issues, such as level
lowering (see~\cite{SW}). However, if $F$ is an imaginary quadratic field, then every non-trivial
extension $H/F$ has at least two pairs of complex places, and the corresponding invariant
$l_0 =   \rank(G) - \rank(K)$ for $\PGL(2)/H$ is at least $2$ (more precisely, it is equal to the number
of complex places of $H$). This means that when $l_0 = 1$, our techniques are mostly confined
to the approach used originally by Wiles, Taylor--Wiles, and Diamond
(\cite{W,TW,DiamondMult}).
}
\end{remark}

\begin{remark}
\emph{
Our techniques also apply to some other situations in which $l_0 = 1$ (the {\bf Betti}\rm \ case).
One may, for example, consider $2$-dimensional representations over a field $F$ with
one complex place. If $[F:\Q]$ is even, there exists an inner form for $\GL(2)/F$ which
is compact at all real places of $F$, and the corresponding arithmetic quotient is
a finite volume arithmetic hyperbolic manifold which is compact if $[F:\Q] > 2$.
(If $[F:\Q]$ is odd, one would have to require that $\rhobar$ be ramified with
semi-stable reduction at at least one prime $\lambda \nmid p$.)
Nonetheless, we obtain minimal lifting theorems in these cases,
modulo an analogue of
Conjecture~\ref{conj:A}. Similarly, our methods immediately produce minimal
lifting theorems for $\GL(3)/\Q$, modulo an appropriate version of Conjecture~\ref{conj:A}.
Similarly, our methods should also apply to other situations in which $\pi_{\infty}$ is
a holomorphic limit of discrete series (the {\bf Coherent}\rm \ case). 
One case to consider would be odd ordinary irreducible Galois representations
$\rho: G_{F} \rightarrow \GL_2(\Qbar_p)$ of a totally real field $F$ which
conjecturally arise from Hilbert modular forms exactly one of whose weights is one.
Other examples of particular interest 
include the case  in which $\rho: G_{\Q} \rightarrow \GSp_4(\Q_p)$ is the Galois
representation associated to an abelian surface $A/\Q$, or 
$\rho: G_{E} \rightarrow \GL_3(\Q_p)$ is the Galois representation associated
to a Picard Curve (see the appendix to~\cite{Blasius}). We hope to return to these examples
in future work.
}
\end{remark}

\part{\texorpdfstring{$l_0$}{l0} arbitrary.}
\label{part:2}

In this second part of the paper, our main result is a conditional
modularity lifting theorem for $n$-dimensional $p$-adic
representations of the Galois group of an arbitrary number
field. In this generality, we are forced to work in a situation
where the automorphic forms in question occur in a range of
cohomological degrees of arbitrary length $l_0$. We could have
presented our arguments in both the coherent cohomology setting and
the Betti cohomology setting, but for concreteness, we have decided to
treat only the latter case in detail.

 We now state our main (conditional) modularity lifting theorem; it
 will be used in Section~\ref{sec:proof-of-ST} to prove
 Theorem~\ref{theorem:ST}.
 Let $\OL$ denote the ring of integers in a finite extension of $\Q_p$,
let $\varpi$ be a uniformizer of $\OL$, and let $\OL/\varpi = k$ be the residue field.
Recall that a representation $\Gal(\C/\R)  \rightarrow \GL_n(\OL)$ is \emph{odd} if the image of $c$ has trace in $\{-1,0,1\}$,
and let $\eps$ denote the cyclotomic character.

\begin{theorem} \label{theorem:modularity}
Assume Conjecture~\ref{conj:AA}. Let $F/\Q$ be an arbitrary number field, and $n$ a positive integer.
Let $p > n$  be unramified in $F$. Let
$$r: G_{F} \rightarrow \GL_n(\OL)$$
be a continuous  Galois representation unramified outside a finite set of primes. 
Denote   the mod-$\varpi$ reduction of $r$ by
 $\rbar: G_{F} \rightarrow \GL_n(k)$.
Suppose that
\begin{enumerate}
\item If $v|p$, the representation $r |D_v$ is crystalline.
\item If $v|p$, then $ \mathrm{gr}^i(r \otimes_{\Z_p} B_{\mathrm{DR}})^{D_v} = 0$ unless
$i \in \{0,1,\ldots, n-1\}$, in which case it is free of rank 1 over
$\CO \otimes_{\Z_p} F_v$. 
  \item The restriction of $\rbar$ to
$\displaystyle{F \kern-0.05em{\left(\zeta_p \right)}}$
 is absolutely irreducible, and the field $F(\ad^0(\rbar))$ does not contain $F(\zeta_p)$.
 \item \label{big} In the terminology
 of~\cite{CHT}, Definition~2.5.1,  $\rbar$ is \big.
 \item \label{odd} If $v|\infty$ is any real place of $F$, then $r|G_{F_v}$ is odd.
 \item \label{taylor} If $r$ is ramified at a prime~$x$, then
 $r|I_x$ is unipotent. Moreover, if, furthermore,  $\rbar$ is unramified at~$x$, 
 then $N(x) \equiv 1 \mod p$.
 \item The determinant of $r$ is  $\eps^{n(n-1)/2}$.
\item \label{conditionserre} Either:
\begin{enumerate}
\item \label{charzero} There exists a cuspidal
automorphic representation $\pi_0$ of
$\GL_n(\A_F)$ such that: $\pi_{0,v}$ has trivial infinitesimal
character for all $v|\infty$, good reduction at all $v|p$, and the $p$-adic Galois representation
$r_p(\pi)$ both satisfies condition~\ref{taylor} and the identity
$\rbar_p(\pi) = \rbar$.
\item  \label{charp} $\rbar$ is Serre modular of minimal level $N(\rbar)$, and $r$ is ramified only at primes
which ramify in $\rbar$.
\end{enumerate}
\end{enumerate}
Then $r$ is modular, that is, there exists a regular algebraic cusp form $\pi$ for $\GL_n(\A_F)$
with trivial infinitesimal character 
such that $L(r,s) = L(\pi,s)$.
\end{theorem}

This theorem will follow immediately from Theorem~\ref{thm:mod-lifting}, proved below.
As in Theorem~\ref{theorem:modularityimaginary}, condition~\ref{charp} is only a statement about the
existence of a mod-$p$ cohomology class of level $N(\rbar)$, not the existence of a
characteristic zero lift; this condition is the natural  generalization of Serre's
conjecture.   On the other hand, the usual strategy for proving potential modularity usually
proceeds by producing characteristic zero lifts which are not minimal, and thus
condition~\ref{charzero} will be useful for applications.
If conditions $1$, $2$, and $3$  are satisfied, then conditions~\ref{odd} and~\ref{taylor} are satisfied after a solvable extension
which is unramified at $p$.
Moreover, \emph{if} $\rbar$ admits an automorphic lift  with trivial infinitesimal character
and good reduction at $p$, then condition~\ref{charzero} is
also satisfied after a solvable extension which is unramified at $p$.
Condition~\ref{charp}, however, is not obviously preserved under cyclic base change.

\medskip

Note that it will be obvious to the expert that our methods will allow for  (conditional) generalizations of these theorems
to other contexts (for example, varying the weight) but we have contented ourselves with the simplest possible statements necessary to
deduce Theorem~\ref{theorem:ST}. We caution, however, that several techniques are
\emph{not} available in this case, in particular, the lifting techniques of Ramakrishna and Khare--Wintenberger require
that $l_0 = 0$.

\section{Some Commutative Algebra II}
\label{section:commutetwo}

The general difficulty 
in proving that $R_{\infty} = \T_{\infty}$
is to show that there are  \emph{enough}
modular Galois representations. 
If the cohomology we are interested in occurs in a range of degrees of length $l_0$,
then we would like to show that in at least one of these degrees that
the associated  modules $H_N$ (which are both Hecke modules and
modules for the group rings $S_N:=\CO[(\Z/p^N\Z)^q]$)
compile, in a Taylor--Wiles patching process, to form a module of
codimension $l_0$ over the completed group ring $S_\infty:=
\CO[[(\Z_p)^q]]$. The problem then becomes to find a find a suitable
notion of ``codimension $l_0$'' for modules over a local ring that
\begin{enumerate}
\item is well behaved for non-reduced quotients of power series rings
over $\CO$ (like $S_N$),
\item can be established for the spaces $H_N$ in question,
\item compiles well in a Taylor--Wiles system.
\end{enumerate}

It turns out to be more effective to patch together a series of complexes $D_N$ of length $l_0$ whose cohomology
computes the cohomology of $\Gamma_1(Q_N)$ localized at $\m$. The limit of these patched complexes will
then turn out to be a length $l_0$ resolution of an associated patched module.

\medskip

It will be useful to prove the following lemmas.

\begin{lemma} Let $S$ \label{lemma:bound}
be a Noetherian local ring. 
If $N$ is an $S$-module with depth $n$, and $0 \ne M \subseteq N$, then $\dim(M) \ge n$.
\end{lemma}

\begin{proof}
Let $\p$ be an associated prime of $M$ (and hence of $N$). Then $\p$ is the annihilator of some $0 \ne m \in M$,
and it suffices to prove the result for $M$ replaced by $mS \subset M$. On the other hand, for a Noetherian local ring,
one has the inequality (see~\cite{matsumura}, Theorem 17.2)
$$n = \depth(N) \le \min_{\Ass(N)} \dim S/\p \le \dim(M).$$
\end{proof}

We deduce from this the following:

\begin{lemma} \label{lemma:deduce}
Let $l_0\geq 0$ be an integer and let $S$ be a 
Noetherian regular local ring of dimension $n \ge l_0$.
Let $P$ be a perfect complex of $S$-modules which is concentrated in degrees $0, \ldots, l_0$.
Then  $\codim(H^*(P)) \le l_0$, and moreover, if equality occurs, then:
\begin{enumerate}
\item $P$ is a projective
resolution of
$H^{l_0}(P)$,
\item $H^{l_0}(P)$ has depth $n - l_0$ and has projective dimension $l_0$.
\end{enumerate}
\end{lemma}

\begin{proof}
Let $\delta^i : P^i \to P^{i+1}$ denote the differential and let $m \le l_0$ denote the smallest integer such that $H^m(P) \ne 0$.
Consider the complex:
$$P^0 \rightarrow P^1 \rightarrow \ldots \rightarrow P^m.$$
By assumption, this complex is exact until the final term, and thus it
is a projective resolution
of the $S$-module $K^m := P^m/\im(\delta^{m-1})$. It follows that the projective dimension
of $K^m$ is $\le m$.
On the other hand, we see that
$$H^m(P) = \ker(\delta^m)/\im(\delta^{m-1}) \subseteq K^m,$$
and thus
$$\codim(H^m(P)) = n - \dim(H^m(P))  \le n - \depth(K^m) = \projdim(K^m) \le m,$$
where the central inequality is Lemma~\ref{lemma:bound}, and the second equality is the
Auslander-Buchsbaum formula.

Suppose that $\codim(H^*(P)) \ge l_0$. Then it follows from the argument above that the smallest
$m$ for which $H^m(P)$ is non-zero is $m = l_0$, 
that $\codim(H^{l_0}(P)) = l_0$,  that $P$ is a resolution of $H^{l_0}(P)$, and that
$\projdim(H^{l_0}(P)) = l_0$, completing the argument.
\end{proof}

\subsection{Patching}
\label{sec:patching}

We establish in this section an abstract Taylor--Wiles style patching
result which may be viewed as an analogue of Theorem 2.1
of~\cite{DiamondMult} and Proposition~\ref{prop:patchingimaginary}, but also including refinements
due to Kisin.

\begin{theorem}
\label{prop:patching}
Let $q$ and $j$ be non-negative integers with $q+j\geq l_0$, and let
$S_\infty=\CO[[(\Z_p)^q]]$. For each integer $N\geq 0$, let
$S_N:=\CO[\Delta_N]$ with $\Delta_N:=(\Z/p^N\Z)^q$. For each $M\geq N
\geq 0$ and each ideal $I$ of $\CO$, we regard $S_N/I$ (and in
particular, $\CO/I = S_0/I$) as a quotient of $S_M$ via the quotient map
$\Delta_M\onto \Delta_N$ and reduction modulo $I$. 
\begin{enumerate}[\ \ (1)]
\item\label{Rinfty}  Let  $R_\infty$ be an object of $\CC_{\CO}$ of Krull dimension $1+j+q-l_0$. 
\item Let $R$ be an object of $\CC_{\CO}$, and let $H$ be an $R$-module.
\item\label{complex-T} Let $T$ be a complex of finite-dimensional $k$-vector spaces
  concentrated in degrees $0,\dots,l_0$ together with a differential
  $d = 0$ and an isomorphism $H^{l_0}(T) \iso H/\varpi$ of $k$-modules. 
\end{enumerate} 
Let $\CO^\square = \CO[[z_1,\dots,z_j]]$ and for each $\CO$-module or $\CO$-algebra
$M$, we let $M^\square:=M\otimes_{\CO}\CO^\square$. For any
$\CO$-algebra $A$, we regard $A$ as a quotient of $A^{\square}$ via
the map sending each $z_i$ to $0$. 

Suppose that, for
each integer $N \geq 1$, $D_N$ is a perfect complex of $S_N/\varpi^N$-modules
with the following properties:
\begin{enumerate}[\ \ (a)]
\item \label{bounded} There is an isomorphism $D_N \otimes_{S_N} S_N/\m_{S_N} \simeq T$.
\item \label{galois-action} For each $M\geq N\geq 0$ with $M\geq 1$
  and each $n\geq 1$, there is an action of $R_\infty$ on the on the cohomology of the complex $D_M^\square \otimes_{S_M}
 S_N/\varpi^n$  that commutes with that of $S_M^{\square}$.
If, in addition, $N \geq N' \geq 0$ and $n\geq n'\geq 1$, then the natural map
$H^*(D_M^{\square}\otimes_{S_M}S_N/\varpi^n)\to
H^*(D_M^{\square}\otimes_{S_M}S_{N'}/\varpi^{n'})$ is compatible with the $R_\infty$-actions.
  \item \label{top-deg} For each $N\geq 1$, there is a surjective map $\phi_N: R_{\infty} \rightarrow R$, 
 and for each $n\geq 1$ we are given an isomorphism
 $$H^{l_0}(D_N^{\square} \otimes_{S_N^{\square}} \OL/\varpi^n) =
 H^{l_0}(D_N \otimes_{S_N} \OL/\varpi^n)
   \simeq H/\varpi^n$$
    of $R_{\infty}$-modules where $R_{\infty}$ acts on $H/\varpi^n$
    via $\phi_N$. Moreover, these isomorphisms are compatible for fixed $N$ and
    varying $n$.
\item \label{image-S} For $M,N$ and $n$ as above, the image of $S_M^\square$ in $\End_\CO(H^*(D_M^{\square} \otimes_{S_M} S_N/\varpi^n))$ is contained in the image
   of $R_\infty$ and moreover, the image of the augmentation ideal of
   $S_N^{\square}$ (that is, the kernel of $S_N^{\square}\to \CO$) is contained in the image of $\ker(\phi_N)$.

\end{enumerate}
Let $\a \subset S_\infty^\square$ denote the kernel of the map
$S_\infty^\square\to\CO$ sending each element of $(\Z_p)^q$ to $1$ and
each $z_i$ to $0$. Then the following holds: there is a perfect complex $P_\infty$ of
finitely generated $S_\infty$-modules concentrated in degrees
$0,\dots,l_0$ such that
\begin{enumerate}[\ \ (i)]
\item\label{proj-res} The complex $P_\infty^\square$ is a projective resolution, of minimal length, of its top
  degree cohomology $H^{l_0}(P_\infty^\square)$.
\item There is an action of $R_\infty\widehat\otimes_{\CO}S_\infty^\square$ on
  $H^{l_0}(P_\infty^\square)$ extending the action of $S_\infty^\square$ and such that
  $H^{l_0}(P_\infty^\square)$ is a finite $R_\infty$-module.
\item The $R_\infty$-depth of $H^{l_0}(P_\infty)$ is equal to
  $1+j+q-l_0(=\dim R_\infty$).
\item\label{top-deg-patchted} There is a surjection $\phi_\infty : R_\infty\onto R$ and an
  isomorphism $\psi_\infty : H^{l_0}(P_\infty^\square)/\a \iso H$ of
  $R_\infty$-modules where $R_\infty$ acts on $H$ via
  $\phi_\infty$. Moreover, the image of $\a$ in
  $\End(H^{l_0}(P_\infty^\square))$ is contained in that of $\ker (\phi_\infty)$.
\end{enumerate}
\end{theorem}

\begin{proof}
  For each $N\geq 1$, let
  $\a_N$ denote the kernel of the natural
  surjection $S_\infty \onto S_N$ and let $\b_N$ denote the open ideal
  of $S_\infty^\square$ generated by $\varpi^N$, $\a_N$ and $(z_1^N,\dots,z_j^N)$. 
   Choose a
  sequence of open ideals $(\d_N)_{N\ge 1}$ of $R$ such that
  \begin{itemize}
  \item $\d_N\supset \d_{N+1}$ for all $N\ge 1$;
  \item $\cap_{N\ge 1}\d_N = (0)$;
  \item $\varpi^NR \subset \d_N \subset \varpi^NR+\Ann_{R}(H)$ for all $N$.
  \end{itemize}
(As in the proof of Theorem~\ref{prop:patchingimaginary}, one can take $\d_N$ to be the ideal generated by
$\varpi^N$ and $\Ann_R(H)^N$.)

Define a \emph{patching datum of level $N$} to be a $3$-tuple $(\phi,\psi,P)$
where
\begin{itemize}
\item $\phi : R_\infty \onto R/\d_N$ is a surjection in $\CC_\CO$;
\item $P$ is a perfect complex
of  $S_\infty/(\a_N+\varpi^N)$-modules such that
$P \otimes S_{\infty}/\m_{S_{\infty}} \simeq T$;
\item For each $N\geq N'\geq 0$, each $N\geq n'\geq 1$ and each ideal $I$ of $\CO^\square$
 with  $(z_1^N,\dots,z_j^N)\subset I \subset
 (z_1,\dots,z_j)$, the cohomology groups $H^i(P^{\square}
 \otimes_{S_\infty^{\square}} S_{N'}^{\square}/(I+\varpi^{n'}))$
 carry an action of
 $R_\infty$ that commutes with the action of
     $S_{\infty}^\square$
     and these $R_\infty$-actions are compatible for varying $N'$,
      $n'$ and $I$;
\item $\psi : H^{l_0}(P^{\square} \otimes_{S_\infty^\square} S_\infty^\square/(\a+\varpi^N))  \iso H/\varpi^NH$ is an isomorphism of $R_\infty$
  modules (where $R_\infty$ acts on $H/\varpi^N H$ via $\phi$). (Note
  that $\psi$ then gives rise to an isomorphism of $R_\infty$-modules
  between $H^{l_0}(P^{\square} \otimes_{S_\infty^\square} S_\infty^\square/(\a+\varpi^{n'}))$ and
  $H/\varpi^{n'}H$ for each $N\geq n' \geq 1$.)
\end{itemize}
We say that two such $3$-tuples $(\phi,\psi,P)$ and
$(\phi',\psi',P')$ are isomorphic if 
$\phi = \phi'$ and
there is an isomorphism of complexes $P \iso P'$ of $S_\infty$-modules
inducing isomorphisms of
$R_\infty\widehat{\otimes}_{\CO}S_\infty$-modules on cohomology which
are compatible with $\psi$ and $\psi'$ in degree $l_0$.
We note that, up to isomorphism, there are finitely many patching
data of level $N$. (This follows from the fact that $R_\infty$ and
$S_\infty$ are topologically finitely generated, and that $T$ is finite.) If $D$ is a patching
datum of level $N$ and $1\le N' \le N$, then $D$ gives rise to a
patching datum of level $N'$ in an obvious fashion. We denote this
datum by $D \bmod N'$.

For each pair of integers $(M,N)$ with $M\ge N\ge 1$, we define a
patching datum $D_{M,N}$ of level $N$ as follows: the statement of the
proposition gives a homomorphism $\phi_M:R_\infty \onto R$ and an
$S_M/\varpi^M$-complex $D_M$. We take
\begin{itemize}
\item $\phi$ to be the composition $R_\infty\stackrel{\phi_M}{\onto} R \onto R/\d_N$;
\item $P$ to be $D_M \otimes_{S_\infty}  S_\infty/(\a_N+\varpi^N) = D_M\otimes_{S_M}S_N/\varpi^N$;
\item $\psi : H^{l^0}(P^{\square} \otimes_{S_\infty^\square} S_\infty^\square/ (\a+\varpi^N))=H^{l_0}(D_M^{\square}
  \otimes_{S_M^{\square}} \CO/\varpi^N) \iso
  H/\varpi^N$ to be the given isomorphism.
\end{itemize}
To see that the third condition in the definition of a patching datum
is satisfied, let $I$ be an ideal of $\CO^\square$ with  $(z_1^N,\dots,z_j^N)\subset I \subset
 (z_1,\dots,z_j)$, and let $1\leq n' \leq N$, $0\leq N' \leq N$. Then
 we have
\[ H^i(P^{\square} \otimes_{S_\infty^\square}
S_{N'}^{\square}/(I+\varpi^{n'})) = H^i(D^{\square}_M
\otimes_{S_M}
S_{N'}/\varpi^{n'})\otimes_{\CO^\square}\CO^{\square}/I, \]
and hence,  by assumption
\eqref{galois-action}, this space carries an $R_\infty$-action that
commutes with the $S_\infty^\square$-action and is compatible for
varying $I$, $N'$ and $n'$. Thus, $D_{M,N}$ is indeed a patching datum of
level $N$.

Since there are finitely many patching data of each level $N\ge 1$, up
to isomorphism, we can find a sequence of pairs $(M_i,N_i)_{i\geq 1}$
such that
\begin{itemize}
\item $M_i \ge N_i$, $M_{i+1}\ge M_i$, and $N_{i+1}\ge N_i$ for all $i$;
\item $D_{M_{i+1},N_{i+1}} \bmod N_i$ is isomorphic to
  $D_{M_i,N_i}$ for all $i\ge 1$.
\end{itemize}
For each $i\ge 1$, we write $D_{M_i,N_i}=(\phi_i,\psi_i,P_i)$ and
we fix an isomorphism  between $D_{M_{i+1},N_{i+1}}\bmod N_i $ and $D_{M_i,N_i}$. 
We define
\begin{itemize}
\item $\phi_\infty : R_\infty \onto R$ to be the inverse limit of the $\phi_i$;
\item $P_\infty := \varprojlim_i P_i$ where each transition map is the
  composite of $P_{i+1}\onto P_{i+1}/(\varpi^{N_i}+\a_{N_{i}})$ with the
  isomorphism $P_{i+1}/(\varpi^{N_i}+\a_{N_{i}}) \iso P_i$ coming from the
  chosen isomorphism between $D_{M_{i+1},N_{i+1}}\bmod N_i$ and
  $D_{M_i,N_i}$.

\item $\psi_\infty$ to be the isomorphism of $R_\infty$-modules
  $H^{l_0}(P_{\infty}^{\square})/\a = H^{l_0}(P_\infty^{\square}/\a) \iso H$
  (where $R_\infty$ acts on $H$ via $\phi_\infty$) arising from the isomorphisms $\psi_i$.
\end{itemize}

Then $P_\infty$ is a perfect complex of $S_\infty$-modules
concentrated in degrees 
$0,\ldots,l_0$ such that $H^*(P_\infty^\square)$ carries an action of
$R_\infty\widehat\otimes_\CO S_\infty^\square$ (extending the action of $S_\infty^\square$).
The image of $S_\infty^\square$ in $\End_\CO(H^*(P_{\infty}^\square))$ is contained in
the image of $R_\infty$. (Use assumption~\eqref{image-S}, and the
fact that the image of $R_\infty$ is closed in $\End_\CO(H^*(P_{\infty}^\square))$
(with its profinite topology).) It follows that $H^i(P_{\infty}^\square)$ is a finite
$R_\infty$-module for each $i$. Moreover, since $S_\infty^\square$ is formally smooth over $\CO$, we
can and do choose a homomorphism $\imath:S_\infty^\square \to R_\infty$ in $\CC_\CO$,
compatible with the actions of $S_\infty^\square$ and $R_\infty$ on $H^*(P_{\infty}^\square)$.

Since $\dim_{S_\infty^\square}(H^*(P_\infty^\square)) =
\dim_{R_\infty}(H^*(P_\infty^\square))$ and
$\dim R_\infty = \dim S_\infty^\square - l_0$, we deduce that
$H^*(P_{\infty}^\square)$ has codimension at \emph{least} $l_0$ as an $S_{\infty}^\square$-module.
By Lemma~\ref{lemma:deduce} (with $S = S_{\infty}^\square$ and $P = P_{\infty}^\square$)
we deduce that $P_{\infty}^\square$ is a resolution of minimal length of $H^{l_0}(P_{\infty}^\square)$
and that 
$$\depth_{S_\infty^\square}(H^{l_0}(P_{\infty}^\square)) =
\dim(S_\infty^\square)-l_0=1+j+q-l_0.$$
Finally note that the image of of $\a$ in
  $\End(H^{l_0}(P_\infty^\square))$ is contained in that of $\ker
  (\phi_\infty)$ by assumption~\eqref{image-S}.
\end{proof}

\begin{theorem}
  \label{thm:faithfulness-statements}
Keep the notation of the previous theorem and suppose in addition that
$R_\infty$ is $p$-torsion free.
\begin{enumerate}
\item If $R_\infty$ is formally smooth over $\OL$, so $R \simeq  \CO[[x_1,\dots,x_{q+j-l_0}]]$, then $H$ is a free $R$-module.
\item\label{irred-generic} If $R_{\infty}[1/p]$ is irreducible, then $H$ is a nearly faithful 
 $R$-module (in the terminology of~\cite{Taylor}). 
\item\label{all-components} More generally, $H$ is nearly faithful as an $R$-module
  providing that every irreducible component of
  $\Spec(R_{\infty}[1/p])$ is in the support of $H^{l_0}(P_{\infty})[1/p]$.
\end{enumerate}
\end{theorem}

\begin{proof}
 Suppose first of all that $R_{\infty} \simeq \OL[[x_1,\ldots,x_{j+q - l_0}]]$.
 Since
$\depth_{R_\infty}(H^{l_0}(P_{\infty}^\square))=\dim R_\infty$, applying the
Auslander--Buchsbaum formula again, we deduce that $H^{l_0}(P_{\infty}^\square)$ is free
over $R_\infty$. Let $\imath : S_\infty^\square \to R_\infty$ be as in
the proof of Theorem~\ref{prop:patching}. Then, the existence of the isomorphism $\psi_\infty: H^{l_0}(P_{\infty}^\square)/\a H^{l_0}(P_{\infty}^\square)
 \iso H$ tells us that $R_\infty/\imath(\a)R_\infty$ acts freely on
 $H$ and hence $\ker(\phi_\infty)\subset \imath(\a)R_\infty$. On the
 other hand, the freeness of $H^{l_0}(P_{\infty}^\square)$ over
 $R_\infty$ and the fact that the image of $\a$ in
 $\End(H^{l_0}(P_\infty^\square))$ is contained in that of
 $\ker(\phi_\infty)$ imply that $\imath(\a)R_\infty \subset
 \ker(\phi_\infty)$. We deduce that
 $\imath(\a)R_\infty=\ker(\phi_\infty)$ and that $R$ acts freely on
 $H$, as required.

For the remaining cases, to show that $H$ is nearly faithful as an
$R$-module, it suffices to show that $H^{l_0}(P_\infty^\square)$ is
nearly faithful as an $R_\infty$-module. To see this, suppose  $H^{l_0}(P_\infty^\square)$ is
nearly faithful as an $R_\infty$-module.  Then
$H^{l_0}(P_\infty^\square)/\a \cong H$ is nearly faithful as an
$R_\infty/\imath(\a)R_\infty$-module. The action of $R_\infty$ on this
module also factors through $R_\infty/\ker(\phi_\infty)=R$. Thus we
see that $R^{\red} \onto (R_\infty/\imath(\a)R_\infty)^{\red}$
 and it suffices to show this
map is an isomorphism. However, the fact that  $H^{l_0}(P_\infty^\square)$ is
nearly faithful as an $R_\infty$-module together with the fact that
the image of $\a$ in
  $\End(H^{l_0}(P_\infty^\square))$ is contained in that of $\ker
  (\phi_\infty)$ imply that 
\[ \imath(\a) + N \subset \ker(\phi_\infty) + N \]
where $N$ is the ideal of nilpotent elements in $R_\infty$. From this
it follows immediately that $(R_\infty/\imath(\a))^{\red} \onto
R^{\red}$, as required.

 Since $R_\infty$
is $p$-torsion free, all its minimal primes have characteristic
0. Thus $H^{l_0}(P_\infty^\square)$ is nearly faithful as an $R_\infty$-module if and
only if each irreducible component of $\Spec (R_\infty[1/p])$ lies in the support of
$H^{l_0}(P_\infty^\square)[1/p]$. Part \eqref{all-components} follows immediately.
For part \eqref{irred-generic}, note that since
$\depth_{R_\infty}(H^{l_0}(P_{\infty}^\square))=\dim R_\infty$, the support of
$H^{l_0}(P_\infty^\square)$ is a union of irreducible components of $\Spec
(R_\infty)$ of maximal dimension. Since $H^{l_0}(P_\infty^\square)\neq \{0\}$,
the result follows.
\end{proof}

\begin{remark}
  It follows from the proof of the previous theorem that for
  $H^{l_0}(P_\infty^\square)$ to be nearly faithful as an $R_\infty$-module,
  it is necessary that $R_\infty$ be equidimensional.
\end{remark}

To implement the level-changing techniques of \cite{Taylor}, we will
need the following refinement of Theorem \ref{prop:patching}. 

\begin{prop}
  \label{prop:simult-patching}
  Let $S_N$ and $\CO^\square$ be as in Theorem \ref{prop:patching}. Suppose
  we are given two sets of data
  $(R_\infty^i,R^i,H^i,T^i,(D_N^i)_{N\geq 1},(\phi^i_N)_{N\geq 1})_{i=1,2}$
  satisfying assumptions \eqref{Rinfty}--\eqref{complex-T} and
  \eqref{bounded}--\eqref{image-S} of Theorem \ref{prop:patching}, for
  $i=1,2$. Suppose also that we are given:
  \begin{itemize}
  \item  isomorphisms of $k$-algebras
\begin{eqnarray*} 
R_\infty^1/\varpi &\liso& R_\infty^2/\varpi \\
R^1/\varpi &\liso & R^2/\varpi,
\end{eqnarray*}

\item an isomorphism of $R^1/\varpi\iso R^2/\varpi$-modules
\[ H^1/\varpi  \liso H^2/\varpi, \]
\item an isomorphism, for each $M\geq N\geq 0$, of $S_N/\varpi$-modules
\[ H^{l_0}(D_M^1\otimes_{S_M}S_N/\varpi) \liso
H^{l_0}(D_M^2\otimes_{S_M}S_N/\varpi)  \]
which induces (after tensoring over $\CO$ with
$\CO^\square$) an isomorphism of
$R^1_\infty\otimes_{\CO} S_N^\square/\varpi\iso
  R^2_\infty\otimes_{\CO} S_N^\square/\varpi$-modules
\[ H^{l_0}((D_M^1)^\square\otimes_{S_M}S_N/\varpi) \liso
H^{l_0}((D_M^2)^\square\otimes_{S_M}S_N/\varpi)  \]
  \end{itemize}
 such that for each $M\geq 1$ the square
\[\begin{CD}
  H^{l_0}(D^1_M\otimes_{S_M}\CO/\varpi) @>>>
  H^{l_0}(D^2_M\otimes_{S_M}\CO/\varpi)\\
@VVV @VVV \\
H^1/\varpi @>>> H^2/\varpi
\end{CD}\]
commutes. Then we can find complexes $P_\infty^{i,\square}$ for $i=1,2$
satisfying conclusions~\eqref{proj-res}--\eqref{top-deg-patchted} of
Theorem \ref{prop:patching} as well as the following additional
property:
\begin{itemize}
\item There is an isomorphism of $R^1_\infty/\varpi \iso R^2_\infty/\varpi$-modules
\[ H^{l_0}(P^{1,\square}_\infty)/\varpi \liso
H^{l_0}(P^{2,\square}_\infty)/\varpi \]
such that the square
\[\begin{CD}
  H^{l_0}(P^{1,\square}_\infty)/(\a+\varpi) @>>>
  H^{l_0}(P^{2,\square}_\infty)/(\a+\varpi)\\
@VVV @VVV \\
H^1/\varpi @>>> H^2/\varpi
\end{CD}\]
commutes.
\end{itemize}
\end{prop}

\begin{proof}
  This can be proved in much the same way as Theorem
  \ref{prop:patching}; we omit the details.
\end{proof}

In
practice, we will apply Prop~\ref{prop:simult-patching} in a situation
where we are primarily interested in the collection of data indexed by
$i=1$. For the data indexed by $i=2$, the ring $R^2_\infty[1/p]$
will be irreducible and hence $H^2$ will be a nearly faithful
$R^2$-module by
Theorem~\ref{thm:faithfulness-statements}. Proposition~\ref{prop:simult-patching}
will then allow us to deduce that $H^1$ is a nearly faithful
$R^1$-module, following the arguments of~\cite{Taylor}.

\section{Existence of complexes}
\label{sec:complexes}

In this section, we prove the existence of the appropriate perfect
complexes of length $l_0$ which are required for patching.  In both
cases --- the Betti case or the coherent case --- the setting is similar: we have a covering space $X_{\Delta}(Q)
\rightarrow X_0(Q)$ of manifolds or an \'etale map $X_{\Delta}(Q)
\rightarrow X_0(Q)$ of schemes over $\CO$, each with covering group
$\Delta$, which is a finite abelian group of the form
$(\Z/p^N\Z)^{q}$. In both cases, the cohomology localized at a maximal
ideal $\m$ of the corresponding Hecke algebra $\T$ is assumed to
vanish outside a range of length $l_0$. The key point is thus to
construct complexes of the appropriate length whose size is bounded
(in the sense of condition~\eqref{bounded} of Theorem
\ref{prop:patching}) independently of $Q$, so that one may apply our
patching result.

\subsection{The Betti Case}
\label{sec:betti-case}
We put ourselves in the following somewhat general situation. Let
$X_0(Q)$ denote the locally symmetric space associated to a reductive
group $G$ over some number field $F$ and a compact open subgroup
$K_0(Q)$ of $G(\A_F^\infty)$. Similarly let $X_{\Delta}(Q)$ be
associated to a normal subgroup $K_{\Delta}(Q)\subset K_0(Q)$ with
quotient $\Delta$, a finite abelian group. In practice, $Q$ will
represent a set of Taylor--Wiles primes. See
Section~\ref{sec:hom-arithm-quot} for the specific compact open
subgroups that we will choose when $G = \PGL(n)$.
Let $R = \CO[\Delta]$ and let $\a$ denote the augmentation ideal of
$R$. Recall that by a \emph{perfect
  complex} of $R$-modules, we mean a bounded complex of finite free
$R$-modules. If $Y$ is a topological space, we let $C(Y)$ denote the
complex of $\CO$-valued singular chains on $Y$.

\begin{lemma}
  \label{lem:perfect-complex}
There exists a perfect complex of $R$-modules $C$ together with a
quasi-isomorphism 
\[C \to C(X_{\Delta}(Q))\]
of complexes of $R$-modules. In particular, we have isomorphisms:
\begin{align*}
  H_*(C\otimes_{\CO}\CO/\varpi^N) &= H_*(X_{\Delta}(Q),\CO/\varpi^N) \\
H_*(C \otimes_{R} R/(\a+\varpi^N))& =
  H_*(X_0(Q),\CO/\varpi^N)
\end{align*}
for all integers $N$.
\end{lemma}

\begin{proof}
By~\cite[\S II.5 Lemma 1]{Mumford}, there exists a perfect complex of
$R$-modules $C$ together with a quasi-isomorphism $C
\to C(X_{\Delta}(Q))$ of complexes of $R$-modules. (Mumford only guarantees that the final term of the complex is flat, but
since $R$ is local, this final term is also free.) Since $C$ and
$C(X_{\Delta}(Q))$ are bounded complexes of flat $R$-modules, we have
\[ H_*(C\otimes_{R} A) \liso H_*(C(X_{\Delta}(Q))\otimes_{R}A) \]
for every $R$-algebra $A$ by~\cite[\S II.5 Lemma 2]{Mumford}. Taking $A=R/\varpi^N$ gives the first
isomorphism. For the second isomorphism, the fact that
$X_{\Delta}(Q)\to X_0(Q)$ is a covering map with group $\Delta$
implies that
\[ C(X_{\Delta}(Q))\otimes_R R/\a \iso C(X_{0}(Q)). \]
Thus, taking $A=R/(\a+\varpi^N)$ gives the second isomorphism. 
\end{proof}

Let $\gamma \in G(\A^{\infty,p})$ with associated Hecke operator
$T_{\gamma}  : H_*(X_{\Delta}(Q),\CO/\varpi^N) \to
H_*(X_{\Delta}(Q),\CO/\varpi^N)$ for $N\leq \infty$ (where we define $\CO/\varpi^{\infty}:=\CO$).

\begin{lemma}
  \label{lem:hecke-op-complex}
Let $C$ be as in Lemma~\ref{lem:perfect-complex}. Then the action of $T_{\gamma}$ on homology may be
lifted to a map
$$T_{\gamma}: \Cc \rightarrow \Cc$$
of complexes of $R$-modules.
\end{lemma}

\begin{proof}
  Let $A=\CO/\varpi^N$ for $N\leq \infty$ and let $K\subset G(\A^\infty_F)$ be the compact
  open subgroup with $X_K = X_{\Delta}(Q)$ (this was called
  $K_{\Delta}(Q)$ above). Let $K' =
  \gamma K\gamma^{-1}\cap K$ and $K'' = K \cap \gamma^{-1} K
  \gamma$. Note that right multiplication by $\gamma$ gives an
  isomorphism: $\gamma : X_{K'} \to X_{K''}$.
The operator $T_{\gamma}$ is equal, up to an invertible scalar which
we may ignore, to
  the composition:
\[ H_i(X_K, A)\stackrel{\Tr}{\to} H_i(X_{K'},A) \stackrel{\gamma_*}{\ra}
H_ii(X_{K''},A)\to H_i(X_K,A),\]
where the first map is the transfer (or corestriction) map. 
Thus $T_{\gamma}$ is induced by the corresponding composition of morphisms of complexes
\[ C(X_K) \stackrel{\Tr}{\to} C(X_{K''}) \stackrel{\gamma_*}{\to} C(X_{K'}) \to C(X_K), \]
(after tensoring over $\CO$ with $A$). Denote this composition $\wt{T}_{\gamma}$. Restricting to
$C$ (by means of the quasi-isomorphism $C \to C(X_K)$ of
Lemma~\ref{lem:perfect-complex}), we obtain $\wt{T}_{\gamma}: C\to
C(X_K)$ which also gives rise to $T_\gamma$ on homology. We thus have a
diagram
\[ \begin{CD}
  C @>\wt{T}_{\gamma}>> C(X_K) \\
@. @AAA \\
@. C
\end{CD} \]
of complexes of $R$-modules with the vertical morphism being a
quasi-isomorphism. Since $C$ is perfect, the morphism
$\wt{T}_{\gamma}$ can be lifted to a morphism $T_{\gamma} : C\to C$
making the diagram commute. (See
\cite[\href{http://stacks.math.columbia.edu/tag/08FQ}{Tag
  08FQ}]{stacks-project} for example.)
\end{proof}

Let $\T$ denote a Hecke algebra generated over $\CO$ by a collection of
operators $T_\gamma$. Then, for any $T\in \T$, we we can express $T$
as a polynomial in the operators $T_{\gamma}$ and thus lift the action
of $T$ on homology to an endomorphism $T:C \to C$. 

\begin{lemma}  For $T\in \T$, let
  $\displaystyle{\Cc_{T}:=\lim_{\rightarrow} T^n \Cc}$. Then $\Cc_{T}$
  is a perfect complex of $R$-modules whose
homology is 
$$\displaystyle{\lim_{\rightarrow} T^n H_*(\Cc)}.$$
\end{lemma}

\begin{proof} Since $R$ is complete, the functor $M \rightarrow
  \displaystyle{\lim_{\rightarrow} T^n} M$ on finitely generated
  $R$-modules is exact and 
and $\displaystyle{\lim_{\rightarrow} T^n} M$ is in fact a direct summand of $M$ (the other factor being the submodule of $M$ on which $T$ is topologically nilpotent).
A direct summand of a projective module is projective, and the
equality of homology follows from the exactness of the functor. 
 \end{proof}

We now assume that $\Delta$ is of $p$-power order. Thus
$R=\CO[\Delta]$ is local and we let $\m_R$ be the maximal ideal of $R$.

\begin{lemma}[Nakayama's Lemma for perfect complexes]
\label{lem:nak-complex}
Let $T\in \T$ be as above. Then
there exists a perfect complex of $R$-modules $\Dc$ which is
quasi-isomorphic to $C_T$ and such that
$$\dim D_n/\m_R = \dim H_n(\Cc_{T} \otimes R/\m_R)=\dim
\displaystyle{\lim_{\rightarrow}} \ T^m H_n(X_{0}(Q),k)$$
for all $n$. Moreover,
the length of $\Dc$ is at most $l_0$, where $l_0$ is the range of cohomology groups such that 
$\displaystyle{\lim_{\rightarrow}  \ T^n H^*(\Cc)}$ is non-zero.
\end{lemma}

\begin{proof}
By Lemma~1 of Mumford (\cite{Mumford}, Ch.II.5) again, one may find a
perfect complex $\Kc$
quasi-isomorphic to $\Cc_{T}$ and such that $K$ is
bounded of length $l_0$. Assume that the
differential $d$ on $D$ is non-zero modulo $\m_R$ from
degree $n+1$ to $n$. Then by Nakayama's Lemma, there exists a direct
sum decomposition of perfect complexes of $R$-modules
$$\Kc \simeq \Kcp \oplus J, $$
 where $J_i$ is zero for $i\neq n+1,n$ and $d:J_{n+1}\to J_n$ is an
isomorphism of free rank 1 $R$-modules. Thus, $\Kcp$ is also a perfect
complex of $R$-modules which is quasi-isomorphic to $\Kc$. Replacing $\Kc$ by $\Kcp$ and using
induction, we eventually arrive at a complex $\Dc$ so that $d$ is zero modulo $\m_R$, from which the equality of dimensions
follows by Nakayama's Lemma.
\end{proof}

In practice, the Hecke algebra $\T$ will be of the form $\Tan[U_x:x\in
Q]$ where $\Tan$ is the subalgebra generated by good Hecke operators
away from $Q$ and $p$. We say that two maximal ideals of $\T$ give rise to the
same Galois representation if they contract to the same ideal of
$\Tan$. In practice, we will be interested in localizing the homology
groups $H_*(X_{\Delta}(Q),\CO/\varpi^N)$ at a particular maximal ideal
$\m$ of $\T$. The residue field of $\m$ will be equal to $k$.
In order to apply the above lemmas, we take the
Hecke operator $T$ to be
$$\prod_{x\in Q} P_{x} \circ
\prod_{i\in \Omega} P_i,$$ where $\Omega$, $P_x$ and
$P_i$ are chosen as follows: for each of the finitely many maximal ideals $\n$ of $\T$ which occurs in
$H_*(X_{\Delta}(Q),k)$,  choose an $\CO$-algebra
homomorphism $\phi_{\n} : \T \to \overline{k}$ with kernel $\n$.  
We let $\Omega$ index the
collection of such maximal ideals $\n$ of $\T$ which give rise to a Galois representation
distinct from $\m$. This is equivalent to $\phi_{\n}$ and $\phi_{\m}$
differing on $\Tan$.  Thus, for $i\in \Omega$ corresponding to a
maximal ideal $\n$, there
exists a good Hecke operator $T_i$ such
that $\phi_{\n}(T_i)\neq \phi_{\m}(T_i)$. Let $F_i(T)$ denote 
the minimal polynomial over~$E$ of the Teichmuller lift of~$\phi_{\n}(T_i)$ to~$\overline{E}$.
By Hensel's Lemma, the element~$\phi_{\m}(T_i) \in k$ is not a root modulo~$\varpi$ of~$F_i(T)$.
Hence~$P_i = F_i(T_i)$ is an element
of $\n$ but not of $\m$.
For the maximal ideals $\n$ with the same
Galois representation as $\m$, for all $x|Q$ by construction there
will be a projector $P_x$ which commutes with the action of the
diamond operators and cuts out the localization at $\m$.  (For
example, if $G = \GL(2)$, then $P_x$ can be taken to be $\lim (U_x -
\beta_x)^{n!}$ for $x|Q$, where $U_x - \alpha_x \in \m$ and
${\alpha_x,\beta_x}$ are arbitrary lifts of the (distinct) eigenvalues of
$\rbar(\Frob_x)$ to~$\OL$). In particular, we have
$$\lim_{\rightarrow} T^n H_*(\Gamma_{\Delta}(Q_N),\OL/\varpi^N)
= H_*(\Gamma_{\Delta}(Q_N),\OL/\varpi^N)_{\m}.$$
  Thus, letting $D$ be as in Lemma~\ref{lem:nak-complex} for this
  choice of $T$, we have that $D$ is a perfect complex of $R$-modules
  such that:
  \begin{itemize}
  \item $D_n \neq 0$ if and only if $H_n(X_0(Q),k)_{\m}\neq \{ 0\}$,
  \item $H_n(D\otimes_{\CO} \CO/\varpi^N) \cong
    H_n(X_{\Delta}(Q),\CO/\varpi^N)_{\m}$, for all $n, N$, and
  \item $H_n(D\otimes_R R/(\a+\varpi^N))\cong
    H_n(X_0(Q),\CO/\varpi^N)_{\m}$ for all $n, N$.
  \end{itemize}

\subsection{The Coherent Case} \label{section:nakajima} 
We now explain how to prove the existence of appropriate complexes in
the setting of coherent cohomology. The setting will be as
follows.
We
will have an \'{e}tale map $Y=X_{\Delta}(Q) \to X = X_0(Q)$ with
Galois group $\Delta$, a finite abelian $p$-group. The spaces $X$ and
$Y$ will be proper and smooth over $\Spec (\CO)$; they will arise as 
integral models of the Shimura varieties associated to some reductive
group $G$ over a number field $F$ and some compact open subgroups $K_{\Delta}(Q)\subset K_0(Q)
\subset G(\A^{\infty}_F)$. In the case that these Shimura varieties
are not compact, $X$ and $Y$ will be arithmetic toroidal
compactifications, as constructed in \cite{lan-thesis}.
We will be given an automorphic
vector bundle $\EE$ on $Y$ (in the case of a toroidal
compactification, this will either be a canonical or subcanonical
extension) which pulls back to a bundle also denoted
by $\EE$ on $X$. We will be interested in producing a perfect
complex of $R/\varpi^n$-modules computing
\[ H_i(Y,\EE\otimes \CO/\varpi^n)_{\m} \]
where $\m$ is a maximal ideal of the Hecke algebra generated by `good'
Hecke operators at the unramified primes together with certain 
operators at the primes in the set of auxiliary Taylor--Wiles primes
$Q$; here the homology group is defined as
\[ H_i(Y,\EE\otimes \CO/\varpi^n)
:= H^i(Y,\EE^*\otimes_{\CO_Y} \omega_Y
\otimes_{\CO} \CO/\varpi^n)^{\vee}\]
where $\omega_Y$ is the determinant of
$\Omega^1_{Y/\CO}$.
The reader may wonder why we introduce here the non-standard concept of ``coherent homology.''
The reason is a mixture of both the practical and the psychological. In both 
the {\bf Betti\rm} and {\bf Coherent\rm} case, the modules which patch are obtained by taking 
the Pontryagin duals of the non-zero cohomology
group in \emph{lowest} degree with coefficients in~$E/\OL$. In Betti cohomology, the groups~$H^{q_0}(X,\E/\OL)^{\vee}_{\m}$
may be identified with the non-zero homology group of highest degree with coefficients in~$\OL$, namely~$H_{q_0 + l_0}(X,\OL)_{\m}$.
This identification is essentially a consequence of Poincar\'{e} duality (the manifolds~$X$ have boundary, but the ideals~$\m$ are chosen
specifically so that the cohomology of the boundary becomes trivial after localization at~$\m$). In the coherent case, our use of the
terminology ``homology'' is thus to preserve the analogy, but also as a convenient shorthand for the necessary operation of taking
coefficients of our sheaves in~$\E/\OL$ and then taking Pontryagin duals. One could also make these analogies more precise by
comparing Poincar\'{e} duality to Serre and Verdier duality.

We begin with some commutative algebra.

\begin{lemma}  
\label{lemma:free}
Let $P$ be an $\OL$-module  such that $P$ is $\varpi$-torsion free. Then $P/\varpi^n$ is free over $\OL/\varpi^n$ for each $n$.
\end{lemma}

\begin{proof}
If $n = 1$, then $P/\varpi$ is a module over a field $k = \OL/\varpi$, and hence admits a basis 
$\{\overline{x}_{\alpha}\}$. Let $\{x_{\alpha}\}$ denote any lift of this generating set to $P$.
Let $Q \subset P$ denote the $\OL$-submodule generated by the $x_{\alpha}$.
We claim that $Q$ surjects onto $P/\varpi^n$ for all $n$, and moreover that
$P/\varpi^n$ is free on the images of the generators $x_{\alpha}$ of $Q$.
 We prove this by induction. It is true for $n = 1$ by construction.
Suppose that $Q \rightarrow P/\varpi^n$ is surjective. Let $x \in P$, and consider the image
of $x$ in $P/\varpi^{n+1}$. After subtracting a suitable element of $Q$, we may assume that the image
of $x$ in $P/\varpi^n$ is trivial. Hence  we may write $x = \varpi^n y$ for some $y \in P$. The image of $y$ in
$P/\varpi$ can be written as the image of an element of $Q$, and so $y = z + \varpi w$ for $z \in Q$ and $w \in P$.
It follows that $x = \varpi^n z \mod \varpi^{n+1}$, and thus the image of $x$
 in $P/\varpi^{n+1}$ is  contained in the image of $Q$. It follows that $Q \rightarrow P/\varpi^{n+1}$ is surjective.
Let us now show that that $P/\varpi^{n+1}$ is free over $\OL/\varpi^{n+1}$.
Assume otherwise. Then there exists a relation of the form
$$\sum r_{\alpha} x_{\alpha} \equiv 0 \mod \varpi^{n+1} P.$$
Reducing this equation modulo $\varpi$, we deduce by construction that $r_{\alpha}$ is divisible by $\varpi$ for all $\alpha$.
Yet, since $P$ is $\varpi$-torsion free, any equality $\varpi x = \varpi y$ in $P$ implies the equality $x = y$.
Hence if we write $r_{\alpha} = \varpi s_{\alpha}$,  we obtain a relation
$$\sum s_{\alpha} x_{\alpha} \equiv 0 \mod \varpi^{n} P.$$
By induction, we deduce that $s_{\alpha}$ is $0$ in $\OL/\varpi^{n}$, and hence $r_{\alpha}$ is trivial
in $\OL/\varpi^{n+1}$. In particular, $P/\varpi^{n+1}$ is freely generated by the images of the $x_{\alpha}$, completing
the induction.
\end{proof}

(As pointed out by the referee, Lemma~\ref{lemma:free} is also an immediate consequence of the fact that~$P/\varpi^m$ is flat over~$\OL/\varpi^m$
and so automatically free because~$\OL/\varpi^m$ is an Artinian local ring.)

In the following lemma, $\Delta$ may be any finite abelian group.

\begin{lemma} \label{lemma:cross} Let 
  $R = \OL[\Delta]$.  If $M$ is an $R$-module which is  free  over $\OL/\varpi^n$ and
$M/\varpi$ is free over $R/\varpi$, then $M$ is free over $R/\varpi^n$.
\end{lemma}

\begin{proof} Let $\{\overline{y}_{\alpha}\}$ be an $R/\varpi = k[\Delta]$ basis for $M/\varpi$. Since $R$
is a free $\OL$-module, we may choose a finite basis
 $\{z_i\}$ for $R$ over $\OL$. Note that
 $\{\overline{z}_i\}$ is a basis for $k[\Delta]$ over $k$, and $\overline{z}_{i} \cdot \overline{y}_{\alpha}$ is a basis
 for $M/\varpi$ over $k$. Lifting the elements $\overline{y}_{\alpha}$ to elements $y_{\alpha}$ of $M$, we see,
 as in the proof of the Lemma~\ref{lemma:free}, that $z_{i} \cdot y_{\alpha}$ is a free basis for $M$ as an
 $\OL/\varpi^n$ module. We claim that $y_{\alpha}$ is a free basis for $M$ as an $R/\varpi^n$-module.
 Assume otherwise, so that there is a relation
 $$\sum r_{\alpha} y_{\alpha} = 0$$
 with $r_{\alpha} \in R/\varpi^n$. We may uniquely write $r_{\alpha} = \sum s_{\alpha,i} z_i$ with $s_{\alpha,i}$
 in $\OL/\varpi^n$. We then deduce from the freeness of $M$ over $\OL/\varpi^n$ with $z_i y_{\alpha}$ as a basis
 that $s_{\alpha,i} = 0$ for all $\alpha$ and all $i$, and hence $r_{\alpha} = 0$.
  \end{proof}

  We now return to the situation described at the beginning of this
  section: $f: Y \to X$ is an \'{e}tale map of smooth proper
  $\CO$-schemes with Galois group $\Delta$ abelian of $p$-power order.
  Let $\Ff$ be a coherent sheaf of $\OL_X$-modules which is
  $\varpi$-torsion free.  Following Nakajima~\cite{Nakajima}, we take
  an affine covering of $X$ by affine schemes $\{U_{\alpha}\}$ (which
  are necessarily flat over $\Spec(\OL)$).  We thus obtain a \u{C}ech
  complex $D$ of $\CO$-flat $\OL[\Delta]$-modules computing
  $H^i(Y,f^*(\mathscr{F}))$. More precisely, the terms of $D$ are direct sums
  of modules of the form $N \otimes_A B$, where $\Spec(A)\subset X$ is
  an intersection of $U_{\alpha}$'s with preimage $\Spec(B) \subset Y$
  and $N = \Gamma(\Spec (A),\CF)$. 
 Moreover, the complex $D \otimes
  \CO/\varpi^n$ computes $H^i(Y,f^*(\mathscr{F}) \otimes
  \OL/\varpi^n)$ for every $n$.

\begin{lemma} Let $\Spec(B) \rightarrow \Spec(A)$ be a
finite Galois \'{e}tale morphism of flat $\OL$-algebras with Galois
group $\Delta$. Let $R$ denote the local ring $\CO[\Delta]$. Then, for any 
$A$-module $N$ which is flat over $\OL$,
$(N \otimes_A B)/\varpi^n$ is a free $R/\varpi^n$-module for each $n$.
\end{lemma}

\begin{proof} Let $M = N \otimes_A B$. Then
$$M/\varpi = (N \otimes_A B)/\varpi = N/\varpi \otimes_{A/\varpi} B/\varpi,$$
and thus $M/\varpi$ is a projective $R/\varpi = k[\Delta]$-module by  Lemma~1 of~\cite{Nakajima}, and is 
thus free. (A theorem of Kaplansky implies that a projective module $P$ over a local ring $R$
is always free (see~\cite{Kaplansky}).)
Note that $M$ is flat over $\CO$ since $N$ and $A$ are flat over $\CO$
and $B$ is flat over $A$. It follows from Lemma~\ref{lemma:free} that $M/\varpi^n$ is free over $\OL/\varpi^n$.
By Lemma~\ref{lemma:cross}, we deduce that $M/\varpi^n$ is free over $R/\varpi^n$.
\end{proof}

It follows that $D\otimes \CO/\varpi^n$ is a bounded complex of
projective $R/\varpi^n$-modules computing the cohomology groups
$H^i(Y,f^*(\mathscr{F})/\varpi^n)$. We apply this as follows. Let 
$\EE$ denote the automorphic vector bundle introduced above and take
\[ \mathscr{F}=\EE^* \otimes_{\CO_X} \omega_X .\]
Applying Lemma~1 of \cite{Mumford}, Ch.II.5 again, we
may replace $D\otimes \CO/ \varpi^n$ by a quasi-isomorphic complex of
$R/\varpi^n$-modules $C_n^{\vee}$ which is perfect. Then dualizing this latter
complex, we obtain a (chain) complex $C_n$ whose homology groups are 
\[ H_i(Y,\EE\otimes \CO/\varpi^n).\]
It remains to define an action of the Hecke algebra and cut out the localization at $\m$.

The Hecke algebra will be generated by operators $T_{\gamma}$ where
$\gamma \in G(\A^{\infty,p}_F)$. Let $L = K_{\Delta}(Q)\subset
G(\A^{\infty})$ be the compact open subgroup corresponding to
$Y=X_{\Delta}(Q)$. Let $L^\gamma = \gamma L \gamma^{-1}\cap L$. Then
by choosing suitable polyhedral cone decomposition data, there exists
an arithmetic toroidal compactification $Y^\gamma$, proper and smooth
over $\CO$, of the Shimura variety of level $L^{\gamma}$ together with
maps $\pi_1,\pi_2 : Y^\gamma \to Y$ where $\pi_1$ is associated to the
inclusion $L^\gamma \to L$ and $\pi_2$ is associated to right
multiplication by $\gamma$ on complex points. (See \cite[\S
6.4.3]{lan-thesis}.) By \cite[Thm.\ 2.15(4)(c)]{lan-kuga} and the fact
that all automorphic vector bundles are constructed from the Hodge
bundle (see \cite[\S 4.2]{LanSuh}), there is an
isomorphism
\[ \phi : \pi_2^* \EE \liso \pi_1^* \EE \]
of sheaves on $Y^\gamma$. To define the Hecke operator $T_{\gamma}$, we follow
the approach of \cite[p.\ 256]{chai-faltings} which avoids having to
define the trace of $\pi_1$ on cohomology. 

Let $A=\CO/\varpi^n$ and let $\Ff = \EE^* \otimes
\omega_X$ be as above. If $M$ is an $\CO$-module or a sheaf of
$\CO$-modules on some space, we denote by $M_A$ the tensor product
$M\otimes_{\CO}A$. By Verdier duality, the group $H^i(Y,(f^*\Ff)_A)$
is Pontryagin dual to \[ H^{d-i}(Y,\mathcal Hom_{\CO_{Y}}(f^*\Ff,\omega_{Y})_A)=H^{d-i}(Y,(f^*\EE)_A)\] where $d$
is the dimension of $Y$. Thus, to
define an operator $T_\gamma$ on $H^i(Y,(f^*\Ff)_A)$, it
suffices to define a pairing
\[ H^i(Y,(f^*\Ff)_A)\otimes_A H^{d-i}(Y,(f^*\EE)_
A)\to A .\]
We define the pairing by sending $x \otimes y $ to 
\[ \Tr( \pi_1^*(x) \cup \phi \pi_2^*(y)) \]
where
\begin{itemize}
\item $\pi_1^*(x) \cup \phi \pi_2^*(y)$ denotes the image of
  the cup product of $\pi_1^*(x)$ and $\phi \pi_2^*(y)$ under the
  natural map
\[ H^d(Y^{\gamma}, (\EE^*\otimes \omega_{Y^\gamma} \otimes \EE)_A) \to H^d(Y^{\gamma}, (\omega_{Y^\gamma})_A),\]
and
\item $\Tr$ denotes the trace isomorphism
\[   H^d(Y^{\gamma}, (\omega_{Y^\gamma})_A) \to A.\]
\end{itemize}
This defines the action of $T_\gamma$ on cohomology. We now want to
lift the action of $T_\gamma$ to an endomorphism the complex $C_n$ introduced above.

Let $Y_A = Y\times_{\CO}A$ and let $\pi : Y_A \to \Spec A$ be the
structural morphism. Then, since $\GG:=(f^*\Ff)_A$ is a $\Delta$-equivariant sheaf
on $Y_A$, we may regard $R\pi_*(\GG)=R\Hom(\CO_{Y_A},\GG)$ as an object of the bounded
derived category of $R=A[\Delta]$-modules $D^{\flatt}(R)$. By Verdier
duality, we have
\[ R\Hom(R\Hom(\CO_{Y_A},\GG),A[0])  = R\Hom(\GG,\omega_{Y_A}[d]) \]
Thus, we have an equality (of $\Hom$'s in the category $D^\flatt(R)$):
\[  \Hom( R\Hom(\CO_{Y_A},\GG),R\Hom(\CO_{Y_A},\GG)) =
 \Hom(R\Hom(\CO_{Y_A},\GG)\stackrel{L}{\otimes} R\Hom(\GG,\omega_{Y_A}[d]),
 A[0]),\]
and we define an element of $\wt{T}_{\gamma}$ of the right hand side by composing
\begin{itemize}
\item the pullback under $\pi_1^*\otimes (\phi\circ \pi_2^*)$,
\[  R\Hom(\CO_{Y_A},\GG)\stackrel{L}{\otimes}
R\Hom(\GG,\omega_{Y_A}[d])\longrightarrow
R\Hom(\CO_{Y^{\gamma}_A},\pi_1^*\GG)\stackrel{L}{\otimes} R\Hom(\pi_1^*\GG,\omega_{Y^{\gamma}_A}[d])
\]
\item the composition morphism
\[ R\Hom(\CO_{Y^{\gamma}_A},\pi_1^*\GG) \stackrel{L}{\otimes} R\Hom(\pi_1^*\GG,\omega_{Y^{\gamma}_A}[d])
\to R\Hom(\CO_{Y^{\gamma}_A},\omega_{Y^{\gamma}_A}[d]), \]
and
\item the duality isomorphism
\[ R\Hom(\CO_{Y^{\gamma}_A},\omega_{Y^{\gamma}_A}[d]) \liso A[0].\]
\end{itemize}
Then $\wt{T}_{\gamma}$ induces the Hecke operator 
\[ T_{\gamma} : H^i(Y_A, \GG) \to H^i(Y_A,\GG) \]
defined above. 
If $C_n$ is the complex introduced above, then it comes
equipped with an isomorphism
\[ C_n^{\vee} \liso R\pi_*(\GG)  = R\Hom(\CO_{Y_A},\GG) \] 
in $D^{\flatt}(R)$ and we have a diagram:
\[ \begin{CD}
  C_n^{\vee} @>>> R\Hom(\CO_{Y_A},\GG) @>\wt{T}_{\gamma}>> R\Hom(\CO_{Y_A},\GG) \\
@. @. @AAA \\
@. @. C_n^{\vee}
\end{CD} \]
Since $C_n^{\vee}$ is perfect, we can apply \cite[\href{http://stacks.math.columbia.edu/tag/08FQ}{Tag
  08FQ}]{stacks-project} once again to lift $\wt{T}_{\gamma}$ to a
morphism of complexes $T_{\gamma} : C_n^{\vee} \to C_n^{\vee}$ that
induces the operator $T_{\gamma}$ on cohomology.

Now that we can lift a given Hecke operator $T$ to the complex
$C_n^{\vee}$ (and hence to its dual $C_n$), we can show, exactly
as in the previous section, that given a maximal ideal $\m$ of the Hecke
algebra, then for a judicious choice of Hecke
operator $T$, the complex 
\[ \varinjlim_m T^m C_n \]
is a perfect complex of $R$-modules with homology equal to 
\[ H_i(Y,\EE_A)_{\m}. \]

\section{Galois deformations}
\label{sec:galois-deformations}

\subsection{Outline of what remains to be done to prove Theorem~\ref{theorem:modularity}}
Given the patching result (Theorem~\ref{prop:patching}) and the existence of complexes
satisfying an appropriate boundedness condition (condition~\eqref{bounded} of \emph{ibid}.), to complete
the argument consists of the following steps. First, consider the \emph{minimal} case where there
are no primes $x \nmid p$ such that $\rho | D_x$ is ramified when $\rbar | D_x$ is unramified. In this case, it suffices to
 construct a sequence $Q_N$ of collections of $q$ primes
$N(x) \equiv 1 \mod p^N$ such that the corresponding Hecke rings $\T_{Q_N}$ are all
quotients of a fixed patched global deformation 
ring $R_{\infty}$ such that $R_{\infty}[1/p]$ is irreducible of the appropriate dimension.
The usual Taylor--Wiles--Kisin method (with some modifications due to Thorne) exactly produces the desired
sets $Q_N$, and the computation of $R_{\infty}$ (which is naturally a power series over $\Rloc$) 
is computed in the usual manner, except now its dimension is $l_0$ less than in the classical case.
If one \emph{assumes} vanishing of cohomology outside
the expected range and also \emph{assumes} that the Hecke action of cohomology in that range comes from Galois
representations with the expected properties, then one may construct a series of complexes
(as in the last section) which all have actions by  $R_{\infty}$, and then using 
Theorem~\ref{prop:patching} the desired conclusions follow as expected.

\medskip

This leaves the case when there exist primes such that~$\rho |I_x$ is unipotent, $N(x) \equiv 1 \mod p$,  and
yet $\rbar|D_x$ is unramified. Here one uses Taylor's trick~\cite{Taylor} to avoid Ihara's lemma.
 The key calculation
in Taylor's paper requires only that one has control over the depth of
the patched module on which 
the ring $R_\infty$ acts, as
well as the structure mod $\varpi$ of various local deformation rings. The required information concerning depth is
exactly what one deduces in the proof of Theorem~\ref{prop:patching}. The only difference in this setting is that
the relevant dimension of $R_{\infty}$ is $l_0$ less than the classical case, whereas the corresponding depth of 
$H^{l_0}(P_{\infty})$ is also exactly $l_0$ less than the classical case --- this means that the argument goes through as expected.
We begin, however, by explaining our  method in the case of one-dimensional representations,
in order to demonstrate the method.

\subsection{Modularity of one-dimensional representations}
\label{section:GL1}

In this section, 
we apply our method to one-dimensional representations,
that is, to the case when $G = \GL(1)/F$ for
an arbitrary number field~$F$. We will need to assume that $F$ does
not contain $\zeta_p$. 
The arguments here are (ultimately) somewhat circular, but they exhibit
all the various aspects of the general method. The invariant value of $\ell_0$ for
a field $F$ of signature $(r_1,r_2)$ will be $r_1 + r_2 - 1$.

\medskip

Up to twist, there is only one residual Galois representation, namely, the trivial
representation
$$\rbar: G_F \rightarrow \F^{\times}_p.$$
Minimal deformations of $\rbar$ consist of
(everywhere) unramified representations.
Note that $\ad(\rbar) = \F_p$ and $\ad(\rbar)(1) = \mu_p$. 
Let us consider the dimensions of the associated Selmer group
$H^1_{L}(F,\F_p)$ and the dual Selmer group $H^1_{L^*}(F,\mu_p)$.
Recall from the Greenberg--Wiles formula that:
$$ \frac{|H^1_{L}(F,\F_p)|}{|H^1_{L^*}(F,\F_p)|} = 
\frac{|H^0(F,\F_p)|}{|H^0(F,\mu_p)|}
\prod_{v} \frac{|L_v|}{|H^0(F_v,\F_p)|}.$$
The possible local  contributions come from $v|p$, $v|\infty$, and the $H^0$ term.
We assume that $\zeta_p \notin F$.
\begin{enumerate}
\item The contribution from
$\displaystyle{\frac{|H^0(F,\F_p)|}{|H^0(F,\mu_p)|}}$
is~$p$, because $\zeta_p \notin F$. Thus the contribution to  $\dim
H^1_{L}(F,\F_p) - \dim H^1_{L^*}(F,\mu_p)$  is $1$.
\item  Let $v|\infty$, so $F_v = \R$ or $F_v = \C$.
The groups $H^0(\R,\F_p)$ and $H^0(\C,\F_p)$ are both one-dimensional.
Hence,  the contribution to $\dim H^1_{L}(F,\F_p) - \dim H^1_{L^*}(F,\mu_p)$ 
at the infinite places is
$-r_1 - r_2$.
\item Let $v|p$. Let $k_v$ be the residue field of $F_v$.   The group $H^0(F_v,\F_p)$ is always $1$-dimensional.
The Selmer condition $L_v \subset H^1(F_v,\F_p)$ is defined
to be the classes that are unramified when restricted to inertia.
 By inflation--restriction, there is a map:
 $$ 0 \rightarrow H^1(\Gal(\overline{k_v}/k_v),\F_p) \rightarrow 
 H^1(F_v,\F_p) \rightarrow H^1(I_v,\F_p).$$
 Since $H^1(\Gal(\overline{k}_v/k_v),\F_p)$ is clearly one-dimensional,
 the contribution of these terms is $1 - 1 = 0$ for each $v|p$.
 \end{enumerate}
 It follows that
 $$\dim H^1_{L}(F,\F_p) - \dim H^1_{L^*}(F,\mu_p) = -(r_1 + r_2 - 1).$$
 We can, in fact, deduce this equality directly by computing both terms via
 class field theory. The first group has dimension
 $\dim \Cl(F)/p$. For the second, recall that for $v|p$ the group $L^*_v$ is
 one-dimensional and is dual to the unramified classes in $H^1(F_v,\F_p)$. This dual
 consists of classes which are finite flat.
 So we are interested in $H^1_{\fppf}(\OL_F,\mu_p)$, which sits in the exact sequence:
 $$0 \rightarrow \OL^{\times}_F/\OL^{\times p}_F \rightarrow
 H^1_{\fppf}(\OL_F,\mu_p) \rightarrow \Cl(F)[p] \rightarrow 0$$
 by Hilbert's theorem~$90$~(Prop.~III 4.9 of~\cite{milne}). Hence the difference in ranks is
 $$\dim \OL^{\times}_F/\OL^{\times p}_F = r_1 + r_2 - 1,$$
 as follows from Dirichlet's unit theorem and the assumption that $\zeta_p \notin F$.
 
 \medskip

Now let us consider  the corresponding symmetric space. The natural space
to consider is
$$Y:=J_F/F^{\times} = F^{\times} \backslash \A^{\times}_F / U K_{\infty}$$
where $U$ is the maximal compact subgroup of the finite adeles, and
$K_{\infty}$ is the connected component of the identity of the maximal compact
subgroup of $F^{\times} \otimes \R$. For a set $Q$ of auxiliary primes,
we would also like to consider the space
$$Y_{Q} = F^{\times} \backslash \A^{\times}_F / U_Q$$
where , for $v \in Q$ with $N(v) \equiv 1 \mod p^n$,
one replaces $\OL^{\times}_v$ by $\OL^{\times p^n}_v$, the unique subgroup of
index~$p^n$. In
the corresponding notation for~$\GL(n)$, we have $Y = Y_0(Q)$ and
$Y_Q = Y_1(Q)$.
It is slightly more aesthetically pleasing  to work with the compact part of
this space:
$$X_{Q} :=  F^{\times} \backslash \A^{\times}_F / U_Q A^0_{\infty},$$
where $A^0_{\infty}$ is the identity component of the $\R$-points of the maximal $\Q$-split
torus in the centre. Note that $Y_Q$ is an $\R$-bundle over $X_{Q}$, and so, 
from the cohomological viewpoint,
the extra factor of $\R$ does not change any  of the cohomology groups. 
What, geometrically, is $X_{Q}$? The component group is
the maximal quotient of the ray class group of conductor $Q$ and exponent~$p^n$.
The fibres are coming from the infinite primes. The group $K_{\infty}$
is $r_2$ copies of $S^1$ coming from the complex places.
The fibres of $Y_{Q}$ are then exactly the connected components of the cokernel of the map
$$\OL^{\times}_F \rightarrow \R^{\times r_1} \times \C^{\times r_2}/K_{\infty},$$
which is  $(S^1)^{r_1 + r_2-1} \times \R$. Passing to~$X_{Q}$ excises the~factor of~$\R$.
Hence the components of $X_{Q}$ consist simply of a product of circles.
Let us examine the cohomology of~$X_{Q}$. In  degree zero, the cohomology is
$$ \Z_p[\RCl(Q)/p^n],$$
by class field theory.  Let $\pi_v \in \OL_v$ be a uniformizer, and suppose that $(v,Q) =1$.
Then the Hecke operator $T_{\pi}$ acts on the component groups via 
the image of $[\pi_v] \in \RCl(Q)$. 
For $v | Q$, we also have diamond operators for $\alpha \in \OL^{\times}_v$.
There is a corresponding Galois representation
$$\rho_Q: G_F \rightarrow \T^{\times}_Q,$$
which is exactly the Galois representation coming from class field theory; the diamond operators
correspond, via the local Artin map, to the representation of the inertia subgroups at $v|Q$.
If we localize at the maximal ideal~$\m$ of $\T_Q$ corresponding to $\rhobar$, we obtain a
deformation of $\rhobar$ which is ramified only at~$Q$. On the other hand, the action
of~$\T_Q$ on the higher cohomology groups simply propagates from $H^0$ using the K\"{u}nneth formula,
and so one obtains the same action on all cohomology groups. If $R_{Q}$ denotes the
deformation ring of $\rhobar$ unramified outside~$Q$, we obtain a surjection
$$R_Q \rightarrow \T_{Q,\m},$$
moreover, both rings are naturally modules over the ring of diamond operators
$\Z_p[\Delta_{Q}] = \Z_p[U/U_Q]$, acting on the left via Yoneda's lemma and local class field theory,
and on the right via Hecke operators; and this action is the same. 

\medskip

In this setting, a Taylor--Wiles prime~$v$ is a prime $N(v) \equiv 1 \mod p^n$ which
satisfies a Galois condition and an automorphic condition.  The automorphic condition
is that there is no extra cohomology when passing from $X$ to $X_0(Q)$. Since $X = X_0(Q)$,
this is tautologically true. The Galois condition is that we have to be able
 to choose $|Q| = \dim H^1_{L*}(F,\mu_p) = \dim H^1_{L}(F,\F_p) + \ell_0$ primes which
exactly annihilate the dual Selmer group, which is  $H^1_{L^*}(F,\mu_p) = H^1_{\fppf}(F,\mu_p)$.
This group sits inside $H^1(F,\mu_p) = F^{\times}/F^{\times p}$, by Hilbert's Theorem~$90$ in
the classical version. By Kummer theory, the corresponding elements
give rise to extensions
$F(\alpha^{1/p},\mu_p)$. What does it mean to annihilate this class by allowing
ramification at a
prime $N(v) \equiv 1 \mod p^n$? Allowing ramification at~$v$ in the Selmer group corresponds to
assuming that the classes in the dual Selmer group split completely at~$v$. 
That is, we want to choose primes~$v$ so that the class is non-trivial under the map
$$H^1(F,\mu_p) \rightarrow H^1(F_v,\mu_p) = H^1(F_v,\F_p).$$
Note that $\mu_p = \F_p$ as a $\Gal(\overline{F}_v/F_v)$-module if $N(v) \equiv 1 \mod p$. This amounts to asking that the element
$\Frob_v$ in $\Gal(F(\alpha^{1/p},\zeta_p)/F)$ is non-trivial, and that $N(v) \equiv 1 \mod p^n$.
By the Chebotarev density theorem,
this is possible unless $F(\alpha^{1/p}) \subset F(\zeta_{p^n})$. 
Note that $\alpha \in F$. 
 This can happen if $\alpha = \zeta_p$. So we have
to assume that $\zeta_p \notin F$, although we have already made this assumption previously.
More generally, if $\alpha^{1/p} \in F(\zeta_{p^n})$, then, since $F(\zeta_{p^n})$ is 
Galois an abelian over~$F$, it must be the case that $F(\alpha^{1/p})$ is also
Galois an abelian over~$F$. This implies that $F(\zeta_p) \subset F(\alpha^{1/p})$, and
a consideration of degrees implies that $\zeta_p \in F$. Hence, if $\zeta_p \notin F$,
we may choose suitable primes to annihilate the dual Selmer group.

\medskip

The invariant $\ell_0 = r_1 + r_2 - 1$ exactly matches the dimensions of cohomology
which are supported on~$\m$ and the number of elements in the dual Selmer group which
have to be annihilated (the dimension of $X_{Q}$ is equal to~$\ell_0$, so the cohomology
vanishing results in degrees $> \ell_0$ are automatic). Our patching result then allows us to deduce that there is an isomorphism
$$\Ru \simeq \Z_p[\Cl(\OL_F) \otimes \Z_p],$$
and hence an  isomorphism
between the Galois group of the maximal unramified abelian~$p$-power
extension of $F$ and the class group of~$F$.

\begin{remark} \emph{The circularity of this argument comes from the
application of Greenberg--Wiles, which requires the full use of class field theory.
Even though we only apply this theorem in the seemingly innocuous case of
$M = \F_p$, in fact the general proof of the Euler characteristic formula reduces
exactly to this case by inflation. 
}
\end{remark}

\subsection{Higher dimensional Galois representations}

In this section, we apply our methods to Galois representations of
regular weight over number fields. When the relevant local deformation rings
are all smooth, the argument is
similar to the corresponding result for imaginary quadratic fields
 in~\S~\ref{sec:imag-quadr-fields} that corresponds to the case  $n = 2$ and $l_0 = 1$. However, in order to
 prove non-minimal modularity theorems, it is necessary to consider non-smooth rings, following Kisin.
 The main reference for many of the local computations below is the paper~\cite{CHT}.

 \subsection{The invariant \texorpdfstring{$l_0$}{l0}}
\label{sec:l0}
 Let $F$ be a number field of signature $(r_1,r_2)$. 
 The invariants $l_0$  and $q_0$ may be defined explicitly by the following formula, where
``rank'' denotes rank over $\R$.
$$\begin{aligned}
l_0 := & \  r_1 \left(\rank(\SL_n(\R)) - \rank(\SO_{n}(\R)) \right)
 + r_2 \left(\rank(\SL_n(\C)) - \rank(\SU_{n}(\C)) \right) \\
   = &   \ \begin{cases} \displaystyle{r_1 \left(\frac{n-1}{2}\right) + r_2 (n-1)}, & n \ \text{odd}, \\
   \displaystyle{r_1 \left(\frac{n-2}{2}\right) + r_2 (n-1)}, & n \ \text{even}. \end{cases} \end{aligned}$$
    $$\begin{aligned}
2 q_0 + l_0  := & \  r_1 \left(\dim(\SL_n(\R)) - \dim(\SO_{n}(\R)) \right)
 + r_2 \left(\dim(\SL_n(\C)) - \dim(\SU_n(\C)) \right) \\
   = & \   r_1 \left(n^2 - 1 - \frac{n(n-1)}{2}\right) + r_2 \left(2 (n^2 - 1) - (n^2 - 1)\right). \end{aligned} $$
   The invariants $l_0$ and $2q_0 + l_0$ arise as follows: $2 q_0 + l_0$ is the
   (real) dimension  of the locally symmetric space associated to
   $\G:=\Res_{F/\Q}(\PGL(n))$, and $[q_0,\ldots,q_0 + l_0]$ is the range such that 
   cuspidal automorphic $\pi$ for $\G$ which are tempered at $\infty$ contribute to cuspidal
   cohomology (see Theorem~6.7, VII, p.226 of~\cite{BW}). (In particular, $q_0$ is an integer.)
   
   \medskip

 Let $V$ be a representation of $G_F$ of dimension $n$ over a field of 
 characteristic different from $2$, and assume that  the action of $G_{F_v}$ is \emph{odd}
 for each $v| \infty$. Explicitly, this is the trivial condition for complex places, and
 for real places $v|\infty$ says 
 that the action of complex conjugation $c_v \in G_{F_v}$
 satisfies $\Trace(c_v) \in \{-1,0,1\}$. Then, via an elementary calculation,
 one has:
 \begin{eqnarray*} 
 \sum_{v|\infty} \dim H^0(F_v,\Ad^0(V)) = \begin{cases}
 \displaystyle{r_1 \left(\frac{n^2+1}{2} - 1 \right) + r_2 (n^2 - 1),} & n \ \text{odd}, \\
 \displaystyle{r_1 \left(\frac{n^2}{2} - 1 \right) + r_2 (n^2 - 1),} & n \ \text{even}. \end{cases}
  \end{eqnarray*}
Thus, in both cases we see that:
\begin{eqnarray}
 \label{infinity}
 \sum_{v|\infty} \dim H^0(F_v,\Ad^0(V)) = [F:\Q]\frac{n(n-1)}{2} +
l_0 . 
\end{eqnarray}

\subsection{Deformations of Galois Representations}
\label{sec:deform-galo-repr}
Let $p>n$ be a prime that is unramified in $F$ and assume that $\Frac W(k)$
contains the image of every embedding $F \into \overline{\Q}_p$. Fix a continuous absolutely irreducible representation:
$$\rbar: G_{F} \rightarrow \GL_n(k).$$
We assume that:
\begin{itemize}
\item For each $v|p$, $\rbar|_{G_{v}}$ is Fontaine--Laffaille
with weights $[0,1,\ldots,n-1]$ for each $\tau : \CO_F \to k$ factoring
through $\CO_{F_v}$.
\item For each $v\nmid p$, $\rbar|_{G_v}$ has at worst unipotent
  ramification and $\mathbf{N}_{F/\Q}(v)\equiv 1 \mod p$ if $\rbar$ is
  ramified at $v$.
\item The restriction $\rbar|_{G_v}$ is odd for each $v|\infty$.
\end{itemize}
We also fix a continuous character
\[ \xi : G_F \to \CO^\times \]
lifting $\det(\rbar)$ and with 
\begin{itemize}
\item $\xi|_{I_v} = \eps^{n(n-1)/2}$ for all $v|p$, and
\item $\xi|_{I_v} = 1$ for all $v\nmid p$.
\end{itemize}
For example, we can simply take  $\xi =  \eps^{n(n-1)/2}$.

\medskip

Let $S_p$ denote the set of primes of $F$ lying above $p$. Let $R$
denote a finite set of primes of $F$ disjoint from $S_p$ that
contains all primes at which $\rbar$ ramifies and is such that
$\mathbf{N}_{F/\Q}(v)\equiv 1\mod p$ for each $v\in R$. Let  
$Q$
denote a finite set of primes of $F$ disjoint from
$R\cup S_p$. Finally, let $S=S_p\coprod R$ 
and $S_Q =
S\coprod Q$. In what follows, $R$ will consist of primes away from $p$
where we allow ramification and $Q$ will consist of Taylor--Wiles primes.

\subsubsection{Local deformation rings}
\label{sec:local-deform-rings}

For $v|p$, let $R_v$ denote the framed Fontaine--Laffaille $\CO$-deformation ring
with determinant $\xi|_{G_{F_v}}$ and $\tau$-weights equal to
$[0,1,\dots,n-1]$ for each $\tau : \CO_F \into W(k)$. By~\cite{CHT}  Prop.~2.4.3, $R_v$ is formally smooth over
$\OL$ of relative dimension $n^2-1+[F_v:\Q_p]n(n-1)/2$. 

\medskip

For each $v\in R$, choose a tuple $\chi_v =
(\chi_{v,1},\dots,\chi_{v,n})$ of distinct characters \[\chi_{v,i}:I_v
\lra 1+\m_{\CO}\subset \CO^\times\]
such that $\prod_i\chi_{v,i}$ is trivial.
We introduce the following framed deformation rings for each $v\in R$:
\begin{itemize}
\item Let $R_v^1$ denote the universal framed $\CO$-deformation ring of
$\rbar|_{G_v}$ corresponding to lifts of determinant $\xi$ and with
the property that each element $\sigma\in I_v$ has characteristic
polynomial $(X-1)^n$.
\item Let $R_v^{\chi_v}$ denote the universal framed $\CO$-deformation ring of
$\rbar|_{G_v}$ corresponding to lifts of determinant $\xi$ and with
the property that each element $\sigma\in I_v$ has characteristic
polynomial $\prod_i(X-\chi_{v,i}(\sigma))$.
\end{itemize}

\medskip

We let
\begin{eqnarray*}
\Rloc^1 & := \left(\widehat\bigotimes_{v\in S_p} R_v \right)
\widehat\bigotimes \left(\widehat\bigotimes_{v\in R} R_v^1 \right) \\
\Rloc^\chi & := \left(\widehat\bigotimes_{v\in S_p} R_v \right)
\widehat\bigotimes \left(\widehat\bigotimes_{v\in R} R_v^{\chi_v} \right)
\end{eqnarray*}

\begin{lemma}
\label{lem:Rloc-props}
The rings $\Rloc^1$ and $\Rloc^\chi$ have the following properties:
\begin{enumerate}
\item Each of $\Rloc^1$ and $\Rloc^\chi$ is $p$-torsion free and
equidimensional of dimension
\[ 1+|S_p\cup R|(n^2-1) + [F:\Q]\frac{n(n-1)}{2}.\]
\item We have a natural isomorphism:
\[ \Rloc^1/\varpi \liso \Rloc^{\chi}/\varpi.\]
\item The topological space $\Spec \Rloc^{\chi}$ is irreducible.
\item Every irreducible component of $\Spec \Rloc^1/\varpi$ is
  contained in a unique irreducible component of $\Spec \Rloc^1$.
\end{enumerate}
\end{lemma}

\begin{proof}
  This follows from \cite[Lemma 3.3]{blght} using \cite[Prop.\
  3.1]{Taylor} and the properties of the Fontaine--Laffaille rings
  $R_v$ recalled above.
\end{proof}

For each $v\in Q$, we assume that:
\begin{itemize}
\item $\rbar|_{G_v} \cong \sbar_v \oplus \psibar_v$ where $\psibar_v$ is
a generalized eigenspace of Frobenius of dimension 1.
\end{itemize}
Moreover, we let $\DP_v$ denote the deformation problem (in the sense
of \cite[Defn.\ 2.2.2]{CHT}) consisting of lifts $r$ of $\rbar|_{G_v}$
of determinant $\xi$ and of the form $\rho\cong s_v\oplus
  \psi_v$ where $s_v$ (resp.\ $\psi_v$) lifts $\sbar_v$ (resp.\
  $\psibar_v$) and $I_{v}$ acts via (possibly different) scalars on $s_v$ and
  $\psi_v$.
Let 
\[ L_v \subset H^1(G_v,\ad^0(\rbar)) \]
denote the Selmer condition determined by all deformations of
$\rbar|G_{v}$ to $k[\epsilon]/\epsilon^2$ of type $\DP_v$. Then
\[ \dim_k L_v - h^0(G_v,\ad^0(\rbar))=1.\]

\subsubsection{Global deformation rings}
\label{sec:glob-deform-rings}

We now consider the following global deformation data:
\begin{eqnarray*}
  \CS_Q &= (\rbar,\CO,S_Q,\xi,(\DP_v)_{v\in S_p\cup Q},(\DP^1_v)_{v\in R}) \\
  \CS_Q^\chi &= (\rbar,\CO,S_Q,\xi,(\DP_v)_{v\in S_p\cup Q},(\DP^\chi_v)_{v\in R}),
\end{eqnarray*}
where $\DP_v$, $\DP^1_v$ and $\DP^\chi_v$ are the local deformation
problems determined by the rings $R_v$, $R_v^1$ and $R_v^{\chi_v}$ for
$v\in S_p$ or $v\in R$. A deformation of $\rbar$ to an object of
$\CC_{\CO}$ is said to be of type $\CS_Q$ (resp.\ $\CS_Q^\chi$) if:
\begin{enumerate}
\item it is unramified outside $S_Q$;
\item it is of determinant $\xi$;
\item for each $v\in S_p\cup Q$, it restricts to a lifting of type
  $\DP_v$ and for $v\in R$, to a lifting of type $\DP^1_v$ (resp.\ $\DP^\chi_v$).
\end{enumerate}
If $Q=\emptyset$, we will denote $\CS_Q$ and $\CS_Q^\chi$ by $\CS$ and
$\CS^{\chi}$. The functor from $\CC_{\CO}$ to Sets sending $R$ to the
set of deformations of type $\CS_Q$ (resp.\ $\CS_Q^\chi$) is
represented by an object $R_{\CS_Q}$ (resp.\ $R_{\CS_Q^\chi}$).

We will also need to introduce framing. To this end, let 
\[ T = S_p \cup R.\] Let $R^{\square_T}_{\CS_Q}$ (resp.\
$R^{\square_T}_{\CS_Q^\chi}$) denote the object of
$\CC_\CO$ representing the functor sending $R$ in $\CC_\CO$ to the set
of deformations of $\rbar$ of type $\CS_Q$ (resp.\ $\CS_Q^\chi$) framed at each $v\in
T$. (We refer to Definitions 2.2.1 and 2.2.7 of \cite{CHT} for the
notion of a framed deformation of a given type, replacing the group
$\mathcal{G}_n$ of \emph{op.\ cit.}\ with $\GL_n$ where appropriate.)

We have natural maps
\begin{eqnarray*}
  R_{\CS_Q} &\lra R^{\square_T}_{\CS_Q} \\
  \Rloc^1 &\lra R^{\square_T}_{\CS_Q}
\end{eqnarray*}
coming from the obvious forgetful maps on deformation
functors. Similar maps exist for the `$\chi$-versions' of these
rings. The following lemma is immediate:

\begin{lemma}
  The map 
\[ R_{\CS_Q} \lra R^{\square_T}_{\CS_Q} \]
is formally smooth of relative dimension $n^2|T| -1$. The same
statement holds for the corresponding rings of type $\CS_Q^\chi$.
\end{lemma}

\medskip

We now consider the map   $\Rloc^1 \ra R^{\square_T}_{\CS_Q}$. For
this, we will need to consider the following Selmer groups:
{\small
\begin{eqnarray*} H^1_{\CL(Q),T}(G_{F},\ad^0\rbar) &:= &  \ker \left(H^1(G_{F,S_Q},\ad^0\rbar) \to \bigoplus_{x\in
  T}H^1(G_{x},\ad^0\rbar)\bigoplus \bigoplus_{x\in
  Q} 
   H^1(G_{x},\ad^0\rbar)/L_x \right)\\
  H^1_{\CL(Q)^\perp,T}(G_{F},\ad^0\rbar(1)) &:=& \ker \left(H^1(G_{F,S_Q},\ad^0\rbar(1)) \to  \bigoplus_{x\in
  Q} 
  H^1(G_{x},\ad^0\rbar(1))/L_x^\perp\right).\end{eqnarray*}
  }

\begin{prop}
\label{prop:number-generators}
  \begin{enumerate}
  \item The ring $R^{\square_T}_{\CS_Q}$ (resp.\ $R^{\square_T}_{\CS_Q^\chi}$) is a quotient of a power series
ring over $\Rloc^1$ (resp.\ $\Rloc^\chi$)
in 
\[ h^1_{\CL,T}(G_{F},\ad^0\rbar) + \sum_{v\in T}h^0(G_v,\ad\rbar) - h^0(G_F,\ad\rbar) \ \text{variables.} \]
\item We have
  \begin{eqnarray*} h^1_{\CL,T}(G_{F},\ad^0\rbar)  &=&
    h^1_{\CL^{\perp},T}(G_{F},\ad^0\rbar(1))+h^0(G_F,\ad^0\rbar)-h^0(G_F,\ad^0\rbar(1))
    \\
    & &+ \sum_{v\in 
    Q} (\dim_k L_v - h^0(G_{v},\ad^0\rbar)) - \sum_{v\in
      T\cup\{v|\infty\}}h^0(G_v,\ad^0\rbar).
    \end{eqnarray*}  
  \end{enumerate}
\end{prop}

\begin{proof}
  The first part follows from the argument of \cite[Lemma
  3.2.2]{kisin-moduli} while the second follows from Poitou-Tate
  duality and the global Euler characteristic formula (c.f.\ the proof
  of \cite[Prop.\ 3.2.5]{kisin-moduli}).
\end{proof}

\subsection{The numerical coincidence}

By choosing a set of Taylor--Wiles primes $Q$ to kill the dual Selmer group, one
deduces the following.

\begin{prop}
\label{prop:tw-primes}
  Assume that $\rbar(G_{F(\zeta_p)})$ is big and let  $q \geq h^1_{\CL^{\perp},T}(G_{F},\ad^0\rbar(1))$ be an
integer. Then for any $N\geq 1$, we can find a tuple
$(Q,(\psibar_v)_{v\in Q})$ where 
\begin{enumerate}
\item $Q$ is a finite set of primes of $F$ disjoint from $S$ with $|Q|=q$.
\item For each $v\in Q$, we have
$\rbar|_{G_{v}} \cong \sbar_v \oplus \psibar_v$ where $\psibar_v$ is
a generalized eigenspace of Frobenius of dimension 1.
\item For each $v\in Q$, we have $\mathbf{N}_{F/\Q}(v)\equiv 1 \mod p^N$.
\item\label{generators} The ring $R^{\square_T}_{\CS_Q}$ (resp.\ $R^{\square_T}_{\CS_Q^\chi}$) is a quotient of a
  power series ring over $\Rloc^1$ (resp.\ $\Rloc^\chi$) in
\[ q + |T| -1 - [F:\Q]\frac{n(n-1)}{2} - l_0. \]
variables. 
\end{enumerate}
\end{prop}

\begin{proof}
Suppose given a tuple $(Q,(\psibar_v)_{v\in Q})$ satisfying the first
three properties. 
Let $e_v\in \ad\rbar$ denote the $G_v$-equivariant projection
onto $\psibar_v$. Then, as in \cite[Prop.\ 2.5.9]{CHT} (although we
work here with a slightly different deformation problem at each $v\in Q$) we have
\[ 0 \lra H^1_{\CL(Q)^{\perp},T}(G_F,\ad^0\rbar(1))\lra
H^1_{\CL^{\perp},T}(G_F,\ad^0\rbar(1)) \lra \oplus_{v\in Q}k \]
where the last map is given by
$[\phi]\mapsto(\Tr(e_v\phi(\Frob_v)))_v$.
The argument of \cite[Prop.\ 2.5.9]{CHT} can then be applied to deduce
that one
may choose a tuple $(Q,(\psibar_x)_{x\in Q})$ satisfying the first
three required properties and such that
\[ H^1_{\CL(Q)^{\perp},T}(G_F,\ad^0\rbar(1)) = \{ 0\}.\]
The last property then follows from Prop.\ \ref{prop:number-generators},
equation \eqref{infinity} and the fact that
$$\dim_k L_v - h^0(G_v,\ad^0\rbar) =  1 \ \text{\ if
    $v\in Q$}.$$ 

\end{proof}

\section{Homology of Arithmetic Quotients}
\label{sec:hom-arithm-quot}

 Let $\A$ denote the adeles of $\Q$, and
$\A^\infty$ the finite  adeles. Similarly, let $\A_F$ and
$\A_F^\infty$ denote the adeles and finite adeles of $F$. Let $\G = \mathrm{Res}_{F/\Q} \PGL(n)$, and write $G_{\infty} = 
\G(\R) = \PGL_n(\R)^{r_1} \times \PGL_n(\C)^{r_2}$.
Let $K_{\infty}$ denote a maximal compact of $G_{\infty}$ with
connected component $K^{0}_{\infty}$.
For any 
 compact open subgroup $K$ of
$\G(\A^\infty)$, we may define an arithmetic orbifold $Y(K)$ as follows:
$$Y(K):= \G(\Q) \backslash \G(\A)/K^0_{\infty} K.$$
It has dimension $2 q_0 + l_0$ in the notation above.
We will specifically be interested in the following $K$.  Let
$S=S_p\cup R$ and $S_Q=S\cup Q$ be as in Section \ref{sec:galois-deformations}.
We follow the convention that the cohomology of $Y(K)$ is the orbifold cohomology
in the sense of Remark~\ref{remark:orbifold}.

\subsubsection{Arithmetic Quotients} \label{section:ar1-imaginary}
Let $K_{Q}=\prod_{v}K_{Q,v}$ and
$L_{Q}=\prod_vL_{Q,v}$ denote the open compact
subgroups of $\G(\A)$ such that:
\begin{enumerate}
\item  If $v \in Q$,  $K_{Q,v}$ is the image in $\PGL(\OL_v)$ of $\left\{ g \in \GL_n(\OL_{v}) \ | \
g \ \text{stabilizes $\ell$}   \mod \pi_{v} \right\}$ where $\ell$ is
a fixed
line in $k_v^n$.
\item  If $v \in Q$,  $L_{Q,v} \subset K_{Q,v}$ is the normal subgroup with quotient group $k^{\times}_v$.
Explicitly,
$$K_{Q,v} = \left( \begin{matrix} 1 & * \\ 0 & \GL_{N-1}(\OL_v) \end{matrix} \right) \mod \pi_v,$$
$$L_{Q,v} = \left( \begin{matrix} 1 & * \\ 0 & \SL_{N-1}(\OL_v) \end{matrix} \right) \mod \pi_v.$$
\item If $v\not \in S_Q$ 
$K_{Q,v} = L_{Q,v} = \PGL_n(\OL_v)$.
\item  If $v \in R$,  $K_{Q,v} = L_{Q,v}=  \text{the Iwahori $\Iw(v)$ subgroup of
    $\PGL_n(\OL_v)$}$ associated to the upper triangular unipotent subgroup.
\end{enumerate}

When $Q=\emptyset$, we let $Y=Y(K_\emptyset)$. Otherwise, we define
the arithmetic quotients $Y_0(Q)$ and $Y_1(Q)$ to be $Y(K_{Q})$ and
$Y(L_{Q})$ respectively. They are the analogues of the modular curves
corresponding to the congruence subgroups consisting of $\Gamma_0(Q)$
and $\Gamma_1(Q)$. (Rather, they are the appropriate analogues in the $\PGL$-context.)

For each $v\in R$, let $\Iw_1(v)\subset \Iw(v)$ denote the pro-$v$
Iwahori. We fix a character
\[ \psi_v=\psi_{v,1}\times\dots\times\psi_{v,n} : \Iw(v)/\Iw_1(v)\cong(k_v^\times)^n/(k_v^{\times}) \lra 1+\m_{\CO}\subset \CO^\times. \]
The collection of characters $\psi=(\psi_v)_{v\in R}$ allows us to
define a local system of free rank 1 $\CO$-modules $\CO(\psi)$ on
$Y_i(Q)$ for $i=1,2$: let $\pi:\tY_i(Q)\to Y_i(Q)$ denote the arithmetic quotient
obtained by replacing the subgroup $\Iw(v)$ with $\Iw_1(v)$ for each
$v\in R$. Then, a section of $\CO(\psi)$ over an open subset
$U\subset Y_i(Q)$ is a locally constant function $f: \pi^{-1}(U)\ra
\CO$ such that $f(\gamma u) = \psi(\gamma)f(u)$ for all $\gamma \in
\Iw(v)/\Iw_1(v)$. We let $H^i_\psi(Y_i(Q),\CO)$ and
$H_{i,\psi}(Y_i(Q),\CO)$ denote $H^i(Y_i(Q),\CO(\psi))$ and
$H_{i}(Y_i(Q),\CO(\psi))$. Note that if $\psi=1$ is the collection of
trivial characters, then $\CO(\psi)\cong \CO$ and hence
$H^i_{\psi}(Y_i(Q),\CO)\cong H^i(Y_i(Q),\CO)$.

\subsubsection{Hecke Operators}
We recall the construction of the Hecke operators. \label{subsection:heckeoperators}
Let $g \in \G(\A^\infty)$ be an invertible matrix trivial at each
place $v\in R$.
For $K\subset \G(\A^\infty)$ a compact open subgroup of the form $K_Q$
or $L_Q$, the Hecke
operator $T(g)$ is defined on the homology modules
$H_{\bullet,\psi}(Y(K),\OL)$ by considering the composition:
$$H_{\bullet,\psi}(Y(K),\OL) \rightarrow
H_{\bullet,\psi}(Y(g K g^{-1} \cap K_{Q}),\OL)\ra H_{\bullet,\psi}(Y(K\cap g^{-1}Kg),\OL)  \rightarrow
H_{\bullet,\psi}(Y(K),\OL),$$
the first map coming from the corestriction map, the second coming
from the map $Y(gKg^{-1}\cap K,\OL) \ra Y(K\cap g^{-1} Kg,\OL)$
induced by right multiplication by $g$ on $\G(\A)$ and the third
coming from the natural map on homology. The maps on cohomology $\H^{\bullet}(Y(K),\OL)$ are defined
similarly. (Since, conjecturally, the cohomology of the boundary will vanish after localizing at the relevant $\m$,
we may work either with cohomology or homology, by duality.)
The Hecke operators act on $\H^{\bullet}(Y(K),\OL)$ but
do not preserve the homology of the connected components. 
For $\alpha \in \A_F^{\infty,\times}$, we define the
Hecke operator $T_{\alpha,k}$ by taking 
$$g =  \diag(\alpha,\alpha, \ldots, 1, \ldots,1)$$
consisting of $k$ copies of $\alpha$ and $n-k$ copies of $1$.
We now define the Hecke algebra.
\begin{df}
Let $\Tan_{Q,\psi}$ denote the subring of 
$$ \End \bigoplus_{k,n} \H^{k}(Y_1(Q),\CO/\varpi^n)$$
generated by Hecke endomorphisms 
$T_{\alpha,k}$ for 
all $k \le n$ and
all $\alpha$ which
are units at primes
in $S_Q$.
Let $\T_{Q,\psi}$ denote the $\CO$-algebra generated by
the same operators  with $T_{\alpha}$ for
${\alpha}$ non-trivial at places in $Q$.
If $Q = \emptyset$, we write $\T_\psi$ for $\T_{Q,\psi}$.
\end{df}
If $\a \subseteq \OL_F$ is an ideal prime to the level, we may define the Hecke operator
$T_{\a,k}$ as $(1/\mathbf{N}_{F/Q}(\a)^k)T_{\alpha,k}$ where
$\alpha \in \A^{\times,\infty}_F$ is any element which represents the ideal $\a$ and such that
$\alpha$ is $1$ for each component dividing the level. In particular, if  $\a = x$ is prime, then $T_x$
is uniquely defined when $x$ is prime to the level but not when $x$ divides the level.

\begin{remark} \emph{It would be more typical to define $\T_{Q,\psi}$ as the subring of endomorphisms
of 
$$\End \bigoplus_{k} \H^{k}(Y_1(Q),\CO),$$
 except that it would not be obvious from this definition
that $\T_{Q,\psi}$  acts on $\H^k(Y_1(Q),\CO/\varpi^n)$ for any $n$. It may well be true (for the
$\m$ we consider) that $\T_{Q,\psi}$ acts \emph{faithfully} on the module $\H^{q_0 + l_0}(Y_1(Q),\CO)$ ---
and indeed (at least for $Q = \emptyset$)
this   (conjecturally) follows when Theorem~\ref{theorem:modularity} applies
and (in addition) $\Rloc$ is smooth. Whether one can prove this directly is an interesting question.
(The claim is obvious when the cohomology occurs in a range of length $l_0 = 0$, and also follows
in the case $l_0 = 1$ given known facts about the action of $\T_Q$ on cohomology with $K$ coefficients.)
}
\end{remark}

\subsection{Conjectures on Existence of Galois Representations}
\label{conjectures}

Let $\m$ denote a maximal ideal of $\Tan_{Q,\psi}$, and let $\Tan_{Q,\psi,\m}$ denote the
completion. It is a local ring which is finite (but not necessarily flat) over
$\CO$.

\begin{conjectureA} 
\label{conj:AA} 
There exists a semisimple continuous Galois representation
$\rbar_\m:G_F \rightarrow \GL_n(\Tan_{Q,\psi}/\m)$ with the following property:
 if $\lambda\not\in S_Q$ is a prime of $F$, then $\rbar_\m$ is unramified
at $\lambda$, and the characteristic polynomial of $\rbar_\m(\Frob_{\lambda})$ is
$$X^n - T_{\lambda,1} X^{n-1} + \ldots +
(-1)^i\nmf(\lambda)^{i(i-1)/2}T_{\lambda,i}X^{n-i}+\ldots+
(-1)^n\nmf(\lambda)^{n(n-1)/2}T_{\lambda,n},$$
in $\Tan_{Q,\psi}/\m[X]$. Note that this property determines $\rbar_\m$ uniquely by the
Chebotarev density theorem. If $\rbar_\m$ is absolutely irreducible,
we say that $\m$ is \emph{non-Eisenstein}. In this case we further predict
that there exists a deformation $\r_\m:G_F\rightarrow
\GL_n(\Tan_{Q,\psi,\m})$ of $\rbar_\m$ unramified outside $S_Q$ and such that
the characteristic polynomial of
$\r_\m(\Frob_\lambda)$ is given by the same formula as above. 
In
addition, suppose that $\r_{\m}\cong \rbar$ (where $\rbar$ is the representation
introduced in Section \ref{sec:deform-galo-repr}).
 Suppose also that the set of primes $Q$ consists
of a set of Taylor--Wiles primes, that is, a set of primes 
as constructed in Proposition~\ref{prop:tw-primes};  this is an empty condition when
$Q = \emptyset$. 
We conjecture that $\r_{\m}$ enjoys the following properties:
\begin{enumerate}
\item If $v|p$, then $\r_\m|_{G_v}$ is Fontaine-Laffaille with all
  weights equal to $[0,1,\dots,n-1]$.
\item \label{condition:two} If $v\in Q$, then $\r_\m|_{G_v}$ is a lifting of type $\DP_v$
  where $\DP_v$ is the local deformation problem specified in Section \ref{sec:local-deform-rings}.
\item If $v\in R$, then the characteristic polynomial of
  $\r_\m(\sigma)$ for each
  $\sigma\in I_v$ is $(X-\psi_{v,1}(\Art_v^{-1}(\sigma)))\dots(X-\psi_{v,n}(\Art_v^{-1}(\sigma)))$.
 \item  \label{condition:vanishingconjA} 
 \begin{enumerate}
 \item The localizations $\H^i(Y_1(Q),\OL/\varpi^m)_{\m}$ vanish unless $i \in [q_0,\ldots,q_0 + l_0]$.
 \item The localizations $\H^i(\partial Y_1(Q),\OL/\varpi^m)_{\m}$ vanish for all $i$, where 
 $\partial Y_1(Q)$ is the boundary of the Borel--Serre compactification of $Y_1(Q)$.
 \end{enumerate}
\item\label{condition:aux}   
For $x\in Q$, let $P_x(X) = (X - \alpha_x)Q_x(X)$ denote the characteristic polynomial
of $\rbar(\Frob_x)$ where $\alpha_x=\psibar_x(\Frob_x)$. Let $\m_Q$
denote the maximal ideal of $\T_{Q,\psi}$ containing $\m$ and $V_{x} - \alpha_x$ for all $x|Q$, where
$V_{x} = T_{x,1}$, which is well defined modulo $\m$. Then there is an isomorphism
$$ \lim_{k \rightarrow \infty} \prod_{x\in Q} Q_x(V_{{x}})^{k!}:\H^*(Y,\CO/\varpi^m)_{\m} \iso \H^*(Y_0(Q),\CO/\varpi^m)_{\m_Q},$$
  \end{enumerate}
It follows that $\r_\m$ is a deformation of $\rbar$ of type $\CS_Q$
(resp.\ $\CS^\chi_Q$) if each $\psi_v$ is the trivial character
(resp.\ $\psi_v=\chi_v$ for each $v\in R$). In this case, we obtain a
surjection $R_{\CS_Q}\onto \Tan_{Q,1,\m}$ (resp.\ $R_{\CS_Q^\chi}\onto \Tan_{Q,\chi,\m}$).
\end{conjectureA}

Some form of this conjecture has been suspected to be true at least as far back
as the investigations of F.~Grunewald in the early 70's (see~\cite{Grunewald,GHM}). Related conjectures about the existence
of $\rbar_{\m}$ 
were made for $\GL(n)/\Q$ 
by Ash~\cite{Ash}, and for $\GL(2)/F$ by Figueiredo~\cite{Fig}.
 One aspect of this conjecture is that
it implies that the local properties of the (possibly torsion) Galois representations are captured by the
characteristic zero local deformation rings $\Rbox_v$ for primes $v$. One might hope that such a conjecture is true in maximal
generality, but we feel comfortable making the conjecture in this case because the relevant local deformation rings (including the
Fontaine--Laffaille deformation rings) reflect an honest integral theory, which is not necessarily true of all the
local deformation rings constructed by Kisin,
(although the work of Snowden~\cite{Snowden} gives hope that at least in the ordinary case that local deformation rings 
may capture all integral phenomena).
By d\'{e}vissage, conditions~\ref{condition:vanishingconjA} and~\ref{condition:aux} are satisfied if and only if they are satisfied for $n = 1$,
e.g.,  with coefficients in the residue field $k = \OL/\varpi$.

\medskip

\begin{remark} \emph{
Part~(\ref{condition:vanishingconjA})(a) of Conjecture~\ref{conj:AA} may be verified directly in a number of small rank cases,
in particular for~$\GL(2)/F$ when~$F$ is CM field of degree either~$2$ or~$4$, or for~$\GL(3)/\Q$. In the latter two cases
(where~$(q_0,l_0) = (2,2)$ and~$(2,1)$ respectively), the key point is that the lattices in question satisfy the 
congruence subgroup property~\cite{CSP}, which yields vanishing for both~$(H^1)_{\m}$ and (by duality, considering both~$\m$ and~$\m^*$)~$(H^{q_0 - 1}_c)_{\m}$ 
where~$\m$ is a non-Eisenstein maximal ideal.
On the other hand, the vanishing of~$(H^{q_0 - 1}_c)_{\m}$ also implies the vanishing of~$(H^{q_0 - 1})_{\m}$ after localization at~$\m$,
since the cohomology of the boundary vanishes after localization at~$\m$. (In these cases, we are implicitly using the fact that we know
enough about the boundary of the locally symmetric varieties in question to also resolve Part~(\ref{condition:vanishingconjA})(b) of Conjecture~\ref{conj:AA}.)}
\end{remark}

\begin{remark} \emph{In stating Conjecture~\ref{conj:AA}, we have assumed that $Q$ is divisible
only by Taylor--Wiles primes.
To modify the conjecture appropriately for more general $Q$,
one would have to modify condition~\ref{condition:two} to allow for more general
quotients of the appropriate local deformation ring (which would involve a mix
of tamely ramified principal series and  unipotent representations) and one
would also drop condition~\ref{condition:aux}.}
 \end{remark}

\medskip

\begin{remark}  \emph{Condition~\ref{condition:vanishingconjA} of Conjecture~\ref{conj:AA} says that we could
also have formulated our conjecture for compactly supported cohomology, or equivalently for homology.
The complexes we eventually patch are computing 
$$H^*(Y,\OL/\varpi^n)^{\vee}_{\m} =  H^*_c(Y,\OL/\varpi^n)^{\vee}_{\m} =  H_*(Y,\OL/\varpi^n)_{\m^*}$$
for the dual maximal ideal $\m$,
so it may have made more sense to work with homology. Indeed, in the homological
formulation, we wouldn't need to assume anything about the vanishing of the homology of the
boundary localized at $\m$.  However, for historical reasons, we
continue to work in the present setting, with the understanding that the real difficulty in Conjecture~\ref{conj:AA}
lies (after Scholze~\cite{Scholze}) with proving the non-vanishing of (co-)homology groups in the required range after
localization at $\m$ and proving local-global compatibility, especially at $v|p$. 
}
\end{remark}

\medskip

The reason for  condition~\ref{condition:aux} of Conjecture~\ref{conj:AA}   is that the arguments of~\S3 of~\cite{CHT}
(in particular,  Lemma~3.2.2 of \emph{ibid}.) often require that the $\GL_n(F_x)$-modules $M$
in question are $\OL$-flat. However, it may be possible to remove this condition, we hope to return to
this point later (it is also true that slightly weaker hypotheses are sufficient for our arguments).
On the other hand, we have the following:
\begin{lemma} \label{lemma:TWtwo}  If $n = 2$, then 
 condition~\ref{condition:aux} of Conjecture~\ref{conj:AA}  
 holds for all Taylor--Wiles primes.
\end{lemma}

\begin{proof} By induction, it suffices to prove the result when
  $Q=\{x\}$ consists of a
  single such prime. For simplicity, we drop $\psi$ from the notation.
The two natural degeneracy maps induce maps:
$$\begin{aligned}
\pssi: & \ H^*(Y,\CO/\varpi^n)^2 \rightarrow H^*(Y_0(x),\CO/\varpi^n), \\
\pssi^{\vee}: & \ H^*(Y_0(x),\CO/\varpi^n)
 \rightarrow H^*(Y,\CO/\varpi^n)^2 \end{aligned}$$
such that the composition $\pssi^{\vee} \circ \pssi$ is the matrix
$$\left( \begin{matrix} N(x) + 1 & T_x \\ T_x & N(x) + 1 \end{matrix} \right),$$
which has determinant $T^2_x  - (1 + N(x))^2$. If the eigenvalues
of $\rhobar(\Frob_x)$ are $\alpha_x$ and $\beta_x$, then
 $\alpha_x \beta_x \equiv N(x) \equiv 1 \mod p$.
 If $x$ is a Taylor--Wiles prime, then by assumption, $\alpha_x$ is distinct from $\beta_x$, or equivalently,
$\alpha_x \not\equiv \pm 1 \mod p$. It follows that $T^2_x - (1 + N(x))^2 \not\in \m$, and hence 
$\pssi^{\vee} \circ \pssi$ is invertible after localizing at $\m$. In particular, the maps $\pssi$ and $\pssi^{\vee}$
induce a splitting
$$\begin{aligned}
H^*(Y_0(x),\CO/\varpi^n)_{\m}  \simeq & \ H^*(Y,\CO/\varpi^n)^2_{\m} \oplus W, \\
H^*(Y_0(x),\CO/\varpi^n)_{\wtm} \simeq & \ H^*(Y,\CO/\varpi^n)_{\wtm} \oplus W_{\wtm}
\end{aligned}
$$
for some $\Tan_{Q,\m}$-module $W \subset H^*(Y_0(x),\CO/\varpi^n)$, and
$\wtm = (\m,U_x - \alpha_x)$. (Here, by abuse of notation, $\alpha_x$ denotes any
lift of $\alpha_x \in \OL/\varpi$ to $\OL/\varpi^n$.) It suffices to prove that $W$ is trivial.
One approach is to try to show that some $w \in W$ generates 
either  the Steinberg representation  $\Sp$ or $\Sp \otimes \chi$
for the quadratic unramified character $\chi$ of $F^{\times}_x$, and then deduce that
the action of $U_{x}$ on $w$ is via $\pm 1$, contradicting the assumption on $\m$.
However, 
the map 
$$H^{*}(Y_0(x),\CO/\varpi^n) \rightarrow  \left(\lim_{\rightarrow} H^*(Y(x^m),\CO/\varpi^n)\right)^{U_0(x)}$$
is \emph{a priori} neither  surjective nor injective, which causes some complications with this approach.
Hence we proceed somewhat differently
(although see Remark~\ref{remark:lowest} below).
The following argument is implicit in
the discussion of Ihara's Lemma in Chapter~4 of~\cite{CV}.

\medskip

Recall that there exists a decomposition
$$Y \simeq \coprod \Gamma_i \backslash \mathcal{H},$$
where the $\Gamma_i$ are congruence subgroups commensurable with  $\PGL_2(\OL_F)$ and
$\mathcal{H}$ denotes the corresponding locally symmetric space. The finitely many connected
 components  of $Y$ will naturally be a torsor over a ray
 class group corresponding to the  level of $Y$.
 Let $\Gamma$ be one such subgroup. 
Denote by  $\Gamma^1$ the intersection
$\Gamma \cap \PSL_2(\OL_F)$. By construction,  $\Gamma/\Gamma^1$ is an elementary two group, which
we denote by $\Phi$. Recall that we are assuming that
 $k = \OL/\varpi$ has odd characteristic~$p$. Then, by Hochschild--Serre, there is an isomorphism
$$H^*(\Gamma,\OL/\varpi^n) \simeq H^*(\Gamma^1,\OL/\varpi^n)^{\Phi}.$$

\medskip

By construction, for a Taylor--Wiles prime $x$, the
level structure of $Y$ at $x$ is maximal. For convenience, let us also assume that $x$ is
trivial in the ray class group corresponding to the component group of $Y$ (this is equivalent to
imposing a further congruence condition on $x$, but is imposed only for notational convenience
in the argument below).
For such a prime $x$, we may form the amalgam 
$$G:= \Gamma^1 *_{\Gamma^1_0(x)} \Gamma^1$$
of $\Gamma^1$ with itself along the subgroup
$\Gamma^1_0(x):=\Gamma_0(x)\cap \Gamma^1$. Then $G$
will be a congruence subgroup of the $S$-arithmetic group
$\PSL_2(\OL_F[1/x])$ with the same level structure of $\Gamma^1$ at primes away from~$x$.
(Without the extra assumption on $x$, one would have to amalgamate \emph{different} pairs
of lattices $\Gamma_i$ occurring in the decomposition of $Y$ according to the action of the
ray class group, cf.~\S~4.1.4 of~\cite{CV}. The argument would then proceed quite similarly, but it would require
more notation) 
The long exact sequence of Lyndon for an amalgam
(See~\cite{SerreTrees}, p.169) gives rise to the following exact sequence:
$$\ldots \rightarrow H^{i-1}(G,\OL/\varpi^n) \rightarrow H^{i}(\Gamma^1,\OL/\varpi^n)^2 \rightarrow H^i(\Gamma^1_0(x),\OL/\varpi^n) \rightarrow \ldots$$
We claim that this sequence is equivariant with respect to the Hecke operator $U_{x^2}$, which acts
on $H^*(G,\OL/\varpi^n)$ by $1$.
First, recall the definition of $U_{x^2}$. It is defined to be $1/N(x)^2$ times the operator induced
by taking the double coset operator for $\Gamma^1_0(x)$ corresponding to the matrix:
$$\left(\begin{matrix} \pi^2_x & 0 \\ 0 & 1 \end{matrix} \right).$$
Since this operator only depends on the matrix up to scalar, one may equally take the matrix to be
$$g:=\left(\begin{matrix} \pi_x & 0 \\ 0 & \pi^{-1}_x \end{matrix} \right).$$
With this normalization, the matrix $g$ lies in $G$. 
In particular, the corresponding map on $H^*(G,\OL/\varpi^n)$ is given by multiplication by the degree
of this operator on $\Gamma^1_0(x)$, which is $N(x)^2$, and thus (after normalizing) it follows that $U_{x^2}$ acts by~$1$.
It follows that if we localize the sequence at any ideal $\m$ such that $U_{x^2} - 1$ is invertible, then
there is an isomorphism
 $$H^{i}(\Gamma^1,\OL/\varpi^n)^2_{\m} \simeq H^i(\Gamma^1_0(x),\OL/\varpi^n)_{\m}.$$
 To recover the isomorphism for $Y$, it suffices to repeat this argument for each lattice $\Gamma_i$.
 On $H^*(Y_0(x),\OL/\varpi^n)$, however, the operator $U_x$ satisfies $U^2_x = U_{x^2}$.
 In particular, since neither $\alpha_x$ nor $\beta_x$ is equal to~$\pm 1$, we deduce (for the maximal ideal $\m$ of
 interest) that there is an isomorphism
 $$H^{i}(Y,\OL/\varpi^n)^2_{\m} \simeq H^i(Y_0(x),\OL/\varpi^n)_{\m}.$$
 Taking $\wtm = (\m,U_x - \alpha_x)$ and
 applying the projections $\displaystyle{\lim_{n \rightarrow \infty} (U_x - \beta_x)^{n}}$ and 
 $\displaystyle{\lim_{n \rightarrow \infty} (U_x - \alpha_x)^{n}}$
 gives the necessary isomorphism.
 \end{proof}
 
 \begin{remark}  \emph{ As noted in~\cite{CV}, the group $\PSL_2(F_x)$ decomposes as an
 amalgam whereas $\PGL_2(F_x)$ does not --- this is the reason for the reduction
 step to the $\PSL_2$ case above. One \emph{could} proceed above with $\PGL_2$, but then the amalgams
 would more naturally be subgroups of $\PGL_2(\OL_F[1/x])^{(\mathrm{ev})} \subset \PGL_2(\OL_F[1/x])$
 consisting of matrices whose determinant has even valuation at $x$ (cf. Chapter~4 of~\cite{CV}).
 In either case, one deduces as above that $U^2_{x}$ acts by $+1$ on $H^*(G,\OL/\varpi^m)$.
 }
\end{remark}

\begin{remark} \label{remark:lowest}   \emph{ The proof of this lemma is related to the proof
of Lemmas~\ref{lemma:matt-w1} and~\ref{lemma:matt}. However, in those cases, it was only necessary  to prove equality in the lowest
degree, which is more elementary. 
Indeed, if $q_0$ denotes the lowest degree in which $H^{q_0}(Y,\CO/\varpi)_{\m}$ is nonzero, then,
by Hochschild--Serre, the kernel of the map 
$$H^{q_0}(X_0(x),\OL/\varpi^n) \rightarrow H^{q_0}(X_1(x),\OL/\varpi^n)$$
has a filtration by terms of the form $H^i(\Delta, H^j(X_1(x),\OL/\varpi^n))$ for $j < q_0$. Since (by assumption)
the coefficients of this expression are trivial after localization at $\m$ for $j < q_0$,  the kernel vanishes and the map above is injective. Hence the argument above using the representation $\Pi$ generated by $w \in W$ applies in this case.
Analysis of this spectral sequence suggests, however, that the map localized at $\m$ will not (in general)
be injective for $i > q_0$ when
$l_0 > 0$. 
}
\end{remark}

\subsection{An approach to Conjecture~\ref{conj:AA} part~5}
In this section, we present an informal approach to proving part~$5$ of Conjecture~\ref{conj:AA} under a
stronger assumption that $\rbar$ has enormous image, at least in the analogous case of $\GL(n)$ (from which
it should be easy to deduce the corresponding claim for $\PGL$, since manifolds for the former are circle bundles over
manifolds for the latter, and so have highly related Hecke actions). Here, by enormous image, we require (in
addition to bigness) the existence
of suitable Taylor--Wiles primes $x$ such that $\rbar(\Frob_x)$ has \emph{distinct} eigenvalues.  We thank David Helm
for some helpful remarks concerning the deformation theory of  unramified principal series.

\subsubsection{Local Preliminaries}

Let $F/\Q$ be a number field, let $x$ be a prime in $F$ such that $N(x) \equiv 1 \mod p$.
Let $G = \GL_n(F_x)$, and let $D \subset B \subset P \subset G$ denote the Borel subgroup  $B$ and a
parabolic subgroup $P$ with Levi factor $L:=\GL_{n-1}(F_x) \times F^{\times}_x$, and
$D \simeq (F^{\times}_x)^n$ the Levi of $B$.
Let $G(\OL_x) = \GL_n(\OL_x)$, let $U(x) \subset G(\OL_x)$ denote the full congruence subgroup
of level $x$, and  let $U_0(x) \subset G(\OL_x)$ denote
the largest subgroup containing $U(x)$ whose image in $\GL_n(\OL_x/\varpi_x)$ stabilizes a line, chosen
compatibly with respect to $P$.
Suppose that $N(x) \equiv 1 \mod p$.

\medskip

Let $\rhobar: G_x \rightarrow \GL_n(k)$ be a  continuous semi-simple representation.
 We say that an
irreducible admissible mod-$p$ representation $\pi$ is \emph{associated} to $\rhobar$ if
$$\mathrm{rec}(\pi) = \WD(\rhobar)$$
under the semi-simple local Langlands correspondence of Vign{\'e}ras~\cite{VL}.
The following is well known.

\begin{lemma} \label{lemma:firstcase} Let $\rhobar:G_x \rightarrow \GL_n(k)$ be  unramified with
distinct eigenvalues. Then $\mathrm{rec}(\pi) = \rhobar$ if and only if
$\pi$ is the irreducible unramified mod-$p$ principal series:
$$\pi = \nind^{G}_{B}(\chi),$$
where $\chi: (F^{\times}_x)^n \rightarrow k^{\times}$ factors
through $(F^{\times}_x/\OL^{\times}_x)^n$ and sends each uniformizer
to a distinct eigenvalue of $\rhobar(\Frob_x)$.
\end{lemma}

In addition, we have the following:

\begin{lemma} \label{lemma:ext} Let $\pi$ be the unramified principal series
in Lemma~\ref{lemma:firstcase}, and let $\pi'$ denote any irreducible admissible
representation of $G$ such that $\pi' \not\simeq \pi$. Then
$\Ext^1(\pi,\pi') = \Ext^1(\pi',\pi) = 0$.
\end{lemma}

\begin{proof} The supercuspidal support of $\pi$ consists of the distinct characters
$\chi_i$. If either extension group is non-zero, then, by
Theorem~3.2.13 of~\cite{EH}, it follows that $\pi'$ has the same supercuspidal
support as $\pi$. But this implies that $\pi'$ is a quotient of $\pi$, and hence
is isomorphic to $\pi$.
\end{proof}

\medskip

\begin{df} Let $\Cat$ denote the category of locally admissible $G$-modules over 
$\displaystyle{
 A:=\OL/\varpi^k}$ such that every irreducible subquotient of
$M \in \Cat$ is associated to $\pi$. 
\end{df}

Under our assumptions on $\rhobar$, we may give a quite precise
description of the finite length elements $M \in \Cat$.

\begin{lemma} Suppose that $M \in \Cat$ has finite length as a $G$-module.
Then there exists a finite length $A$-module $M_B$ and a
character
$$\wchi: B \rightarrow \Aut(M_B)$$
whose irreducible constituents correspond to the character $\chi$, and such that
$M \simeq \nind^{G}_{B}(M_B)$.
\end{lemma}

\begin{proof}
The irreducible constituents of the parabolic restriction $\res^{B}_{G}(\pi)$ consist of
the characters $\chi^{w}$ for~$w$ in the Weyl group $W$ of $G$. By assumption,
all the eigenvalues of $\rhobar(\Frob_x)$ are distinct, and hence all the characters $\chi^w$ are distinct. In particular, 
$\Ext^i(\chi^v,\chi^{w}) = 0$ for all~$i$ if $v \ne w \in W$. It follows that $\res^{B}_{G}(M)$ admits a decomposition
$$\res^{B}_{G}(M) \simeq \bigoplus_{W} M^{w}_{B},$$
where   the irreducible constituents of
$M^{w}_{B}$ are $\chi^{w}$ for
$w \in W$. Moreover, 
 $\length_{A}(M^{w}_{B}) = \length_{G}(M)$ is finite for any $w \in W$.
 Let $M_{B}:=M^{\mathrm{id}}_B$.  There is a natural map
 $$M \rightarrow \nind^{G}_{B} \res^{B}_{G}(M) = \bigoplus_{W} \nind^{G}_{B} M^{w}_{B}
 \rightarrow  \nind^{G}_{B} M_B.$$
 Note that $M$ and $ \nind^{G}_{B} M_B$ are elements of $\Cat$ of the same length, and
  all the irreducible constituents of $ \nind^{G}_{B} M^w_B$ for $w \ne \mathrm{id}$ are
 distinct from $\pi$. Thus, by comparing lengths, to
 prove that the composite of these maps is an isomorphism it suffices to prove that the first map is injective.
 If $K$ denotes the kernel, then $\res^{B}_{G}(K) = 0$. Yet this contradicts the assumption that
 $M$ (and hence $K$) lies in $\Cat$, since $\res^{B}_{G}(\pi) \ne 0$.
\end{proof}

Recall that $D \subset B$ denotes the Levi of $B$, which is $(F^{\times})^n$.
Since $\chi$ is trivial on $D(\OL_x)$, Any finite deformation $\wchi$
of $D(F)$  which deforms $\chi$ has pro-$p$ image after restriction to $D(\OL_x)$, and
thus factors through
$$\left(\lim_{\leftarrow} F^{\times}/F^{\times p^m}\right)^n \simeq
\left( \Z_p \oplus \lim_{\leftarrow} k^{\times}/k^{\times p^m} \right).$$
The universal deformation of this group can be given quite explicitly:

\begin{corr} Let $q = |k^{\times} \otimes \Z_p|$. Then $\nind^{G}_B$ induces an equivalence of
categories between the category of direct limits of finite length modules over the ring $R$ below and and $\Cat$:
$$R:=  \bigotimes_{i=1}^{n} A[T]/(T^q - 1) \otimes_{\OL} A[[X]].$$
\end{corr}

Using this description of $\Cat$, we may prove the following:

 \begin{lemma}  \label{lemma:inject} The category $\Cat$ has enough injectives.
 The functor $M \rightarrow M^{U(x)}$ from $\Cat$ to $G(k):=G(\OL_x)/U(x) \simeq \GL_n(k)$-modules
 takes injectives to acyclic modules.
\end{lemma}

\begin{proof}  
One may explicitly
observe that the appropriate category of $R$-modules has enough injectives. The composite functor from $R$-modules
to $G(k)$-modules can be described explicitly as follows. Given a  deformation $\wchi$, recall
that $\wchi$ factors through $(k^{\times} \times \Z)^n$. Hence the restriction $\wchi |_{D(k)}$ to $(k^{\times})^n
= D(k) \subset B(k)$  is well defined, and one has
$$\left( \nind^{G}_{B} (\wchi)\right)^{U(x)} \simeq \Ind^{G(k)}_{B(k)} \left(\wchi |_{D(k)}\right).$$
Since finitely generated injective $A[T]/(T^q - 1)$-modules are free, it follows that the image of an injective module has
a filtration whose pieces are isomorphic to
$\Phi:=\Ind^{G(k)}_{B(k)}(\psi)$,
where $\psi$ is the universal deformation over $k$ of $D(k)$ to
$k[k^{\times}/k^{\times q}]$.
Yet $\Phi$ is a direct summand of $\Ind^{G(k)}_{B(k)} k[D(k)]$ and thus of  $k[G(k)]$; hence it is  injective and acyclic.
\end{proof}

\begin{remark} \label{remark:abstract}
\emph{
Since the group $U(x)$ is pro-$p$, the higher cohomology of $U(x)$ vanishes. Hence, for any
subgroup $U(x) \subset \Gamma \subset G(\OL_x)$,  by Hochschild--Serre there are identifications
$$H^i(\Gamma,M) \simeq H^i(\Gamma/U(x),M^{U(x)}).$$
Any injective $G(k)$-module is also injective as a $\Gamma/U(x) \subset G(k)$-module.
Hence, by Lemma~\ref{lemma:inject},  the derived functors of $M \mapsto M^{\Gamma}$ are
well defined, and  they coincide with $H^i(\Gamma,M)$.
}
\end{remark}

\medskip

Suppose that the roots of $P(T) = (T - \alpha)Q(T)$ are the Satake parameters of~$\pi$.
The Hecke algebra of $U_0(x)$ contains the operator   
$V= V_{\varpi_x}$  corresponding to the double coset
of the diagonal matrix with $n-1$ entry $1$ and the final entry $\varpi_x$.  
The operator $P(V)$ is zero on $\pi^{U_0(x)}$ and thus acts nilpotently on $M^{U_0(x)}$.
In particular, if
$$e_{\alpha}:=\lim_{\rightarrow} Q(V)^{n!},$$
then $e_{\alpha}$ is a projection of $M^{U_0(x)}$
onto the \emph{localization} of $M^{U_0(x)}$ at the ideal
$(V - \alpha)$ for any lift of $\alpha$ to $\OL$.

\medskip

We shall now define two functors $\Ff$ and $\Gf$ on the category $\Cat$, defined on objects by
$$\Ff(M):=M^{G(\OL_x)}, \qquad \Gf(M):= e_{\alpha} M^{U_0(x)}.$$
There is a natural transformation:
$\iota: \Ff \rightarrow \Gf$
defined by the composition of the obvious inclusion $M^{G(\OL_x)} \hookrightarrow M^{U_0(x)}$ with
$e_{\alpha}$. Note that $\Ff$ and $\Gf$ are left exact. Since $\Cat$ has enough injectives (Lemma~\ref{lemma:inject}),
we have associated right derived functors $R^k \Ff$ and $R^k \Gf$ respectively. 
Hence, since $e_{\alpha}$ is exact, we may (see Remark~\ref{remark:abstract}) identify these right derived functors
with the following cohomology groups:
$$R^k \Ff(M) \simeq H^k(G(\OL_x),M), \qquad R^k \Gf(M) \simeq e_{\alpha} H^k(U_0(x),M).$$

\begin{theorem} 
\label{theorem:cht} The natural transformation $\iota:\Ff \rightarrow \Gf$ is an isomorphism
of functors. In particular,  there is an isomorphism
$$\iota_*: H^k(G(\OL_x),M) \rightarrow e_{\alpha} H^k(U_0(x),M).$$
\end{theorem}

\begin{remark} \emph{ One should compare Theorem~\ref{theorem:cht} to Lemma~3.2.2 of~\cite{CHT}, which
implies  that $\iota_{*}$ is an isomorphism for $k = 0$ and modules $M \in \Cat$ of the form $N \otimes \OL/\varpi^n$
where $N$ is admissible and flat over $\OL$ and $N \otimes_{\OL} \overline{K}$ is semi-simple. 
The (implicit) assumptions on $\rhobar$ in~\cite{CHT}  are, however, somewhat weaker;
one only need assume that the particular eigenvalue $\alpha$ has multiplicity one, and moreover the
assumption that $M$ is an element of $\Cat$ is relaxed (although, for the module $M$ in~\S3 of~\cite{CHT}
for which Lemma~3.2.2 is applied, one may deduce from local-global compatibility
that $M \in \Cat$).
We expect that Theorem~\ref{theorem:cht} is true under these weaker assumptions as well, and
possibly even under the generalization of Lemma~3.2.2 of~\cite{CHT} due to Thorne (Proposition~5.9 of~\cite{thorne}),
see Remark~\ref{remark:possible} following the proof.
}
\end{remark}

\begin{proof}  For $F = \Ff$ or $\Gf$, one has  $F(M) = \displaystyle{\lim_{\rightarrow} F(M_i)}$ as the limit runs over all
finite length submodules $M_i$, hence it suffices to prove the isomorphism for $M$
of finite length. 
In particular, we may assume that $M = \nind^{G}_{B} M_B$ for some finite deformation $\wchi$ of $\chi$.
Then we have an isomorphism
$$\Ff(M) \simeq (M_B)^{D(\OL_x)}.$$
Let $\wchi(m)$ denote the restriction of $\wchi$ to  $F^{\times}_x$ whose irreducible
constituents correspond to the unramified character which takes the value $\alpha_m$
on a uniformizer for some eigenvalue $\alpha_m$ of $\rhobar(\Frob_x)$.
Let  $\wchi(\mhat)$ denote the restriction of $\wchi$ to
$(F^{\times}_x)^{n-1}$ corresponding to the other $n-1$ eigenvalues.
 Then there is an isomorphism
$$\begin{aligned}
M^{U_0(x)} \simeq & \ \bigoplus_{m=1}^{n} \left(\wchi(m) \otimes  \nind(\wchi(\mhat) \right))^{L(\OL_x)} \\
\simeq & \ \bigoplus_{m=1}^{n} (M_B)^{D(\OL_x)}. \end{aligned}
$$
Moreover, the action of $V$ on each factor is given by
the coset corresponding to $\varpi_x \times \mathrm{Id} \in L(F_x)$, which
acts via $\wchi(m)(\varpi_x)$, and the normalized sum of
the invariants is equal to the image of $M^{G(\OL_x)}$. In particular, the operator $e_{\alpha}$
projects onto the $m$th factor such that $\alpha = \alpha_m$, which is an isomorphism.
\end{proof}

\begin{remark} \label{remark:possible} \emph{ The proof of this result, is, to some extent, ``by explicit computation.'' 
Here is a different approach which may work under the weaker assumption
that $\alpha$ has multiplicity one but there is no other assumption on the eigenvalues of
$\rhobar(\Frob_x)$.
First one establishes, for irreducible $\pi \in \Cat$, that there is an isomorphism
$\Ff(\pi) \simeq \Gf(\pi)$. This is essentially already done in~\S3 of~\cite{CHT}.
Now proceed by induction on the length of $M$. 
Suppose now that the claim is true for  modules of length $< \length(M)$. By assumption, there
is an inclusion $\pi \subset M$, let $N$ denote the quotient. By induction, there is a long
exact sequence as follows:
$$
\begin{diagram}
0 & \rTo & \Ff(\pi) & \rTo & \Ff(M) & \rTo & \Ff(N) & \rTo & R^1 \Ff(\pi) \\
 & & \dEquals & & \dTo & & \dEquals & & \dTo \\
 0 & \rTo & \Gf(\pi) & \rTo & \Gf(M) & \rTo & \Gf(N) & \rTo & R^1 \Gf(\pi) \\
  \end{diagram}
$$
By the five lemma, it suffices to show that $R^1 \Ff(\pi) \rightarrow R^1 \Gf(\pi)$ is injective.
By Hochschild--Serre, one has isomorphisms
$$R^1 \Ff(\pi) \simeq H^1(\GL_n(k),\pi^{U(x)}), \qquad
R^1 \Gf(\pi) \simeq e_{\alpha} H^1(U_0(k),\pi^{U(x)}),$$
which (in principle) one might be able to compute explicitly for the relevant $\pi$.
}
\end{remark}

\subsubsection{Applications to Taylor--Wiles primes}
	
We now define the modules $M_j$ as follows:
$$M_j:=\lim_{m \rightarrow \infty} H^j(X(x^m),\OL/\varpi^n)_{\m}.$$
The modules $M_j$ are filtered (as $G = \GL_n(F_x)$-modules) by the the admissible module $M_j[\m]$.
By assumption, any representation $\pi \subset M_j[\m]$ lies in $\Cat$, and hence $M_j \in \Cat$
by Lemma~\ref{lemma:ext}.
By Hochschild--Serre, we have two spectral sequences, namely,
$$H^i(G(\OL_x),M_j) \Rightarrow H^{i+j}(X,\OL/\varpi^n)_{\mE},$$
$$
e_{\alpha} H^i(U_0(x),M_j) \Rightarrow e_{\alpha} H^{i+j}(X_0(x) ,\OL/\varpi^n)_{\mE} = H^{i+j}(X_0(x) ,\OL/\varpi^n)_{\m}.$$
There is a natural map between these spectral sequences given by $\iota_{*}$. 
By Theorem~\ref{theorem:cht}, these maps are isomorphisms, and hence we deduce that the map:
$$e_{\alpha}: H^*(X,\OL/\varpi^n)_{\mE} \simeq H^*(X_0(x),\OL/\varpi^n)_{\m},$$
is an isomorphism, as required.

\subsection{Modularity Lifting}
\label{sec:modularity-lifting}

In this section we prove our main theorem on modularity lifting. We
note that Theorem~\ref{theorem:modularity} follows from it immediately as a
corollary.

\medskip

We assume the existence of a maximal ideal
$\m$ of $\T:=\T_{\emptyset,1}$ with $\rbar_\m\cong \rbar$. We assume
also that $\rbar(G_{F(\zeta_p)})$ is big. 

\medskip

For each integer $N\geq 1$, let $Q_N$ be a set of primes satisfying the conclusions of
Proposition~\ref{prop:tw-primes}. We also assume that
Conjecture~\ref{conj:AA} holds for each of the sets $Q_N$.

\medskip

For each $N$, there is a natural covering map $Y_1(Q_N) \rightarrow Y_0(Q_N)$
with Galois group
$$\wt\Delta := \prod_{x \in Q} (\OL_F/x)^{\times}.$$
Choose a surjection $\wt\Delta \onto \Delta_N:=(\Z/p^N \Z)^q$ and let
$Y_{\Delta_N}(Q_N) \to Y_0(Q_N)$ be the corresponding sub-cover. For each $0\leq M \leq N$, we regard $\Delta_M$ as a quotient of
$\Delta_N$ in the natural fashion. This gives rise to further
sub-covers $Y_{\Delta_M}(Q_N)\to Y_0(Q_N)$.

By Conjecture~\ref{conj:AA} and the results of
Section~\ref{sec:betti-case}, there exists a perfect
complex $\wt D_N$ of free $S_N = \OL[\Delta_N]$-modules such that
\begin{itemize}
\item $\wt D_N$ is concentrated in degrees $q_0,\dots,q_0+l_0$,
\item the complex $\wt D_N\otimes_{S_N} S_N/\m_{S_N}$ has trivial differentials,
\item for each $i$, $n\geq 1$ and $0\leq M\leq N$, we have an isomorphism of $S_N$-modules
\[  H_i(\wt D_N\otimes_{S_N}S_M/\varpi^n) \cong \left( \lim_{n \rightarrow \infty} \prod_{x\in Q}
Q_x(V_{{x}})^{n!}\right) H_{i}(Y_{\Delta_M}(Q_N),\CO/\varpi^n)_{\m^*} . \]
\end{itemize}
Note that we have
$$H^*(Y_1(Q_N),\OL/\varpi^{n})^{\vee}_{\m} \simeq H_{*}(Y_1(Q_N),\OL/\varpi^{n})_{\m^*},$$
where the equivalence comes from the fact that we are assuming the cohomology of the boundary vanishes
after localization at $\m$.  

Similarly, working with the local system associated to our choice of
characters $\chi = (\chi_v)_{v \in R}$, there exists a perfect complex
$\wt D_N^{\chi}$ of free $S_N$-modules satisfying the first two properties above
as well as:
\begin{itemize}
\item for each $i$, $n\geq 1$ and $0\leq M\leq N$, we have an isomorphism of $S_N$-modules
\[  H_i(\wt D_N^{\chi}\otimes_{S_N}S_M/\varpi^n) \cong \left( \lim_{n \rightarrow \infty} \prod_{x\in Q}
Q_x(V_{{x}})^{n!}\right) H_{i,\chi}(Y_{\Delta_M}(Q_N),\CO/\varpi^n)_{\m^*} . \]
\end{itemize}
Note that we have
$$H^*_{\chi}(Y_1(Q_N),\OL/\varpi^{n})^{\vee}_{\m} \simeq H_{*,\chi}(Y_1(Q_N),\OL/\varpi^{n})_{\m^*},$$
again by Conjecture~\ref{conj:AA} part~\eqref{condition:vanishingconjA}.)

Since
\[ H^*(Y,\CO/\varpi)\cong H^*_{\chi}(Y,\CO/\varpi),\] the ideal $\m$
induces a maximal ideal of $\T_\chi:=\T_{\emptyset,\chi}$, which we
also denote by $\m$ in a slight abuse of notation. By Conjecture
\ref{conj:AA}, we have surjections $R_{\CS}\onto \T_\m$ and
$R_{\CS_\chi}\onto \T_{\chi,\m}$. 

\begin{theorem}
  \label{thm:mod-lifting}
If we regard $H^{q_0}(Y,K/\CO)^{\vee}_\m$ as an $R_{\CS}$-module via the
map $R_{\CS}\onto \T_\m$, then it is a nearly faithful $R_{\CS}$-module.
\end{theorem}

\begin{proof}
  We will apply the results of Section~\ref{sec:patching}. For each
  $N\geq 1$, we have chosen a set of Taylor--Wiles primes $Q_N$ satisfying the
  assumptions of Proposition~\ref{prop:tw-primes} (for some fixed
  choice of $q$). Let 
\[ g = q+|T|-1-[F:\Q]\frac{n(n-1)}{2}-l_0 \]
be the integer appearing in part~\eqref{generators} of this
proposition. We will apply Proposition \ref{prop:simult-patching} with
the following:
\begin{itemize}
\item Let $S_\infty=\CO[[(\Z_p)^q]]$ and $S_N = \CO[\Delta_N]$ as in
  the statement of Proposition \ref{prop:patching}.
\item Let $j=n^2|T|-1$ and $\CO^\square=\CO[[z_1,\dots,z_j]]$.
\item Let 
  \begin{eqnarray*}
    R^1_\infty &=& \Rloc^1[[x_1,\dots,x_g]] \\
    R^2_\infty &=& \Rloc^\chi[[x_1,\dots,x_g]].
  \end{eqnarray*}
  Note each $R^i_\infty$ is $p$-torsion free and equidimensional of
  dimension $1+q+j-l_0$ by Lemma \ref{lem:Rloc-props}. In addition, we
  have a natural isomorphism $R^1_\infty/\varpi \iso
  R^2_\infty/\varpi$.
\item Let $(R^1,H^1)=(R_{\CS},H^{q_0}(Y,K/\CO)^{\vee}_\m)$ and
  $(R^2,H^2)=(R_{\CS^\chi},H^{q_0}_\chi(Y,K/\CO)^{\vee}_\m)$. Note that we have
  natural compatible isomorphisms $R^1/\varpi\iso R^2/\varpi$ and $H^1/\varpi
  \iso H^2/\varpi$.
\item Let $T=T^1=T^2$ be the complex with
  $T^i=H^{i-l_0}(Y,\CO/\varpi)^{\vee}_\m$ and with all
  differentials $d : T^i \to T^{i+1}$ equal to $0$.
\item For each $N\geq 1$, let $Y_{\Delta_N}(Q_N)\to Y_0(Q_N)$ denote
  the subcover of $Y_1(Q_N)\to Y_0(Q_N)$ with Galois group $\Delta_N =
  (\Z/p^N)^q$. We introduced above perfect (homological) complexes of $S_N$-modules $\wt
  D_N$ and $\wt D^{\chi}_N$ above; they are concentrated in degrees $q_0,\dots,q_0+l_0$.  We now regard these as
  cohomological complexes concentrated in degrees $0,\dots,l_0$. Then
  we let $D^1_N$ (resp.\ $D^2_N$) denote the perfect
  complex of $S_N$-modules $\wt D_N / \varpi^N$ (resp.\ $\wt D^{\chi}_N/\varpi^N$).
   Note that the cohomology of $D^1_N$ (resp.\ $D^2_N$) computes
   $H^*(Y_{\Delta_N}(Q_N),\CO/\varpi^N)^{\vee}_\m$ (resp.\
   $H^*_{\chi}(Y_{\Delta_N}(Q_N),\CO/\varpi^N)^{\vee}_\m$), after a
   shift in degree by $q_0$.
  We can and do assume that
  $D^i_N\otimes S_N/\m_{S_N}\cong T$ for $i=1,2$.
\item Choose representatives for the universal deformations of type
  $\CS_{Q_N}$ and $\CS^\chi_{Q_N}$ which agree modulo $\varpi$. This gives rise to isomorphisms 
  \begin{eqnarray*}
     R^{\square_T}_{\CS_{Q_N}} &\liso& R_{\CS_{Q_N}}[[z_1,\dots,z_j]]  \\
 R^{\square_T}_{\CS^\chi_{Q_N}} &\liso & R_{\CS^{\chi}_{Q_N}}[[z_1,\dots,z_j]] .
  \end{eqnarray*}
  In the notation of Proposition \ref{prop:patching}, the rings on the
  right hand side can be written $R_{\CS_{Q_N}}^\square$ and
  $R_{\CS^\chi_{Q_N}}^{\square}$. By Proposition \ref{prop:tw-primes},
  we can and do choose surjections $R_\infty^1 \onto
  R_{\CS_{Q_N}}^\square$ and $R_\infty^2 \onto
  R_{\CS_{Q_N}^\chi}^\square$. Composing these with the natural maps
  $R^{\square}_{\CS_{Q_N}}\onto R_{\CS_{Q_N}}\onto R_{\CS}=R^1$ and
  $R^{\square}_{\CS_{Q_N}^\chi}\onto R_{\CS_{Q_N}^\chi}\onto
  R_{\CS^\chi}=R^2$, we obtain surjections $\phi_N^1 : R^1_\infty \onto
  R^1$ and $\phi_N^2:R^2_\infty\onto R^2$.
\end{itemize}
We have now introduced all the necessary input data to Proposition
\ref{prop:simult-patching}. We now check that they satisfy the
required conditions. 
\begin{itemize}
\item For each $M\geq N\geq 0$ with $M\geq 1$ and each $n\geq 1$, we
  have an action of $R_{\CS_{Q_N}}$ (resp.\ $R_{\CS^\chi_{Q_N}}$) on
  the cohomology $H^*(Y_{\Delta_N}(Q_M),\CO/\varpi^n)_\m$ (resp.\
  $H^*_\chi(Y_{\Delta_N}(Q_M),\CO/\varpi^n)_\m$) by Conjecture \ref{conj:AA}. Applying the
  functor, $X\mapsto X^\square$, and using the surjection
  $R^1_\infty\onto R_{\CS_{Q_N}}^\square$ (resp.\ $R^2_\infty\onto
  R_{\CS^\chi_{Q_N}}$), we obtain an action of $R^1_\infty$
  (resp. $R^2_\infty$) on
  $H^*(D_M^{1,\square}\otimes_{S_M}S_N/\varpi^n)$ (resp.\ $H^*(D_M^{2,\square}\otimes_{S_M}S_N/\varpi^n)$).
\end{itemize}
Thus condition~\eqref{galois-action} of Proposition~\ref{prop:patching}
is satisfied for both sets of patching data. Condition~\eqref{image-S}
follows from Conjecture~\ref{conj:AA}, while condition~\eqref{top-deg}
is clear. Finally, we note that we have isomorphisms
{\small
\[
H^{l_0}((D_M^1)^\square\otimes_{S_M}S_N/\varpi)=H^{l_0}(Y_{\Delta_N}(Q_M),\CO/\varpi)^\square
\iso H^{l_0}_\chi(Y_{\Delta_N}(Q_M),\CO/\varpi)^\square=
H^{l_0}((D_M^2)^\square\otimes_{S_M}S_N/\varpi) \]
}
for all $M\geq N\geq 0$ with $M\geq 1$. These isomorphisms are
compatible with the actions of $R^i_\infty$ and give rise to the
commutative square required by
Proposition~\ref{prop:simult-patching}. 

We have now satisfied all the
requirements of Proposition~\ref{prop:simult-patching} and hence we
obtain two complexes $P^{1,\square}_\infty$ and
$P^{2,\square}_\infty$. By Lemma~\ref{lem:Rloc-props}, $\Spec
R_\infty^2$ is irreducible, and hence by
Theorem~\ref{thm:faithfulness-statements}
$H^{l_0}(P^{2,\square}_{\infty})$ is nearly faithful as an
$R^2_\infty$-module. Thus
\[ H^{l_0}(P^{1,\square}_\infty)/\varpi \cong
H^{l_0}(P^{2,\square}_\infty)/\varpi \] 
is nearly faithful over $R^1_\infty/\varpi \iso R^2_\infty/\varpi$. By
Lemma~\ref{lem:Rloc-props} and 
\cite[Lemma 2.2]{Taylor}, it follows that
$H^{l_0}(P^{1,\square}_\infty)$ is nearly faithful over $R^1_\infty$
providing that $H^{l_0}(P^{1,\square}_\infty)$ is $p$-torsion
free. However, each associated prime of
$H^{l_0}(P^{1,\square}_\infty)$ is a minimal prime of $R^1_\infty$ and
by Lemma~\ref{lem:Rloc-props}, all such primes have characteristic
0. Thus $p$ cannot be a zero divisor on
$H^{l_0}(P^{1,\square}_\infty)$ and the result of Taylor applies. By
conclusion~\eqref{top-deg-patchted} of
Proposition~\ref{prop:patching} we deduce that $H^1 =
H^{q_0}(Y,K/\CO)^{\vee}_\m$ is nearly faithful over $R^1=R_{\CS}$, as required.
\end{proof}

\section{Proof of Theorem~\ref{theorem:ST}}
\label{sec:proof-of-ST}

In this section, we prove Theorem~\ref{theorem:ST}  

\begin{proof}
Let $A$ be an elliptic curve over a number field $K$. If $A$ has CM, then the result is well known, so
we may assume that $\End_{\C}(A) = \Z$.
 Let 
 $$r = \Sym^{2n-1} \rho: G_{K} \rightarrow \GL_{2n}(\Q_p)$$
 denote the representation corresponding to the $(2n-1)$th symmetric power of the Tate module of $A$.
To prove Theorem~\ref{theorem:ST},
 it suffices (following, for example, the proof of Theorem~4.2 of~\cite{HSBT}) to prove that
  for each $n$, there exists a $p$ such that $r$ is potentially modular.
 We follow the proof of Theorem~6.4 of~\cite{blght}.
 (The reason for following the proof of Theorem~6.4 instead of Theorem~6.3 of \emph{ibid}. is that the
 latter theorem proceeds via compatible families arising from the Dwork family such that
 $V[\lambda]_t$ is ordinary but not crystalline, which would necessitate a  different
 version of Theorem~\ref{theorem:modularity}.)
 In particular, we make the following extra hypothesis:
 \begin{itemize}
\item There exists a  prime $p$ which is totally split in $K$,
and such that $p+1$ is divisible by an integer $N_2$ which is greater than $n$
and prime to the conductor of $A$.
Moreover,
 the mod-$p$ representation $\rhobar_A:G_{K} \rightarrow \GL_2(\F_p)$ associated to $A[p]$ is surjective, 
 $A$ has good reduction at all $v|p$, and
  for all primes $v|p$ we have
 $$\rhobar_A:G_{\Q_p} \simeq \Ind^{\Q_p}_{\Q_{p^2}} \omega_2.$$
\end{itemize}
 (This is a non-trivial condition on $A$, we consider the general case below.)
 It  then  suffices to find sufficiently large primes $p$ and $l$,   a finite extension $L/K$,
 an integer $N_2$ with $N_2 > n+1$ and $p+1 = N_1 N_2$ 
 (as in the statement of Theorem~6.4 of~\S~4 of~\cite{blght})
 and primes $\lambda$, $\lambda'$ of
 $\Q(\zeta_N)^{+}$ (with $\lambda$ dividing $p$ and $\lambda'$ dividing $l$) and
 a point $t \in T_0(L)$ on the Dwork family such that:
 \begin{enumerate}
 \item
 $V[\lambda]_{t} \simeq \rbar |_{G_{L}}$,
 \item $V[\lambda']_{t} \simeq \rbar' |_{G_{L}}$, where
 $r'$ is an ordinary weight $0$ representation which
 induced from $G_{L M}$ for some suitable CM field $M/\Q$ of degree $2n$.
 \item $p$ splits completely in $L$.
 \item $A$ and $V$ are semistable over $L$.
\item $ \rbar |_{G_{L}}$ and $\rbar'|_{G_{L}}$ satisfy
all the hypotheses
of Theorem~\ref{theorem:modularity} with the possible exception of residual modularity.
\end{enumerate}
This can be deduced (as in the proof of Theorem~6.4 of~\cite{blght})
using the theorem of Moret--Bailly (in the form of Prop.~6.2 of  \emph{ibid}) and
via character building.
By construction, the modularity of $r$  follows from
two applications of Theorem~\ref{theorem:modularity}, once applied
to the $\lambda'$-adic representation associated to $V$ (using $\rbar'$ and
the residual modularity coming from the induction of a Grossencharacter) and once to 
the $\lambda$-adic representation associated to $\Sym^{2n-1}(A)$, using the residual modularity
coming from $V$.

\medskip

For a general elliptic curve $E$,  we reduce to the previous case as follows.
It suffices to find a second elliptic curve $A$, a number field $L/K$, 
and primes
$p$ and $q$ such that:
\begin{enumerate}
\item The mod-$p$ representation $ \rbar  = (\Sym^{2n-1} \rhobar_E)|_{G_{L}}$ satisfies
all the hypotheses
of Theorem~\ref{theorem:modularity} with the possible exception of residual modularity.
\item $A$ and $E$ are semistable over $L$ and have good reduction at all primes dividing~$p$ and~$q$.
\item $p$ and $q$ split completely in $L$.
\item $p+1$ is divisible by an integer $N_2 > n + 1$ which is prime to  the conductor of $A$.
\item $E[q] \simeq A[q]$ as $G_L$-modules, and the corresponding mod-$p$ representation is surjective.
\item The mod-$p$ representation $\rhobar_A:G_{L} \rightarrow \GL_2(\F_p)$ associated to $A$ is surjective,
and $\rhobar_{A}|_{G_{\Q_p}} \simeq \Ind^{\Q_p}_{\Q_{p^2}} \omega_2$.
\end{enumerate}
This lemma also follows easily from Prop.~6.2 of~\cite{blght}, now applied to twists of a
modular curve.
We deduce as above (using the mod-$q$ representation)
that $\Sym^{2n-1}(A)$ is potentially modular over some extension which
is unramified at $p$, and then use Theorem~\ref{theorem:modularity} once more now at the prime at $p$ to
deduce that $\Sym^{2n-1}(E)$ is modular.
\end{proof}

\begin{remark}
\emph{
It is no doubt possible to  also deal with  even symmetric powers
using the tensor product idea of Harris~\cite{Trick}.
}
\end{remark}

\section{Acknowledgements}

We would like to thank Matthew Emerton, David Helm, Toby Gee, Andrew
Snowden, Richard Taylor, David Treumann, Akshay Venkatesh, and Andrew Wiles
 for useful conversations. We also thank Toby Gee for detailed comments on several drafts of
 this paper.
Many of the ideas of this paper were discovered when both the authors were members of the Institute for
Advanced Study in 2010--2011 --- the authors would like to thank the IAS for the pleasant and productive environment
provided. Finally, the authors would like to thank each of the referees for their help in improving both the accuracy
and readability of this manuscript.

\bibliographystyle{amsalpha}
\bibliography{CalegariGeraghty}
 
 \end{document}